\documentclass[11pt]{amsart}
\usepackage{verbatim, latexsym, amssymb, amsmath,color, mathabx}
\usepackage{enumitem}
\usepackage{epsfig}
\usepackage{tikz-cd}

\usepackage{geometry}
\geometry{hmargin=2cm,vmargin=2.5cm}

\usepackage[hidelinks]{hyperref}

\newtheorem{thm}{Theorem}[section]
\newtheorem{prop}[thm]{Proposition}
\newtheorem{lem}[thm]{Lemma}
\newtheorem{cor}[thm]{Corollary}
\theoremstyle{definition}

\newtheorem{rem}[thm]{Remark}
\newtheorem{convention}[thm]{Convention}

\newtheorem{exe}{Example}[section]

\DeclareMathOperator{\rank}{rank}

\newcommand{\PFH}{\text{PFH}}
\newcommand{\TWPFH}{\text{TwPFH}}
\newcommand{\PFC}{\text{PFC}}
\newcommand{\TWPFC}{\text{TwPFC}}

\newcommand{\HM}{\text{HM}}

\newcommand{\HMbar}{\overline{\text{HM}}}
\newcommand{\TWHM}{\text{TwHM}}
\newcommand{\TWHMhat}{\widehat{\text{TwHM}}}
\newcommand{\TWHMcheck}{\widecheck{\text{TwHM}}}
\newcommand{\TWHMbar}{\overline{\text{TwHM}}}
\newcommand{\CM}{\text{CM}}

\newcommand{\CMbar}{\overline{\text{CM}}}
\newcommand{\TWCM}{\text{TwCM}}
\newcommand{\TWCMhat}{\widehat{\text{TwCM}}}
\newcommand{\TWCMcheck}{\widecheck{\text{TwCM}}}
\newcommand{\TWCMbar}{\overline{\text{TwCM}}}

\newcommand{\cl}{\text{cl}}
\newcommand{\SF}{\text{SF}}

\newcommand{\coexact}{\Omega^1_{\text{co-exact}}}
\newcommand{\Diff}{\text{Diff}}
\newcommand{\Ham}{\text{Ham}}

\newcommand{\wt}{\widetilde}
\newcommand{\dvol}{\text{dvol}}

\newcommand{\fa}{\mathfrak{a}}
\newcommand{\fc}{\mathfrak{c}}
\newcommand{\fC}{\mathfrak{C}}
\newcommand{\fd}{\mathfrak{d}}
\newcommand{\fe}{\mathfrak{e}}
\newcommand{\fg}{\mathfrak{g}}
\newcommand{\fp}{\mathfrak{p}}
\newcommand{\fq}{\mathfrak{q}}
\newcommand{\fs}{\mathfrak{s}}
\newcommand{\fS}{\mathfrak{S}}
\newcommand{\fcs}{\mathfrak{cs}}

\newcommand{\bD}{\mathbb{D}}
\newcommand{\bR}{\mathbb{R}}
\newcommand{\bZ}{\mathbb{Z}}

\newcommand{\bfA}{\mathbf{A}}
\newcommand{\bfS}{\mathbf{S}}

\newcommand{\cG}{\mathcal{G}}
\newcommand{\cH}{\mathcal{H}}
\newcommand{\cJ}{\mathcal{J}}
\newcommand{\cL}{\mathcal{L}}
\newcommand{\cS}{\mathcal{S}}
\newcommand{\cP}{\mathcal{P}}

\newcommand{\oJ}{\overline{J}}

\allowdisplaybreaks

\begin{document}
\title[The smooth closing lemma]{Periodic Floer homology and the smooth closing lemma for area-preserving surface diffeomorphisms}
\begin{abstract}

We prove a very general Weyl-type law for Periodic Floer Homology, estimating the action of twisted Periodic Floer Homology classes over essentially any coefficient ring in terms of the grading and the degree, and recovering the Calabi invariant of Hamiltonians in the limit.  We also prove a strong non-vanishing result, showing that under a monotonicity assumption which holds for a dense set of maps, the Periodic Floer Homology has infinite rank.  An application of these results yields that a $C^{\infty}$--generic area-preserving diffeomorphism of a closed surface has a dense set of periodic points.  This settles Smale's tenth problem in the special case of area-preserving diffeomorphisms of closed surfaces. 
\end{abstract}

\author{Dan Cristofaro-Gardiner}
\author{Rohil Prasad}
\author{Boyu Zhang}

\maketitle 

\setcounter{tocdepth}{1}
\tableofcontents

\section{Introduction} \label{sec:intro}

There are two aims of this paper.  The first is to establish the following theorem.

\begin{thm}[Smooth closing lemma for area-preserving diffeomorphisms] \label{thm:closingLemma}
Let $\Sigma$ be a closed, oriented surface equipped with an area form $\omega$, and let $\Diff(\Sigma, \omega)$ be the space of $C^\infty$--diffeomorphisms on $\Sigma$ preserving $\omega$.  Suppose $\phi \in \Diff(\Sigma, \omega)$. Then for any open set $U \subset \Sigma$ and any neighborhood $V$ of $\phi$ in the $C^\infty$ topology of $\text{Diff}(\Sigma, \omega)$, there is $\phi' \in V$ such that $\phi'$ has a periodic point in $U$.
\end{thm}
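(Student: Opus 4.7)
The plan is to follow the template pioneered by Irie for three-dimensional Reeb flows (via embedded contact homology) and adapted by Asaoka--Irie to Hamiltonian diffeomorphisms of surfaces: deduce a $C^\infty$ closing lemma from a Weyl-type asymptotic for spectral invariants in a suitable Floer theory. Here the two main inputs are precisely the PFH Weyl law and the strong non-vanishing theorem announced in the abstract.

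First I would reduce to the monotone case. After an arbitrarily small $C^\infty$-perturbation of $\phi$ inside $V$ I may assume the monotonicity hypothesis holds, so the non-vanishing theorem supplies a sequence of nonzero twisted PFH classes $\sigma_d$ of degrees $d \to \infty$ together with spectral invariants $c_{\sigma_d}$.  Choose a nonnegative, not identically zero Hamiltonian $H \colon \Sigma \to \mathbb{R}$ with $\mathrm{supp}(H) \subset U$, and set $\phi_s := \phi_H^s \circ \phi$ for $s \in [0,1]$, where $\phi_H^s$ is the time-$s$ Hamiltonian flow of $H$.  Pick $s_0 > 0$ small enough that $\phi_s \in V$ for all $s \in [0, s_0]$.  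Now argue by contradiction: suppose no $\phi_s$, $s \in [0, s_0]$, has a periodic point in $U$.

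Under this hypothesis every periodic orbit of every $\phi_s$ lies outside $\mathrm{supp}(H)$, so the Hamiltonian perturbation contributes zero to the action of every PFH generator.  A continuation argument then forces
\[
c_{\sigma_d}(\phi_s) = c_{\sigma_d}(\phi_0) \qquad \text{for all } d \text{ and all } s \in [0, s_0].
\]
On the other hand, combining the Weyl law with the recovery of the Calabi invariant in the limit gives
\[
\lim_{d \to \infty} \frac{c_{\sigma_d}(\phi_{s_0})}{d} - \lim_{d \to \infty} \frac{c_{\sigma_d}(\phi_0)}{d} = s_0 \int_\Sigma H\, \omega,
\]
which is strictly positive.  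This contradicts the previous displayed equality in the limit, so some $\phi_s$ with $s \in (0, s_0]$ must have a periodic point in $U$; by construction $\phi_s \in V$.

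The main obstacle I anticipate is the exact action-preservation step: one must show the twisted PFH continuation map from $\phi_0$ to $\phi_{s_0}$ induces an identification of chain complexes that preserves the action filtration \emph{on the nose}, not merely up to a Hofer-type bound, as soon as all PFH generators avoid $\mathrm{supp}(H)$.  This requires handling the non-exactness of $\omega$ and the absence of a priori monotonicity via the twisted coefficient system, and is precisely where the generality of the Weyl law---applicable over essentially any coefficient ring, with its precise grading and degree bookkeeping---becomes indispensable.
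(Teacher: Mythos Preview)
Your overall architecture matches the paper: reduce to monotone (and nondegenerate) $\phi$, invoke the non-vanishing theorem to get classes $\sigma_m$ in arbitrarily high degree, run a one-parameter family $\phi_s$ generated by a Hamiltonian supported in $U$, and play off the Weyl law against the claim that the spectral invariants $c_{\sigma_m}(\phi_s)$ do not move.

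The gap is in how you justify that the spectral invariants do not move. You frame it as needing the continuation map to preserve the action filtration \emph{on the nose} at the chain level, and you correctly flag this as a serious difficulty. The paper does not attempt this. Instead it uses two softer properties of the spectral invariants together with a measure-theoretic trick:
\begin{itemize}
\item \emph{Spectrality} (Proposition~\ref{prop:pfhSpectrality}): each $c_{\sigma_m}(\phi_s,\Theta_m^s)+\int_{\Theta_m}H^s\,dt$ lies in the action spectrum $\mathcal{T}(s)=\bigcup_m\mathcal{T}(s,m)$.
\item \emph{Hofer continuity} (Proposition~\ref{prop:hoferContinuity1}): this quantity is continuous in $s$.
\end{itemize}
Under the contradiction hypothesis, all $\phi_s$ share the same periodic orbit sets, and since every orbit misses $\mathrm{supp}(H)$, Stokes' theorem shows $\mathcal{T}(s)$ is independent of $s$. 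Because $[\omega_\phi]$ is a real multiple of an integral class, each $\mathcal{T}(s,m)$ is discrete, so $\mathcal{T}(s)=\mathcal{T}$ is a countable union of discrete sets and hence has Lebesgue measure zero. A continuous function from $[0,1]$ into a measure-zero subset of $\mathbb{R}$ is constant. This replaces your chain-level action-preservation step entirely.

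Two smaller points. First, the spectral invariants and the Weyl law carry a reference-cycle correction $\int_{\Theta_m}H\,dt$ that you have suppressed; it must be tracked (it is what makes the function $c_m(s)$ land in $\mathcal{T}$). Second, your displayed asymptotic splits the limit into two separate limits $\lim_d c_{\sigma_d}(\phi_{s_0})/d$ and $\lim_d c_{\sigma_d}(\phi_0)/d$; the Weyl law only gives the limit of the \emph{difference} (with the grading and reference-cycle corrections), and that is all you need.
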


By a Baire category theorem argument that is now standard, we obtain the following corollary:

\begin{cor}[The generic density theorem for area-preserving diffeomorphisms] \label{cor:genden} 
For a $C^{\infty}$--generic element of $\Diff(\Sigma, \omega)$, the union of periodic points is dense.  More precisely, the set of elements of $\Diff(\Sigma, \omega)$ without dense periodic points forms a meager subset in the $C^{\infty}$--topology.
\end{cor}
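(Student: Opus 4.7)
The plan is a Baire category argument starting from Theorem \ref{thm:closingLemma}. Fix a countable basis $\{V_n\}_{n \geq 1}$ of the topology of $\Sigma$ (for instance, metric balls of rational radii centered at points of a countable dense subset). For each $n$, I define
\[
\mathcal{O}_n := \{\phi \in \Diff(\Sigma,\omega) : \phi \text{ has a non-degenerate periodic point in } V_n\}.
\]
Since $\Diff(\Sigma,\omega)$ with the $C^\infty$ topology is a Baire space, it suffices to show each $\mathcal{O}_n$ is open and dense; the intersection $\bigcap_n \mathcal{O}_n$ is then a dense $G_\delta$ all of whose elements have dense periodic points, and its complement (which contains the set of elements without dense periodic points) is meager.

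Openness of $\mathcal{O}_n$ follows from the persistence of non-degenerate periodic points. If $\phi \in \mathcal{O}_n$ has a non-degenerate periodic point $x \in V_n$ of period $k$, then $x$ is a non-degenerate fixed point of $\phi^k$, so the implicit function theorem applied in a local chart to $(y,\psi) \mapsto \psi^k(y) - y$ produces a unique nearby fixed point $x' = x'(\phi')$ of $(\phi')^k$ for every $\phi'$ sufficiently $C^1$-close to $\phi$. For $\phi'$ sufficiently $C^\infty$-close to $\phi$, this $x'$ still lies in $V_n$ and is still non-degenerate, giving $\phi' \in \mathcal{O}_n$.

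For density, fix $\phi \in \Diff(\Sigma,\omega)$ and any $C^\infty$-neighborhood $V$ of $\phi$. Theorem \ref{thm:closingLemma} yields some $\phi_1 \in V$ with a periodic point $x_1 \in V_n$ of some period $k$. If $x_1$ happens to be non-degenerate, we are done. Otherwise I would perform a further area-preserving local modification: choose a small Darboux ball around one of the orbit points, contained in $V_n$ if it is the ball around $x_1$ and in any case disjoint from the other iterates of the orbit, and pre-compose $\phi_1$ with a $C^\infty$-small Hamiltonian diffeomorphism supported in this ball that fixes the chosen orbit point but adjusts the linearization of $\phi_1^k$ at $x_1$ so that $1$ is no longer an eigenvalue. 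The resulting $\phi_2$ has the same orbit through $x_1$ of period $k$, now non-degenerate, and lies in $V \cap \mathcal{O}_n$.

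The real content of the corollary is packaged into Theorem \ref{thm:closingLemma}; the Baire wrapper is entirely standard. The only mild technical obstacle is the local non-degeneracy perturbation used to pass from ``some periodic point in $V_n$'' to ``a non-degenerate periodic point in $V_n$'' without leaving $V$, but this is a classical symplectic construction that can be made arbitrarily $C^\infty$-small and supported away from the rest of the orbit.
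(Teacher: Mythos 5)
Your proof is correct and is precisely the standard Baire category argument that the paper invokes without spelling out (defining the open dense sets $\mathcal{O}_n$ of maps with a nondegenerate periodic point in the $n$-th basis element, using the closing lemma plus a local nondegeneracy perturbation for density, and persistence of nondegenerate periodic orbits for openness). No gaps; the one point worth stating slightly more explicitly is that for any $M \in Sp(2,\bR)$ the set of $A \in Sp(2,\bR)$ for which $MA$ has eigenvalue $1$ is a proper subvariety, so the desired $C^\infty$-small linearization adjustment at the periodic point is always available.
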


Questions like this have attracted considerable interest.  For example, Franks and Le Calvez \cite{frankslecalvez} previously described the question of whether or not the generic density theorem holds as ``perhaps the most important" question about ``the topological picture of the dynamics of $C^r$ generic area preserving diffeomorphisms of surfaces"; Xia \cite[Conj. 1]{xia} attributes the conjecture that the generic density theorem holds to Poincar\'e \cite{poincare}.  A general form of Theorem \ref{thm:closingLemma} (for closed manifolds in all dimensions at any non-wandering point, without the volume-preserving assumption) is the subject of the tenth problem in Smale's 
list \cite{smaleProblemList}
of eighteen problems for the $21^{st}$ century.  

\begin{rem} (The $C^r$-topology)
It is known that the set of area-preserving maps of regularity $C^{\infty}$ is dense in the set of such maps of regularity $C^r$ for all $r \ge 1$ \cite{ZehnderSmoothing}.  From this it is straightforward to deduce analogues of
Theorem~\ref{thm:closingLemma} and Corollary~\ref{cor:genden} in the $C^r$-topology from Theorem~\ref{thm:closingLemma}, for any $r \ge 1$.  The case $r = 1$ was previously proved by Pugh--Robinson \cite{PughRobinson}, building on work of Pugh \cite{PughClosing, PughGenericDensity}.   
\end{rem}

A major breakthrough related to Theorem~\ref{thm:closingLemma} and Corollary~\ref{cor:genden} occurred in celebrated work of Asaoka-Irie \cite{asaokairie}, which established Theorem~\ref{thm:closingLemma} and Corollary~\ref{cor:genden} for Hamiltonian diffeomorphisms of surfaces; this work itself built on beautiful work of Irie \cite{irie}.  A key idea in \cite{irie, asaokairie} was to establish the desired closing lemmas by applying a kind of Weyl law, proved in \cite{CGHR15}, relating the lengths of certain periodic orbits to the volume of three-manifolds with contact forms.  The precise statement of this law used a Floer homology for three-manifolds called ``embedded contact homology" (ECH).     
Since the groundbreaking work in \cite{asaokairie, irie}, it has generally been expected 
that Theorem~\ref{thm:closingLemma} and Corollary~\ref{cor:genden} could follow from a strong enough Weyl law on an appropriate Floer homology for surfaces. 

Establishing such a law is the second aim of this work, and the bulk of the paper is about this.  We establish essentially the most general possible statement, which includes a strong non-vanishing result for the relevant Floer homology that makes our Weyl law broadly applicable.

\subsection*{The PFH Weyl Law}

We now explain the statement of the aforementioned Weyl law and non-vanishing theorem, keeping our overview as non-technical as possible and deferring a more detailed summary to \S \ref{subsec:pfh}.  Let $\phi$ be an area-preserving diffeomorphism of $(\Sigma,\omega)$.  Let $M_\phi=[0,1]\times \Sigma/(1,p)\sim (0,\phi(p))$ be the mapping torus of $\phi$.  Let $V$ denote the vertical tangent bundle of $M_\phi$. The pull-back of $\omega$ to $[0,1]\times \Sigma$ defines a smooth $2$--form on $M_\phi$ because $\phi$ is area-preserving, and we denote this $2$--form by $\omega_\phi$. A class $\Gamma\in H_1(M_\phi;\mathbb{Z})$ has a well-defined {\bf degree} defined by pushing forward to $H_1(S^1) \cong \mathbb{Z}.$  

There is an invariant $\PFH(\phi)$ called the {\bf periodic Floer homology} of $\phi$; after a choice of coefficient ring $R$ it depends only on the Hamiltonian isotopy class of $\phi$.   Essentially, the PFH is the homology of a chain complex generated by certain sets of periodic points of $\phi$, relative to a differential that counts certain pseudoholomorphic curves in $\mathbb{R} \times M_{\phi}$.  In general, $\PFH(\phi)$ only has the structure of a graded $R$--module; however, there is a closely related homology $\TWPFH(\phi,\Theta)$
after fixing an additional data set of a {\bf reference cycle} $\Theta$ (see \S \ref{subsec:pfhDefinition}).
By definition, $\Theta$ is an embedded oriented closed $1$--manifold in $M_{\phi}$ transverse to the bundle $V$, such that a trivialization of $V$ along $\Theta$ has been fixed. 
The homology group $\TWPFH(\phi,\Theta)$ has an $\mathbb{R}$-valued filtration associated to the ``symplectic action functional" 
defined by $\omega_\phi$ (see \S \ref{subsubsec:pfhSpectralInvariantsDefn}).
We will suppress the coefficient ring $R$ from the notation most of the time, but in a handful of cases we will specify it and write $\TWPFH(\phi, \Theta; R)$. 

One can then attempt to compare the actions, gradings, and degrees of elements in $\TWPFH$, and this is the genesis of the questions that are taken up here.
We first state our non-vanishing result, which will guarantee that there 
exist interesting classes in $\TWPFH$ for which one can attempt to make these kinds of comparisons.  To set the stage, note first that there exist maps $\phi$, for example irrational translations of the two-torus, with no periodic points at all.  Thus, we can not expect an interesting non-vanishing result for $\TWPFH$ to hold for every $\phi$.  To state the relevant condition on $\phi$ for us, denote by $[\Theta] \in H_1(M_\phi;\mathbb{Z})$ the homology class induced by $\Theta$, and define the degree of $\Theta$ to be the degree of $[\Theta]$.  We say that a class $\Gamma \in H_1(M_\phi;\mathbb{Z})$ is {\bf monotone} if 
\begin{equation} \label{eq:cGamma}
2\text{PD}(\Gamma) + c_1(V) = - \rho [\omega_\phi]
\end{equation}  
for some constant $\rho \in \mathbb{R}$,
where $\text{PD}(\Gamma)$ denotes the Poincar\'e dual of $\Gamma$, 
and we say that $\Theta$ is monotone if $[\Theta]$ is.  Equation \eqref{eq:cGamma} guarantees that 
the change in grading is proportional to the change in action for different liftings of a generator from $\PFH$ to $\TWPFH$ (see \S \ref{subsec:pfh} for details).

\begin{thm}[Non-triviality of PFH]
\label{thm:nonvan}
Let $\left(\Sigma,\omega\right)$ and $\phi$ be given and let $R$ be a commutative ring with finite global dimension.   Then for any monotone reference cycle $\Theta$ in sufficiently high degree,
\[ \TWPFH(\phi,\Theta;R) \ne 0.\]
\end{thm}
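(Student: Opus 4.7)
The plan is to reduce the problem to a non-vanishing statement for twisted monopole Floer cohomology on the mapping torus $M_\phi$ and then exploit the contribution of reducible Seiberg--Witten solutions in the monotone setting. First I would invoke a Lee--Taubes-type isomorphism between twisted PFH and a flavor of twisted monopole Floer cohomology, presumably established as infrastructure earlier in the paper: concretely, $\TWPFH(\phi,\Theta;R) \cong \TWHMcheck(M_\phi,\mathfrak{s}_\Gamma;R)$, where $\Gamma = [\Theta]$, the spin$^c$-structure satisfies $c_1(\mathfrak{s}_\Gamma) = 2\,\text{PD}(\Gamma) + c_1(V)$, and the local system on the monopole side is determined by $\Theta$. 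Under \eqref{eq:cGamma}, $c_1(\mathfrak{s}_\Gamma)$ is a negative real multiple of $[\omega_\phi]$, which is exactly the Kronheimer--Mrowka monotonicity condition on the Seiberg--Witten side.

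Next I would analyze the long exact triangle relating $\TWHMhat$, $\TWHMcheck$, and $\TWHMbar$. The bar version is computed from the reducible locus, and since $[\omega_\phi]$ is non-torsion, a Morse--Bott computation in the monotone setting identifies $\TWHMbar(M_\phi,\mathfrak{s}_\Gamma;R)$ as (essentially) a free $R$-module that is non-zero in infinitely many gradings spaced by $2$, independently of $R$. For $\Theta$ of sufficiently high degree, a quantitative Chern--Simons--Dirac energy estimate should confine the irreducible contributions to $\TWHMhat$ to a grading window that is bounded above in a way that does not keep pace with the bar gradings. Consequently, in high enough grading the connecting map $\TWHMbar \to \TWHMhat$ must vanish, so $\TWHMcheck \to \TWHMbar$ surjects onto a non-zero module, giving $\TWPFH(\phi,\Theta;R) \ne 0$.

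The hypothesis that $R$ has finite global dimension enters in passing between coefficient rings: one first establishes non-vanishing of $\TWHMbar$ over a field (say, a residue field of $R$, or $\mathbb{Q}$), and then transfers the result to $R$ via a universal coefficient spectral sequence whose length is bounded by the global dimension, so that Tor-obstructions to lifting non-vanishing terminate after finitely many pages.

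The main obstacle is the grading separation in the second step: showing that, in the monotone case and for high-degree $\Theta$, the irreducible and reducible contributions to the Seiberg--Witten chain complex occupy disjoint grading ranges. This relies essentially on monotonicity, which forces the relative grading to grow linearly with symplectic action, so that the high-grading Morse--Bott reducibles are pushed beyond any fixed range of irreducible solutions. A secondary technical difficulty, which should not change the structure of the argument but requires care, is setting up the twisted Morse--Bott perturbation and the local system induced by $\Theta$ so that the bar complex remains computable as the degree of $\Theta$ grows, and doing so uniformly over the coefficient ring $R$.
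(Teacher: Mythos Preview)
Your proposal has the right architecture---Lee--Taubes isomorphism, the exact triangle, non-vanishing of the bar flavor from reducibles---but two of the key identifications are inverted, and this causes the argument to point in the wrong direction.

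First, the Lee--Taubes isomorphism identifies $\TWPFH(\phi,\Theta;R)$ with $\TWHM^*(M_\phi,\Gamma,m_-;R)$, and by \cite[Theorem~31.5.1]{monopolesBook} this is the \emph{hat} flavor $\TWHMhat^*(M_\phi,\Gamma,m_b;R)$, not the check flavor. So you want $\TWHMhat \neq 0$, not $\TWHMcheck \neq 0$. In fact the paper shows that for $\Gamma$ of sufficiently high degree, $\TWHMcheck^*(M_\phi,\Gamma,m_b;R)$ \emph{vanishes} (Proposition~\ref{prop:checkVanishing}): one has $\TWHMcheck(m_b) \cong \TWHM^*(m_+)$, and the $m_+$ equations (which correspond to $r=0$) have no solutions once the spin-c structure is large enough, by a standard compactness argument. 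The exact triangle then gives $\TWHMbar \cong \TWHMhat$ directly (Corollary~\ref{cor:barEqualsHat}). Your proposed mechanism---a grading separation forcing $\TWHMbar \to \TWHMhat$ to vanish in high grading---is not what happens; on the negative-monotone side there are irreducible solutions in arbitrarily high grading (these are exactly the PFH generators under Lee--Taubes), so no such confinement occurs.

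Second, the finite global dimension hypothesis is not used to transfer between coefficient rings. The paper first proves non-vanishing of \emph{untwisted} $\HMbar^*(M_\phi,\fs,c_b;R)$ over any commutative ring $R$ (Theorem~\ref{thm_HMbar_nonvanish}), using the Kronheimer--Mrowka spectral sequence and a sixth-root-of-unity Euler characteristic trick. The finite global dimension enters only in passing from untwisted to \emph{twisted}: one has $\CMbar^* \simeq \TWCMbar^* \otimes_{\mathcal{R}} \mathcal{R}/\mathcal{I}$ with $\mathcal{R} = R[H^1(M_\phi;\bZ)]$, and Lemma~\ref{lem_homological_algebra_Kunneth} says that a flat acyclic complex tensored with a module admitting a finite projective resolution stays acyclic. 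Since $R$ (hence $\mathcal{R}$) has finite global dimension, $\mathcal{R}/\mathcal{I}$ has such a resolution, so $\TWCMbar$ acyclic would force $\CMbar$ acyclic, contradicting the untwisted non-vanishing.
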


Monotone reference cycles exist in abundance, as the following shows.

\begin{exe}
By definition, the map $\phi$ admits monotone classes if and only
if $[\omega_\phi] \in H^2(M_\phi;\mathbb{R})$ is proportional to an element in $H^2(M_\phi;\mathbb{Q})$.
When $\phi$ admits monotone classes, 
there are infinitely many monotone classes on $M_\phi$ with degrees tending to $+\infty$.  In this case, Theorem~\ref{thm:nonvan} implies that there are nonvanishing twisted PFH groups with degrees tending to $+\infty$.
\end{exe}

We will call a map $\phi$  {\bf monotone} if $[\omega_\phi] \in H^2(M_\phi;\mathbb{R})$ is proportional to an element in $H^2(M_\phi;\mathbb{Q})$. We will prove in Lemma \ref{lem:genericallyMonotone}  below that monotone maps are dense in $\Diff(\Sigma, \omega)$.  Our Weyl law will be about the asymptotics of actions on $\TWPFH(\phi,\Theta)$ when the degrees of $\Theta$ tend to $+\infty$.

Let us now explain the Weyl law that is the heart of our paper.   Fix $\phi$, let $H$ be any Hamiltonian, let $\phi^1_H$ be the associated time--$1$ Hamiltonian flow, and consider $\phi' = \phi\circ \phi^1_H$.  Then $\phi$ and $\phi'$ are Hamiltonian isotopic and so, as mentioned above, $\TWPFH(\phi)$ and $\TWPFH(\phi')$ are isomorphic.  In fact, as we will explain in \S \ref{subsec:pfhCobordismMaps}, this isomorphism is canonical. If $\sigma \in \TWPFH(\phi)$, we denote the corresponding element of $\TWPFH(\phi')$ by $\sigma^H$; a reference cycle $\Theta$ for $\phi$ also induces a reference cycle for $\phi'$ which we denote by $\Theta^H$.  As we also mentioned above, $\TWPFH$ has a grading, which we denote by $I$, and an action.  We denote the minimum action required to represent a class $\sigma$ for $(\phi,\Theta)$ by $c_{\sigma}(\phi,\Theta)$, deferring the precise definition to \S\ref{subsubsec:pfhSpectralInvariantsDefn}, and we call this the {\bf PFH spectral invariant} associated to $\sigma$.  Let $\pi: \Sigma \times [0,1] \to M_\phi$ denote the quotient map that defines the mapping torus.

\begin{thm}[The Weyl law] \label{thm:hutchingsConjectureFullintro}
Let $\Sigma$ be a closed
connected surface of genus $G$ with area form $\omega$ of area $1$. 
Fix $\phi,$ fix any 
 Hamiltonian $H \in C^\infty([0,1] \times \Sigma)$, fix any sequence of monotone reference cycles $\Theta_m$ with degrees $d_m$ tending to $+\infty$, and fix any two sequences of nonzero classes 
$\sigma_m, \tau_m \in \TWPFH_*(\phi,\Theta_m)$.  Then
\begin{equation*}
\lim_{m \to \infty} \frac{c_{\tau^{H}_{m}}(\phi', \Theta^H_m) - c_{\sigma_m}(\phi, \Theta_m) + \int_{\pi^{-1}(\Theta_m)} H dt}{(d_m + 1 - G)} - \frac{I(\tau_m) - I(\sigma_m)}{2(d_m + 1 - G)^2} 
 = \int_{\Sigma \times [0,1]} H \omega \wedge dt.
\end{equation*}
\end{thm}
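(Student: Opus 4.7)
The plan is to reduce the theorem to a cobordism-energy estimate for a single class, and then to absorb the grading mismatch between $\sigma_m$ and $\tau_m$ using the monotonicity relation \eqref{eq:cGamma}. Pairing \eqref{eq:cGamma} with the fiber class $[\Sigma]\in H_2(M_\phi)$, for which $\langle[\omega_\phi],[\Sigma]\rangle = 1$, $\langle c_1(V),[\Sigma]\rangle = 2-2G$, and $\langle \text{PD}(\Gamma),[\Sigma]\rangle = d_m$, forces $\rho = -2(d_m+1-G)$. The different lifts of a given PFH generator to $\TWPFH$ are related by an action of $H_2(M_\phi)$ which shifts both the grading and the action; monotonicity makes these two shifts proportional, yielding for any two nonzero classes $\sigma,\tau \in \TWPFH_*(\phi,\Theta_m)$ the identity
\begin{equation*}
c_\tau(\phi,\Theta_m) - c_\sigma(\phi,\Theta_m) = \frac{I(\tau)-I(\sigma)}{2(d_m+1-G)} + O(1),
\end{equation*}
together with the analogous identity on the $(\phi',\Theta_m^H)$ side. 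Since the canonical isomorphism $\TWPFH(\phi,\Theta_m)\cong \TWPFH(\phi',\Theta_m^H)$ sending $\sigma_m\mapsto \sigma_m^H$ preserves the grading, subtracting the two identities eliminates the grading correction and reduces the theorem to the special case $\tau_m = \sigma_m$, namely
\begin{equation*}
\lim_{m\to\infty}\frac{c_{\sigma_m^H}(\phi',\Theta_m^H) - c_{\sigma_m}(\phi,\Theta_m) + \int_{\pi^{-1}(\Theta_m)} H\,dt}{d_m+1-G} = \int_{\Sigma\times [0,1]} H\,\omega\wedge dt.
\end{equation*}

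For this reduced identity, the Hamiltonian isotopy $\{\phi^t_H\}$ generates a symplectic cobordism $(W,\Omega)$ from $(M_\phi,\omega_\phi)$ to $(M_{\phi'},\omega_{\phi'})$ whose induced PFH continuation map is precisely the canonical isomorphism sending $\sigma_m$ to $\sigma_m^H$. A Stokes-type computation for a pseudoholomorphic curve realizing this continuation expresses the symplectic area relative to the reference cycles as $d_m\int_{\Sigma\times[0,1]} H\,\omega\wedge dt - \int_{\pi^{-1}(\Theta_m)} H\,dt$, plus a remainder whose dependence on $d_m$ is the central quantity to control. Combined with the non-negativity of the holomorphic curve energy, this yields one direction of the required asymptotic; running the same argument for the time-reversed isotopy generating $\phi$ from $\phi'$ yields the matching direction. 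Dividing by $d_m+1-G$ and passing to the limit gives the reduced identity, and combining with the grading reduction step completes the proof.

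The main obstacle is sharpness of the cobordism energy estimate: a naive application of the holomorphic energy inequality produces the correct leading term but with error of order $d_m$, whereas the theorem demands an error of order $o(d_m)$. Closing this gap is expected to require either a careful chopping argument, splitting $H$ into many short-time Hamiltonians and applying a subadditivity-type bound for PFH spectral invariants to each piece, or else passage through a PFH-to-Seiberg-Witten correspondence in which the min-max/max-min interpolation technique developed for the ECH Weyl law in \cite{CGHR15} can be adapted. The twisted setting with arbitrary coefficient ring introduces genuine new subtleties compared with the untwisted ECH case, and handling these while still obtaining an $o(d_m)$ error is likely the most demanding technical step. Throughout, Theorem~\ref{thm:nonvan} is essential: it guarantees the classes $\sigma_m,\tau_m$ exist so that the spectral invariants in the statement are honest real numbers rather than $+\infty$.
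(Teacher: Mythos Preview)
Your reduction step contains a circularity. The identity you assert,
\[
c_\tau(\phi,\Theta_m) - c_\sigma(\phi,\Theta_m) \;=\; \frac{I(\tau)-I(\sigma)}{2(d_m+1-G)} + O(1),
\]
is precisely the $H=0$ case of the theorem (this is Example~\ref{ex:zero}), not a consequence of monotonicity. Monotonicity says only that for $Z\in H_2(M_\phi;\bZ)$ the grading shift $\langle c_\Gamma,Z\rangle$ and the action shift $\langle\omega_\phi,Z\rangle$ are proportional; this relates classes that differ by the $H_2$--action on $\TWPFC$, not arbitrary pairs $\sigma,\tau$. For two generators $(\Theta,W)$ and $(\Theta',W')$ with distinct orbit sets, the offset between action and $\tfrac{1}{2(d_m+1-G)}\cdot(\text{grading})$ is a constant $C(\Theta,\Theta')$ that you have not bounded uniformly in $d_m$, and there is no elementary reason it should be $O(1)$. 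So you cannot reduce to $\tau_m=\sigma_m$ without already knowing a case of the result you are proving.

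For the remaining ``single class'' statement, your diagnosis is accurate but the work is not done. The holomorphic-curve/Stokes argument you sketch gives exactly the Hofer-continuity bound of Proposition~\ref{prop:hoferContinuity1}, which sandwiches $c_{\sigma_m^H}-c_{\sigma_m}+\int_{\Theta_m}H\,dt$ between $d_m\int\min H$ and $d_m\int\max H$; this is $O(d_m)$ away from $d_m\int H\,\omega\wedge dt$, not $o(d_m)$. The paper closes this gap via a genuinely different mechanism: it passes to Seiberg--Witten spectral invariants, runs a continuous family in the deformation parameter $r$ down to the balanced value $r=-2\pi\rho$, shows (Proposition~\ref{prop:maxMinReducibles}) that there the invariant is carried by a \emph{reducible} solution whose action can be computed explicitly, and controls the error accumulated along the way by spectral-flow estimates (Propositions~\ref{prop:spectralFlow1} and \ref{prop:spectralFlow2}) yielding an $O(d_m^{4/5+\epsilon})$ remainder. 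The grading term in the theorem arises from this spectral-flow contribution, not from a separate reduction; the $\sigma_m$ and $\tau_m$ are handled symmetrically and simultaneously. Finally, Theorem~\ref{thm:nonvan} is not used in the proof of the Weyl law itself---the statement already assumes nonzero $\sigma_m,\tau_m$ exist.
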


This gives an essentially complete description of the asymptotic relationship between grading and action on PFH, with any coefficients, at the leading order.  
The proof of Theorem \ref{thm:hutchingsConjectureFullintro} can also yield an asymptotic bound for the subleading term (see  \eqref{eq:hutchingsProof10} and \eqref{eq:hutchingsProof11} below).
Exploring the subleading asymptotics
is an interesting question for further study, but is not the focus of the present work. 

We now present several examples to help the reader get a feel for what this Weyl law is saying.  

\begin{exe}
Let $\phi =id$ be the identity and $\phi' = \phi^1_H$ be the time-$1$ flow of a Hamiltonian on the two-disc that vanishes near the boundary.  We can embed this disc in the northern hemisphere of $\mathbb{S}^2$, extend $H$ by $0$, extend $\phi'$ as the time-$1$ flow of $H$ and extend $\phi$ as the identity.  
Then our Weyl Law recovers the classical Calabi invariant of $\phi'$ via the asymptotics of PFH spectral invariants.    For example, some particular choices\footnote{For these choices, we take $\tau_m = \sigma_m$, with each having grading $d_m$, and choose the reference cycles $\Theta_m$ to be in the Southern Hemisphere; then, the terms involving $I$, the $\int H dt$ term, and the $c_{\sigma_m}$ term all vanish.}
of the data in the Weyl Law lead to the formula
\[ \lim_{m \to \infty} \frac{c_{\tau^H_m}(\phi',\Theta^H_m)}{d_m + 1} = \text{CAL}(\phi'),\]
which is equivalent to a conjecture of Hutchings \cite[Rmk. 1.12]{simplicity20}; the resolution of a special case of this conjecture was a key step in the recent proof in \cite{simplicity20} of the longstanding  ``Simplicity Conjecture".  
\end{exe}

\begin{exe}
More generally, if $\phi'$ is any Hamiltonian diffeomorphism
of a compact surface with non-trivial boundary that is the identity near the boundary, then we can embed this surface in a closed surface and mimic the choices from the previous example.  One expects in this case to be able to recover the Calabi invariant via our PFH spectral invariants and this is planned for future work.  We are guaranteed infinitely many nonzero PFH classes by our Theorem~\ref{thm:nonvan}.  

In general, it is an interesting question to try to recover classical invariants from invariants of a more Floer-homological nature; this can illustrate, for example, that one has a multitude of sensitive invariants.  The above considerations show that this can be done for the Calabi invariant using our Weyl law and our non-vanishing result.  In fact, the Calabi invariant is essentially the only homomorphism out of the group of compactly supported Hamiltonian diffeomorphisms, in the sense that Banyaga has shown that the kernel of Calabi is a simple group \cite{banyaga1978structure, banyaga2013structure}. 
\end{exe}

\begin{exe}
\label{ex:zero}
Let $H = 0$ in Theorem~\ref{thm:hutchingsConjectureFullintro}.  Then $\phi = \phi'$ and all terms in Theorem~\ref{thm:hutchingsConjectureFullintro} involving $H$ vanish.     Theorem~\ref{thm:hutchingsConjectureFullintro} in this case now asserts that PFH classes of approximately equal grading have approximately equal action.   Comparisons between grading and action play a key role in some applications of ECH to dynamics \cite{CGHutchings16, CGHP19, CGHHL21, CGMazzucchelli20}.
\end{exe}

The perspective,
then, is that our theorems are quite general regarding PFH and its quantitative structure.  One expects this addition to 
the current toolkit available for studying area-preserving surface diffeomorphisms to have various implications.  Its cousin, the Weyl law for ``embedded contact homology", has had many applications regarding the study of Reeb vector fields on three-manifolds, see \cite{irie, CGHutchings16, CGHP19, irieEquidistribution, CGHHL21, CGMazzucchelli20}; it has been called ``perhaps the deepest property of ECH" \cite[Sec. 1.2]{CGHP19}.
A first example of an application appears in the work \cite{rpEquidistribution}, which uses the Weyl law and the nonvanishing result to quantitatively refine Corollary \ref{cor:genden} and show a ``generic equidistribution'' theorem for periodic points. 

For the application in the present work, namely the smooth closing lemma, the key philosophical point is that if $U \subset \Sigma$ is any open set and $H$ is supported in $U$, then the Weyl law guarantees that the PFH spectral invariants are sensitive enough to detect $H$.  This is a kind of ``locality" property that forces an abundance of periodic points for generic choice of data.

\subsubsection*{Further properties of PFH spectral invariants}

To actually deduce Theorem~\ref{thm:closingLemma} from the Weyl Law, we prove some additional properties of PFH spectral invariants.  These have been expected at least since the work \cite{simplicity20}, and our proofs are standard extensions of the ideas in \cite{simplicity20}, but we briefly mention these results here since they are likely to be useful more generally.
To elaborate, we show in \S\ref{subsec:pfhSpectralInvariantProperties}  that these PFH spectral invariants satisfy ``spectrality" and ``Hofer continuity" properties. The spectrality axiom guarantees that the spectral invariants are actions of periodic points.  The Hofer continuity property allows one to estimate the difference between spectral invariants in terms of the Hofer norm.
We defer the precise statements to \S\ref{subsec:pfhSpectralInvariantProperties}.

\subsection{Ideas of the proofs}
We now give an overview of the ideas and tools needed to prove our main theorems.  

\subsubsection{SWF spectral invariants and the basic idea behind the proof}

As in the proof of the ECH Weyl law \cite{CGHR15}, a key role is played by Seiberg--Witten invariants.
In the PFH case, this is packaged via a 
beautiful theory due to Hutchings and Lee--Taubes which shows that surface dynamics is related to Seiberg--Witten theory via an isomorphism on Floer homology \cite{LeeTaubes12}.  

The Seiberg--Witten--Floer cohomology is a cohomology theory for smooth $3$--manifolds developed by Kronheimer--Mrowka \cite{monopolesBook}. As with PFH, we will work with a ``twisted'' variant of Seiberg--Witten--Floer cohomology; the $3$--manifold in question will always be the mapping torus of an area-preserving surface diffeomorphism.   As with PFH, we give only an impressionistic summary here, deferring the details to \S\ref{subsec:swf}.  Essentially, the twisted Seiberg--Witten--Floer cohomology is the homology of a chain complex generated by solutions to the ``$r$-perturbed" Seiberg-Witten equations on $M_\phi$, relative to a differential counting solutions to the four-dimensional equations on $\mathbb{R} \times M_\phi$.   Here, $r > 0$ is a positive real number which is a very important parameter in our story.   The data of a reference connection $\fc_\Gamma$ endows the twisted Seiberg--Witten--Floer cohomology 
with a quantitative structure defined by tracking the values of a functional called the \emph{Chern--Simons--Dirac functional}, which is denoted by $\fa_{r, \fg}(-, \fc_\Gamma)$.   The quantitative data is packaged into what we call ``SWF spectral invariants''. These are real numbers 
associated to nonzero classes of the twisted Seiberg--Witten--Floer cohomology.
Similarly to the PFH case, the spectral invariant $c^{\HM}_\sigma(\phi,r; \fc_\Gamma)$ is defined, informally, as the largest value of $r^{-1}\fa_{r, \fg}(-, \fc_\Gamma)$ required to represent $\sigma$ as a cocycle.

Lee--Taubes \cite{LeeTaubes12} established an isomorphism 
$$\TWPFH_* \cong \TWHM^{-*}$$
between twisted PFH and twisted SWF which reverses the relative $\bZ$--gradings.
It stands to reason that there should be a correspondence between the associated spectral invariants of PFH and SWF as well. Indeed, in \S\ref{sec:twistedIso} we show that the following result, stated in full generality in Proposition \ref{prop:spectralInvariantIdentity}, holds.  Given a reference cycle $\Theta$ for PFH, we can choose a one-parameter family of base configurations $\{\fc_{\Gamma}(r)\}_{r > -2\pi\rho}$ which ``concentrate'' around the reference cycle $\Theta$ such that the associated SWF spectral invariants recover the PFH spectral invariants in the limit as $r \to \infty$:
\begin{equation} \label{eq:introSwfRecoversPfh} \lim_{r \to \infty} c_\sigma^{\HM}(\phi, r; \fc_\Gamma(r)) = -\pi c_\sigma(\phi, \Theta).\end{equation}

We can now explain the idea behind the proof of the Weyl law.
We will use the equation (\ref{eq:introSwfRecoversPfh}) together with some continuity properties of the SWF spectral invariants as one varies $r$
to estimate the PFH spectral invariants with a high degree of precision.  
We show that there is a continuous, piecewise-smooth family of SWF spectral invariants $c_\sigma^{\HM}(\phi, r; \fc_{\Gamma})$
for $r \in (-2\pi\rho, \infty)$. For large $r$, \eqref{eq:introSwfRecoversPfh} shows that these approximate the PFH spectral invariants well.
On the other hand, the aforementioned continuity properties as one varies $r$ and some smoothness properties allow us to estimate the large $r$ behavior from the behavior at smaller values of $r$; 
it turns out that at a particular $r$, the spectral invariant is carried by a ``reducible solution", see \S\ref{subsubsec:swEquations}.  Critically, 
the reducible locus can be described rather explicitly.  Putting this all together with some extra work allows us to obtain the estimate on the PFH spectral invariants asserted by our Weyl Law.

\subsubsection{Some more details via challenges and comparisons with previous works}

As mentioned above, Periodic Floer homology has a cousin called Embedded Contact Homology (ECH), and an analogous Weyl law for ECH has been known since $2012$ \cite{CGHR15}, so one might ask why the Weyl law for PFH was not proved around the same time; such a Weyl law has long been expected to have interesting applications.  Indeed, an expert reader might note that, in broad strokes, the proof strategy outlined above is similar to the proof in \cite{CGHR15}.  More precisely, the paper \cite{CGHR15} used a Seiberg--Witten approach for the key upper bound, and it was shown in \cite{WeifengMinMax} that one can in fact recover the entire main theorem of \cite{CGHR15} with Seiberg--Witten theory. 

To shed light on this, and to further explain some important points in the proof, we now explain several main challenges in adapting the Seiberg-Witten approach to the present context, together with their resolution.  The language in this section is necessarily more involved than in our impressionistic overview, and is meant for a more specialist reader; it can be skipped and returned to later if desired.

First of all, the analogue of the action on ECH on the Seiberg--Witten side is the ``energy functional" $\frac{i}{2\pi} \int \lambda \wedge F_B,$ where $\lambda$ is a contact form on a three-manifold and $F_B$ is the curvature of a Seiberg--Witten solution: roughly speaking, this is approximately the action of an orbit set in the isomorphism between $ECH$ and $HM$, see \cite[Prop. 2.6]{CGHR15} for the precise statement.  The analogue of the energy in the Lee--Taubes paper is the functional $\frac{i}{2\pi} \int dt \wedge F_B. $
However, since $dt$ is closed, this is a purely cohomological quantity, in particular an integer, and does not generate interesting spectral invariants.  

Second, 
the ``reducible" locus, mentioned above, 
plays a central role in the proofs of both \cite{CGHR15} and \cite{WeifengMinMax}: roughly speaking, a key idea in both proofs is to reduce to some computation about reducibles that can be carried out explicitly.  However, as written, the Seiberg--Witten equations in Lee--Taubes have no reducible solutions at all. 

Third, the Weyl law for ECH only involves a fixed spin-c structure and the corresponding Seiberg--Witten equations are defined on a fixed bundle.  In the PFH case, however, the Weyl law is stated by taking the degrees of $\Theta_m$ to infinity. So for PFH, one has to study the asymptotic behavior of the Seiberg--Witten equations on a sequence of vector bundles, and prove estimates which are robust against these changes.

To explain our resolution of the first issue, first fix a base connection $B_0$.  Then, by Stokes' theorem, the data contained in the integrand is essentially the same as the integral $\int d \lambda \wedge b$, where $F_B = B_0 + b$.  This suggests a formulation which it turns out adapts to the PFH case: we can think of the pair $(\lambda,d\lambda)$ as a stable Hamiltonian structure in the contact case, and then in analogy study the integrand $\int \omega_\phi \wedge b$
for the stable Hamiltonian structure $(dt,\omega_{\phi})$ on the mapping torus.  This is essentially the approach that we take. 
This energy functional is encoded in the rescaled action $r^{-1}\fa_{r, \fg}(-, \fc_\Gamma)$. 

For the second issue, we consider instead a variant of the Lee--Taubes equations that do admit reducible solutions.  
Even when reducible solutions exist, it is not so clear how relevant they are to the spectral invariants, since 
in principle the Seiberg-Witten spectral invariants could always be carried by irreducibles.  In the ECH case, the relationship was given by an argument essentially due to Taubes in his celebrated proof of the Weinstein conjecture: one compares the spectral flow of reducibles and irreducibles, see \cite[Section $2$]{WeifengMinMax} or \cite{CGSavaleSpectralFlow}.  However, this argument does not work in our case, and we argue in Proposition \ref{prop:maxMinReducibles} via a completely different approach, which interestingly involves considering the completed Seiberg--Witten groups.  

For the third issue, we will endow different spin-c structures with different base connections.  This introduces new analytic difficulties but we show that we can choose the base connections so that there are some uniform bounds given by the degree $d$. This allows us to derive uniform estimates for all spin-c structures.

The above ideas introduce some further complications, but for brevity we stop here.  On a technical level, we should note that the Lee--Taubes' isomorphism \cite{LeeTaubes12} between periodic Floer homology and Seiberg--Witten--Floer cohomology is not quantitative, compared to work in the ECH case, so we have to build a lot of this quantitative structure from the ground up.

As a historical remark, one should also note that, a priori, it is not so clear that such a Weyl law should even exist: for example, it was perhaps conceivable that the contact structure in the ECH case plays a crucial role.  In this regard, key evidence for a Weyl law came from a computation of Hutchings, who computed spectral invariants for rotations of the two-sphere and made conjectures along the lines mentioned here, and \cite{simplicity20}, which established the Weyl law for the large class of ``monotone twist" maps on $S^2$, 

As for applying the Weyl law to prove the closing lemma, the new conceptual difficulty here is to get a strong enough nonvanishing result for PFH: ECH is always nonvanishing, whereas as explained above PFH can essentially vanish.

\subsubsection{The rest of the proofs} 
\label{sec:compare}

It remains to explain the idea behind the nonvanishing theorem (Theorem \ref{thm:nonvan}) and the closing lemma.  The nonvanishing theorem is also proved by consideration of the reducible locus; we use a variant of the ``third root of unity trick" from \cite[Cor. 35.1.3]{monopolesBook} to prove the nonvanishing of a corresponding Seiberg--Witten group, and then apply an algebraic argument to extend the non-vanishing result to the twisted case; the non-vanishing of twisted PFH then follows from the Lee--Taubes isomorphism \cite{LeeTaubes12}. The proof of Theorem \ref{thm:closingLemma}, the closing lemma, uses a now standard argument pioneered by Irie, adapted to our purposes and powered by the previous results: Let $U$ and $V$ be as in the statement of the theorem. We can assume after a small perturbation that $\phi$ is nondegenerate and admits monotone classes.  Then the nonvanishing theorem Theorem~\ref{thm:nonvan} guarantees an abundance of PFH spectral invariants.  If Theorem \ref{thm:closingLemma} is false, then for any small Hamiltonian $H$ supported in $U$, the perturbation $\phi' = \phi \circ \phi^1_H$ will have the exact same set of periodic points as $\phi$ itself. This implies, using properties of PFH spectral invariants,
that the PFH spectral invariants do not change under any such perturbation. However, using the Weyl law Theorem \ref{thm:hutchingsConjectureFullintro}, the difference of the PFH spectral invariants of $\phi$ and $\phi'$ recovers the Calabi invariant of $H$, which yields a contradiction because $\phi$ and $\phi'$ may have different Calabi invariants.

\subsubsection{Comparison with the work of Edtmair-Hutchings} 
In the course of the final preparations of the first version of this article, we became aware of work of Oliver Edtmair and Michael Hutchings \cite{edtmairHutchings}, who, simultaneously and independently of us, use a different method to apply PFH spectral invariants to prove, in certain cases, the smooth closing lemma and a Weyl law.
For the convenience of the reader, we now provide a summary of the differences between the two works, and we also mention some relevant developments that occurred after our work and the work of Edtmair--Hutchings first appeared. 

To be precise, Edtmair--Hutchings introduce a PFH theoretic ``$U$--cycle condition", on Hamiltonian isotopy classes, and they prove that for any Hamiltonian isotopy class satisfying this condition, a $C^{\infty}$ closing lemma holds; they prove that rational isotopy classes on $T^2$ and the unique isotopy class on $S^2$ satisfies this condition.  Note that to prove the $C^{\infty}$ closing lemma for a surface, it suffices to prove it on rational isotopy classes.  We later showed in \cite{CGPPZ21}, building on the  ideas for the nonvanishing result in this work, that the $U$--cycle condition holds for all rational isotopy classes.  

As for a Weyl law, Edtmair--Hutchings prove a Weyl law \cite[Thm. 8.1]{edtmairHutchings} that computes
the asymptotic change in spectral invariant for $\phi$ and $\phi' = \phi \circ \phi^1_H$ associated to a sequence of $U$-cyclic PFH classes; the PFH grading plays no role in their law.
Our Weyl law is more general in that it has no $U$-cycle requirement and applies to arbitrary pairs of sequences of PFH classes, rather than a single sequence, see Example~\ref{ex:zero} for a simple illustration of this distinction.   As first remarked to us by Edtmair, our argument in \cite{CGPPZ21} can be used to show that over some coefficient rings, for example $\mathbb{Z}/2$, every PFH class is $U$--cyclic.  On the other hand, we proved in \cite{CGPPZ21} that over some rings, for example $\mathbb{Z}$, there are many classes which are not $U$--cyclic. 

The work of Edtmair--Hutchings also includes an interesting quantitative result for area-preserving diffeomorphisms in rational isotopy classes,  estimating the time it takes orbits of certain periods to appear in a fixed open set along a Hamiltonian perturbation.  This question is not taken up in the present work, which is focused on the PFH Weyl law and its application to the resolution of Smale's question, at all.  

We also note that some time after the first version of our paper appeared, Edtmair \cite{edtmair_elementary} gave a new holomorphic curve-based approach that recovers the results of Edtmair--Hutchings \cite{edtmairHutchings} for the case of Hamiltonian diffeomorphisms. 

\vspace{3 mm}

\textbf{Acknowledgements: } The authors would like to thank Daniel Pomerleano and Clifford Taubes for various helpful discussions. D.C-G. would like to thank Vincent Humili{\`e}re, Kei Irie, Peter Kronheimer, Nikhil Savale and Sobhan Seyfaddini for useful conversations.  R.P. would like to thank his advisor, Helmut Hofer, as well as Mike Miller Eismeier. B.Z. would like to thank Zhouli Xu for helpful conversations regarding homological algebra.  We also thank Oliver Edtmair and Michael Hutchings for correspondence regarding their work that was discussed above. 

The research of D.C-G. is supported by the NSF under Awards \#1711976 and \#2105471.  Part of this research took place when D. C-G. was supported by the Institute for Advanced Study under a von Neumann fellowship.  He thanks the Institute for this support.  Very useful conversations with Humili{\`e}re and Seyfaddini occurred when D. C-G. was an ``FSMP Distinguished Professor” at the
Institut Math{\'e}matiques de Jussieu-Paris Rive Gauche (IMJ-PRG) and he thanks the Fondation Sciences Math{\'e}matiques de Paris for their support as well.  The research of R.P. is supported by the NSF (Graduate Research Fellowship) under Award \#DGE-1656466.

\section{Preliminaries} \label{sec:prelim}

We now review the relevant background and notation that we will need for our proofs.

\subsection{Area-preserving and Hamiltonian diffeomorphisms of surfaces} \label{subsec:areaPreservingMaps}

Here we collect some basic definitions and fix notation regarding area-preserving and Hamiltonian diffeomorphisms of surfaces. 

Let $(\Sigma,\omega)$ denote a closed, oriented surface equipped with an area form $\omega$.
We let $A_\Sigma = \int_\Sigma \omega$ 
denote the area of $(\Sigma,\omega)$; 
we let $G$ denote its genus.
Let  $\text{Ham}(\Sigma, \omega)$ 
denote the subgroup of \textbf{Hamiltonian diffeomorphisms}: these are the time-$1$ maps of the time-dependent vector field 
$X_{H_t}$ defined uniquely at each $t \in \mathbb{R}$ by the equation $\omega(X_{H_t}, -) = -dH_t.$
Here, $H \in C_c^\infty((0,1)\times \Sigma)$; in other words, $H=0$ near $t = 0$ and $t = 1$.  
For brevity, we will often drop the time index and write the Hamiltonian vector field as $X_H$.  The flow of $X_H$ is written as $\{\phi^s_H\}_{s \in \mathbb{R}}$. We say that two diffeomorphisms $\phi$ and $\phi'$ in $\text{Diff}(\Sigma, \omega)$ are \textbf{Hamiltonian isotopic} if there is a Hamiltonian $H \in C^{\infty}_c( (0,1) \times \Sigma )$ such that $\phi' = \phi \circ \phi^1_H$. 

We will write 
$ \widetilde{\Ham}(\Sigma, \omega)$
for the space of smooth paths of Hamiltonian diffeomorphisms
starting at the identity map.
It tends to be convenient to require that the paths are constant near the endpoints, and we will always assume this unless otherwise stated; this can always be achieved by a reparametrization. 
It is a standard fact that any such isotopy is generated by a Hamiltonian $H \in C^{\infty}_c( (0,1) \times \Sigma )$.  
The quotient of $\widetilde{\Ham}(\Sigma, \omega)$ by homotopy of paths relative to endpoints yields the universal cover
$\overline{\Ham}(\Sigma, \omega)$
of ${\Ham}(\Sigma, \omega)$.

There is a natural norm on $\Ham(\Sigma,\omega_{\Sigma})$, called the {\bf Hofer norm}, that is in the background of much of what we do here.  In the present work, we only need to recall the Hofer norm for Hamiltonians: for any $H \in C^{\infty}(\mathbb{R}/\mathbb{Z} \times \Sigma)$, we define 
$\|H\|_{1,\infty} := \int^1_0 \text{max}(H(t,\cdot)) - \text{min}(H(t,\cdot)) dt.$
This can be used to define a norm on $\Ham(\Sigma,\omega_{\Sigma})$, but we do not need this here. There is also an important real-valued invariant of Hamiltonian diffeomorphisms featured in this work. 
Fix any union $\bD \subset \Sigma$ of embedded disks and fix any Hamiltonian $H \in C^{\infty}_c( (0,1) \times \Sigma )$, we define the \textbf{Calabi invariant} of $H$ as $\text{CAL}(H) = \int_0^1 \big( \int_{\bD} H \omega \big) dt.$
Unpublished work of Fathi gives a geometric interpretation of $\text{CAL}$ as the ``average rotation" of the Hamiltonian diffeomorphism associated to $H$, see \cite[Section $2.1$]{Ghys07}.
 It turns out \cite{Calabi70, McduffSalamonBook} that the Calabi invariant only depends on the time-one map $\phi^1_H \in \Diff_c(\Sigma, \omega, \bD)$ of such $H$
 and therefore descends to a homomorphism on the group of area-preserving diffeomorphisms that are compactly supported in $\bD$.

\subsection{Periodic Floer homology} \label{subsec:pfh}

We now give a brief overview of periodic Floer homology (PFH), filling in the details that we omitted in the introduction. We also define PFH spectral invariants, following the approach of \cite{simplicity20}. 

\subsubsection{Stable Hamiltonian structures}
\label{subsec_stable_hamiltonia_structure}
Let $M$ be an oriented three-manifold. A \textbf{stable Hamiltonian structure} on $M$ is a pair $(\lambda, \omega)$ of a one-form $\lambda$ and a closed two-form $\omega$ such that $\lambda \wedge \omega > 0$ and $\ker(\omega) \subset \ker(d\lambda)$.
A stable Hamiltonian structure determines a two-plane field
$\xi = \ker(\lambda)$
and a \textbf{Reeb vector field} $R$, which is the unique vector field such that
$\omega(R, -) \equiv 0, \lambda(R) \equiv 1.$  An \textbf{embedded Reeb orbit} $\gamma$ is a closed leaf of the one-dimensional foliation spanned by $R$. A general \textbf{Reeb orbit} is a pair $(\gamma, m)$ of an embedded Reeb orbit $\gamma$ and a number $m \in \mathbb{N}$, called its \textbf{multiplicity}. 
 A \textbf{parameterization} of a Reeb orbit $(\gamma, m)$ is a smooth map 
$f: \mathbb{R}/mT\mathbb{Z} \to M$
such that $f$ is an $m$-fold cover of $\gamma \subset M$ and $\dot f(t) = R(f(t))$ for all $t \in \mathbb{R}/mT\mathbb{Z}$.  
A parametrized Reeb orbit $f: \mathbb{R}/mT\mathbb{Z} \to M$ is \textbf{nondegenerate} if $1$ is not an eigenvalue of the linearized Reeb flow; this does not depend on the choice of the parameterization. 

Given a stable Hamiltonian structure $(\lambda, \omega)$ on $M$,  let $\cJ(\lambda, \omega)$ denote the space of smooth vector bundle endomorphisms
$J: \xi \to \xi$
with square $-1$ such that $\omega(v, Jv) > 0$ for all nonzero $v \in \xi$. The space $\cJ(\lambda, \omega)$ is nonempty and contractible.
An almost-complex structure $\oJ$ on $\mathbb{R}_s \times M$ is \textbf{admissible} with respect to $\omega$ if it is invariant under translation in the $\mathbb{R}$ direction, sends $\partial_s$ to $R$, preserves $\xi$, and  $\omega(v, \oJ v) > 0$ for all nonzero $v \in \xi$.  
It is evident that admissible almost-complex structures 
are in bijection with elements of $\cJ(\lambda, \omega)$ via restrictions to $\xi$, and we will use this to sometimes refer to an element $J \in \cJ(\lambda, \omega)$ as an admissible almost-complex structure.

\subsubsection{Definition of periodic Floer homology} \label{subsec:pfhDefinition}

Let $(\Sigma, \omega)$ be a closed, connected surface equipped with an area form. Let $\phi \in \Diff(\Sigma, \omega)$ be an area-preserving diffeomorphism. 
 Recall that the area form $\omega$ pulls back to a closed 2-form on the mapping torus $M_\phi$, which is denoted by $\omega_\phi$. Let $t$ be the coordinate for the interval component of $[0,1] \times \Sigma$. Then $dt$ pushes forward to a smooth 1-form on $M_\phi$. The pair $(dt,\omega_\phi)$ forms a stable Hamiltonian structure on $M_\phi$ and we will denote the Reeb vector field by $R$. The associated two-plane bundle $\ker(dt)$ is equal to the vertical tangent bundle of the fibration $M_\phi \to S^1$, which we denote by $V$.  
 Note that periodic orbits of $R$ coincide with periodic points of the surface diffeomorphism $\phi$.  A periodic point $p \in \Sigma$ of $\phi$, of period $k$ is \textbf{nondegenerate} if the derivative of $\phi^k$ at $p$ does not have $1$ as an eigenvalue.  We say $\phi$ is \textbf{$d$--nondegenerate} if all periodic points of period at most $d$ are nondegenerate. We say $\phi$ is \textbf{nondegenerate} if $\phi$ is $d$--nondegenerate for all $d$. It is a basic fact that the set of $d$--nondegenerate diffeomorphisms in $\Diff(\Sigma, \omega)$ is open and dense with respect to the $C^\infty$ topology. 
Now take some nonzero homology class $\Gamma \in H_1(M_\phi; \bZ)$. Recall from the introduction that the \textbf{degree} of $\Gamma$ is defined to be the degree of the image of $\Gamma$ via the projection $M_\phi \to S^1$. We assume that the degree of $\Gamma$ is positive and greater than the genus $G$ of the surface $\Sigma$. The main results of this paper only use PFH for $\Gamma$ of high degree, so this is not a restrictive assumption. It is made to rule out certain types of holomorphic curve bubbling, see \cite[Theorem $1.8$]{HutchingsECH02} and the surrounding discussion. 

The \textbf{PFH generators} are
 finite sets
$\Theta = \{(\gamma_i, m_i)\}$
of pairs of embedded Reeb orbits $\gamma_i$ and multiplicities $m_i \in \mathbb{N}$ which satisfy the following three conditions: (1) the orbits $\gamma_i$ are distinct, (2) the multiplicity $m_i$ is $1$ whenever $\gamma_i$ is a hyperbolic orbit, (3) $\sum_i m_i[\gamma_i] = \Gamma$. 
The chain complex $\PFC_*(\phi, \Gamma, J)$ is the free module generated by the set of all PFH generators. For the proof of the closing lemma, it suffices to take the coefficient ring to be $\bZ/2$, but the Weyl law in Theorem \ref{thm:hutchingsConjectureFullintro} holds for general coefficient rings. From now on, we will fix an arbitrary coefficient ring and omit it from the notation unless otherwise specified. 
 
The differential on $\PFC_*(\phi, \Gamma, J)$ counts ``ECH index 1" curves in $\mathbb{R} \times M_\phi$. More precisely,  define 
$\mathbf{M}_1(\Theta_+, \Theta_-; J)$
to be the space of $J$-holomorphic currents $C$ in $\mathbb{R}_s \times M_\phi$, modulo translation in the $\mathbb{R}$-direction with ECH index $I(\Theta_+, \Theta_-, [C]) = 1$, which are asymptotic to $\Theta_\pm$ as the $\mathbb{R}$-coordinate $s$ limits to $\pm \infty$; we refer the reader to \cite[\S$3.4$]{ECHNotes} for the definition of the ECH index.  There is a generic subset 
$ \cJ^\circ(dt, \omega_\phi) \subset \cJ(dt, \omega_\phi) $
of admissible almost-complex structures such that for any $J \in \cJ^\circ(dt, \omega_\phi)$ and any pair of PFH generators $\Theta_{\pm}$, the space
$\mathbf{M}_{1}(\Theta_+, \Theta_-; J)$
is a (possibly empty) compact oriented  $0$-dimensional manifold and we define the PFH differential by the rule
$\langle \partial\Theta_+, \Theta_-\rangle = \#\mathbf{M}_{1}(\Theta_+, \Theta_-; J)$,
where $\#$ denotes the cardinality counted with signs.  The set $\cJ^\circ(dt, \omega_\phi)$ can be chosen so that $\partial^2 = 0$ (see \cite{HutchingsTaubes07}, \cite{HutchingsTaubes09}). It follows that, for $\phi$ nondegenerate, $\Gamma$ negative monotone of degree greater than $G$, and $J \in \cJ^\circ(dt, \omega_\phi)$, the periodic Floer homology 
$\PFH_*(\phi, \Gamma, J)$
is well-defined; it has a relative $\bZ/\ell$--grading induced by the ECH index, where $\ell$ is the divisibility of $c_\Gamma$ defined by equation \eqref{eq:cGamma} regarded as an element of $H^2(M_\phi;\bZ)$. 

To actually define spectral invariants, we need to instead work with the variant \textbf{twisted periodic Floer homology}, as defined by Hutchings and as introduced in \cite{simplicity20} in the case where $M_\phi = S^1 \times S^2$. Define a \textbf{(twisted) PFH parameter set} 
$\mathbf{S} = (\phi, \Theta_{\text{ref}}, J)$
to be a tuple consisting of the following terms. 
The term $\phi$ in $\mathbf{S}$ is a $d$-nondegenerate area-preserving diffeomorphism of $(\Sigma, \omega)$.  The term $J$ is an admissble almost-complex structure. The term $\Theta_{\text{ref}}$ refers to a choice of \textbf{trivialized reference cycle}. This is itself the datum of two objects. The first is a set of embedded loops with multiplicity $\Theta_{\text{ref}}$ in $M_\phi$, such that the class $[\Theta_{\text{ref}}] \in H_1(M_\phi; \mathbb{Z})$ is negative monotone with degree greater than $G$. The second is a homotopy class of symplectic trivializations of the bundle $V$ over $\Theta_{\text{ref}}$, which we suppress from the notation. It will also be useful in our discussion of the Lee--Taubes isomorphism in \S\ref{sec:twistedIso} to require that $\Theta_{\text{ref}}$ 
is \textbf{separated}, which means that the the embedded loops in $\Theta_{\text{ref}}$ are pairwise disjoint and have multiplicity $1$, $\Theta_{\text{ref}}$ is transverse to $V$, and  $\Theta_{\text{ref}}$ is disjoint from the union of all embedded Reeb orbits of degree less than or equal to $d$ in the mapping torus $M_\phi$.
It is always possible to find separated reference cycles for every positive degree $d$ such that $\phi$ is $d$--nondegenerate.

The \textbf{twisted PFH}, denoted by $\TWPFH_*(\mathbf{S}) = \TWPFH(\phi, \Theta_{\text{ref}}, J)$,
is the homology of the chain complex $\TWPFC_*(\mathbf{S}),$
which we now define.  The complex $\TWPFC_*(\phi, \Theta_{\text{ref}}, J)$ is the free module generated by formal sums of pairs $(\Theta, W)$, where $\Theta$ is a generator of the untwisted complex $\PFC_*(\phi, [\Theta_{\text{ref}}], J)$ and $W$ is an element of the relative homology group $H_2(M_\phi, \Theta, \Theta_{\text{ref}}; \bZ)$. 
The differential is defined as follows. Let $\Theta_-$ and $\Theta_+$ be two PFH generators, and let $C$ be a $J$--holomorphic current in $\bR \times M_\phi$ asymptotic to $\Theta_\pm$ as $s \to \pm \infty$.  The projection of $C$ to $M_\phi$ determines a relative homology class $[C] \in H_2(M_\phi, \Theta_+, \Theta_-; \bZ)$. 
For any class $W \in H_2(M_\phi, \Theta_+, \Theta_-; \bZ)$, define 
$\mathbf{M}_1(\Theta_+, \Theta_-, W; J) \subset \mathbf{M}_1(\Theta_+, \Theta_-; J)$
to be the space of such currents $C$ such that $[C] = W$. 
We define the twisted PFH differential $\widetilde{\partial}$ by setting
$$\langle \widetilde{\partial}(\Theta_+, W_+), (\Theta_-, W_-) \rangle = \#\mathbf{M}_1(\Theta_+, \Theta_-, W_+ - W_-; J).$$
 The arguments of \cite{HutchingsTaubes07, HutchingsTaubes09} apply in this setting as well to show that $\widetilde{\partial}^2 = 0$.   Note that the twisted PFH chain complexes for two distinct choices of trivialized reference cycles $\Theta_{\text{ref}}$, $\Theta_{\text{ref}}'$ are (non-canonically) isomorphic via addition of a class $W' \in H_2(M_\phi, \Theta_{\text{ref}}', \Theta_{\text{ref}}; \mathbb{Z})$. The ECH index induces an absolute $\mathbb{Z}$-grading on $\TWPFC_*(\mathbf{S})$ which we will denote by $I$, see \cite[\S$3.3$]{simplicity20}.

\subsubsection{PFH spectral invariants} \label{subsubsec:pfhSpectralInvariantsDefn}

Fix a PFH parameter set
$\bfS = (\phi, \Theta_{\text{ref}}, J).$
The complex 
$\TWPFC_*(\bfS)$
admits a natural filtration by the \textbf{PFH action functional} defined by
$\bfA(\Theta, W) = \langle \omega_\phi, W \rangle.$
We define the submodule 
$\TWPFC_*^L(\bfS) = \TWPFC_*^L(\phi, \Theta_{\text{ref}}, J)$
of $\TWPFC_*(\bfS)$ to be the submodule generated by the twisted PFH generators $(\Theta, W)$ satisfying
$\bfA(\Theta, W) \leq L.$ Since the restriction of $\omega$ on a $J$--holomorphic curve for admissible $J$ is always pointwise non-negative, $\TWPFC_*^L(\phi, \Theta_{\text{ref}}, J)$ is a subcomplex of $\TWPFC_*(\phi, \Theta_{\text{ref}}, J)$.  We write
$\TWPFH_*^L(\bfS) = \TWPFH_*^L(\phi, \Theta_{\text{ref}}, J)$
to denote the homology of the complex $\TWPFC_*^L(\bfS)$. The inclusion
$\TWPFC_*^L(\bfS) \hookrightarrow \TWPFC_*(\bfS)$
induces a map
$\iota_{\TWPFH}^L(\bfS): \TWPFH_*^L(\bfS) \to \TWPFH_*(\bfS).$  

Let $\sigma \in \TWPFH_*(\bfS)$ be a non-zero homology class. The \textbf{PFH spectral invariant} of $\sigma$, denoted by 
$c_\sigma(\bfS) = c_\sigma(\phi, \Theta_{\text{ref}}, J)$,
is the infimum of all $L$ such that $\sigma$ lies in the image of the map $\iota_{\TWPFH}^L(\bfS)$. 

When $\Gamma$ is monotone, at a given grading, there are only finitely many possible values for the PFH action functional on the generators. Therefore, the PFH spectral invariant of every non-zero class is finite.

When $\phi$ is degenerate, we can still define spectral invariants.  We will state the definition here, but defer some details to later.   Consider a tuple $\bfS = (\phi, \Theta_{\text{ref}}, J)$, where the elements of $\bfS$ are as in the definition of a PFH parameter set, except we no longer require any nondegeneracy from $\phi$.  We define 
\begin{equation}
\label{eqn:defdeg}
c_\sigma(\bfS) := \lim_{n  \to \infty} c_\sigma(\phi \circ \phi^1_{H_n}, \Theta_{\text{ref}}^{H_n}, J^{H_n}),
\end{equation}
where the $\phi \circ \phi^1_{H_n}$ are $d$-nondegenerate,  and $\|H_n\|_{1,\infty} \to 0$.  Here, $\| \cdot \|_{1,\infty}$ denotes the Hofer norm, and in referring to the same class $\sigma$ with respect to the different parameter sets $(\phi \circ \psi^1_{H_n}, \Theta_{\text{ref}}^{H_n}, J^{H_n})$, we are using the identification of the corresponding twisted PFH groups via the canonical cobordism maps that we defer to \S\ref{subsec:pfhCobordismMaps}.  We also defer the proof that this is well-defined to Proposition \ref{prop:hoferContinuity1}.

Later we will show that the PFH spectral invariants 
$c_\sigma(\bfS) = c_\sigma(\phi, \Theta_{\text{ref}}, J)$
 are ``independent of $J$'' in a suitable sense. This requires discussing the canonical identification of twisted PFH complexes with distinct almost-complex structures, and so we will defer this to \S\ref{subsec:pfhSpectralInvariantProperties}.  However, note that the PFH spectral invariants are certainly dependent on the choice of trivialized reference cycle $\Theta_{\text{ref}}$.  
 
\subsubsection{Spectrality}
\label{subsubsec_spectrality}
 A fundamental property of PFH spectral invariants is \emph{spectrality}: the PFH spectral invariant is equal to the action of some PFH generator $(\Theta, W)$. 
 Fix a PFH parameter set $\bfS = (\phi, \Theta_{\text{ref}}, J),$ and fix a non-zero class $\sigma \in \TWPFH_*(\bfS)$. 
 	
 	\begin{prop}
 		\label{prop:pfhSpectrality}
 		There is a twisted PFH generator $(\Theta, W)$ such that
 		$\bfA(\Theta, W) = c_{\sigma}(\bfS).$
 	\end{prop}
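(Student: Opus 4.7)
The plan is to reduce to the observation that, under the monotonicity assumption on $[\Theta_{\text{ref}}]$, the PFH action functional takes only finitely many values on generators of any fixed ECH-index grading, so the infimum defining $c_\sigma(\bfS)$ is automatically attained. First I would reduce to the case that $\sigma$ is homogeneous with respect to the absolute $\bZ$-grading $I$: because a chain $z = \sum_k z_k$ lies in $\TWPFC^L_*$ iff each homogeneous piece $z_k$ does, one has $c_\sigma(\bfS) = \max_{k:\, \sigma_k \ne 0} c_{\sigma_k}(\bfS)$, and a generator realizing the appropriate $c_{\sigma_k}(\bfS)$ realizes $c_\sigma(\bfS)$. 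So I fix a homogeneous $\sigma$ of grading $I_0$.

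The main step is to prove finiteness of
\[ \mathcal{S}_{I_0} := \{\, \bfA(\Theta, W) : (\Theta, W) \text{ is a twisted PFH generator of grading } I_0 \,\} \subset \bR. \]
This has two ingredients. First, the $d$-nondegeneracy of $\phi$ together with the degree constraint $\sum_i m_i \deg(\gamma_i) = d$ bounds both the number of embedded orbits and their multiplicities that can appear in any orbit set $\Theta$ with $\sum_i m_i [\gamma_i] = [\Theta_{\text{ref}}]$, so there are only finitely many such $\Theta$. Second, for a fixed $\Theta$, any two liftings $(\Theta, W)$ and $(\Theta, W')$ of grading $I_0$ differ by a class $A \in H_2(M_\phi; \bZ)$ with $\langle c_\Gamma, A \rangle = 0$; the monotonicity relation $c_\Gamma = -\rho[\omega_\phi]$ with $\rho \ne 0$ (guaranteed by the negative monotonicity of $[\Theta_{\text{ref}}]$) then forces $\langle [\omega_\phi], A \rangle = 0$, so $\bfA(\Theta, W) = \bfA(\Theta, W')$. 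Together these give that $\mathcal{S}_{I_0}$ contains at most one value per orbit set $\Theta$, hence is finite.

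Granted the discreteness of $\mathcal{S}_{I_0}$, the restriction of the filtration $\TWPFC^L$ to grading $I_0$ is piecewise constant in $L$, jumping only at values in $\mathcal{S}_{I_0}$, and so is $\iota_{\TWPFH}^L$. If $c_\sigma(\bfS)$ did not lie in $\mathcal{S}_{I_0}$, I could find $\varepsilon > 0$ so that $(c_\sigma(\bfS) - \varepsilon, c_\sigma(\bfS) + \varepsilon) \cap \mathcal{S}_{I_0} = \emptyset$; on this interval the map $\iota_{\TWPFH}^L$ in grading $I_0$ is constant, so $\sigma \in \mathrm{image}(\iota_{\TWPFH}^L)$ for some $L \in (c_\sigma(\bfS), c_\sigma(\bfS) + \varepsilon)$ would imply $\sigma \in \mathrm{image}(\iota_{\TWPFH}^{L'})$ for some $L' < c_\sigma(\bfS)$, contradicting the definition of the infimum. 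Hence $c_\sigma(\bfS) \in \mathcal{S}_{I_0}$, which is exactly the statement of the proposition. The only real obstacle in this plan is the finiteness of $\mathcal{S}_{I_0}$, where both the nondegeneracy of $\phi$ and the negative monotonicity of the reference cycle are essential; everything else is formal.
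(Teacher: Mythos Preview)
Your argument is correct for the nondegenerate case, but it takes a different route from the paper. The paper does not reduce to homogeneous classes or invoke the grading at all. Instead it argues directly: pick generators $(\Theta_k, W_k)$ whose actions converge to $c_\sigma(\bfS)$; since $\phi$ is $d$-nondegenerate there are only finitely many orbit sets of degree $d$, so after passing to a subsequence $\Theta_k \equiv \Theta$; then monotonicity implies the image of $\int_{(\cdot)}\omega_\phi : H_2(M_\phi;\bZ) \to \bR$ is discrete, so the actions $\int_{W_k}\omega_\phi$ lie in a discrete coset and a convergent sequence there is eventually constant. This is shorter and uses only discreteness of the action spectrum, never the ECH index formula or the relation $c_\Gamma = -\rho[\omega_\phi]$ beyond discreteness. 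Your approach, by contrast, proves the sharper intermediate fact that the action spectrum at a fixed grading is \emph{finite}, which is nice to know but costs you the reduction to homogeneous $\sigma$ and the index-change computation $I(\Theta,W) - I(\Theta,W') = \langle c_\Gamma, W-W'\rangle$.

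One point worth noting: the paper's proof also includes a short paragraph for degenerate $\phi$, handled by nondegenerate approximation via $\phi \circ \phi^1_{H_n}$ with $\|H_n\|_{1,\infty}\to 0$. Your argument does not address this, and indeed your finiteness of $\mathcal{S}_{I_0}$ would break down without nondegeneracy. Strictly speaking the proposition is stated for a PFH parameter set (hence $d$-nondegenerate $\phi$), so you have proved what is asked; but the paper later applies spectrality to possibly degenerate maps in the closing lemma, which is why the extra paragraph is there.
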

 	
 	\begin{proof}
 		Suppose first that $\phi$ is nondegenerate. By definition of the spectral invariant, there is a sequence $(\Theta_k, W_k)$ of twisted PFH generators such that
 		$\lim_{k \to \infty} \int_{W_k} \omega_\phi = c_{\sigma}(\bfS)$. 
 		
 		Note that the orbit sets $\Theta_k$ all have the same degree $d$. Since $\phi$ is nondegenerate, there are finitely many orbit sets of degree $d$. Therefore, after passing to a subsequence we can assume that the orbit sets $\Theta_k$ are all equal to a single orbit set $\Theta$. 
 		
 		The monotonicity assumption implies that the image of the map
 		$\int_{(\cdot)} \omega_\phi: H_2(M; \mathbb{Z}) \to \mathbb{R}$ is discrete. The space $H_2(M, \Theta, \Theta_{\text{ref}}; \bZ)$ is an affine space over $H_2(M; \bZ)$.
 		It follows that the sequence
 		$\{\int_{W_k} \omega_\phi\}_{k \geq 1}$
 		lies in a discrete subset of $\bR$. Since it also converges to $c_{\sigma}(\bfS)$, for sufficiently large $k$ we must have
 		$\int_{W_k}\omega_\phi = c_{\sigma}(\bfS).$
 		
 		This concludes the proof of the proposition in the case where $\phi$ is nondegenerate. In the case where $\phi$ is degenerate, the proposition follows by taking nondegenerate approximations $\phi \circ \phi^1_{H_n}$ with $\|H_n\|_{1, \infty} \to 0$. 
 	\end{proof}
 
 \subsubsection{$(\delta, d)$-approximations} \label{subsubsec:dFlatApproximations}

Fix a pair $(\phi, J)$ of a diffeomorphism $\phi \in \Diff(\Sigma, \omega)$ and an admissible almost-complex structure $J \in \cJ^\circ(dt, \omega_\phi)$.  The Lee--Taubes isomorphism \cite{LeeTaubes12} requires taking a ``$(\delta, d)$''--approximation to $(\phi, J)$, as introduced in \cite{LeeTaubes12}; the idea of a $(\delta,d)$--approximation is that the local dynamics for such an approximation around periodic points of period less than $d$ have a particularly nice form; we defer the definition of the local form to equation \eqref{eqn:normalform} below.

We will need to know that we may pass to a $(\delta,d)$-approximation without changing the spectral invariants by too much; this should be expected since the approximation $\phi_*$ can be made arbitrarily close to $\phi$. To make this precise, let $(\phi,J)$ be given, and let $(\phi_*,J_*)$ be our $(\delta,d)$-approximation. Then there will be a Hamiltonian $H^1 \in C^{\infty}_c( (0,1) \times \Sigma)$ that generates an isotopy from $\phi$ to $\wt\phi_*(1)$. The construction of $H^1$ is given in \eqref{eqn:defnhfamily} in \S\ref{subsec:deltaDEstimates} below.
The following proposition is the key estimate we need.

\begin{prop}
\label{prop:dFlatSpectralInvariants} 
Fix PFH parameter sets
$\bfS = (\phi, \Theta_{\text{ref}}, J)$
and
$\bfS_* = (\phi_*, \Theta_{\text{ref}}^{H^1}, J_*^{H_1}).$
where the $(\delta, d)$--approximation $(\phi_*, J_*)$ and the Hamiltonian $H^1$ are as above.

If $[\Theta_{\text{ref}}] \in H_1(M_\phi; \bZ)$ has degree less than or equal to $d$, and $\sigma \in \TWPFH_*(\bfS)$ is a homology class with corresponding class $\sigma_* \in \TWPFH_*(\bfS_*)$ under the isomorphism induced by the canonical bijection of twisted PFH generators, then there is a constant $\kappa_{\ref{prop:dFlatSpectralInvariants}} \geq 1$ depending only on $\phi$, $J$, $d$, $\Theta_{\text{ref}}$ and the metric $g$ such that
$|c_{\sigma}(\bfS) - c_{\sigma_*}(\bfS_*)| \leq \kappa_{\ref{prop:dFlatSpectralInvariants}}\delta.$
\end{prop}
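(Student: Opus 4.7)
The plan is to view $\bfS_*$ as a Hamiltonian perturbation of $\bfS$ generated by the small Hamiltonian $H^1$, and then invoke the Hofer continuity of PFH spectral invariants. The PFH spectral invariants depend on the Hamiltonian isotopy class of $(\phi, \Theta_{\text{ref}})$, and this dependence is Lipschitz in the Hofer norm of the generating Hamiltonian, up to a correction term of the form $\int_{\pi^{-1}(\Theta_{\text{ref}})} H \, dt$. This Hofer continuity property is exactly what will be established in \S\ref{subsec:pfhSpectralInvariantProperties} (cf.\ Proposition \ref{prop:hoferContinuity1}), and the present proposition becomes a direct corollary once one bounds $\|H^1\|_{1,\infty}$ and the correction integral by a constant multiple of $\delta$.

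The first step is to unpack the construction of $(\phi_*, H^1)$ from \S\ref{subsec:deltaDEstimates}. By design, a $(\delta,d)$-approximation $\phi_*$ differs from $\phi$ only inside arbitrarily small neighborhoods of the periodic points of period at most $d$, where $\phi$ is replaced by the normal form \eqref{eqn:normalform}; outside these neighborhoods $\phi_* = \phi$. The Hamiltonian $H^1$ defined in \eqref{eqn:defnhfamily} is tailor-made to generate the isotopy from $\phi$ to $\phi_*$, and a direct inspection of its construction shows that its $C^0$-norm, and hence its Hofer norm $\|H^1\|_{1,\infty}$, is bounded by $\kappa_1 \delta$, where $\kappa_1$ depends only on $\phi$, $d$, $J$, and the metric $g$. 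Intuitively this is built into the $(\delta,d)$-approximation scheme: the normal form perturbs $\phi$ by $C^0$-size $\delta$ in the chosen local coordinates, so the generating Hamiltonian inherits the same size.

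The second step exploits the separation hypothesis on $\Theta_{\text{ref}}$, which requires $\Theta_{\text{ref}}$ to be disjoint from the union of embedded Reeb orbits of period at most $d$. Since $H^1$ is supported in arbitrarily small neighborhoods of exactly these orbits, we may shrink the neighborhoods in the construction so that $\operatorname{supp}(H^1) \cap \pi^{-1}(\Theta_{\text{ref}}) = \emptyset$, forcing the correction term to vanish:
$$\int_{\pi^{-1}(\Theta_{\text{ref}})} H^1 \, dt = 0.$$
Combining this with the bound from Step 1 and the Hofer continuity of spectral invariants yields the estimate $|c_\sigma(\bfS) - c_{\sigma_*}(\bfS_*)| \leq \kappa_{\ref{prop:dFlatSpectralInvariants}} \delta$, where the new constant absorbs both $\kappa_1$ and the Hofer-continuity constant (which depends on $d$ and $\Theta_{\text{ref}}$).

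The main obstacle is the linear-in-$\delta$ Hofer norm bound for $H^1$ in Step 1: this is where the explicit $(\delta,d)$-approximation scheme has to be exploited, via a local computation in a neighborhood of each short periodic orbit using the normal form \eqref{eqn:normalform}. Once that is in hand, the separation of $\Theta_{\text{ref}}$ from the short orbits is immediate from the definition, and Hofer continuity of PFH spectral invariants combines the two ingredients into the stated bound.
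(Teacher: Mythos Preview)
Your approach via Hofer continuity is genuinely different from the paper's, which is entirely chain-level and avoids the Seiberg--Witten cobordism machinery altogether. The paper's argument (in \S\ref{subsubsec:pfhDeltaD}) rests on two facts established in \S\ref{subsubsec:deltaDDetails}: first, the canonical bijection of generators $(\Theta,W)\mapsto (M_{H^1}(\Theta),(M_{H^1})_*(W))$ is a \emph{chain isomorphism} $\TWPFC_*(\bfS)\to\TWPFC_*(\bfS_*)$, because Step~5 there records a bijection of ECH-index-$1$ moduli spaces; second, a direct Stokes-theorem computation (Proposition~\ref{prop:pfhActionApproximationInvariance}) shows this chain isomorphism shifts the action functional by at most $\kappa\delta$. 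The spectral-invariant bound is then immediate from the definition. Your bound $\|H^1\|_{C^0}\lesssim\delta$ is correct and is exactly Lemma~\ref{lem:dFlatBounds}, but the paper uses it inside the action computation, not via Hofer continuity.

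There is a gap in your route that you should be aware of. Proposition~\ref{prop:hoferContinuity1} compares spectral invariants where the class $\sigma$ is transported via the \emph{PFH cobordism map} of \S\ref{subsec:pfhCobordismMaps}, which is built from the Lee--Taubes isomorphism and Seiberg--Witten continuation. The proposition you are proving, however, explicitly specifies that $\sigma_*$ is obtained from $\sigma$ via the \emph{canonical bijection of generators}. You have not argued that these two identifications of $\TWPFH_*(\bfS)$ with $\TWPFH_*(\bfS_*)$ agree on homology. They do---one can extract this by unwinding Remark~\ref{rem:pfhToSwfCanonical} together with the contractibility of the space of SW continuation data---but it is not automatic, and without it Hofer continuity bounds the wrong quantity. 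The paper's chain-level route sidesteps this issue entirely and is more elementary: it uses only the holomorphic-curve bijection from the $(\delta,d)$-approximation scheme and a one-line integration by parts, with no appeal to Seiberg--Witten theory or Chen's pseudoholomorphic curve axiom. (A minor point: the proposition as stated does not assume $\Theta_{\text{ref}}$ is separated, so your Step~2 is not needed; the correction integral is $O(\delta)$ regardless by Lemma~\ref{lem:dFlatBounds}.)
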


 We defer the proof, which is technical, to \S\ref{subsec:deltaDEstimates}.
 
 \subsection{Seiberg--Witten Floer cohomology} \label{subsec:swf}
 
In this section we collect various definitions, notation, and results regarding Kronheimer--Mrowka's theory of Seiberg--Witten--Floer cohomology.   

\subsubsection{Data for the equations } \label{subsubsec:swSetup}

The Seiberg--Witten cohomology requires a choice of metric and spin-c structure.  We now explain how the natural structures on $M_\phi$ give rise to this data.

The choice of admissible almost complex structure $J$ defines a Riemannian metric $g$ on $M_\phi$ by the formula
$g(v_1, v_2) = dt(v_1)dt(v_2) + \frac{1}{2}\omega_{\phi}(\pi_V(v_1), J\pi_V(v_2))$,
where $\pi_V$ denotes the projection 
$TM_\phi \to V$
of the tangent bundle to the vertical tangent bundle with kernel spanned by $R$. It follows from this that
$\|dt\|_{g} \equiv 1$
and
$\star^{g}\omega_{\phi} = 2dt$,
where $\star^{g}$ denotes the Hodge star operator associated to $g$. 

A \textbf{spin-c} structure $\fs$ on an oriented Riemannian 3-manifold $(M,g)$ is defined to be a rank-two Hermitian vector bundle 
$S \to M$
along with an injective bundle map $\cl: T^*M \to \mathfrak{su}(S)$ called the \textbf{Clifford multiplication}.
The bundle $S$ is called the \textbf{spinor bundle} and sections of the spinor bundle $S$ are called \textbf{spinors}.  
By identifying $TM$ with $T^*M$ using the metric $g$, the map $\cl$ also defines a Clifford multiplication on the tangent bundle. A class $\Gamma\in H_1(M_\phi;\bZ)$ determines a unique spin-c structure $\fs_\Gamma$ on $M_\phi$  such that 
\begin{enumerate}
	\item The spinor bundle of $\fs_\Gamma$ is given by
	$S_\Gamma = E_\Gamma \oplus (E_\Gamma \otimes V)$,
	where $V$ has the Hermitian line bundle structure induced by $J$ and $g$, and $E_\Gamma$ is the line bundle over $M_\phi$ with $c_1(E_\Gamma) = PD(\Gamma)$.
	\item $\cl(dt) = \begin{pmatrix} i & 0 \\ 0 & -i \end{pmatrix}$
	with respect to the splitting $S_\Gamma = E_\Gamma \oplus (E_\Gamma \otimes V)$.
\end{enumerate}
The reader may refer to \cite[Section 2(a)]{TaubesWeinstein1} for details on the construction of $\fs_\Gamma$.
	
For $\Psi \in C^\infty(S_\Gamma)$,  define 
$ \cl^\dagger(\Psi) 
= \langle \rho(e^i)\Psi, \Psi \rangle e^i$,
where $\{e^i\}_{i=1}^3$ is a local $g$-orthonormal coframe. 
We will also often write the spinor $\Psi$ as
$ \Psi = r^{1/2}\psi = r^{1/2}(\alpha, \beta),$
where $\psi$ is the rescaling of $\Psi$ by a factor of $r^{-1/2}$, and $\alpha$ and $\beta$ denote the components of $\psi$ with respect to the splitting $S_\Gamma = E_\Gamma \oplus (E_\Gamma \otimes V)$. 

A \textbf{spin-c connection} $\nabla_{\hat{B}}$ on the spinor bundle $S_\Gamma$ is a complex-linear connection such that $\cl$ is parallel with respect to $\nabla_{\hat{B}}$ and the Levi-Civita connection.
Given a spin-c connection $\nabla_{\hat{B}}$, there is a corresponding \textbf{Dirac operator}
$\nabla_{\hat{B}}$
defined by the formula
$D_{\hat{B}}\Psi = \sum_{i=1}^3 \cl(e^i)\nabla_{\hat{B},e_i}\Psi$,
where $\{e^i\}_{i=1}^3$ is any local orthonormal coframe.  When $\Gamma=0$, the spinor bundle $S_\Gamma=S_0$ is equal to $\underline{\mathbb{C}}\oplus V$. Let $\Psi_0$ be the spinor of $S_0$ given by the constant section of $\underline{\mathbb{C}}$ with value $1$. A quick computation identical to the argument in \cite[Lemma $10.1$]{HutchingsWeinsteinNotes} shows that there is a unique spin-c connection $\hat{B}_0$ on $S_0$ such that $D_{B_0}\Psi_0 = 0$.  
Let  
$\text{Conn}(E_\Gamma)$
be the space of smooth unitary connections on $E_\Gamma$, there is a bijection from $\text{Conn}(E_\Gamma)$ to the set of smooth spin-c connections on $S_\Gamma$ by taking $B\in \text{Conn}(E_\Gamma)$ to the connection $B+\hat{B}_0$ on $S_\Gamma=E_\Gamma \otimes S_0$.
In the following, we will abuse notation and use $\nabla_B$ and $D_B$ to denote the spin-c connection and Dirac operator on $S_\Gamma$ associated with $B\in \text{Conn}(E_\Gamma)$.

\subsubsection{Abstract perturbations} \label{subsubsec:abstractPerturbations}

For the purposes of ensuring transverality of the various moduli spaces used to define Seiberg--Witten--Floer cohomology, Kronheimer--Mrowka introduced in \cite[Chapter $11$]{monopolesBook} a large Banach space, denoted by $\cP$, consisting of \textbf{tame perturbations} for the Seiberg--Witten equations. 

A tame perturbation is a map from the $W^{k,2}$ Banach completion of $\text{Conn}(E_\Gamma) \times C^\infty(S_\Gamma)$ to $\mathbb{R}$, invariant under the action of the $W^{k+1,2}$ completion of the ``gauge group'' $\cG(M_\phi)$, defined below, that satisfies several estimates given in \cite[Definition $10.5.1$]{monopolesBook}. Here we set $k \geq 3$. 

The Banach space $\cP$ consists of a carefully chosen class of tame perturbations. Denote the $L^2$ formal gradient of a tame perturbation $\fg$, which by definition is a section of the bundle $iT^*M_\phi \oplus S_\Gamma$, by $(\fC_\fg, \fS_\fg)$, where $\fC_\fg$ denotes the component in $iT^*M_\phi$ and $\fS_\fg$ denotes the component in $S_\Gamma$.

A smooth 1-form $\mu$ on $M_\phi$ defines an element in Kronheimer--Mrowka's Banach space $\cP$ by the formula $\fe_\mu( (B, \Psi)) = \fe_\mu(B) = i\int_{M_\phi} \mu \wedge F_B.$
The formal gradient of $\fe_\mu$ is the pair $(i\star^{g}d\mu, 0)$. We will also write $\fe_\mu(B)-\fe_\mu(B_\Gamma)$ as $\fe_\mu(B,B_\Gamma)$. In the following, we will always take $\mu$ to be co-exact, and unless otherwise mentioned we will assume it has $C^3$-- and $\cP$--norms bounded by $1$.

\subsubsection{The Seiberg--Witten equations and the gauge group} \label{subsubsec:swEquations}

 We will now define a version of the Seiberg--Witten equations on $M_\phi$. 
 Define an \textbf{SW parameter set}
$\cS = (\phi, J, \Gamma, r, \fg = \fe_\mu + \fp)$
to be a tuple consisting of the following terms. 
The term $\phi$ in $\cS$ is an element of $\Diff(\Sigma, \omega)$. The term $J$ is an admissible almost-complex structure in the set $\cJ^\circ(dt, \omega_{\phi})$. The term $\Gamma$ is a class in $H_1(M_\phi, \bZ)$. The term $r$ is a nonnegative real number. The term $\mu$ is a co-exact $1$-form, and $\fp \in \cP$ is an element of the Banach space $\cP$ of tame perturbations. We will usually require $\|\fp\|_\cP$ to be small.

We can now write down the Seiberg--Witten equations associated to the parameter set $\cS$ which we will call the \textbf{$\cS$--Seiberg--Witten equations} for brevity.  Associate a metric and a spin-c structure to $\cS$ as explained above.  Then a pair $\fc = (B, \Psi) \in \text{Conn}(E_\Gamma) \times C^\infty(S_\Gamma)$ solves the $\cS$--Seiberg--Witten equations if and only if
\begin{equation}
\label{eq:sw}
\begin{split}
\star^{g}F_B &= \cl^\dagger(\Psi) - ir dt + i\star^{g}d\mu + i\varpi_V + \fC_\fp(B, \Psi), \\
D_B\Psi &= \fS_\fp(B, \Psi).
\end{split}
\end{equation}
Here $\varpi_V$ denotes the unique harmonic one-form such that $\star^{g}\varpi_V$ represents the class $\pi \cdot c_1(V)$. 

We call a pair $\fc = (B, \Psi) \in \text{Conn}(E_\Gamma) \times C^\infty(S_\Gamma)$
a \textbf{configuration}. We say a configuration is \textbf{reducible} if $\Psi \equiv 0$ and \textbf{irreducible} otherwise. The space $\text{Conn}(E_\Gamma) \times C^\infty(S_\Gamma)$ is acted on the the \textbf{group of gauge transformations}
$\cG(M_\phi) = C^\infty(M_\phi, U(1)),$
defined by
$u \cdot (B, \Psi) = (B - u^{-1}du, u \cdot \Psi).$  The gauge group action preserves the set of solutions to \eqref{eq:sw}.

The group $\cG(M_\phi)$ is disconnected. There is a canonical identification of the connected components of $\cG(M_\phi)$ with $H^1(M_\phi; \bZ)$ sending $u \in \mathcal{G}(M_\phi)$ to the cohomology class of the closed $1$-form $iu^{-1}du$. The \textbf{identity component} $\cG^\circ(M_\phi) \subset \cG(M_\phi)$ of the group of gauge transformations consists of maps $u: M_\phi \to U(1)$ that are null-homotopic, and we will refer to elements of $\cG^\circ(M_\phi)$ as ``null-homotopic gauge transformations''. 

\subsubsection{Linearized operators and spectral flow} \label{subsubsec:spectralFlowDefn}

We now review some natural linear differential operators associated to configurations. Spectral flows between these operators are used to define the grading on Seiberg--Witten cohomology.

To each solution $(B, \Psi = r^{1/2}\psi)$ of \eqref{eq:sw}, we associate to it a first-order linear differential operator
$$\cL_{(B, \Psi)}: C^\infty(iT^*M_\phi \oplus S_\Gamma \oplus i\bR) \to C^\infty(iT^*M_\phi \oplus S_\Gamma \oplus i\bR)$$
which sends a triple $(b, \eta, f)$ to the triple with the $iT^*M_\phi$, $S_\Gamma$, and $i\bR$ components described by
\begin{equation}
 \label{eq:extended3DHessian} 
     \begin{pmatrix}
    b
    \\
    \eta
    \\
    f
 \end{pmatrix}
\mapsto 
\begin{pmatrix}
    *db  -2^{-1/2}r^{1/2}(\eta^\dagger\tau\psi+\psi^\dagger\tau\eta)  -df -d{\fC_\fp}|_{(B,\Psi)}(b,\eta) \\
    2^{1/2}r^{1/2}\cl(b)\psi +   D_B\eta  + 2^{1/2} r^{1/2}f\psi - d \fS_\fp|_{(B, \Psi)}(b,\eta)
    \\
    -d^*b +  2^{-1/2}r^{1/2} (-\eta^\dagger \psi + \psi^\dagger \eta) .
\end{pmatrix}.
\end{equation}

The operator $\cL_{(B,\Psi)}$ is a version of the ``extended Hessian'' in \cite[Section $12.3$]{monopolesBook} and formula \eqref{eq:extended3DHessian} is the same as \cite[(3-1)]{TaubesWeinstein1}.

\begin{convention}
The operator $\cL_{(B, \Psi)}$ depends on a choice of parameter $r$ and also the abstract perturbation $\fg$, in the case where the perturbation is not of the form $\fg = \fe_\mu$. We fix the convention that if $(B, \Psi)$ is a solution of the $(\phi, J, \Gamma, r, \fg)$--Seiberg--Witten equations, then $\cL_{(B, \Psi)}$, unless otherwise stated, denotes the $r$--version of the operator in (\ref{eq:extended3DHessian}). If $(B, \Psi)$ is not explicitly stated to be a solution of the $\cS$--Seiberg--Witten equations, then $\cL_{(B, \Psi)}$, unless otherwise stated, denotes the version of the operator in (\ref{eq:extended3DHessian}) with $r$ set to equal $1$ and no abstract perturbation terms. 
\end{convention}

We say an \emph{irreducible} solution $(B, \Psi)$ to the $\cS$--Seiberg--Witten equations is \textbf{nondegenerate} if $\cL_{(B, \Psi)}$ has trivial kernel. Note that $\cL_{(B,\Psi)}$ extends to an unbounded, self-adjoint operator on the $L^2$ closure of its domain. Therefore, it has trivial kernel if and only if it is surjective. This implies that our definition of nondegeneracy coincides with the definition given in \cite[Lemma $12.4.1$]{monopolesBook}. In general, we will say any irreducible \emph{configuration} $\fc = (B, \Psi)$ is nondegenerate if $\cL_{(B, \Psi)}$ has no kernel. 

The operators $\cL_{(B,\Psi)}$ are used to define a relative $\bZ/\ell$--grading on the Seiberg--Witten--Floer cohomology, where $\ell$ denotes the largest integer dividing the class $c_\Gamma$ from (\ref{eq:cGamma}). For any two nondegenerate irreducible solutions $\fc_- = (B_-, \Psi_-)$ and $\fc_+ = (B_+, \Psi_+)$, define their relative degree as the reduction modulo $\ell$ of the \emph{spectral flow} from the operator $\cL_{(B_-, \Psi_-)}$ to $\cL_{(B_+, \Psi_+)}$.  We will use $\text{SF}(\fc_-, \fc_+)$ to denote the spectral flow between the operators $\cL_{\fc_-}$ and $\cL_{\fc_+}$ associated to two nondegenerate irreducible configurations $\fc_-$ and $\fc_+$, and use $\text{SF}_\ell(\fc_-, \fc_+)$ to denote the reduction of $\text{SF}(\fc_-, \fc_+)$ modulo $\ell$ when $\fc_\pm$ are solutions to the $\cS$--Seiberg--Witten equations.

We are only able to retrieve a relative $\bZ/\ell$ grading from the spectral flow because the spectral flow is not preserved under the action of $\cG(M_\phi)$ on $\fc_\pm$. If we modify either of $\fc_\pm$ by a gauge transformation, the spectral flow changes by an integer divisible by $\ell$. 
However, the spectral flow is invariant under the action of the group $\cG^\circ(M_\phi)$ of null-homotopic gauge transformations. 

\subsubsection{Instantons}

Fix a SW parameter set
$\cS = (\phi, J, \Gamma, r, \fg = \fe_\mu + \fp).$
The differential of Seiberg--Witten--Floer cohomology will be defined by \textbf{$\cS$--Seiberg--Witten instantons} on $\mathbb{R} \times M_\phi$. These are smooth, one-parameter families $\fd = (B_s, \Psi_s)$ in $\text{Conn}(E_\Gamma) \times C^\infty(S_\Gamma)$ that satisfy the equations
\begin{equation}
\label{eq:swInstanton}
\begin{split}
\frac{d}{ds}B_s &= -\star^{g}F_{B_s} + \cl^\dagger(\Psi_s) - ir dt + i\star^{g}d\mu + i\varpi_V + \fC_\fp(B_s, \Psi_s), \\
\frac{d}{ds}\Psi_s &= -D_{B_s}\Psi_s + \fS_\fp(B_s, \Psi_s),
\end{split}
\end{equation}
and limit as $s \to \pm \infty$ to solutions of (\ref{eq:sw}). If $\fd$ is a solution to (\ref{eq:swInstanton}) which limits as $s \to \pm\infty$ to nondegenerate irreducible solutions $\fc_\pm$ of (\ref{eq:sw}), we define its spectral flow $\SF(\fd) = \SF(\fc_-, \fc_+)$ to be the spectral flow between its endpoints. 

It will be useful to interpret $\SF(\fd)$ as the index of a Fredholm operator associated to the instanton $\fd$. Define the operator
\begin{equation} \label{eq:extended4DHessian} \cL_\fd = \frac{d}{ds} + \cL_{(B_s, \Psi_s)}. \end{equation}
Suppose that the limits $(B_\pm, \Psi_\pm)$ of $\fd$ as $s \to \pm \infty$ are irreducible and nondegenerate. Then the operator $\cL_\fd$ is Fredholm, and we have
\begin{equation} \label{eq:indexEqualsSF1} \text{ind}(\cL_\fd) = \SF(\fc_-, \fc_+) = \SF(\fd). \end{equation}

We can also use the operator $\cL_\fd$ to define a notion of nondegeneracy for instantons. An instanton $\fd$ is \textbf{nondegenerate} if the limits $(B_\pm, \Psi_\pm)$ of $\fd$ as $s \to \pm\infty$ are irreducible and nondegenerate and the operator $\cL_\fd$ has trivial cokernel. We also note that the Fredholm operator $\cL_\fd$ can be defined for \emph{any} path $\fd = (B_s, \Psi_s)$ of configurations which converge exponentially as $s \to \pm\infty$. As long as the limits $\fc_\pm = (B_\pm, \Psi_\pm)$ of $\fd$ as $s \to \pm \infty$ are irreducible, nondegenerate configurations (not necessarily solutions to (\ref{eq:sw})), the operator $\cL_\fd$ is Fredholm and the identity (\ref{eq:indexEqualsSF1}) holds. 

\subsubsection{The Chern-Simons-Dirac functional}

Fix an SW parameter set 
$\cS = (\phi, J, \Gamma, r, \fg = \fe_\mu + \fp),$ we define a variety of associated functionals on the configuration space $\text{Conn}(E_\Gamma) \times C^\infty(S_\Gamma)$. Choose a pair of configurations $\fc = (B, \Psi)$ and $\fc_\Gamma = (B_\Gamma, \Psi_\Gamma)$. Then we define the \textbf{energy} to be
$$ E_{\phi}(B, B_\Gamma) = i\int_{M_\phi} (B - B_\Gamma) \wedge \omega_{\phi}, $$
the \textbf{Chern-Simons functional} to be
$$ \fcs(B, B_\Gamma) = -\int_{M_\phi} (B - B_\Gamma) \wedge (F_B - F_{B_\Gamma}) - 2\int_{M_\phi} (B - B_\Gamma) \wedge (F_{B_\Gamma} - i\star^{g}\varpi_V) $$
and the \textbf{Chern-Simons-Dirac functional} to be
$$ \fa_{r, \fg}(\fc, \fc_\Gamma) = \frac{1}{2}(\fcs(B, B_\Gamma) - rE_{\phi}(B, B_\Gamma)) + \int_{M_\phi} \langle D_B\Psi, \Psi \rangle + \fg(B, \Psi) - \fg(B_\Gamma, \Psi_\Gamma). $$

Formally, the three-dimensional Seiberg--Witten equations are critical points of the Chern--Simons--Dirac functional, and the instantons are flow lines. 

The functionals $E_{\phi}$, $\fcs$ and $\fa_{r,\fg}$ are not fully gauge-invariant. They are only invariant under null-homotopic gauge transformations. The change of any of these quantities after pre-composition by a gauge transformation $u$ depends only on the associated class $i[u^{-1}du] \in H^1(M_\phi; \bZ)$, and can be easily computed.

\subsubsection{Seiberg--Witten--Floer cohomology in the monotone case}

Fix an SW parameter set
$\cS = (\phi, J, \Gamma, r, \fg = \fe_\mu + \fp).$
We will assume that the class $\Gamma$ is monotone with monotonicity constant $\rho$. Recall that $\Gamma$ is \textbf{positive monotone} if $\rho > 0$ and \textbf{negative monotone} if $\rho < 0$. Note that if $r \neq -2\pi\rho$, there are no reducible solutions to the $\cS$--Seiberg--Witten equations (\ref{eq:sw}). We will also assume that the abstract perturbation $\fp$ is small and generic (see \cite[Chapter $15$]{monopolesBook}). 

We start by reviewing the definition of Seiberg--Witten--Floer cohomology from \cite{monopolesBook} when $r\neq -2\pi\rho$. Suppose that all solutions to (\ref{eq:sw}) and (\ref{eq:swInstanton}) are nondegenerate. Define $\CM^{-*}(\cS) = \CM^{-*}(\phi, J, \Gamma, r, \fg)$
to be the free module generated by gauge-equivalence classes of solutions to the $\cS$--Seiberg--Witten equations. Given any configuration $\fc = (B, \Psi)$, we will denote its gauge equivalence class by $[\fc]$.  To define the differential $\partial$,
fix a pair of generators $[\fc_-]$ and $[\fc_+]$ and let
$ \mathcal{M}^1([\fc_-], [\fc_+])$
be the space of gauge-equivalence classes of instantons $\fd$ modulo pullback by translation in the $\bR$--direction in $\bR \times M_\phi$, such that $\SF(\fd) = 1$ and there is a gauge transformation $u$ such that $\lim_{s \to -\infty} \fd = u \cdot \fc_-$ and $\lim_{s \to \infty} \fd = \fc_+$. 
Given our nondegeneracy assumptions, $\mathcal{M}^1([\fc_-], [\fc_+])$ is a (possibly empty) oriented compact $0$-dimensional smooth manifold \cite{monopolesBook}. The differential $\partial$ is defined by the equation
$\langle \partial [\fc_+], [\fc_-] \rangle = \#\mathcal{M}^1([\fc_-], [\fc_+])$. The results of \cite[Parts V and VI]{monopolesBook} show $\partial^2 = 0$, and therefore
$\CM^{*}(\cS)$
is a cochain complex. We will denote its cohomology as 
$\HM^{*}(\cS) = \HM^{*}(\phi, J, \Gamma, r, \fg).$

As with PFH, to define spectral invariants we will want to instead use the \textbf{twisted Seiberg--Witten--Floer cohomology}.  Let
$\cS = (\phi, J, \Gamma, r, \fg = \fe_\mu + \fp)$ be an SW parameter set 
where $\Gamma$ is monotone with monotonicity constant $\rho$.  As before, we assume $r \neq -2\pi\rho$ for the moment.
The twisted Seiberg--Witten--Floer cohomology, denoted by 
$\TWHM(\cS),$
can be defined in two ways, both of which will be useful later. 

The first definition is more abstract. By \cite[Chapter $3$]{monopolesBook}, the space 
$\mathcal{B} = (\text{Conn}(E_\Gamma) \times C^\infty(S_\Gamma))/\cG(M_\phi)$
has fundamental group isomorphic to $H^1(M_\phi; \bZ)$. We can construct an explicit model for its universal cover in the following way. Let $\cG^\circ(M_\phi)$ be the identity component of the group of gauge transformations. Then the universal cover is the space
$\mathcal{B}^\circ = (\text{Conn}(E_\Gamma) \times C^\infty(S_\Gamma))/\cG^\circ(M_\phi)$. 
The cover $\mathcal{B}^\circ$ canonically defines a local system, which we denote by $\mathbb{B}^\circ$, over $\mathcal{B}$. The fiber of $\mathbb{B}^\circ$ over any particular point $[\fc] \in \mathcal{B}$ is merely the free module generated by the elements of the fiber of $\mathcal{B}^\circ$ over $[\fc]$. The monodromy of $\mathbb{B}^\circ$ is canonically induced by the monodromy of the cover $\mathcal{B}^\circ$. We define $\TWHM(\cS)$ to be the Seiberg--Witten--Floer homology of $M_\phi$ with respect to the parameter set $\cS$ with coefficients in the local system $\mathbb{B}^\circ$. 

The second definition is more concrete. Indeed, it can be derived by writing down explicitly the definition from \cite{monopolesBook} of the Seiberg--Witten--Floer cohomology with coefficients in the local system $\mathbb{B}^\circ$. Given a solution $\fc = (B, \Psi)$ to (\ref{eq:sw}), we write $[\fc]^\circ$ for its equivalence class modulo the action of $\cG^\circ(M_\phi)$. Then $\TWCM^*(\cS)$ is the free module generated by $\cG^\circ$--equivalence classes of solutions to the $\cS$--Seiberg--Witten equations. To define the differential, fix a pair of generators $[\fc_-]^\circ$ and $[\fc_+]^\circ$, represented by solutions $\fc_-$ and $\fc_+$ to (\ref{eq:sw}), respectively. We denote by
$ \widetilde{\mathcal{M}}^1([\fc_-]^\circ, [\fc_+]^\circ) $
the space of $\cG^\circ$-equivalence classes of instantons $\fd$ modulo pullback by translation in the $\bR$-direction in $\bR\times M_\phi$, such that $\SF(\fd) = 1$ and there is a \emph{null-homotopic} gauge transformation $u$ such that $\lim_{s \to -\infty} \fd = u \cdot \fc_-$ and $\lim_{s \to \infty} \fd = \fc_+$.
Given our nondegneracy assumptions, $\widetilde{\mathcal{M}}^1([\fc_-]^\circ, [\fc_+]^\circ)$ is a (possibly empty) compact oriented $0$--dimensional smooth manifold. Define the differential by
$\langle \widetilde{\partial}[\fc_+]^\circ, [\fc_-]^\circ \rangle = \#\widetilde{\mathcal{M}}^1([\fc_-]^\circ, [\fc_+]^\circ).$

The spectral flow defines a relative $\bZ$--grading on $\TWCM^*(\cS)$, as opposed to the relative $\bZ/\ell$--grading in the untwisted setting. For any two generators $[\fc_-]^\circ$, $[\fc_+]^\circ$, the relative grading $\text{SF}([\fc_-]^\circ, [\fc_+]^\circ)$
is defined to be $\SF(\fc_-, \fc_+)$. Since the spectral flow is invariant under nullhomotopic gauge transformations,  the relative $\mathbb{Z}$--grading is well-defined. Fix a nondegenerate irreducible base configuration $\fc_\Gamma = (B_\Gamma, \Psi_\Gamma)$. Then the spectral flow to the configuration $\fc_\Gamma$ defines an absolute $\bZ$--grading on $\TWCM^*(\cS)$. The differential $\widetilde{\partial}$ by definition increases the absolute grading by $1$. 

When $r = -2\pi\rho$, we can still define variants of Seiberg--Witten--Floer cohomology, and we now explain how this works.
Let
$\cS = (\phi, J, \Gamma, r, \fg = \fe_\mu + \fp)$ be an SW parameter set 
where $\Gamma$ is monotone with monotonicity constant $\rho$ and $r = -2\pi\rho$. 
In this setting, solutions of the $\cS$--Seiberg--Witten equations may admit reducibles. We define three versions of twisted Seiberg--Witten--Floer cohomology groups
$\TWHMcheck^*(\cS)$,  $\TWHMbar^*(\cS)$,  $\TWHMhat^*(\cS)$,
which again can be described in two equivalent methods.

The first method is to define the twisted Seiberg--Witten--Floer cohomology groups as the ``balanced'' Seiberg--Witten--Floer cohomology groups with coefficients in the local system $\mathbb{B}^\circ$, as described in \cite[Section $30.1$]{monopolesBook}. 
The second method again can be derived by writing out the chain complex explicitly. We will not give a fully detailed explanation, but instead content ourselves with writing down the generators, which is all that we need to know. 
The generators of $\TWCMcheck^*(\cS)$ are $\cG^\circ$--equivalence classes of irreducible solutions and ``boundary stable" reducible solutions. The generators of $\TWCMbar^*(\cS)$ are $\cG^\circ$--equivalence classes of both boundary stable and ``boundary unstable" reducible solutions. The generators of $\TWCMhat^*(\cS)$ are $\cG^\circ$--equivalence classes of irreducible solutions and boundary unstable reducible solutions. We write the associated cochain complexes as 
$\TWCMcheck^*(\cS)$, $\TWCMbar^*(\cS)$, $\TWCMhat^*(\cS).$ There are also ``completed'' versions obtained by taking the cohomology of positive completion of the associated cochain complexes (see \cite[Pages 597-598]{monopolesBook}). These are denoted by the three groups
$\TWHMcheck^\bullet(\cS)$, $\TWHMbar^\bullet(\cS)$,  $\TWHMhat^\bullet(\cS).$
We write the associated completed cochain complexes as 
$\TWCMcheck^\bullet(\cS),$  $\TWCMbar^\bullet(\cS),$ 
$\TWCMhat^\bullet(\cS).$  For more details, we refer the reader to \cite{monopolesBook}.

\subsubsection{Continuation maps} \label{subsec:continuationMaps}
Suppose $\phi_0$ is a fixed area-preserving map and suppose $\phi_-$ and $\phi_+$ are both Hamiltonian isotopic to $\phi_0$. Fix Hamiltonians $H_-, H_+ \in C^{\infty}_c( (0,1) \times \Sigma)$ such that $\phi_- = \phi_0 \circ \phi^1_{H_-}$, $\phi_+ = \phi_0 \circ \phi^1_{H_+}$. Let $K:(-\infty,+\infty)\to C^{\infty}_c( (0,1) \times \Sigma)$ be a smooth map such that there exists some $s_* > 0$ so that $K(s) = H_-$ on $(-\infty, -s_*]$ and $K(s) = H_+$ on $[s_*, \infty)$. 

There is a variant of the equations (\ref{eq:swInstanton}) associated with $K$ which we will have to consider in relating the complexes for $\phi_-$ and $\phi_+$.
The homotopy $K$ defines a $4$-manifold $\overline{X}_{K}$ with cylindrical ends modeled on $M_- = M_{\phi_-}$ and $M_+ = M_{\phi_+}$: we define $\overline{X}_{K}$ as the quotient of 
$\bR_s \times [0,1]_t \times \Sigma$
by the identification
$$(s, 1, p) \sim (s, 0, (\phi_0 \circ \phi^1_{K(s)})(p)).$$

Rather than write down the four-dimensional Seiberg--Witten equations, it is more efficient for our purposes to identify this symplectic manifold with the cylinder $\bR \times M_-$ and write down the equations on $\bR \times M_-$. The data of the homotopy $K$ define a diffeomorphism
$\bR_s \times [0,1]_t \times \Sigma \to \bR_s \times [0,1]_t \times \Sigma$
by the map
$(s, t, p) \mapsto (s, t, (\phi^t_{K(s)})^{-1}\phi^t_{H_-}(p)).$
This descends to a diffeomorphism
$\bR \times M_- \to \overline{X}_{K}.$ We will use this diffeomorphism to identify $\bR \times M_- $ with $\overline{X}_{K}.$

Now fix two SW parameter sets 
$\cS_\pm = (\phi_\pm, J_\pm, \Gamma_\pm, r_\pm, \fg_\pm = \fe_{\mu_\pm} + \fp_\pm)$ for $\phi_\pm$ such that $\Gamma_-=(M_{H_+}\circ M_{H_-}^{-1})_*(\Gamma_+)$.
A \textbf{SW continuation parameter set from $\cS_-$ to $\cS_+$} is a data set
$$\cS_s = (K, J_s, r_s, \fg_s = \fe_{\mu_s} + \fp_s)$$
interpolating between $\cS_-$ and $\cS_+$, rigorously defined as follows. Let $H_\pm,\phi_0,\phi_\pm,K(s)$ be as above, and suppose $s_*>0$ is a constant such that $K(s)=H_-$ for all $s\in(-\infty,-s_*]$ and $K(s)=H_+$ for all $s\in[s_*,+\infty)$. 
The notation $J_s$ denotes a smooth family $\{J_s\}_{s \in \bR}$ of almost-complex structures in $\cJ(dt, \omega_{\phi_-})$. The notation $r_s$ denotes a smooth family $\{r_s\}_{s \in \bR}$ of positive real numbers; if $r_+ = r_-$, then we require the family $\{r_s\}_{s \in \bR}$ to be constant. The notation $\mu_s$ denotes a smooth section of the Banach space bundle $\underline{\coexact(M_{\phi_0 \circ \phi^1_{K_s}})}$ over $\bR$, where each fiber is the space of co-exact one-forms with respect to the Riemannian metric $g_s = dt^2 + \frac{1}{2}\omega_K(-, J_s-).$
 The notation $\fg_s$ denotes a smooth section of the Banach space bundle $\underline{\cP}$ over $\bR$, where each fiber is the version of the Banach space $\cP$ of perturbations for the Riemannian metric $g_s$. We also require that $(J_s,r_s,\mu_s,\mathfrak{g}_s) = (J_-,r_-,\mu_-,\mathfrak{g}_-)$ when $s\in (-\infty, -s_*]$ and $(J_s,r_s,\mu_s,\mathfrak{g}_s) = (J_+,r_+,\mu_+,\mathfrak{g}_+)$ when $s\in [s_*,+\infty)$. It is straightforward to verify that once we fix the Hamiltonians $H_-$ and $H_+$, the space of SW continuation parameter set from $\cS_-$ to $\cS_+$ is contractible.

Let 
$\cS_s = (K, J_s, r_s, \fg_s)$
denote a SW continuation parameter set between two SW parameter sets $\cS_-$ and $\cS_+$. Then a smooth, one-parameter family $\fd = (B_s, \Psi_s)$ in $\text{Conn}(E_{\Gamma_-}) \times C^\infty(S_{\Gamma_-})$ is a \textbf{$\cS_s$--Seiberg--Witten instanton} if it satisfies the equations
\begin{equation}
\label{eq:swContinuation}
\begin{split}
\frac{d}{ds}B_s &= -\star^{g_s}F_{B_s} + \cl_s^\dagger(\Psi_s) - ir_s dt + i\star^{g_s} d\mu_s+i\varpi_{V,s} + \fC_{\fg_s}(B_s, \Psi_s), \\
\frac{d}{ds}\Psi_s &= -D_{B_s}\Psi_s + \fS_{\fg_s}(B_s, \Psi_s),
\end{split}
\end{equation}
and limits as $s \to \pm \infty$ to solutions of the $(\phi_\pm, J_\pm, \Gamma_\pm, r_\pm, \fg_\pm)$--Seiberg--Witten equations. 

Here are some more necessary clarifications regarding this definition. The $1$--form $\varpi_{V,s}$ is the unique $g_s$--harmonic $1$--form such that the harmonic $2$--form $\star^{g_s} \varpi_{V_s}$ represents $\pi c_1(V)$. The notation $\fC_{\fg_s}$ and $\fS_{\fg_s}$ denotes the $L^2$ formal gradient of $\fg_s$ with respect to the metric $g_s$. The notation $\cl_s$ denotes the Clifford multiplication on $S_{\Gamma_-} = E_{\Gamma_-} \oplus (E_{\Gamma_-} \otimes V)$ associated to the metric $g_s$. The Hermitian structure on $S_{\Gamma_-}$ is also determined here by the metric $g_s$, but we can assume that the Hermitian structure on $E_{\Gamma_-}$ remains independent of the metric $g_s$. The notion of $\fd$ limiting to a solution of the $(\phi_-, J_-, \Gamma_-, r_-, \fg_-)$--Seiberg--Witten equations as $s \to -\infty$ is straightforward. We say that $\fd$ limits to a solution of the $(\phi_+, J_+, \Gamma, r_+, \fg_+)$--Seiberg--Witten equations as $s \to \infty$ if and only if the limit of $\fd$ as $s \to \infty$ is a pullback of a solution of the $(\phi_+, J_+, \Gamma_+, r_+, \fg_+)$--Seiberg--Witten equations by the diffeomorphism $M_{H_+}\circ M_{H_-}^{-1}: M_- \to M_+$.  

There is a corresponding notion of ``spectral flow'' for solutions of (\ref{eq:swContinuation}), given as the index of the following Hessian. Let $\fd = (B_s, \Psi_s)$ be any smooth path in $\text{Conn}(E_{\Gamma_-}) \times C^\infty(S_{\Gamma_-})$ which has nondegenerate, irreducible limits $\fc_\pm = (B_\pm, \Psi_\pm)$ as $s \to \infty$. Note that $\fc_+$ is required to be nondegenerate in the sense that the version of the Hessian (\ref{eq:extended3DHessian}) with respect to the metric $g_+$ has no kernel. Then we can define a first-order differential operator
$ \cL_{\fd} = \frac{\partial}{\partial s} + \cL_{(B_s, \Psi_s)} $
which acts on a Hilbert space consisting of paths of sections of $iT^*M_- \oplus S_{\Gamma_-} \oplus i\bR$ with suitable exponential decay. The nondegeneracy of $\fc_\pm$ implies that this operator is Fredholm. Moreover, its index does not depend on perturbations of the path $\fd$. Given this, we will write the index of this operator as 
$\text{ind}(\cL_\fd) = \text{ind}(\fc_-, \fc_+).$

As shown by \cite[Chapter VII]{monopolesBook}, counting solutions to \eqref{eq:swContinuation} can be used to construct chain maps between the Seiberg–Witten–Floer complexes
associated with $\cS_+$ and $\cS_-$. We will call such maps \textbf{continuation maps} and denote them by $T(\cS_s)$.

\begin{rem}
	\label{rem:conventions}
	There are various ways of writing down the Seiberg--Witten equations with slightly different conventions. 
	Our equations (\ref{eq:sw}), (\ref{eq:swInstanton}), and (\ref{eq:swContinuation}) are equivalent to the analogues in \cite{TaubesWeinstein1, TaubesWeinstein2, ECHSWF} 
after rescaling $\Psi$ by a factor of $r^{1/2}$. Our equations are equivalent, as long as $r > 2\pi\rho$ and $\rho < 0$, to the analogues in \cite{LeeTaubes12} after rescaling $\Psi$ by a factor of $( 2(r + 2\pi\rho))^{1/2}$. They replace $dt$ with a one-form denoted by $\mathbf{a}$, but $dt = \mathbf{a}$ in our case since we are assuming that $J$ preserves the bundle $V$. We also note that they instead assume the Riemannian metric is fixed so that $\omega_{\phi}$ rather than $\frac{1}{2}\omega_{\phi}$ is Hodge dual to $dt$. Our equations are equivalent to the the equations in \cite{monopolesBook} after rescaling $\Psi$ by a factor of $\frac{1}{2}$. This is because they replace the term $\cl^\dagger(\Psi)$ with $\cl^{-1}$ of the traceless part of $\Psi\Psi^*$. This is equal to $\frac{1}{2}$ times $\cl^\dagger(\Psi)$. 
\end{rem}

\subsubsection{Identifying Seiberg--Witten--Floer cohomology for different parameter sets} \label{subsubsec:swfCobordismMaps}
Suppose
$$\cS_\pm = (\phi_\pm, J_\pm, \Gamma_\pm, r_\pm, \fg_\pm)$$ are SW parameter sets
with both $r_+$ and $r_-$ greater than $-2\pi\rho$ and $\phi_\pm = \phi_0\circ \phi_{H_\pm}^1$.
  Also assume that the classes $\Gamma_\pm$ are ``compatible'' in the sense that $(M_{H_+}\circ M_{H_-}^{-1})_*(\Gamma_-)=\Gamma_+$.
By \cite[Theorem $31.4.1$]{monopolesBook}, we have $\HM^{*}(\cS_+) \cong \HM^{*}(\cS_-)$.
We now use the construction from \S\ref{subsec:continuationMaps} to explicitly describe this isomorphism.

Let 
$\cS_s = (K, J_s, r_s, \fg_s)$
be a SW continuation parameter set between $\cS_-$ and $\cS_+$. If the smooth path $\{\fg_s\}$ is sufficiently generic, there is a quasi-isomorphism
$T(\cS_s): \CM^{*}(\cS_+) \to \CM^{*}(\cS_-)$
defined by counting solutions $\fd$ to the $\cS_s$--Seiberg--Witten instanton equations (\ref{eq:swContinuation}) with $\text{ind}(\cL_\fd) = 0$. This quasi-isomorphism preserves the relative $\bZ/\ell$--grading. 
In the language of \cite{monopolesBook}, this is the ``cobordism map'' defined by the completed cobordism $\overline{X}_{K}$, equipped with the canonical homology orientation as described below \cite[Definition $3.4.1$]{monopolesBook}. We refer the reader to \cite[Chapter VII]{monopolesBook} for the details of the construction of these quasi isomorphisms in the case where the path $\{r_s\}_{s \in \bR}$ is constant. These chain maps in the case where $\{r_s\}_{s \in \bR}$ is not constant are featured in the results of \cite[Chapter $31$]{monopolesBook}. 

The map $T(\cS_s)$ can change for different choices of $\cS_s$. However, since the space of all possible choices of $\cS_s$ is contractible (for a fixed choice of $H_-$ and $H_+$), the results of \cite[Chapters VII, VIII]{monopolesBook} imply that the induced maps on cohomology are independent of the choice of $\cS_s$ and may only depend on $H_-$ and $H_+$. 
Applying the above discussion to the case where $\phi_0=\phi_-$ and $H_-=0$, we conclude that 
for any pair $\cS_\pm$ of SW parameter sets as above, and
 for any choice $h \in \widetilde{\Ham}(\Sigma, \omega)$ of a path in $\Ham(\Sigma, \omega)$ from $\text{id}$ to $\phi_-^{-1}\phi_+$, there is an isomorphism
$$T(\cS_-, \cS_+; h): \HM^*(\cS_+) \to \HM^*(\cS_-).$$
The same argument as above also shows that $T(\cS_-, \cS_+; h)$ only depends on the homotopy class of $h$ relative to boundary, so the map $T(\cS_-, \cS_+; -)$ descends to $\overline{\Ham}(\Sigma, \omega)$.

The results of \cite[Chapter VII]{monopolesBook} also imply that these isomorphisms are functorial on the level of cohomology. Namely for any triple $\cS_1$, $\cS_2$, $\cS_3$ of SW parameter sets we have
$$T(\cS_1, \cS_2; h_1) \circ T(\cS_2, \cS_3; h_2) = T(\cS_1, \cS_3; h_2 \circ h_1)$$
where $h_2 \circ h_1$ denotes the composition of paths. 
Applying the above result when $h_1$ and $h_2$ are the constant path, we conclude that given $\phi$ and $\Gamma$, the groups 
$\HM^*(\cS)$
for any valid $\cS$ with $r > -2\pi\rho$  are canonically isomorphic to a single group 
$\HM^*(M_\phi, \Gamma, m_-).$

The same results hold in the twisted setting. This is because the results regarding cobordism maps in \cite{monopolesBook} extend to the case of the cohomology with coefficients in the local system $\mathbb{B}^\circ$. In general, the data of a certain ``morphism of local systems'' (See \cite[Page 458]{monopolesBook}) is required. In our setting, this is canonically determined by the cover $\mathcal{B}^\circ$ as well.
To summarize, an SW continuation parameter set
$\cS_s = (K, J_s, r_s, \fg_s)$
from $\cS_-$ to $\cS_+$, where
$\cS_\pm = (\phi_\pm, J_\pm, \Gamma_\pm, r_\pm, \fg_\pm),$
and $r_\pm > -2\pi\rho$, induces a quasi-isomorphism
$T(\cS_s): \TWCM^*(\cS_+) \to \TWCM^*(\cS_-)$
counting solutions to the $\cS_s$--Seiberg--Witten instanton equations (\ref{eq:swContinuation}) with $\text{ind}(\cL_\fd) = 0$. 

As before, the isomorphism on the level of cohomology also does not depend on the choice of $\cS_s$, apart from the homotopy class $h$ of the Hamiltonian isotopy from $\phi_-$ to $\phi_+$, and therefore descends to an isomorphism
$T(\cS_-, \cS_+; h): \TWHM^*(\cS_+) \to \TWHM^*(\cS_-).$

These isomorphisms satisfy the same naturality properties as in the untwisted case. 
Moreover, if $\phi_+ = \phi_-$, $J_+ = J_-$, and the family $\{J_s\}_{s \in \bR}$ is constant, the additivity of the index under gluing and the formula (\ref{eq:indexEqualsSF1}) imply that the isomorphisms $T(\cS_-, \cS_+; h)$ preserve the \emph{absolute} $\bZ$--gradings defined by a fixed choice of base configuration $\fc_\Gamma$. 
Therefore, similar to the untwisted case,
given $\phi$ and $\Gamma$, the groups $\TWCM(\cS)$ over all possible SW parameter sets with $r > -2\pi\rho$ are all canonically isomorphic to a fixed $\bZ$--graded group $\TWHM^*(M_\phi, \Gamma, m_-)$. Analogous statements hold for the groups
$\TWHMcheck^*(\cS),$ $\TWHMbar^*(\cS),$  $\TWHMhat^*(\cS)$
when $r = -2\pi\rho$ in the SW parameter set $\cS$. In particular, for every choice of SW parameter set $\cS$ when $r = -2\pi\rho$, they are canonically isomorphic to fixed groups
$\TWHMcheck^*(M_\phi, \Gamma, m_b),$ $\TWHMbar^*(M_\phi, \Gamma, m_b),$ $\TWHMhat^*(M_\phi, \Gamma, m_b).$

The cobordism maps on these groups also count solutions to the $\cS_s$--Seiberg--Witten equations, but to get a well-defined count in this case one needs to consider ``broken instantons'', which are sequences $\mathfrak{d}_1, \ldots, \mathfrak{d}_k$ such that the $s \to \infty$ limit of $\mathfrak{d}_i$ coincides with the $s \to -\infty$ limit of $\mathfrak{d}_{i+1}$. We refer the reader to \cite{monopolesBook} for details.

\subsubsection{Formal properties of twisted Seiberg--Witten--Floer cohomology}
\label{subsubsec_formal_properties_of_twHM}

Since twisted Seiberg--Witten--Floer cohomology can be interpreted as ordinary Seiberg--Witten--Floer cohomology with local coefficients, the theorems in \cite{monopolesBook} also apply to the twisted case, and we have the following results.

\begin{thm}
	\label{thm:exactTriangle} \cite[Chapter VIII]{monopolesBook}
	There is an exact triangle
	\begin{center}
		\begin{tikzcd}
			\TWHMcheck^*(M_\phi, \Gamma, m_b) \arrow{rr} & & \TWHMbar^*(M_\phi, \Gamma, m_b) \arrow{ld} \\
			& \TWHMhat^*(M_\phi, \Gamma, m_b) \arrow{lu} & 
		\end{tikzcd}
	\end{center}
	as well as an analogous exact triangle for the completed twisted monopole Floer homology groups.
\end{thm}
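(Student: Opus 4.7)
The plan is to deduce the exact triangle from the corresponding exact triangle for ordinary Seiberg--Witten--Floer cohomology proved in \cite[Chapter~VIII]{monopolesBook}, using the description in Section~\ref{subsec:swf} of twisted Seiberg--Witten--Floer cohomology as ordinary monopole Floer cohomology with coefficients in the local system $\mathbb{B}^\circ$ over the configuration space.

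First, I would invoke Kronheimer--Mrowka's construction of the exact triangle at the cochain level. As explained in \cite[Chapter~$22$]{monopolesBook}, the three flavors of cochain complex $\CMcheck, \CMbar, \CMhat$ have generators decomposing into irreducibles, boundary stable reducibles, and boundary unstable reducibles, with block differentials, and the exact triangle is the long exact sequence in cohomology associated to a short exact sequence of cochain complexes built from this decomposition.

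Next, I would observe that the same construction is available for the twisted cochain complexes $\TWCMcheck, \TWCMbar, \TWCMhat$. These are defined using the same configurations with the same block decomposition into irreducible, boundary stable, and boundary unstable pieces, with generators now being $\cG^\circ$-equivalence classes rather than $\cG$-equivalence classes. Equivalently, twisting by the local system $\mathbb{B}^\circ$ is an exact operation on cochain complexes, hence preserves the short exact sequence. Passing to cohomology produces the desired twisted exact triangle.

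For the completed version, I would repeat the argument after replacing each cochain complex with its positive completion as defined in \cite[pages~$597$--$598$]{monopolesBook}. Since the completion is built from a grading-based filtration that is compatible with the block decomposition, the short exact sequence of cochain complexes induces a short exact sequence of completed cochain complexes, and the long exact sequence in cohomology yields the completed twisted exact triangle.

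The principal obstacle is essentially bookkeeping: verifying that the construction in \cite{monopolesBook} is sufficiently formal to extend verbatim to local coefficients and positive completions. This should be automatic, since the exact triangle ultimately arises from an algebraic short exact sequence of cochain complexes whose definition only involves the block decomposition of generators and the block structure of the differential, both of which survive twisting and completion unchanged.
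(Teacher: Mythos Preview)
Your proposal is correct and matches the paper's approach. The paper does not give a separate proof of this theorem; it simply observes (in the sentence preceding the statement) that ``since twisted Seiberg--Witten--Floer cohomology can be interpreted as ordinary Seiberg--Witten--Floer cohomology with local coefficients, the theorems in \cite{monopolesBook} also apply to the twisted case,'' and then cites \cite[Chapter~VIII]{monopolesBook} directly---your write-up is precisely the expanded justification of this remark.
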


The following vanishing result is a variant of a well-known fact about Seiberg--Witten cohomology.

\begin{prop}
	\label{prop:checkVanishing} There is a constant $d_* \geq 1$ depending only on a fixed background metric on the manifold $M_\phi$ such that for every class $\Gamma$ of degree $d \geq d_*$, the cohomology group $\TWHMcheck^*(M_\phi, \Gamma, m_b)$ is zero. 
\end{prop}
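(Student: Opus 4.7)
The plan is to use the exact triangle of Theorem~\ref{thm:exactTriangle} together with an analysis of the balanced Seiberg--Witten equations at $r = -2\pi\rho$. First, I identify the reducible locus: at the balanced parameter with a small admissible perturbation $\fp$, the reducible solutions modulo the null-homotopic gauge group $\cG^\circ(M_\phi)$ form an affine copy of $H^1(M_\phi;\mathbb{R})$, namely the universal abelian cover of the Picard torus $T_\Gamma$. Applying the blow-up construction of \cite{monopolesBook}, the complex $\TWCMbar^\bullet(M_\phi,\Gamma,m_b)$ is realized as a Morse--Bott complex on this cover enhanced by a $\mathbb{CP}^\infty$-factor coming from the normal spinor directions. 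Since the cover is contractible and carries a translation action by $H^1(M_\phi;\mathbb{Z})$, the resulting cohomology is controlled by the $U$-action together with a contracting homotopy along the non-compact directions.

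Next, I exploit the monotonicity relation \eqref{eq:cGamma} to show that for $d$ large, the connecting map $\TWHMhat^\bullet \to \TWHMcheck^{\bullet+1}$ in the exact triangle becomes surjective, forcing $\TWHMcheck^\bullet = 0$. The key point is that the spectral flow between the reducible critical set and any irreducible configuration grows linearly in $d$, coming from the shift $2\mathrm{PD}(\Gamma)$ in $c_1(\fs_\Gamma)$. For $d$ sufficiently large, the irreducible contributions to $\TWHMcheck^\bullet$ therefore sit in gradings separated from the boundary-stable reducible contributions, and combined with the explicit bar-version computation above, an exactness-plus-filtration argument kills every class. Alternatively, one can argue directly that the $U$-action on $\TWHMcheck^\bullet$ becomes invertible for $d$ large, which together with finiteness in each grading implies vanishing.

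The main obstacle will be verifying that the threshold $d_*$ depends only on the background metric, and not on $\Gamma$ directly. This requires uniform Morse--Bott estimates on the reducible locus and a uniform linear-in-$d$ lower bound on the relevant spectral flows; both should follow from controlling the harmonic representatives of $[\omega_\phi]$ and $c_1(V)$ with respect to the fixed background metric. A related technical issue is ensuring that the contracting homotopy in the first step passes to the positive completion used in defining $\TWHMbar^\bullet$, which requires care with the action filtration defined by the Chern--Simons--Dirac functional along the reducible cover.
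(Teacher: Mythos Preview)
Your approach is fundamentally different from the paper's and, more importantly, has genuine gaps at the key steps. The paper's proof is much shorter: it invokes \cite[Theorem~31.1.1]{monopolesBook} to identify $\TWHMcheck^*(M_\phi,\Gamma,m_b)$ with $\TWHM^*(M_\phi,\Gamma,m_+)$, observes that the latter can be computed using an SW parameter set with $r=0$, and then uses the standard compactness fact that the three-dimensional Seiberg--Witten equations with uniformly bounded perturbation admit solutions for only finitely many spin-c structures. Hence the chain complex is already zero once $d$ exceeds a threshold determined by the background metric. No exact triangle, no reducible analysis, and no spectral flow estimates are needed.

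By contrast, your argument never actually closes. The claim that ``irreducible contributions sit in gradings separated from the boundary-stable reducible contributions'' does not by itself force $\TWHMcheck^*$ to vanish; you would still need to show that the reducible part of the check complex is acyclic in the relevant gradings, and your Morse--Bott computation on the universal cover of the Picard torus does not obviously yield this (indeed, $\TWHMbar^*$ is nonzero by Theorem~\ref{thm:nonVanishing}, so the reducible sector is not trivially acyclic). The ``exactness-plus-filtration argument kills every class'' is asserted but not supplied. The alternative route via invertibility of the $U$-action is also unproven: you give no mechanism by which large $d$ makes $U$ an isomorphism on $\TWHMcheck^*$, and in fact the check version typically has $U$-torsion. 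Finally, the concerns you flag yourself---uniformity of $d_*$ and compatibility with positive completion---are real and not addressed.

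The lesson here is that the proposition is a soft consequence of a known wall-crossing isomorphism and a compactness bound, not a delicate grading or filtration argument.
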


\begin{proof}
	By \cite[Theorem $31.1.1$]{monopolesBook} the group $\TWHMcheck^*(M_\phi, \Gamma, m_b)$ is isomorphic to $\TWHM^*(M_\phi, \Gamma, m_+)$. Note that the theorem quoted asserts an isomorphism between the completed versions of the cohomology groups. However, both $\TWHMcheck^*(M_\phi, \Gamma, m_b)$ and $\TWHM^*(M_\phi, \Gamma, m_+)$ are equal to their completed versions, since their cochain complexes in this case do not change after completion. 
	
	Recall that $\TWHM^*(M_\phi, \Gamma, m_+)$ can be computed as $\TWHM^*(\cS)$, where $\cS = (\phi, J, 0, \Gamma, \fg)$
	is an SW parameter set with $r = 0$.
	It is a standard fact that the three-dimensional Seiberg--Witten equations with uniformly bounded perturbations have solutions only for finitely many spin-c structures. Therefore $\TWHM^*(M_\phi, \Gamma, m_+)$ vanishes when the degree of $\Gamma$ is sufficiently high.
\end{proof}

The following is a corollary of Theorem \ref{thm:exactTriangle} and Proposition \ref{prop:checkVanishing} that will be very useful to us.  In the untwisted setting, it was originally observed by Lee--Taubes in \cite{LeeTaubes12}. 

\begin{cor} \label{cor:barEqualsHat}
	For every $\Gamma\in H_1(M_\phi;\bZ)$ with sufficiently large degree, the map
	$\TWHMbar^*(M_\phi, \Gamma, m_b) \to \TWHMhat^*(M_\phi, \Gamma, m_b)$
	in the exact triangle of Theorem \ref{thm:exactTriangle} is an isomorphism. 
\end{cor}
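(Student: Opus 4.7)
The proof is essentially a one-line consequence of the two preceding results, so the plan is mainly to spell out the standard exact-triangle manipulation and identify the correct threshold on the degree.

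First, I would invoke Theorem \ref{thm:exactTriangle}, which gives the long exact sequence
\[
\cdots \to \TWHMcheck^{*-1}(M_\phi, \Gamma, m_b) \to \TWHMbar^{*}(M_\phi, \Gamma, m_b) \to \TWHMhat^{*}(M_\phi, \Gamma, m_b) \to \TWHMcheck^{*}(M_\phi, \Gamma, m_b) \to \cdots
\]
obtained by unwinding the triangle. Next, apply Proposition \ref{prop:checkVanishing}: choose $d_*$ as in that proposition, and assume $\Gamma$ has degree $d \geq d_*$, so that $\TWHMcheck^{*}(M_\phi, \Gamma, m_b) = 0$ in every degree. Then in the long exact sequence both the term immediately before and the term immediately after the map $\TWHMbar^* \to \TWHMhat^*$ vanish, so the map is simultaneously injective and surjective, hence an isomorphism.

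There is no genuine obstacle to address: once Proposition \ref{prop:checkVanishing} is in hand, the corollary is pure homological algebra on the exact triangle. The only bookkeeping is to make sure the degree threshold $d_*$ from Proposition \ref{prop:checkVanishing} is the one that governs the statement of the corollary, which it does, since vanishing of $\TWHMcheck^*$ in \emph{all} gradings is exactly what the exact-triangle argument needs.
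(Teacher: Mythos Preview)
Your proposal is correct and matches the paper's approach exactly: the corollary is stated as an immediate consequence of Theorem~\ref{thm:exactTriangle} and Proposition~\ref{prop:checkVanishing}, and your unwinding of the exact triangle with the vanishing of $\TWHMcheck^*$ is precisely the intended argument.
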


\subsection{The Lee--Taubes isomorphism}
\label{sec:leetaubes}

In this section, we present Lee--Taubes' isomorphism between PFH and Seiberg--Witten--Floer cohomology in the untwisted case, following \cite{LeeTaubes12} and \cite[Paper V, Proposition $2.1$]{ECHSWF}.  

Fix a nondegenerate area-preserving diffeomorphism $\phi$ of a surface $(\Sigma, \omega)$ equipped with a choice of area form. Suppose that $\phi$ is negative monotone and $\Gamma$ is the corresponding negative monotone class of degree $d \geq \max(1, G)$ with monotonicity constant $\rho < 0$. Choose a generic almost-complex structure $J \in \cJ^\circ(dt, \omega_\phi)$. Fix a $(\delta, d)$-approximation $(\phi_*, J_*)$ of $(\phi, J)$ for $\delta > 0$ very small. We will see in \S\ref{subsec:deltaDEstimates} that $\phi_*$ is the endpoint of a specific Hamiltonian isotopy $\wt\phi_*$ from $\phi$ to $\phi_*$, and for every $s \in [0,1]$ there is a Hamiltonian $H^s$, supported in a union $\bD_\delta \subset \Sigma$ of embedded disks centered around the periodic points of period $\leq d$, generating an isotopy from $\phi$ to $\wt\phi_*(s)$. Pick $r > -2\pi\rho$ and define an SW parameter set $\cS_* = (\phi_*, J_*^{H^1}, \Gamma^{H^1}, r, \fg = \fe_\mu + \fp)$.  We assume that $\fe_\mu$ and $\fp$ have small $\cP$--norms and are generic, more precisely ``strongly admissible'' in the sense of \cite[\S$3$]{TaubesWeinstein1}.

Lee--Taubes \cite[Theorem $1.2$]{LeeTaubes12} defined for all sufficiently large $r > -2\pi\rho$ a chain-isomorphism
$$T_\Phi^r: \PFC_*(\phi_*, \Gamma^{H^1}, J_*^{H^1}) \to \CM^{-*}(\cS).$$
The map $T_\Phi^r$ defined above is induced by an injective map $\Phi^r$ from the set of PFH generators to the set of solutions of the $(\phi_*, J_*^{H^1}, \Gamma^{H^1}, r, \fe_\mu)$--Seiberg--Witten equations (\ref{eq:sw}). By a transversality result of Taubes (see \cite[Proposition $3.11$]{TaubesWeinstein1}) and the implicit function theorem, we find as long as $\mu$ is small and generic, and $\fp$ is sufficiently small, that these are in bijection with the set of solutions to the $\cS$--Seiberg--Witten equations. We compose the map of modules induced by $\Phi^r$ with this bijection to produce the desired chain map $T_\Phi^r$. The composition of $\Phi^r$ with the projection onto the gauge-equivalence class moreover defines a bijection between the set of PFH generators and the set of gauge-equivalence classes of solutions to the $\cS$--Seiberg--Witten equations. 

The following proposition lists some formal properties of the map $\Phi^r$ that we will find useful. 

\begin{prop}
\label{prop:untwistedIsomorphism} The map $\Phi^r$ satisfies the following properties: 
\begin{enumerate}
\item There are constants $\kappa_{\ref{prop:untwistedIsomorphism}} = \kappa_{\ref{prop:untwistedIsomorphism}}(\phi, J, \Gamma) \geq 1$ and $r_{\ref{prop:untwistedIsomorphism}} = r_{\ref{prop:untwistedIsomorphism}}(\phi, J, \Gamma) \geq 1$ such that for any PFH generator $\Theta$ and $r \geq r_{\ref{prop:untwistedIsomorphism}}$, the configuration $\Phi^r(\Theta) = (B(r), \Psi(r) = r^{1/2}(\alpha(r), \beta(r)))$ has $|\alpha(r)| \geq 999/1000$ on the complement of the tubular neighborhood around $\Theta$ of radius $\kappa_{\ref{prop:untwistedIsomorphism}}r^{-1/2}$. 
\item For a given PFH generator $\Theta$, denote the configuration $\Phi^r(\Theta)$ by $(B(r), \Psi(r))$. Then the two-forms $\frac{i}{2\pi}F_{B(r)}$ converge weakly as one-dimensional currents to the orbit set $\Theta$ as $r\to\infty$. 
\end{enumerate}
\end{prop}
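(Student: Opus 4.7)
The plan is to extract both statements from the construction of $\Phi^r$ in \cite{LeeTaubes12}, whose output is a small perturbation of an explicit ``vortex approximation'' concentrated along $\Theta$. Recall that in Taubes--Lee's construction, one first builds a reference configuration $(B_0(r),\Psi_0(r))$ that, in a tubular neighborhood of each embedded Reeb orbit $\gamma_i$ of $\Theta$ with multiplicity $m_i$, is modelled transverse-slice-by-transverse-slice on the symmetric vortex of vortex number $m_i$ on $\bR^2$ (rescaled by a factor of $r^{-1/2}$), and away from $\Theta$ is essentially the reducible configuration $(B_\Gamma, r^{1/2}(1,0))$. A contraction mapping/implicit function theorem argument then produces the true solution $(B(r),\Psi(r))=\Phi^r(\Theta)$ as a perturbation of $(B_0(r),\Psi_0(r))$, with the difference controlled by $O(r^{-\kappa})$ in appropriate weighted Sobolev norms.

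First I would establish (1). The key point is the standard pointwise Taubes-type bound $|\alpha(r)|\le 1 + O(r^{-1})$ on $\alpha(r)$ that holds for any solution of the $\cS$-Seiberg--Witten equations with our sign conventions; this is obtained by applying a maximum principle to the Weitzenb\"ock identity for $D_{B(r)}^*D_{B(r)}\Psi(r)=0$ together with the curvature equation in \eqref{eq:sw}. Combined with the vortex model, this gives an exponential decay estimate $1 - |\alpha(r)|^2 \le C\exp(-\sqrt{r}\,\mathrm{dist}(\cdot,\Theta)/C)$ on the complement of a tube of radius $\kappa_0 r^{-1/2}$ around $\Theta$, as is standard in this area (cf.\ \cite{TaubesWeinstein1, LeeTaubes12}). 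Choosing $\kappa_{\ref{prop:untwistedIsomorphism}}$ larger than $\kappa_0$ sufficiently so that $C\exp(-\kappa_{\ref{prop:untwistedIsomorphism}}/C)<1-(999/1000)^2$, and $r_{\ref{prop:untwistedIsomorphism}}$ large enough that the $O(r^{-1})$ additive error is absorbed, yields the claimed pointwise bound outside the tube of radius $\kappa_{\ref{prop:untwistedIsomorphism}} r^{-1/2}$.

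For (2), I would combine the curvature equation, which reads $\star^{g} F_{B(r)} = r(|\alpha|^2-1)\,dt + (\text{terms of lower order in } r)$ modulo a uniformly bounded perturbation, with the localization from (1). Concretely, fix a smooth test $1$-form $\eta$ on $M_\phi$, and split $M_\phi = U_r \cup (M_\phi\setminus U_r)$ where $U_r$ is the tube of radius $\kappa_{\ref{prop:untwistedIsomorphism}} r^{-1/2}$ around $\Theta$. On the complement $M_\phi\setminus U_r$, the exponential decay of $1-|\alpha(r)|^2$ proved above forces $\int_{M_\phi\setminus U_r}\eta\wedge \frac{i}{2\pi} F_{B(r)}\to 0$. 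On $U_r$, the vortex model gives that on every transverse slice the integral of $\tfrac{i}{2\pi}F_{B(r)}$ equals $m_i + o(1)$, so using Fubini along $\gamma_i$ the integral $\int_{U_r}\eta\wedge \tfrac{i}{2\pi} F_{B(r)}$ converges to $\sum_i m_i \int_{\gamma_i}\eta$, which is exactly the pairing of $\eta$ with the current $\Theta$.

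The main obstacle is purely bookkeeping: keeping careful track of how the perturbation step in the Lee--Taubes gluing argument depends on $r$ through the $(\delta,d)$-approximation, and ensuring that the constants $\kappa_{\ref{prop:untwistedIsomorphism}}$ and $r_{\ref{prop:untwistedIsomorphism}}$ can be chosen to depend only on $(\phi, J, \Gamma)$. This is where one must invoke, rather than re-derive, the uniform estimates from \cite{LeeTaubes12} and the earlier \cite{TaubesWeinstein1}, since the analytic work has essentially already been carried out there; the above argument is simply the geometric extraction from those estimates of the two clean properties needed downstream.
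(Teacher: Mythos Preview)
Your proposal is correct and in the same spirit as the paper's proof, which is itself essentially a citation: the paper simply points to \cite[Paper II, \S 3]{ECHSWF} and \cite[Paper V, proof of Proposition 2.1]{ECHSWF} for item (1), and to the a priori energy bounds together with the argument of \cite[Paper IV, Theorem 1.1]{ECHSWF} for item (2). Your write-up unpacks more of the underlying analysis (the vortex model plus gluing perturbation) than the paper does.

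One small difference worth flagging for item (2): rather than your hands-on tube/complement decomposition using the explicit vortex approximation, the paper invokes the general Taubes compactness argument, which shows that \emph{any} family of solutions with uniformly bounded $\int dt\wedge F_B$ has curvature currents converging (subsequentially) to an orbit set; the identification of that limit with $\Theta$ then comes from item (1). This is slightly more robust, since it does not require tracking how the gluing perturbation affects slice-by-slice curvature integrals inside the tube. Your direct approach is also fine, but the step ``on every transverse slice the integral of $\tfrac{i}{2\pi}F_{B(r)}$ equals $m_i+o(1)$'' implicitly assumes the perturbation from the vortex model is small in a norm controlling curvature in $L^1$ on slices, which is true but is exactly the kind of bookkeeping you acknowledge at the end.
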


\begin{proof}
The first item in Proposition \ref{prop:untwistedIsomorphism} is implicit in the construction of $\Phi^r$ in \cite[Paper II, Section $3$]{ECHSWF}, and is indeed mentioned explicitly in Part $2$ of the proof of Proposition $2.1$ in \cite[Paper V]{ECHSWF}. The second item follows from the a priori uniform energy bounds on the solutions $(B(r), \Psi(r))$ along with the argument in \cite[Paper IV, Theorem $1.1$]{ECHSWF}. 
\end{proof}

The Lee--Taubes isomorphism extends to the twisted setting; details of the construction can be found in \S\ref{sec:twistedIso}. Choose a reference cycle $\Theta_{\text{ref}}$ for the map $\phi$ discussed above which represents the class $\Gamma$, and let $\bfS_* = (\phi_*, \Theta_{\text{ref}}^{H^1}, J_*^{H^1})$ be a PFH parameter set given by a $(\delta, d)$-approximation. Then there is a bijection $\text{Tw}\Phi^r$ between generators of the complex $\TWPFC_*(\bfS_*)$ and the complex $\TWCM^{-*}(\cS_*)$, which induces a chain isomorphism $T_{\text{Tw}\Phi}^r: \TWPFC_*(\bfS_*) \to \TWCM^{-*}(\cS_*).$

\subsubsection{PFH cobordism maps} \label{subsec:pfhCobordismMaps}
Suppose
$\cS_\pm = (\phi_\pm, J_\pm, \Gamma_\pm, r_\pm, \fg_\pm)$ are SW parameter sets
with both $r_+$ and $r_-$ greater than $-2\pi\rho$ and $\phi_\pm = \phi_0\circ \phi_{H_\pm}^1$ for some Hamiltonians $H_\pm$. 
Assume that the classes $\Gamma_\pm$ are ``compatible'' in the sense that $(M_{H_+}\circ M_{H_-}^{-1})_*(\Gamma_-)=\Gamma_+$. Choose a trivialized reference cycle $\Theta_{\text{ref}} \subset M_{\phi_0}$ so that its pushforwards $\Theta_{\text{ref}}^{\pm} = M_{H_{\pm}}(\Theta_{\text{ref}})$ represent the classes $\Gamma_{\pm}$. 
Let 
$\cS_s = (K, J_s, r_s, \fg_s)$
be a SW continuation parameter set between $\cS_-$ and $\cS_+$.
The Lee--Taubes isomorphism identifies the Seiberg--Witten--Floer cohomology groups associated to $\cS_\pm$ with PFH groups associated to the parameter sets $\bfS_\pm = (\phi_\pm, \Theta^\pm_{\text{ref}}, J_\pm)$. 
Then the composition of the Lee--Taubes isomorphism and the Seiberg--Witten continuation map yields a chain isomorphism 
$$T^{\PFH}(\bfS_-, \bfS_+; \cS_s): \TWPFC_*(\bfS_+) \to \TWPFC_*(\bfS_-).$$

A ``pseudoholomorphic curve axiom'' proved by Chen \cite{Chen21} implies that this chain isomorphism preserves the $\bZ$-gradings on each side; see the discussion before Theorem $3.6$ in \cite{simplicity20} or \S\ref{subsec:pfhSpectralInvariantProperties} for a more detailed discussion of this result. 

The homotopy $K$ between the Hamiltonians $H_\pm$ induces a Hamiltonian isotopy from $\text{id}$ to $\phi_-\phi_+^{-1}$, which only depends on the particular choice of $K$ up to homotopy. From our discussion on invariance in \S\ref{subsubsec:swfCobordismMaps}, the induced map on PFH depends only on the choice of Hamiltonians $H_\pm$, and so we denote it by
$$T^{\PFH}(\bfS_-, \bfS_+; H_-, H_+): \TWPFH_*(\bfS_+) \to \TWPFH_*(\bfS_-).$$

We mostly consider the case where $\phi_0 = \text{id}$ and $H_- = 0$, and in this case for the sake of convenience we write $H_+ = H$ and denote the map on PFH by $T^{\PFH}(\bfS_-, \bfS_+; H)$. The functoriality properties on the Seiberg--Witten side imply that, via the cobordism maps for $\phi = \phi_- = \phi_+$ and $H_+ = H_- = 0$, the groups $\TWPFH_*(\phi, \Theta_{\text{ref}}, J)$ for varying $J \in \cJ(dt, \omega_\phi)$ are canonically isomorphic to a single $\bZ$-graded module which we write as $\TWPFH_*(\phi, \Theta_{\text{ref}})$.

\section{The Weyl law}
Having reviewed the extensive preliminaries, we now give the proof of the Weyl law, assuming some estimates whose proof we defer to \S\ref{sec:estimates}.  

From now on, we will call a constant ``geometric'' if it depends only on the underlying Riemannian metric. Unless otherwise specified, the notation $\kappa$ denotes a geometric constant greater than or equal to $1$, which can be taken to increase from line to line. For $x, y > 0$ positive, we will write $x \lesssim y$ or $x = O(y)$ if there exists a geometric constant $\kappa > 0$ such that $x \leq \kappa y$. 
\subsection{Seiberg--Witten--Floer spectral invariants}

We begin by defining the aforementioned ``Seiberg--Witten--Floer spectral invariants'', whose definition is analogous to the previously defined spectral invariants for PFH.  Fix an SW parameter set $\cS = (\phi, J, \Gamma, r, \fg = \fe_\mu + \fp)$ and a base configuration $\fc_\Gamma$. We define Seiberg--Witten--Floer spectral invariants in the case where $r > -2\pi\rho$. They will depend on our choice of base configuration $\fc_\Gamma = (B_\Gamma, \Psi_\Gamma)$.  

Recall from the preliminaries that the Chern--Simons--Dirac functional $\fa_{r, \fg}(-, \fc_\Gamma)$ is invariant under null-homotopic gauge transformations, so it is well-defined on generators of $\TWCM^*(\cS)$. 
For each $L\in \mathbb{R}$, define a submodule $\TWCM^*_L(\cS; \fc_\Gamma)$ of $\TWCM^*(\cS)$ to be the submodule generated by all of the twisted SWF generators for which $r^{-1}\fa_{r, \fg}(-, \fc_\Gamma) \geq L$.  By a standard monotonicity estimate for gradient flows of the Chern--Simons--Dirac functional, which can also be deduced as a special case of Lemma \ref{lem:csdContinuationGradient1} below, $\TWCM^*_L(\cS; \fc_\Gamma)$ is a subcomplex of $\TWCM^*(\cS)$. Write $\TWHM^*_L(\cS; \fc_\Gamma)$ for the cohomology of this subcomplex. The inclusion 
induces a map
$\iota^{\TWHM}_L(\cS; \fc_\Gamma): \TWHM^*_L(\cS; \fc_\Gamma) \to \TWHM^*(\cS)$. 

Fix a class $0\neq \sigma \in \TWHM^*(\cS)$. The \textbf{Seiberg--Witten--Floer spectral invariant} of $\sigma$, denoted by 
$c^{\HM}_{\sigma}(\cS; \fc_\Gamma)$,
is the supremum of all $L$ such that $\sigma$ lies in the image of the map $\iota^{\TWHM}_L(\cS; \fc_\Gamma)$. 

In the monotone case, there are only finitely many possible values for $\fa_{r, \fg}(\fc, \fc_\Gamma)$ on the generators of $\TWCM^*(\cS; \fc_\Gamma)$ at any given degree (see \cite[Proposition $29.2.1$]{monopolesBook}). Therefore, if $\sigma\neq 0$, then its Seiberg--Witten--Floer spectral invariant is finite.

We must extend the definition of the Seiberg--Witten spectral invariants to the case where $\fg \in \cP$ is \emph{any} abstract perturbation with $\|\fg\|_{\cP} \leq 1$; a priori they are only defined for those $\fg \in \cP$ for which the complex $\TWCM^*(\cS)$ is well-defined. This is necessary because it is more analytically tractable to work with the Seiberg--Witten spectral invariants where the abstract perturbation is of the special form $\fg = \fe_\mu$. 

The Seiberg--Witten spectral invariant
$$c^{\HM}_{\sigma}(\cS; \fc_\Gamma) = c^{\HM}_{\sigma}(\phi, J, \Gamma, r, \fg; \fc_\Gamma)$$
for general $\fg$ is defined as follows. Fix any sequence $\fp_k \to 0$ in $\cP$ such that, for every $k$, the chain complex
$\TWCM^*(\cS_k) = \TWCM^*(\phi, J, \Gamma, r, \fg_k)$
is well-defined. Then set
$$c^{\HM}_{\sigma}(\cS; \fc_\Gamma) = \lim_{k \to \infty} c^{\HM}_{\sigma}(\cS_k; \fc_\Gamma).$$

The limit on the right-hand side exists and is independent of the choice of sequence $\fp_k \to 0$. This is a direct consequence of the fact that Seiberg--Witten spectral invariants are locally Lipschitz-continuous with respect to the abstract perturbation term $\fg$, which is the content of the following lemma. The proof uses estimates from \S\ref{subsubsec:continuationEstimates}. 

\begin{lem}
\label{lem:maxMin1}
Let $\phi$, $J$, $\Gamma$, $\rho$, $\sigma$ and $\fc_\Gamma$ be as fixed above. Let $\cS_- = (\phi, J, \Gamma, r, \fg_-)$ and $\cS_+ = (\phi, J, \Gamma, r, \fg_+)$ be any pair of SW parameter sets with $r > -2\pi\rho$ so that the complexes $\TWCM^*(\cS_\pm)$ are defined. Suppose that $-2\pi\rho > 10r_{\ref{prop:continuationEstimates2}}$. Then there is a constant $\kappa_{\ref{lem:maxMin1}} \geq 1$ depending only on $\phi$ and $J$ such that if $\|\fg_+ - \fg_-\|_{\cP} \leq \kappa_{\ref{lem:maxMin1}}^{-1}$ then the following bound holds:
$$|c^{\HM}(\cS_+; \fc_\Gamma) - c^{\HM}(\cS_-; \fc_\Gamma)| \leq \kappa_{\ref{lem:maxMin1}}r^2\|\fg_+ - \fg_-\|_{\cP}.$$
\end{lem}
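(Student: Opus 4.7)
The plan is to build SW continuation cochain maps between the twisted Seiberg--Witten--Floer complexes of $\cS_-$ and $\cS_+$ from a straight-line homotopy of perturbations, and then to translate the standard Chern--Simons--Dirac monotonicity estimate along the resulting instantons into a Lipschitz bound on the spectral invariants.

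First I would fix a smooth cutoff $\chi: \bR \to [0,1]$ with $\chi \equiv 0$ on $(-\infty, -1]$ and $\chi \equiv 1$ on $[1, \infty)$, and set $\fg_s := \fg_- + \chi(s)(\fg_+ - \fg_-)$. Together with the trivial Hamiltonian homotopy $K \equiv 0$ (so $\phi$ and $J$ remain constant in $s$), this gives an SW continuation parameter set $\cS_s$ from $\cS_-$ to $\cS_+$, and the time-reversed family $\cS_{-s}$ interpolates back the other way. After applying arbitrarily small generic auxiliary perturbations in $\cP$, the constructions in \S\ref{subsec:continuationMaps} and \S\ref{subsubsec:swfCobordismMaps} produce continuation cochain maps
\begin{equation*}
T(\cS_s): \TWCM^*(\cS_+) \to \TWCM^*(\cS_-), \qquad T(\cS_{-s}): \TWCM^*(\cS_-) \to \TWCM^*(\cS_+),
\end{equation*}
whose induced maps on cohomology are mutually inverse and thus match the chosen class $\sigma$ on the two sides.

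Next, for any $\cS_s$-instanton $\fd = (B_s, \Psi_s)$ with nondegenerate limits $\fc_\pm$ at $s = \pm \infty$, the identity $\partial_s \fc_s = -\nabla \fa_{r, \fg_s}(\fc_s)$ (in temporal gauge) and the chain rule give
\begin{equation*}
\fa_{r, \fg_+}(\fc_+, \fc_\Gamma) - \fa_{r, \fg_-}(\fc_-, \fc_\Gamma) = -\int_{-\infty}^{\infty}\|\partial_s \fc_s\|_{L^2}^2\,ds + \int_{-\infty}^{\infty}\chi'(s)\big[(\fg_+ - \fg_-)(\fc_s) - (\fg_+ - \fg_-)(\fc_\Gamma)\big]\,ds.
\end{equation*}
The first integral on the right is nonpositive, so rearranging and dividing by $r$ yields
\begin{equation*}
r^{-1}\fa_{r, \fg_-}(\fc_-, \fc_\Gamma) \geq r^{-1}\fa_{r, \fg_+}(\fc_+, \fc_\Gamma) - r^{-1}\left|\int_{-\infty}^{\infty}\chi'(s)\big[(\fg_+ - \fg_-)(\fc_s) - (\fg_+ - \fg_-)(\fc_\Gamma)\big]\,ds\right|.
\end{equation*}
The hypothesis $-2\pi\rho > 10 r_{\ref{prop:continuationEstimates2}}$ places the analysis in the regime of the continuation estimates deferred to \S\ref{subsubsec:continuationEstimates}. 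Combining the resulting uniform $C^0$ control on the spinor $\Psi_s$ along any $\cS_s$-instanton with the tame perturbation axioms of \cite[Chapter 10]{monopolesBook} bounds the right-hand error by $\kappa r^2 \|\fg_+ - \fg_-\|_\cP$ for some geometric $\kappa \geq 1$, provided $\|\fg_+ - \fg_-\|_\cP$ is small enough to stay within the radius of the a priori estimate.

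Finally, I would convert this pointwise filtration estimate into the claimed Lipschitz bound on spectral invariants. Each nonzero matrix coefficient of $T(\cS_s)$ from $[\fc_+]^\circ$ to $[\fc_-]^\circ$ is contributed by an index-zero instanton and therefore obeys the displayed inequality, so if a class $\sigma$ is representable in $\TWCM^*(\cS_+)$ at CSD level $L$, then $T(\cS_s)(\sigma)$ is representable in $\TWCM^*(\cS_-)$ at level $\geq L - \kappa r^2 \|\fg_+ - \fg_-\|_\cP$. Taking the supremum over representatives gives $c^{\HM}_\sigma(\cS_-; \fc_\Gamma) \geq c^{\HM}_\sigma(\cS_+; \fc_\Gamma) - \kappa r^2 \|\fg_+ - \fg_-\|_\cP$, and the symmetric inequality follows by running the same argument with $T(\cS_{-s})$. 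The principal obstacle is securing the $r^2$--dependence of the perturbation integral along continuation instantons with an $s$--varying perturbation, which is precisely what the deferred estimates in \S\ref{subsubsec:continuationEstimates} are designed to deliver; once they are in hand the remainder of the argument is a routine filtration comparison.
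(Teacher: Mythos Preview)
Your proposal follows the paper's approach closely: interpolate perturbations linearly, build the continuation map, use the CSD gradient identity (which is Lemma~\ref{lem:csdContinuationGradient1} in the case $r_+ = r_-$) along index-zero instantons to bound the action shift, and conclude by the symmetric max--min filtration argument. The paper simply packages the action-shift bound as a citation of Proposition~\ref{prop:continuationEstimates2} with $r_+ = r_-$, which you have partly unpacked.

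One point deserves correction. The deferred estimates in \S\ref{subsubsec:continuationEstimates} do \emph{not} supply ``uniform $C^0$ control on the spinor $\Psi_s$'' along a continuation instanton; no such pointwise bound is available in the interior of the flow when abstract perturbations are present. The actual mechanism (see the proof of Lemma~\ref{lem:continuationEstimates4}) is an $L^2$ absorption: Lemma~\ref{lem:tamePerturbationEstimate2} bounds the perturbation integral by $\Lambda_\fg$ times quantities of the form $\|F_{B_s}-F_{B_\Gamma}\|_{L^2}^2 + \|\Psi_s\|_{L^2}^2$, and these are in turn controlled by the instanton equations and absorbed into the nonpositive gradient term $-\int\|\partial_s\fc_s\|_{L^2}^2\,ds$ once $\Lambda_\fg$ is small enough. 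Since you correctly defer to \S\ref{subsubsec:continuationEstimates} for the bound, this does not create a gap in your outline, but the mechanism you name is not the one that actually delivers the $\kappa r^2\|\fg_+-\fg_-\|_\cP$ estimate.
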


\begin{proof}
Pick some smooth path $\{\fg_s\}_{s \in \bR}$ in $\cP$ such that $\fg_s = \fg_-$ for $s < - 1$, $\fg_s = \fg_+$ for $s > 1$, and we have a uniform bound
$$\sup_{s \in [-1,1]} \|\frac{\partial}{\partial s} \fg_s\|_\cP \leq 4\|\fg_+ - \fg_-\|_\cP.$$

Also assume that $\|\fg_+ - \fg_-\|_\cP \leq \frac{1}{100}\kappa_{\ref{prop:continuationEstimates2}}^{-1}$. Define SW parameter sets
$\cS_- = (\phi, J, \Gamma, r, \fg_-)$
and
$\cS_+ = (\phi, J, \Gamma, r, \fg_+).$
Define the SW continuation parameter set
$\cS_s = (0, J, r, \fg_s)$
from $\cS_-$ to $\cS_+$. Suppose that $\{\fg_s\}_{s \in \bR}$ is chosen sufficiently generically, so that the chain quasi-isomorphism 
$T(\cS_s): \TWCM^*(\cS_+) \to \TWCM^*(\cS_-)$
is well-defined.

Let $\fc_+$ be a solution of the $\cS_+$--Seiberg--Witten equations, and let $\fc_-$ be any solution of the $\cS_-$--Seiberg--Witten equations such that $[\fc_-]^\circ$ appears with nonzero coefficient in $T(\cS_s)[\fc_+]^\circ$. Using the fact that the map $T(\cS_s)$ counts solutions to the $\cS_s$--Seiberg--Witten equations, we deduce that there is a solution $\fd$ of the $\cS_s$--Seiberg--Witten instanton equations (\ref{eq:swContinuation}) such that $\lim_{s \to \pm\infty} \fd = \fc_{\pm}$. 

It follows from the version of Proposition \ref{prop:continuationEstimates2} with $r = r_+ = r_-$ that
$$
\fa_{r, \fg_+}(\fc_+, \fc_\Gamma) - \fa_{r, \fg_-}(\fc_-, \fc_\Gamma) \leq \kappa_{\ref{prop:continuationEstimates2}}r^2\|\fg_+ - \fg_-\|_{\cP}.
$$
If we take any cochain $\widetilde{\sigma}_+$ in $\TWCM^*(\cS_+)$ representing $\sigma$, then we find that 
\begin{equation} \label{eq:maxMin1} \min_{[\fc_+] \in \widetilde{\sigma}_+}\fa_{r, \fg_+}(\fc_+, \fc_\Gamma) \leq \min_{[\fc_-] \in T(\cS_s) \cdot \widetilde{\sigma}_+} \fa_{r, \fg_-}(\fc_-, \fc_\Gamma) + \kappa_{\ref{prop:continuationEstimates2}}r^2\|\fg_+ - \fg_-\|_{\cP}. \end{equation}
Taking the maximum of (\ref{eq:maxMin1}) across all $\widetilde{\sigma}_+$ implies
$$c^{\HM}_{\sigma}(\cS_+; \fc_\Gamma) \leq c^{\HM}(\cS_-; \fc_\Gamma) + \kappa_{\ref{prop:continuationEstimates2}}r^2\|\fg_+ - \fg_-\|_\cP.$$
Reversing the roles of $\fg_\pm$ and applying the same argument yield a similar inequality with $\fg_+$ and $\fg_-$ switched, which proves the lemma. 
\end{proof}

We will also need a ``spectrality'' result for the Seiberg--Witten spectral invariants, which asserts that the Seiberg--Witten spectral invariants are equal to actions of genuine solutions of the Seiberg--Witten equations. 

\begin{lem} \label{lem:maxMin2}
Let $\phi$, $J$, $\Gamma$, $\rho$, $\sigma$, $\fc_\Gamma$ be as fixed above. Fix any $r > -2\pi\rho$ and any $\fg \in \cP$ such that $\|\fg\|_{\cP} \leq 1$. Then there is a solution $\fc$ of the $(\phi, J, \Gamma, r, \fg)$--Seiberg--Witten equations such that
$$c^{\HM}_{\sigma}(\phi, J, \Gamma, r, \fg; \fc_\Gamma) = r^{-1}\fa_{r, \fg}(\fc, \fc_\Gamma).$$
\end{lem}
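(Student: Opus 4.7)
The plan is to reduce to the generic case where the twisted complex is well-defined, establish spectrality there by a finiteness argument, and then pass to the general case by a compactness argument using Lemma~\ref{lem:maxMin1}.

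First I would handle the case in which $\fg$ is chosen generically so that $\TWCM^*(\cS)$ is a well-defined chain complex with nondegenerate irreducible generators. By the unwinding of the definition, $c^{\HM}_\sigma(\cS;\fc_\Gamma)$ equals
\[
\sup_{\wt\sigma} \min_{[\fc]^\circ \in \wt\sigma} r^{-1}\fa_{r,\fg}(\fc,\fc_\Gamma),
\]
where $\wt\sigma$ ranges over cocycle representatives of $\sigma$ and $[\fc]^\circ$ over the $\cG^\circ$-equivalence classes of generators appearing in $\wt\sigma$. The monotonicity of $\Gamma$ together with \cite[Proposition~29.2.1]{monopolesBook}, as cited in the definition of the SWF spectral invariants, ensures that for each fixed absolute grading the values of $r^{-1}\fa_{r,\fg}(\cdot,\fc_\Gamma)$ on generators form a discrete subset of $\bR$. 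Consequently, since $\sigma\neq 0$, the supremum above is attained by some cocycle representative $\wt\sigma_0$, and the minimum over that representative is attained by some generator $[\fc]^\circ$. That generator is represented by a genuine solution $\fc$ of the $\cS$--Seiberg--Witten equations, and by construction $r^{-1}\fa_{r,\fg}(\fc,\fc_\Gamma) = c^{\HM}_\sigma(\cS;\fc_\Gamma)$.

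Next I would treat the general case. Choose a sequence $\fg_k \in \cP$ with $\fg_k \to \fg$ in $\cP$ such that each complex $\TWCM^*(\cS_k)$ is well-defined, where $\cS_k = (\phi,J,\Gamma,r,\fg_k)$. By Lemma~\ref{lem:maxMin1}, the spectral invariants satisfy $c^{\HM}_\sigma(\cS_k;\fc_\Gamma) \to c^{\HM}_\sigma(\cS;\fc_\Gamma)$. By the generic case above, for each $k$ there is a solution $\fc_k$ of the $\cS_k$--Seiberg--Witten equations with $r^{-1}\fa_{r,\fg_k}(\fc_k,\fc_\Gamma) = c^{\HM}_\sigma(\cS_k;\fc_\Gamma)$.

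Finally I would extract a limit. The a priori $C^\infty$ bounds on solutions of the perturbed three-dimensional Seiberg--Witten equations (with $\|\fg_k\|_\cP$ and $r$ fixed), together with the standard compactness theory for the Seiberg--Witten equations from \cite[Chapter~10]{monopolesBook}, imply that after applying a sequence of gauge transformations and passing to a subsequence, the $\fc_k$ converge smoothly to a configuration $\fc_\infty$ which solves the $\cS$--Seiberg--Witten equations. Continuity of $\fa_{r,\fg}(\,\cdot\,,\fc_\Gamma)$ on the configuration space with respect to $C^\infty$ convergence, and the fact that $\fg_k \to \fg$ in $\cP$, give
\[
r^{-1}\fa_{r,\fg}(\fc_\infty,\fc_\Gamma) \;=\; \lim_{k\to\infty} r^{-1}\fa_{r,\fg_k}(\fc_k,\fc_\Gamma) \;=\; \lim_{k\to\infty} c^{\HM}_\sigma(\cS_k;\fc_\Gamma) \;=\; c^{\HM}_\sigma(\cS;\fc_\Gamma),
\]
so $\fc = \fc_\infty$ is the desired solution.

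The main obstacle is the compactness step: one must ensure that the gauge-fixed sequence $\{\fc_k\}$ has uniform higher-order bounds robust against the perturbation $\fg_k$, so that the limit configuration exists and solves the equations with perturbation $\fg$. This is already built into the Kronheimer--Mrowka framework, but it does require invoking the standard compactness results for $\cP$-perturbed Seiberg--Witten equations on a closed $3$-manifold.
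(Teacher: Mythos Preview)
Your Step~1 for the generic case is correct and in fact cleaner than the paper's: you use directly that at a fixed grading there are only finitely many action values, so the max--min is attained by a genuine generator. The paper instead runs a compactness argument already in Step~1.

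The genuine gap is in your Step~2. The compactness theory for the perturbed Seiberg--Witten equations only gives convergence after applying gauge transformations $u_k \in \cG(M_\phi)$, and these need not lie in the identity component $\cG^\circ(M_\phi)$. Since $\fa_{r,\fg}(\,\cdot\,,\fc_\Gamma)$ is invariant only under $\cG^\circ(M_\phi)$, you cannot conclude
\[
r^{-1}\fa_{r,\fg}(\fc_\infty,\fc_\Gamma) = \lim_{k\to\infty} r^{-1}\fa_{r,\fg_k}(\fc_k,\fc_\Gamma);
\]
what you get instead is
\[
r^{-1}\fa_{r,\fg}(\fc_\infty,\fc_\Gamma) = c^{\HM}_\sigma(\cS;\fc_\Gamma) - \lim_{k\to\infty} i\bigl(\pi r^{-1} + \tfrac{1}{2\rho}\bigr)\bigl([u_k^{-1}du_k]\cup c_\Gamma\bigr)[M_\phi],
\]
and this extra term need not vanish. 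The paper fixes this by observing that the numbers $([u^{-1}du]\cup c_\Gamma)[M_\phi]$ form a discrete subset of $\bR$, so the above limit is realized by some $v \in \cG(M_\phi)$, and then $v\cdot\fc_\infty$ is the desired solution with the correct action. You should insert this correction; without it your limiting solution may have the wrong Chern--Simons--Dirac value.
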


\begin{proof}
We prove the lemma in two steps.

\textbf{Step 1:} The first step proves the lemma in the case where $\fg$ is such that the complex $\TWCM^*(\cS) = \TWCM^*(\phi, J, \Gamma, r, \fg)$ is well-defined. By the definition of the spectral invariant, there is a sequence of solutions $\{\fc_k\}_{k \in \mathbb{N}}$ of the $\cS$--Seiberg--Witten equations such that
$$\lim_{k \to \infty} r^{-1}\fa_{r, \fg}(\fc_k, \fc_\Gamma) = c^{\HM}_{\sigma}(\cS; \fc_\Gamma).$$

The compactness theory for the Seiberg--Witten equations implies that, after passing to a subsequence, there is a sequence of gauge transformations $\{u_k\}_{k \in \mathbb{N}}$ and a solution $\fc$ of the $\cS$--Seiberg--Witten equations such that
$\lim_{k \to \infty} u_k \cdot \fc_k = \fc$
in the $C^\infty$ topology. This implies, by the formula for the change in the Chern--Simons--Dirac functional under gauge transformations, that 
\begin{align*}
r^{-1}\fa_{r, \fg}(\fc, \fc_\Gamma) &= \lim_{k \to \infty} r^{-1}\fa_{r, \fg}(u_k \cdot \fc_k, \fc_\Gamma) \\
&= \lim_{k \to \infty} r^{-1}\fa_{r, \fg}(\fc_k, \fc_\Gamma) - i(\pi r^{-1} + \frac{1}{2\rho})([u_k^{-1}du_k] \cup c_\Gamma)[M_\phi] \\
&= c^{\HM}_{\sigma}(\phi, J, \Gamma, r, \fg; \fc_\Gamma) - \lim_{k \to \infty} i(\pi r^{-1} + \frac{1}{2\rho})([u_k^{-1}du_k] \cup c_\Gamma)[M_\phi].
\end{align*}

Here $c_\Gamma := 2\text{PD}(\Gamma) + c_1(V)$ is used for brevity. The set of real numbers $$\{([u^{-1}du] \cup c_\Gamma)[M_\phi]\,|\,u \in C^\infty(M_\phi, S^1)\}$$
is discrete, so there is a gauge transformation $v$ such that
$$\lim_{k \to \infty} -([u_k^{-1}du_k] \cup c_\Gamma)[M_\phi] = ([v^{-1}dv] \cup c_\Gamma)[M_\phi].$$
Hence
$\fa_{r, \fg}(v \cdot \fc, \fc_\Gamma) = r^{-1}c^{\HM}_{\sigma}(\phi, J, \Gamma, r, \fg; \fc_\Gamma)$
as desired. 

\textbf{Step 2:} The second step proves the lemma where $\fg$ is arbitrary. Then Step 1 and the definition of the Seiberg--Witten spectral invariants implies that there is a sequence $\fg_k \to \fg$ in $\cP$ and a sequence $\{\fc_k\}_{k \geq 1}$ of configurations such that for each $k$, $\fc_k$ is a solution of the $(\phi, J, \Gamma, r, \fg_k)$--Seiberg--Witten equations and $$r^{-1}\fa_{r, \fg_k}(\fc_k, \fc_\Gamma) = c^{\HM}_{\sigma}(\phi, J, \Gamma, r, \fg_k; \fc_\Gamma).$$
Therefore
$c^{\HM}_{\sigma}(\phi, J, \Gamma, r, \fg; \fc_\Gamma) = \lim_{k \to \infty} r^{-1}\fa_{r, \fg_k}(\fc_k, \fc_\Gamma).$

The compactness theory for the Seiberg--Witten equations implies that, after passing to a subsequence, there is a sequence of gauge transformations $\{u_k\}_{k \in \mathbb{N}}$ and a solution $\fc$ of the $(\phi, J, \Gamma, r, \fg)$--Seiberg--Witten equations such that
$\lim_{k \to \infty} u_k \cdot \fc_k = \fc$
in the $C^\infty$ topology. By an identical argument to Step 1, the fact that  $\{([u^{-1}du] \cup c_\Gamma)[M_\phi]\,|\,u \in C^\infty(M_\phi, S^1)\}$
is discrete implies that there is a gauge transformation $v$ such that 
$$r^{-1}\fa_{r, \fg}(v \cdot \fc, \fc_\Gamma) = c^{\HM}_{\sigma}(\phi, J, \Gamma, r, \fg; \fc_\Gamma)
\phantom\qedhere\makeatletter\displaymath@qed$$
\end{proof}

\subsection{Recovering the PFH spectral invariants from the Seiberg--Witten ones}
\label{sec:twistedIso}
\subsubsection{The Chern--Simons--Dirac functional and the PFH action}
 \label{subsec:twistedCompactness}
Fix a family of SW parameter sets
$\cS_r = (\phi, J, \Gamma, r, \fe_\mu)$
where $\Gamma$ has degree $d$ and is monotone, the map $\phi$ is $d$-nondegenerate, and $\mu$ is a small co-exact $1$-form. 
Fix a \emph{separated} reference cycle $\Theta_{\text{ref}}$ for $\phi$ representing $\Gamma$ as defined in \S\ref{subsec:pfhDefinition}.  We define a particular family of reference configurations $\fc^c(r) = (B^c(r), \Psi^c(r))$ which we call a \textbf{$\Theta_{\text{ref}}$--concentrated family}. The idea is that the functional $r^{-1}\fa_{r, \fe_\mu}(-, \fc^c(r))$ will approximate the twisted PFH action functional $\bfA$ as $r \to \infty$. 

To write down the construction of $\fc^c(r) = (B^c(r), \Psi^c(r))$, first observe that the bundle $E_\Gamma$, when restricted to the complement of $\Theta_{\text{ref}}$, admits a smooth section $\alpha^c$ such that $|\alpha^c| \equiv 1$ everywhere. Choose any such section $\alpha^c$. There is a unique flat, unitary connection $B^c$ on the restriction of $E_\Gamma$ to the complement of $\Theta_{\text{ref}}$ such that $\alpha^c$ is covariantly constant with respect to $B^c$. 

We choose the connections $B^c(r)$ to satisfy the following conditions:
\begin{enumerate}
\item $B^c(r)$ agrees with $B^c$ outside of a tubular neighborhood of $\Theta_{\text{ref}}$ of radius $r^{-1/2}$. 
\item There is a constant $\kappa \geq 1$ depending only on $\Gamma$, the ambient Riemannian metric, and the length of the reference cycle $\Theta_{\text{ref}}$ such that, for every $r$, $\|F^c(r)\|_{L^1(M_\phi)} \leq \kappa$ and $\|F^c(r)\|_{C^0} \leq \kappa r$. 
\item The two-forms $\frac{i}{2\pi}F^c(r)$, as $r \to \infty$, converge weakly as one-dimensional currents to the reference cycle $\Theta_{\text{ref}}$. 
\end{enumerate}

We choose the spinors $\Psi^c(r) = (\alpha^c(r), \beta^c(r))$ to satisfy the following conditions:
\begin{enumerate}
\item $\alpha^c(r)/|\alpha^c(r)|$ is equal to $\alpha^c$ outside of a tubular neighborhood of $\Theta_{\text{ref}}$ of radius $r^{-1/2}$. 
\item $\alpha^c(r)$ is transverse to the zero section of $E_\Gamma$ and has zero set equal to $\Theta_{\text{ref}}$.  
\item $\|\alpha^c(r)\|_{C^0(M_\phi)} \leq r^{-1}$ for every $r$. 
\item $\beta^c(r) \equiv 0$ for every $r$. 
\end{enumerate}

The following proposition can be thought of as an abstract compactness result. The statement is used later in the proof of Proposition~\ref{prop:dFlatSWSpectral}, but many of the computations in the proof can also be used in the proof of the upcoming Proposition~\ref{prop:spectralInvariantIdentity}, so we put it here to avoid repeating the same computations twice. 

\begin{prop} \label{prop:actionConvergence}
Fix a sequence $r_k \to +\infty$ and fix $\cS_{r_k} = (\phi, J, \Gamma, r_k, \fe_\mu)$. Fis for each $k$ a solution $\fc_k = (B_k, \Psi_k = r_k^{1/2}(\alpha_k, \beta_k))$ to the $\cS_{r_k}$--Seiberg--Witten equations. Fix a separated reference cycle $\Theta_{\text{ref}}$ for $\phi$ with $[\Theta_{\text{ref}}] = \Gamma$ and a $\Theta_{\text{ref}}$-concentrated family $\fc^c(r) = (B^c(r), \Psi^c(r))$. 
Suppose there is some $E > 0$ independent of $k$ such that 
$|r_k^{-1}\fa_{r_k, \fe_\mu}(\fc_k, \fc^c(r_k))| \leq E$ for every $k$.

Then there is a generator $(\Theta, W)$ of the twisted PFH complex 
$\TWPFC(\phi, \Theta_{\text{ref}}, J)$
such that after passing to a subsequence, we have
$\lim_{r \to \infty} r^{-1}\fa_{r, \fe_\mu}( \fc_k, \fc^c(r_k)) = -\pi \bfA(\Theta, W).$
\end{prop}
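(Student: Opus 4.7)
The plan is to extract from the sequence $\fc_k$ a limiting orbit set $\Theta$ via Taubes's compactness and a relative 2-chain class $W$ via the twisted Lee--Taubes correspondence, then analyze $r_k^{-1}\fa_{r_k, \fe_\mu}$ term by term to show it converges to $-\pi \int_W \omega_\phi = -\pi\bfA(\Theta,W)$.

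First, the hypothesis $|r_k^{-1}\fa_{r_k,\fe_\mu}(\fc_k, \fc^c(r_k))| \le E$ together with standard bounds on the auxiliary terms gives a uniform bound on the energy $E_\phi(B_k, B^c(r_k))$. By Taubes's compactness theorem adapted to the mapping torus setting in \cite{LeeTaubes12}, after passing to a subsequence and applying null-homotopic gauge transformations, the currents $\tfrac{i}{2\pi}F_{B_k}$ converge weakly to a 1-cycle in the class $\Gamma$; $d$-nondegeneracy of $\phi$ ensures the limit is an orbit set $\Theta$ which is a PFH generator. By construction, $\tfrac{i}{2\pi}F_{B^c(r_k)} \to [\Theta_{\text{ref}}]$. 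The twisted Lee--Taubes correspondence set up with the concentrated family $\fc^c(r)$ (as in \S\ref{sec:twistedIso} below) assigns to the $\cG^\circ$-class of $\fc_k$ a relative class $W_k \in H_2(M_\phi, \Theta_k, \Theta_{\text{ref}}; \bZ)$, where $\Theta_k$ is the PFH generator associated to $\fc_k$; for large $k$ these stabilize to a single $W \in H_2(M_\phi, \Theta, \Theta_{\text{ref}}; \bZ)$.

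Second, I would expand
\begin{equation*}
r_k^{-1}\fa_{r_k,\fe_\mu}(\fc_k, \fc^c(r_k)) = -\tfrac12 E_\phi(B_k, B^c(r_k)) + \tfrac{1}{2r_k}\fcs(B_k, B^c(r_k)) + r_k^{-1}\fe_\mu(B_k, B^c(r_k)),
\end{equation*}
noting that the Dirac term $r_k^{-1}\int\langle D_{B_k}\Psi_k, \Psi_k\rangle$ vanishes since the formal gradient of $\fe_\mu$ has zero spinor component. The perturbation term is $O(r_k^{-1})$ because $\mu$ pairs against the uniformly bounded mass of $F_{B_k} - F_{B^c(r_k)}$, and the Chern--Simons term is $o(1)$ by the large-$r$ estimates of \cite{TaubesWeinstein1}. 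The asymptotics are thus controlled by $-\tfrac12 E_\phi(B_k, B^c(r_k)) = -\tfrac{i}{2}\int (B_k - B^c(r_k))\wedge\omega_\phi$, and the core of the proof is to identify its limit with $-\pi\int_W\omega_\phi$.

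Finally, writing $B_k - B^c(r_k) = ia_k$ in the Lee--Taubes gauge, I would decompose $a_k$ into a singular part carrying $da_k \to -2\pi(\Theta - \Theta_{\text{ref}})$ as currents, concentrated in small tubes around the 1-cycles, plus a regular remainder controlled in $L^2$ by the curvature estimates and the pointwise bound $|\alpha_k|\to 1$ away from $\Theta$. A Stokes-type calculation on the complement of these tubes then relates $\int a_k\wedge\omega_\phi$ to $-2\pi\int_{W_k}\omega_\phi$ up to $o(1)$, which passes to the limit to give the claim. I expect the main obstacle to be this last step: making the singular/regular decomposition and the Stokes calculation quantitatively precise requires the detailed Lee--Taubes local model for SW solutions near Reeb orbits, together with uniform control of the tube-boundary residues in terms of the large-$r$ estimates on $(a_k, \alpha_k - 1)$ near $\Theta_k \cup \Theta_{\text{ref}}$.
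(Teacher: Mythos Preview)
Your overall strategy is sound, but there is a genuine gap in the second paragraph. You claim that ``the Chern--Simons term is $o(1)$ by the large-$r$ estimates of \cite{TaubesWeinstein1}'' and, earlier, that the hypothesis ``together with standard bounds on the auxiliary terms gives a uniform bound on the energy $E_\phi(B_k, B^c(r_k))$.'' Neither follows directly: the functional $\fcs(B_k, B^c(r_k))$ is \emph{not} fully gauge-invariant, and the estimates you cite (Proposition~\ref{prop:3dActionEstimates} here) bound $\fcs$ only after applying a suitable gauge transformation. Since the $\fc_k$ are fixed configurations rather than gauge-equivalence classes, $r_k^{-1}\fcs(B_k, B^c(r_k))$ could a priori be unbounded, and then the action hypothesis tells you nothing about $E_\phi$ either. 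The paper resolves this by observing that the combination $\fcs + 2\pi\rho E_\phi$ \emph{is} fully gauge-invariant (this is where monotonicity of $\Gamma$ enters), so $a_k := r_k^{-1}(\fcs + 2\pi\rho E_\phi) \to 0$ by Proposition~\ref{prop:3dActionEstimates}; combining this with the hypothesis that $b_k := r_k^{-1}(\fcs - r_k E_\phi)$ is bounded and solving the resulting $2\times 2$ linear system yields both $r_k^{-1}\fcs \to 0$ and $E_\phi$ bounded.

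Your last paragraph is in the right spirit but diverges from the paper's implementation. The paper does not invoke the twisted Lee--Taubes correspondence to produce $W_k$ (that map goes from PFH generators to \emph{specific} SW solutions, so it cannot be applied to arbitrary $\fc_k$), nor does it use a singular/regular decomposition of $a_k$. Instead it introduces an auxiliary connection $\hat{B}_k$, obtained from $B_k$ by making $\alpha_k/|\alpha_k|$ covariantly constant wherever $|\alpha_k| > 3/4$; this $\hat{B}_k$ is flat away from $\Theta$. The key technical point (exactly your anticipated obstacle) is that $\int(B_k - \hat{B}_k)\wedge\omega_\phi \to 0$: since $\omega_\phi$ is dual to the Reeb direction, only $\nabla_{B_k,R}\alpha_k$ enters, and the Dirac equation expresses this in terms of derivatives of $\beta_k$, which are small by Proposition~\ref{prop:3dEstimates1}. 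The class $W$ is then read off from the integer periods $\frac{1}{2\pi i}\int_{\gamma_j}(\hat{B}_k - B^c(r_k))$ over a basis $\{\gamma_j\}$ of $H_1(M_\phi;\bR)$; monotonicity is used once more to write $\omega_\phi$ as a rational combination of closed Poincar\'e-dual $2$-forms plus an exact form, after which a Stokes identity converts those periods into $2\pi\int_W\omega_\phi$.
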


\begin{proof}
{\bf Step 1:}    We begin by recalling some compactness results from the work of Lee--Taubes.

The integrals
$\int_{M_\phi} F_{B_k} \wedge dt$
are uniformly bounded (in fact, constant) and so by the results of \cite{TaubesWeinstein1}, the following statements hold after passing to a subsequence:
\begin{enumerate}
\item The forms $\frac{1}{2\pi i}F_{B_k}$ converge as one-dimensional currents to an orbit set $\Theta$ of $\phi$ in the homology class $\Gamma$. 
\item The functions $|\alpha_k|$ converge uniformly to $1$ on compact subsets of $M_\phi \setminus \Theta$. 
\end{enumerate}

{\bf Step 2:}  
For every $k$, the function
$\fcs(-, B^c(r_k)) + 2\pi\rho E_\phi(-, B^c(r_k))$
is fully gauge-invariant.  
By the estimates from Proposition \ref{prop:3dActionEstimates} below, we have 
$$\lim_{k\to\infty} \underbrace{r_k^{-1}\bigg(\fcs(B_k, B^c(r_k)) + 2\pi\rho E_\phi(B_k, B^c(r_k))\bigg)}_{=:a_k}= 0.$$
By the assumption,
$b_k:=r_k^{-1}\bigg(\fcs(B_k, B^c(r_k)) - r_k E_\phi(B_k, B^c(r_k))\bigg)$
is bounded. Since 
$$
r_k^{-1} \fcs(B_k, B^c(r_k)) = \frac{r_k a_k + 2\pi\rho b_k}{r_k + 2\pi\rho},\quad E_\phi(B_k, B^c(r_k)) = \frac{a_k-b_k}{2\pi\rho/r_k+1},
$$
we conclude that $\lim_{k\to\infty} r_k^{-1} \fcs(B_k, B^c(r_k))=0,$ and $E_\phi(B_k, B^c(r_k))$ is bounded.
Hence
\begin{equation} \label{eq:dFlatSwSpectral1} 
	\lim_{k \to \infty} |r_k^{-1}\fa_{r_k, \fe_\mu}(\fc_k, \fc^c(r_k)) + \frac{1}{2}E_\phi(B_k, B^c(r_k))| =\lim_{k\to\infty} \big|\frac12 r^{-1}\fcs(B_k, B^c(r_k))\big|=0.
\end{equation}
Since $E_\phi(B_k, B^c(r_k))$ is bounded, we may pass to a subsequence so that
$\lim_{k \to \infty} E_\phi(B_k, B^c(r_k)) = E_*$
for some constant $E_*$.
By (\ref{eq:dFlatSwSpectral1}), we have
$\lim_{k \to \infty} r_k^{-1}\fa_{r_k, \fe_\mu}(\fc_k, \fc^c(r_k)) = -\frac{1}{2}E_*.$

{\bf Step 3:}  
Fix a smooth cutoff function $\chi: \bR \to [0,1]$ which is equal to $1$ on $(-\infty, 5/16]$ and $0$ on $[7/16, \infty)$.  Then write
$$\hat{B}_k = B_k - \frac{1}{2}\chi(1 - |\alpha_k|^2)|\alpha_k|^{-2}(\langle \nabla_{B_k}\alpha_k, \alpha_k \rangle - \langle \alpha_k, \nabla_{B_k}\alpha_k \rangle).$$
It will be useful in what follows to note that $\alpha_k/|\alpha_k|$ is covariantly constant with respect to $\hat{B}_k$ wherever $|\alpha(r_k)| \geq 3/4$ and that the two-forms $\frac{i}{2\pi}F_{\hat{B}_k}$ converge to $\Theta$ as currents.
We have 
\[
\begin{split}
 \int_{M_\phi} (B_k - \hat{B}_k) \wedge \omega_\phi 
&  = \frac{1}{2}\int_{M_\phi} \chi(1 - |\alpha_k|^2)|\alpha_k|^{-2}(\langle \nabla_{B_k}\alpha_k, \alpha_k \rangle - \langle \alpha_k, \nabla_{B_k}\alpha_k \rangle) \wedge \omega_\phi \\
&  = \frac{1}{2}\int_{M_\phi} \chi(1 - |\alpha_k|^2)|\alpha_k|^{-2}(\langle \nabla_{B_k,R}\alpha_k, \alpha_k \rangle - \langle \alpha_k, \nabla_{B_k,R}\alpha_k \rangle) \dvol_g.
\end{split}
\]
In the last line, $\nabla_{B_k, R}\alpha_k$ denotes the derivative of $\alpha_k$ along the Reeb vector field $R$. 

The Dirac equation $D_{B_k}\Psi_k = 0$ tells us that $\nabla_{B_k, R}$ is expressible as a sum of covariant derivatives of $\beta_k$. By standard estimates for solutions of the Seiberg--Witten equations, see Proposition \ref{prop:3dEstimates1} for the precise statement, this implies that for $k$ sufficiently large
$$|\nabla_{B_k, R}\alpha_k| \lesssim |1 - |\alpha_k|^2 + r_k^{-1}|^{1/2}.$$
Plugging in the inequality above and using the fact that $\chi(1 - |\alpha_k|^2)$ is supported where $|\alpha_k| \geq 3/4$, we find that for $k$ sufficiently large 
$$\int_{M_\phi} (B_k - \hat{B}_k) \wedge \omega_\phi \lesssim \big(\int_{M_\phi} |1 - |\alpha_k|^2 + r_k^{-1}|\dvol_g\big)^{1/2}.$$
By the first and last items in Proposition \ref{prop:3dEstimates1},  we have 
$\int_{M_\phi} (B_k - \hat{B}_k) \wedge \omega_\phi \lesssim r_k^{-1/2},$
and so
\begin{equation}
	\label{eqn:goalstep3}
	\lim_{k \to \infty} \int_{M_\phi} (B_k - \hat{B}_k) \wedge \omega_\phi = 0.
\end{equation}

{\bf Step 4:}  By \eqref{eqn:goalstep3} and the definition of $E_*$, we have
\begin{equation}
	\label{eqn:goalstep4}
E_* = \lim_{k \to \infty} i \int_{M_\phi} (\hat{B}_k - B^c(r_k)) \wedge \omega_\phi.
\end{equation}
Using this as motivation, we now define the desired class $W$, which is the content of the present step.

Recall that $\hat{B}_k$ is flat where $|\alpha_k| \geq 3/4$, while $B^c(r_k)$ is flat in the complement of a tubular neighborhood of $\Theta_{\text{ref}}$ of radius $r_k^{-1/2}$. The unit length sections $\alpha_k/|\alpha_k|$ and $\alpha^c(r_k)/|\alpha^c(r_k)|$ are respectively $\hat{B}_k$- and $B^c(r_k)$-covariantly constant on these respective neighborhoods. Write $U_k$ for the open region consisting of the intersection of $\{|\alpha_k| > 3/4\}$ and this tubular neighborhood complement.  Since $|\alpha_k|$ uniformly converges to $1$ on compact subsets of $M_\phi \setminus \Theta$, we can pass to a subsequence so that $U_k \subset U_{k+1}$ for every $k$ and also 
$\bigcup_k U_k = M_\phi \setminus (\Theta \cup \Theta_{\text{ref}}).$
Now fix a set $\Lambda_\Gamma = \{\gamma_1, \ldots, \gamma_{b_1(M_\phi)}\}$ of embedded loops disjoint from $\Theta$ and $\Theta_{\text{ref}}$ that form a basis of $H_1(M_\phi; \bR)$. 
We may assume without loss of generality that the loops $\gamma_j$ lie in $U_k$ for all $k$. Then we conclude 
$$i\int_{\gamma_j} (\hat{B}_k - B^c(r_k)) = 2\pi z_{j,k}$$
with $z_{j, k} \in \bZ$ for all $j,k$. 

{\bf Step 5:} 
For every $j$, let $\eta_j$ be a closed $2$--form that is Poincar\'e dual to $\gamma_j$ and is supported very close to $\gamma_i$. We can assume the support of $\eta_i$ lies in $U_k$ for all $k$. This implies
$$i \int_{M_\phi} (\hat{B}_k - B^c(r_k)) \wedge \eta_j = 2\pi z_{j,k}$$
for every $j$ and $k$.

Since $\phi$ is assumed to be monotone, there are integers $\{c_j\}$ and a real number $\lambda$ such that such that $\omega_\phi -\lambda \sum_j c_j \eta_j$ is exact. Write $\xi$ for a primitive of this exact two-form. 
Then we have
\begin{equation} \label{eqn:goalstep5}
i\int_{M_\phi} (\hat{B}_k - B^c(r_k)) \wedge \omega_\phi = 2\pi \lambda \sum_{j} c_j z_{j,k} + \int_{M_\phi} (F_{\hat{B}_k} - F_{B^c(r_k)})\wedge \xi.
\end{equation}
Recall that $F_{\hat{B}_k} - F_{B^c(r_k)}$ converges to $\Theta - \Theta_{\text{ref}}$ as a current. 
By \eqref{eqn:goalstep4}, we conclude that 
$$
\lim_{k\to\infty }\lambda \sum_j c_j z_{j,k} = \frac{1}{2\pi}\Big( E_* - \int_{\Theta-\Theta_{\text{ref}}}\xi \Big).
$$
Since all $c_j$'s are integers, we may choose a subsequence such that $\lambda\sum_j c_j z_{j,k}$ is independent of $k$.  Choose an arbitrary $k_0$ in this subsequence, and let $z_j = z_{j,k_0}$. Define W to be the class in $H_2(M_\phi;\Theta,\Theta_{\text{ref}};\mathbb{Z})$ such that the algebraic intersection of $\gamma_j$ and $W$ is $z_j$ for all $j$. Then by \eqref{eqn:goalstep5} we have
\begin{equation}
	\label{eqn_limit_Bhat_minus_Bc_wedge_omega}
\lim_{k \to \infty} i\int_{M_\phi} (\hat{B}_k - B^c(r_k)) \wedge \omega_\phi = 2\pi \int_W \omega_\phi.
\end{equation}
Hence
$$E_* = \lim_{k \to \infty} i\int_{M_\phi} (B_k - B^c(r_k)) \wedge \omega_\phi = 2\pi\bfA(\Theta, W),$$
which implies the proposition in view of \eqref{eq:dFlatSwSpectral1}.
\end{proof}

\subsubsection{Lee--Taubes' isomorphism in the twisted setting}\label{subsec:twistedIsoDefinition}

We now explain how to refine the Lee--Taubes isomorphism to an isomorphism in the twisted setting.
 
We retain the setup from above.  Fix a separated reference cycle $\Theta_{\text{ref}}$ for $\phi$ representing $\Gamma$. After taking $\delta$ very small, we can assume that the Hamiltonian $H^1$ from \S\ref{subsubsec:dFlatApproximations} 
generating an isotopy from $\phi$ to the $(\delta, d)$-approximation $\phi_*$ has support disjoint from $\Theta_{\text{ref}}$. Fix a PFH parameter set
$\bfS_* = (\phi_*, \Theta_{\text{ref}}^{H^1}, J_*^{H^1}).$
Then there is a corresponding Seiberg--Witten parameter set $\cS_*$: the induced data is just as in \S\ref{sec:leetaubes} , except that we will choose the reference connection to be a connection from the previous section.

The goal in this section is to establish the following:

\begin{prop}
	\label{prop_iso_twisted_PFH_SW}
There is a chain isomorphism
$T_{\text{Tw}\Phi}^r: \TWPFC_*(\bfS_*) \to \TWCM^{-*}(\cS_*).$
\end{prop}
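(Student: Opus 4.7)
The plan is to upgrade the untwisted Lee--Taubes bijection $\Phi^r$ between PFH generators and gauge equivalence classes of solutions to the $\cS_*$--Seiberg--Witten equations to a bijection $\text{Tw}\Phi^r$ between the generators $(\Theta, W)$ of $\TWPFC_*(\bfS_*)$ and the $\cG^\circ$--equivalence classes of solutions of the $\cS_*$--Seiberg--Witten equations. Once the bijection on generators is constructed in a manner compatible with the geometric data, compatibility with the differentials will follow from the untwisted Lee--Taubes theorem together with a bookkeeping argument for relative homology classes.

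First I would construct $\text{Tw}\Phi^r$ on generators. The fiber of the covering $\mathcal{B}^\circ \to \mathcal{B}$ over a point $[\fc]$ is a torsor for $\pi_1(\mathcal{B}) \cong H^1(M_\phi;\mathbb{Z})$, while, on the PFH side, $H_2(M_\phi, \Theta, \Theta_{\text{ref}}; \mathbb{Z})$ is a torsor for $H_2(M_\phi;\mathbb{Z})$. Poincar\'e duality identifies these two groups of affine translations, so it suffices to fix for each PFH generator $\Theta$ a reference lift: namely, pick a reference relative class $W_0(\Theta) \in H_2(M_\phi, \Theta, \Theta_{\text{ref}};\mathbb{Z})$ and a lift $[\Phi^r(\Theta)]^\circ$ of $[\Phi^r(\Theta)]$ to $\mathcal{B}^\circ$. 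I would pin down $W_0(\Theta)$ and the lift in a geometrically natural way using the concentrated family $\fc^c(r)$ built in \S\ref{subsec:twistedCompactness}: the computation in Proposition~\ref{prop:actionConvergence} (in particular equation~\eqref{eqn_limit_Bhat_minus_Bc_wedge_omega}) shows that the rescaled Chern--Simons--Dirac functional $r^{-1}\fa_{r,\fe_\mu}(\Phi^r(\Theta), \fc^c(r))$ approximates $-\pi \bfA(\Theta, W)$ for a unique relative class $W$, and this $W$ is the natural choice of $W_0(\Theta)$. With this choice made, set $\text{Tw}\Phi^r(\Theta, W) := [u \cdot \Phi^r(\Theta)]^\circ$ where $u \in \cG(M_\phi)$ is any gauge transformation whose class in $H^1(M_\phi;\mathbb{Z}) \cong \pi_0(\cG(M_\phi))$ is Poincar\'e dual to $W - W_0(\Theta) \in H_2(M_\phi;\mathbb{Z})$. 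Since $\Phi^r$ is a bijection on gauge equivalence classes and the free $H^1(M_\phi;\mathbb{Z})$--action transitively parameterizes lifts, $\text{Tw}\Phi^r$ is a bijection on generators.

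Next I would verify that the induced module map $T_{\text{Tw}\Phi}^r$ is a chain map. The untwisted Lee--Taubes theorem \cite[Thm.~1.2]{LeeTaubes12} already gives a bijection between the moduli spaces $\mathbf{M}_1(\Theta_+, \Theta_-; J_*^{H^1})$ of ECH index $1$ holomorphic currents and the $0$--dimensional components of the moduli spaces $\mathcal{M}^1([\fc_-], [\fc_+])$ of spectral flow $1$ instantons, respecting signs. The point is to enhance this to the twisted level: to each index $1$ instanton $\fd$ connecting $\Phi^r(\Theta_-)$ to $\Phi^r(\Theta_+)$ modulo null-homotopic gauge, one assigns a relative homology class, and I need to show this matches the relative class $[C] \in H_2(M_\phi, \Theta_+, \Theta_-;\mathbb{Z})$ of the corresponding holomorphic curve $C$, modulo the reference choices $W_0(\Theta_\pm)$. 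This follows by matching up the changes of the $r^{-1}\fa_{r,\fe_\mu}(-, \fc^c(r))$ functional across an instanton with the changes of the PFH action functional across a holomorphic curve, again using Proposition~\ref{prop:actionConvergence}: both decrease by $\pi$ times the $\omega_\phi$--pairing of the relative class, and both relative class invariants are affine over the same abelian group $H_2(M_\phi;\mathbb{Z})$, so agreement for one pair of generators forces agreement for all.

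The main obstacle is this last bookkeeping step: one must check that the natural identification of $\pi_0(\cG(M_\phi))$ with $H_2(M_\phi;\mathbb{Z})$ used to parameterize lifts on the SW side agrees, on the nose, with the identification used to parameterize the affine choices of $W$ on the PFH side, with compatible orientations and signs. This is a purely topological matching problem that can be settled by testing on a basis of $H_2(M_\phi;\mathbb{Z})$ using cycles $\eta_j$ as in Step 5 of the proof of Proposition~\ref{prop:actionConvergence}. Once this is done, the twisted chain map property follows from the untwisted one; bijectivity of $\text{Tw}\Phi^r$ on generators then immediately yields that $T_{\text{Tw}\Phi}^r$ is a chain isomorphism.
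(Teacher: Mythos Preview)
Your overall architecture is right --- lift $\Phi^r$ by specifying, for each $(\Theta,W)$, a particular gauge representative of $\Phi^r(\Theta)$ depending on $W$ --- but the mechanism you propose for pinning down that gauge has a real gap.

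You want to fix a reference class $W_0(\Theta)$ by declaring it to be the unique $W$ for which $r^{-1}\fa_{r,\fe_\mu}(\Phi^r(\Theta),\fc^c(r))$ approximates $-\pi\bfA(\Theta,W)$. But $\bfA(\Theta,W)=\int_W\omega_\phi$ is only a single real number: it determines $W$ only modulo the kernel of pairing with $[\omega_\phi]$ in $H_2(M_\phi;\bZ)$. Whenever $b_1(M_\phi)\geq 2$ (which is the generic situation for surfaces of genus $\geq 1$), this kernel is nontrivial, so neither $W_0(\Theta)$ nor the required gauge transformation $u$ is uniquely specified by your prescription. The same defect reappears in your chain--map verification: matching the \emph{drop in action} across an instanton with the $\omega_\phi$--pairing of the relative class of the corresponding holomorphic curve tells you only that these classes agree after projecting to a one--dimensional quotient of $H_2(M_\phi;\bZ)$, not that they agree on the nose. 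Your final paragraph gestures at ``testing on a basis using cycles $\eta_j$'', but that is precisely the missing ingredient, not a side check.

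The paper's construction replaces the action functional by the full holonomy data. One fixes a basis $\Lambda_\Gamma=\{\gamma_j\}$ of $H_1(M_\phi;\bR)$ disjoint from $\Theta_{\text{ref}}$ and all relevant orbits, modifies $B(r)$ to a connection $\hat B(r)$ that is flat near the $\gamma_j$, and uses the integer holonomies
\[
X_{\gamma_j}(\hat B(r),B^c(r))=\frac{1}{2\pi i}\int_{\gamma_j}(\hat B(r)-B^c(r))
\]
(equivalently, relative winding numbers of $\alpha(r)$ against $\alpha^c(r)$) to select the unique gauge class $u_W$ with $X_{\gamma_j}(u_W\cdot\hat B(r),B^c(r))=\langle\gamma_j,W\rangle$ for every $j$. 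This pins down the lift for \emph{each} basis element, not just for the $[\omega_\phi]$--direction, and the chain--map property then follows from Taubes' Paper~V argument, which shows that the instanton corresponding to a given index--$1$ holomorphic current has the same winding numbers and hence the same relative homology class. If you rewrite your construction with these holonomy integrals as the primary tool (rather than the scalar action), the rest of your outline goes through.
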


The proof is quite similar to Taubes' proof in \cite[Paper V]{ECHSWF} regarding the isomorphism between twisted ECH and a twisted version of Seiberg--Witten Floer.  However, as the details and notation are important in what is coming, we do give a proof, although we are a little brief since our argument is standard in view of Taubes' work.  

\begin{proof}

We define the map $\text{Tw}\Phi^r$ as the composition of the map $\Phi^r$ from \S\ref{sec:leetaubes} and a choice of suitable gauge.  To define this gauge, we have to fix several additional pieces of data, which are similar to objects introduced before and during the proof of Proposition~\ref{prop:actionConvergence}:

\emph{Collection of loops:} Fix a collection of loops $\Lambda_\Gamma \subset M_\phi$ that are disjoint from the union of $\Theta_{\text{ref}}$ and all Reeb orbits of $\phi$ of degree less than or equal to $d$ and form a basis of $H_1(M_\phi; \bR)$. 

\emph{Scale:} Fix a scale $r_* \geq r_{\ref{prop:untwistedIsomorphism}}$ such that any of the loops in $\Lambda_\Gamma$, any of the Reeb orbits of $\phi$ of degree less than or equal to $d$, and $\Theta_{\text{ref}}$ are all pairwise of distance greater than or equal to $1000\kappa_{\ref{prop:untwistedIsomorphism}}r_*^{-1/2}$ from each other. We also require that $\delta \leq 10^{-2}\kappa_{\ref{prop:untwistedIsomorphism}}r_*^{-1/2}$. 

\emph{Concentrated family:} Fix a $\Theta_{\text{ref}}$-concentrated family of base configurations on $M_\phi$. This was already done in \S\ref{subsec:twistedCompactness}, but we repeat the construction here and make some slight tweaks to fit with the slightly different situation under discussion here.  

Observe that the bundle $E_\Gamma$, when restricted to the complement of $\Theta_{\text{ref}}$, admits a smooth section $\alpha^c$ such that $|\alpha^c| \equiv 1$ everywhere. Choose any such section $\alpha^c$. There is a unique flat, unitary connection $B^c$ on the restriction of $E_\Gamma$ to the complement of $\Theta_{\text{ref}}$ such that $\alpha^c$ is covariantly constant with respect to $B^c$.

We choose the connections $B^c(r)$ to satisfy the following conditions:
\begin{enumerate}
\item $B^c(r)$ agrees with $B^c$ outside of a tubular neighborhood of $\Theta_{\text{ref}}$ of radius $10^{-5}\kappa_{\ref{prop:untwistedIsomorphism}}r^{-1/2}$. 
\item There is a constant $\kappa \geq 1$ depending only on $\Gamma$, the ambient Riemannian metric, and the length of the reference cycle $\Theta_{\text{ref}}$ such that, for every $r$, $\|F^c(r)\|_{L^1(M_\phi)} \leq \kappa$ and $\|F^c(r)\|_{C^0} \leq \kappa r$. 
\item The two-forms $\frac{i}{2\pi}F^c(r)$, as $r \to \infty$, converge weakly as one-dimensional currents to the reference cycle $\Theta_{\text{ref}}$.
\end{enumerate}

We choose the spinors $\Psi^c(r) = (\alpha^c(r), \beta^c(r))$ to satisfy the following conditions:
\begin{enumerate}
\item $\alpha^c(r)/|\alpha^c(r)|$ is equal to $\alpha^c$ outside of a tubular neighborhood of $\Theta_{\text{ref}}$ of radius $10^{-5}\kappa_{\ref{prop:untwistedIsomorphism}}r^{-1/2}$.
\item $\alpha^c(r)$ is transverse to the zero section of $E_\Gamma$ and has zero set equal to $\Theta_{\text{ref}}$. 
\item $\|\alpha^c(r)\|_{C^0(M_\phi)} \leq r^{-1}$ for every $r$.
\end{enumerate}

\emph{Cutoff function:} Fix a smooth function $\chi: \bR \to [0,1]$ such that $\chi$ is equal to $1$ on $(-\infty, 7/16]$ and $0$ on $[9/16, \infty)$. 

\emph{Poincar\'e dual two-form:} It is not necessary for the definition of the isomorphism, but for the purposes of computations later, we fix for any loop $\gamma \in \Lambda_\Gamma$ a closed two-form $\eta_\gamma$ Poincar\'e dual to $\gamma$ and supported in a fixed tubular neighborhood of $\gamma$ of radius $10\kappa_{\ref{prop:untwistedIsomorphism}}r_*^{-1/2}$.  

\vspace{2 mm}
\textbf{Definition of the map:} We now define the promised choice of gauge. Let $(\Theta, W)$ be any twisted PFH generator. Then as long as $r$ is sufficiently large we write
$$\Phi^r(\Theta) = (B(r), \Psi(r) = r^{1/2}(\alpha(r), \beta(r))).$$

Similarly as in the proof of Proposition~\ref{prop:actionConvergence}, define the connection
$$\hat{B}(r) = B(r) - \frac{1}{2}\chi(1 - |\alpha(r)|^2)|\alpha(r)|^{-2}(\langle \nabla_{B(r)}\alpha(r), \alpha(r) \rangle - \langle \alpha(r), \nabla_{B(r)}\alpha(r) \rangle),$$
and observe that:
\begin{itemize}
\item $\hat{B}(r)$ is flat where $|\alpha(r)| > 3/4$. 
\item By Proposition \ref{prop:untwistedIsomorphism} and the first item, $\hat{B}(r)$ is flat outside of a tubular neighborhood of radius $\kappa_{\ref{prop:untwistedIsomorphism}}r^{-1/2}$ around $\Theta$. 
\item $\alpha(r)/|\alpha(r)|$ is covariantly constant with respect to $\hat{B}(r)$ where $|\alpha(r)| > 3/4$. 
\item As $r \to \infty$, the two-forms $\frac{i}{2\pi}F_{\hat{B}(r)}$ converge as currents to $\Theta$. 
\end{itemize}

Now choose as a base connection $B^c(r)$ from the \emph{$\Theta_{\text{ref}}$-concentrated family} from above.  We chose $B^c(r)$ to be a connection on the bundle $E_\Gamma \to M_\phi$, but as usual we view it as a connection on $M_{\phi_*}$ by pushing forward by the diffeomorphism $M_{H^1}$. Our setup in the previous step shows that the one-form $\hat{B}(r) - B^c(r)$ is closed on a neighborhood of any loop $\gamma \in \Lambda_\Gamma$. More precisely, near $\gamma$ we find that both $\hat{B}(r)$ and $B^c(r)$ are flat and admit covariantly constant sections. 

The connections $\hat{B}(r)$ and $B^c(r)$ therefore define an integer-valued functional on the set of loops $\Lambda_\Gamma$, defined by the map
$$\gamma \mapsto X_\gamma(\hat{B}(r), B^c(r)) = \frac{1}{2\pi i}\int_\gamma (\hat{B}(r) - B^c(r)).$$
In terms of the \emph{Poincar\'e dual two-forms} $\{\eta_\gamma\}_{\gamma \in \Lambda_\Gamma}$ fixed in the first step, we can write
$$X_\gamma(\hat{B}(r), B^c(r)) = \frac{1}{2\pi i}\int_{M_\phi} (\hat{B}(r) - B^c(r)) \wedge \eta_\gamma.$$

The functional $X_\gamma(-, B^c(r))$ is invariant under null-homotopic gauge transformations. For a general gauge transformation,
$$X_\gamma(u \cdot \hat{B}(r), B^c(r)) = X_\gamma(\hat{B}(r), B^c(r)) - \frac{1}{2\pi i}([u^{-1}du] \cup [\eta_\gamma])[M_\phi].$$
It follows that there is a unique gauge transformation  $u_W \in \cG(M_\phi)$ up to homotopy such that for any $\gamma \in \Lambda_\Gamma$, 
$$X_\gamma(u_W \cdot \hat{B}(r), B^c(r)) = \langle \gamma, W \rangle = \int_W \eta_\gamma.$$
The expression following the first equality is the algebraic intersection pairing and the second equality follows by Poincar\'e duality. 

Set $\text{Tw}\Phi^r(\Theta, W) = u_W \cdot (B(r), \Psi(r))$. It is immediate from the definition that $\text{Tw}\Phi^r$ is a bijection from the set of twisted PFH generators to the set of twisted SWF generators. An argument identical to the proof of \cite[Paper V, Proposition $2.1$]{ECHSWF} apart from cosmetic changes can be implemented to show that the induced map of modules
$$T_{\text{Tw}\Phi}^r: \TWPFC_*(\bfS_*) \to \TWCM^{-*}(\cS_*)$$
is a chain isomorphism. 
\end{proof}

\begin{rem} \label{rem:relativeWindingNumber}
There is a more topological interpretation of this isomorphism that we will find useful later. Let $\gamma \in M_\phi$ be any embedded loop and $\alpha$ and $\alpha'$ any two sections of $E_\Gamma$ that do not vanish along $\gamma$. Let $f: \gamma \to \mathbb{C}$ be the smooth function defined by
$\alpha' = f \cdot \alpha/|\alpha|$
at any point in $\gamma$. Note that by our assumptions $f$ is never equal to zero. We define the \textbf{relative winding number}
$\text{wind}(\gamma, \alpha, \alpha') \in \bZ$
to be the winding number of the function $f$. A local computation shows that the function $X_\gamma(B(r), B^c(r))$ is equal to $\text{wind}(\gamma, \alpha^c(r), \alpha(r))$. We also note that, by definition, for any $r_1$ and $r_2$ sufficiently large, we have
$\text{wind}(\gamma, \alpha^c(r_1), \alpha^c(r_2)) = 0.$
The gauge transformation $u_W$ defined above is the unique gauge transformation (up to homotopy) such that
$\text{wind}(\gamma, \alpha^c(r), u_W \cdot \alpha(r)) = \langle \gamma, W \rangle$
for every $\gamma \in \Lambda_\Gamma$. 
\end{rem}

\begin{rem}
Since the isomorphism class of a twisted PFH group is independent of the choice of reference cycle, the twisted Lee--Taubes isomorphism holds even when the reference cycle is not separated. However, when $\Theta_{\text{ref}}$ is separated, we obtain an explicit, geometric realization of the Lee--Taubes isomorphism as above, which will be useful for subsequent arguments. 
\end{rem}

\begin{rem} \label{rem:pfhToSwfCanonical}
It will be important to note how the isomorphism 
$$T_{\text{Tw}\Phi}^r: \TWPFC_*(\bfS_*) \to \TWCM^{-*}(\cS_*)$$
induces an isomorphism 
$$\TWPFH_*(\phi, \Theta_{\text{ref}}, J) \to \TWHM^{-*}(\phi, J, \Gamma, r, \fe_\mu + \fp)$$ 
of the Floer groups arising from $\phi$ rather than its $(\delta,d)$--approximation $\phi_*$. 

This follows from pre-composing the first isomorphism with the isomorphism of twisted PFH groups associated to the canonical bijection of the generators and then post-composing with the canonical isomorphism between the two Seiberg--Witten--Floer cohomology groups, induced by counting solutions to the $\cS_s$--Seiberg--Witten instanton equations. Moreover, the second isomorphism is canonical in the sense than any $(\delta, d)$-approximation of $\phi$ will induce the same isomorphism
$$\TWPFH_*(\phi, \Theta_{\text{ref}}, J) \to \TWHM^{-*}(\phi, J, \Gamma, r, \fe_\mu + \fp)$$ 
on the level of homology. This is verified explicitly in \cite[Section $5.2$]{Chen21}. 
\end{rem}

\subsubsection{Twisted SWF spectral invariants recover twisted PFH spectral invariants}

We conclude \S\ref{sec:twistedIso} with a proof that the Seiberg--Witten spectral invariants recover its twisted PFH spectral invariants in the limit $r \to \infty$. 

Let  $\Lambda_\Gamma$, $\fc^c(r) = (B^c(r), \Psi^c(r)= (\alpha^c(r), \beta^c(r)))$, $\{\eta_\gamma\}_{\gamma \in \Lambda_\Gamma}$, $r_*$, $\chi$ be as in \S\ref{subsec:twistedIsoDefinition}.  
Note 
that we retain the assumption that the reference cycle is separated. Write $\{c_\gamma\}_{\gamma \in \Lambda_\Gamma}$ for the unique set of real numbers such that 
$\omega_\phi - \sum c_\gamma\eta_\gamma$
is exact, and write $\xi_\phi$ for a choice of primitive of this exact two-form.

\begin{prop}
\label{prop:spectralInvariantIdentity}
Let $\sigma$ be any class in $\TWPFH_*(\bfS)$ and let $\sigma_\Phi \in \TWHM^{-*}(\cS_r)$ denote $T_{\text{Tw}\Phi}^r(\sigma)$. Then for any choice $\fc_\Gamma = (B_\Gamma, \Psi_\Gamma)$ of base configuration, we have the identity
$$\lim_{r \to \infty} \big(c^{\HM}_{\sigma_\Phi}(\cS_r; \fc_\Gamma) + \frac{i}{2}\int_{M_{\phi}} (B^c(r) - B_\Gamma) \wedge \omega_{\phi}\big) = -\pi c_{\sigma}(\bfS).$$
\end{prop}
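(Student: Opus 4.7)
The plan is to proceed in three main steps. First, I would reduce the problem to computing the limit with respect to the base configuration $\fc^c(r)$ instead of $\fc_\Gamma$. The Chern--Simons--Dirac functional transforms under a change of base by an additive constant,
\[
\fa_{r,\fe_\mu}(\fc, \fc_\Gamma) = \fa_{r,\fe_\mu}(\fc, \fc^c(r)) + \fa_{r,\fe_\mu}(\fc^c(r), \fc_\Gamma),
\]
so I would analyze each term in $\fa_{r, \fe_\mu}(\fc^c(r), \fc_\Gamma)$: after dividing by $r$ the dominant contribution comes from $-\tfrac{1}{2}E_\phi(B^c(r), B_\Gamma) = -\tfrac{i}{2}\int_{M_\phi}(B^c(r) - B_\Gamma) \wedge \omega_\phi$, while the Chern--Simons and Dirac contributions are $o(1)$ thanks to the uniform $L^1$ bound on $F_{B^c(r)}$, the pointwise bound $\|\Psi^c(r)\|_{C^0} \leq r^{-1}$, and the smallness of $\fe_\mu$. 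This reduces matters to showing $\lim_{r \to \infty} c^{\HM}_{\sigma_\Phi}(\cS_r; \fc^c(r)) = -\pi c_\sigma(\bfS)$.

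Second, I would establish the central estimate that for any twisted PFH generator $(\Theta, W)$,
\[
\lim_{r \to \infty} r^{-1}\fa_{r, \fe_\mu}(\text{Tw}\Phi^r(\Theta, W), \fc^c(r)) = -\pi \bfA(\Theta, W).
\]
This will follow by essentially repeating the computation in the proof of Proposition \ref{prop:actionConvergence} applied to the specific sequence $\text{Tw}\Phi^r(\Theta, W)$. The crucial point is that the gauge transformation $u_W$ built into the definition of $\text{Tw}\Phi^r$ is designed precisely so that the relative winding numbers $X_\gamma(\hat{B}(r), B^c(r))$ match the intersection pairings $\langle \gamma, W \rangle$; when reproducing the steps leading to equation \eqref{eqn_limit_Bhat_minus_Bc_wedge_omega}, this forces the relative homology class emerging in the limit to be exactly our chosen $W$, yielding $2\pi \bfA(\Theta, W)$ rather than some ambiguous shift by an element of $H_2(M_\phi;\bZ)$. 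Under the monotonicity and $d$-nondegeneracy hypotheses there are only finitely many twisted PFH generators at the absolute grading of $\sigma$, so the convergence is uniform over this grading.

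Third, I would combine the uniform estimate with the min-max characterization of spectral invariants. For the lower bound $\liminf c^{\HM}_{\sigma_\Phi}(\cS_r; \fc^c(r)) \geq -\pi c_\sigma(\bfS)$, PFH spectrality (Proposition \ref{prop:pfhSpectrality}) produces a cocycle $\widetilde{\sigma}$ representing $\sigma$ with $\max \bfA = c_\sigma(\bfS)$; then $T_{\text{Tw}\Phi}^r(\widetilde{\sigma})$ represents $\sigma_\Phi$, and the minimum of $r^{-1}\fa$ over its support converges to $-\pi c_\sigma(\bfS)$. For the reverse inequality I would argue by contradiction: if $c^{\HM}_{\sigma_\Phi}(\cS_{r_k}; \fc^c(r_k)) \geq -\pi c_\sigma(\bfS) + \eta$ along some sequence $r_k \to \infty$, then $\sigma_\Phi$ admits cocycle representatives with every generator satisfying $r_k^{-1}\fa \geq -\pi c_\sigma(\bfS) + \eta/2$; pulling back under $(T_{\text{Tw}\Phi}^{r_k})^{-1}$ and applying the uniform estimate would yield, for $k$ large, a PFC cocycle representing $\sigma$ with every generator having $\bfA < c_\sigma(\bfS) - \eta/(4\pi)$, contradicting the definition of $c_\sigma(\bfS)$.

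The hard part will be Step 2: making the limit $r^{-1}\fa_{r, \fe_\mu}(\text{Tw}\Phi^r(\Theta, W), \fc^c(r)) \to -\pi \bfA(\Theta, W)$ precise with the correct relative homology class appearing in the limit, which requires carefully tracking the gauge choice $u_W$ through the curvature-concentration argument that identifies $\tfrac{i}{2\pi}F_{\hat{B}(r)}$ with $\Theta$. A subsidiary technical point is that the proposition concerns $\phi$ while the Lee--Taubes construction lives on a $(\delta, d)$-approximation $\phi_*$, so at the end one invokes the canonical cobordism identifications of Remark \ref{rem:pfhToSwfCanonical} together with Proposition \ref{prop:dFlatSpectralInvariants} to transfer the estimates back to $\phi$ up to an $O(\delta)$ error that can be sent to zero.
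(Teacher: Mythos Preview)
Your proposal is correct and follows essentially the same route as the paper's proof, with the steps reordered: the paper performs the $(\delta,d)$-reduction first (its Step 1) and the change of base configuration last (its Step 4), while you do the opposite. One small addition: when you transfer from $\phi_*$ back to $\phi$ at the end, you also need Proposition~\ref{prop:dFlatSWSpectral} to control the $O(\delta)$ change in the Seiberg--Witten spectral invariant, not only Proposition~\ref{prop:dFlatSpectralInvariants} for the PFH side.
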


\begin{proof}
The proof will proceed in four steps. 

\textbf{Step 1:} The first step reduces to proving the theorem in the case where $(\phi, J)$ is a $(\delta, d)$-approximation, i.e. a pair suitable for directly defining the Lee--Taubes isomorphism. Suppose that the statement of the theorem is true for any $(\delta, d)$-approximation $(\phi_*, J_*)$. Then the right-hand side, by Proposition \ref{prop:dFlatSpectralInvariants}, will differ from $-\pi c_\sigma(\bfS)$ by $\lesssim \delta$. 

Next, we show that the left-hand side also changes by $\lesssim \delta$. The first term on the left-hand side, the Seiberg--Witten spectral invariant, is known to change by $\lesssim \delta$ by Proposition \ref{prop:dFlatSWSpectral}. It remains only to conider the second term. Recall the notation for the Hamiltonian $H^1$ from \S\ref{subsubsec:dFlatApproximations} generating an isotopy from $\phi$ to $\phi_*$. Lemma \ref{lem:dFlatBounds} asserts that $\|H^1\|_{C^0} \lesssim \delta$. 

The two-form $\omega_{\phi_*}$ pulls back by $M_{H^1}$ to $\omega_\phi + dH^1 \wedge dt$, so the integral 
$$\frac{i}{2}\int_{M_\phi} (B^c(r) - B_\Gamma) \wedge \omega_\phi$$ 
changes by 
\begin{equation*}
\frac{i}{2}\int_{M_\phi} (B^c(r) - B_\Gamma) \wedge dH^1 \wedge dt = \frac{i}{2}\int_{M_\phi} (F^c(r) - F_\Gamma) \wedge H^1 dt \lesssim \delta \|F^c(r) - F_\Gamma\|_{L^1} \lesssim \delta. 
\end{equation*}

The final inequality follows from the fact that the two-form $F^c(r)$ have $L^1$ norms uniformly bounded in $r$. This shows that the left-hand side overall changes by $\lesssim \delta$ when taking a $(\delta, d)$-approximation. It then suffices to take $\delta \to 0$ to conclude the statement of the theorem for the original pair $(\phi, J)$. The remaining three steps will now prove the theorem directly assuming that $(\phi, J)$ is itself suitable for defining the Lee--Taubes isomorphism. 

\textbf{Step 2:} 
Fix any PFH generator $(\Theta, W)$. Write $\fc(r) = (B(r), \Psi(r) = r^{1/2}(\alpha(r), \beta(r)))$ for the SWF generator $\text{Tw}\Phi^r(\Theta, W)$.  We compute the $r \to \infty$ limits of the energies of these configurations: 
\begin{align*}
	-\frac12 \lim_{r \to \infty} E_{\phi}(B(r), B^c(r)) &= -\frac12  \lim_{r \to \infty} i\int_{M_{\phi}} (B(r) - B^c(r)) \wedge \omega_{\phi}  \\
&= -\frac12 \lim_{r \to \infty} i\int_{M_{\phi}} (\hat{B}(r) - B^c(r)) \wedge \omega_{\phi}  = -\pi \int_W \omega_\phi = -\pi \bfA(\Theta, W).
\end{align*}
The first equality follows from the definition of $E_\phi$ and the second equality follows from \eqref{eqn:goalstep3}. The third equality follows from the definition of $\text{Tw}\Phi^r(\Theta, W)$ and the fact that $F_{B(r)} - F_{B^c(r)}$ converges to $\Theta - \Theta_{\text{ref}}$ as $1$-dimensional currents as $r \to \infty$. Since $E_{\phi}(B(r), B^c(r))$ is bounded, the proof of \eqref{eq:dFlatSwSpectral1} can be carried out in reverse to conclude the identity
\begin{equation} \label{eq:spectralInvariantIdentity2} \lim_{r \to \infty} r^{-1}\fa_{r, \fe_\mu}(\fc(r), \fc^c(r)) = -\pi \bfA(\Theta, W). \end{equation}

\textbf{Step 3:} We apply (\ref{eq:spectralInvariantIdentity2}) to show
\begin{equation}
	\label{eq:spectralInvariantIdentity1} 
	\lim_{r \to \infty} c^{\HM}_{\sigma_\Phi}(\cS_r; \fc^c(r)) = -\pi c_{\sigma}(\bfS).
\end{equation} 
This is a formal argument using the definition of the spectral invariants and the fact that $\text{Tw}\Phi^r$ is a bijection between the set of twisted PFH generators and the set of twisted SWF generators. For any $\epsilon>0$, let $\wt\sigma \in \TWPFC_*(\bfS)$ be a chain representing $\sigma$ such that it has a generator with action in the interval
$(c_{\sigma}(\bfS) - \epsilon, c_{\sigma}(\bfS)].$

Then (\ref{eq:spectralInvariantIdentity2}) tells us that for sufficiently large $r$, there is a cochain
$\wt\sigma_\Phi(r) = T_{\text{Tw}\Phi}^r(\wt\sigma) \in \TWHM^{-*}(\cS_r)$
representing $\sigma_\Phi$ with a generator with action in the interval
$(-\pi c_{\sigma}(\bfS) -2 \epsilon, -\pi c_{\sigma}(\bfS) + \epsilon).$
By taking $\epsilon\to 0$, we obtain
$$\liminf_{r \to \infty} c^{\HM}_{\sigma_\Phi}(\cS_r; \fc^c(r)) \geq -\pi c_{\sigma}(\bfS).$$

Now assume for the sake of contradiction that
$$\limsup_{r \to \infty} c^{\HM}_{\sigma_\Phi}(\cS_r; \fc^c(r)) > -\pi c_{\sigma}(\bfS).$$
This implies that there is $\epsilon > 0$, a sequence $r_k \to \infty$, and a sequence of cochains 
$\wt\sigma_\Phi^k \in \TWHM^{-*}(\cS_{r_k})$
representing $\sigma_\Phi$ such that every generator in $\wt\sigma_\Phi^k$ has action greater than or equal to $-\pi c_{\sigma}(\bfS) + \epsilon$. 
 It follows that, after passing to a subsequence, there is a cochain $\wt\sigma \in \TWPFC_*(\bfS)$ representing $\sigma$ such that
every generator in $T_{\text{Tw}\Phi}^{r_k}(\wt\sigma)$ has action greater than or equal to $-\pi c_{\sigma}(\bfS) + \epsilon$
for every $k$. The identity (\ref{eq:spectralInvariantIdentity2}) then implies that every generator in $\wt\sigma$ will have action strictly less than $c_\sigma(\bfS)$. This is impossible by definition of the spectral invariant, which proves (\ref{eq:spectralInvariantIdentity1}). 

\textbf{Step 4:} 
Fix any base configuration $\fc_\Gamma = (B_\Gamma, \Psi_\Gamma)$. By Lemma \ref{lem:maxMin2}, for each $r > -2\pi\rho$ there is a solution $\fc(r) = (B(r), \Psi(r))$ to the $\cS_r$-Seiberg--Witten equations such that $r^{-1}\fa_{r, \fe_\mu}(\fc, \fc^c(r)) = c^{\HM}_{\sigma_\Phi}(\cS_r; \fc^c(r))$. We deduce the identity
\begin{equation} \label{eq:spectralInvariantIdentity3} \lim_{r \to \infty} \big(-\frac{1}{2}E_\phi(B(r), B_\Gamma) + \frac{i}{2}\int_{M_{\phi}} (B^c(r) - B_\Gamma) \wedge \omega_\phi\big) = \lim_{r \to \infty} -\frac{1}{2}E_\phi(B(r), B^c(r)) = -\pi c_\sigma(\bfS).\end{equation}

The first equality follows by definition of $E_\phi$ and the second follows from \eqref{eq:dFlatSwSpectral1} and \eqref{eq:spectralInvariantIdentity1}. The Chern--Simons--Dirac functional shifts by a constant when the base configuration is changed, so we also have $r^{-1}\fa_{r, \fe_\mu}(\fc, \fc_\Gamma) = c^{\HM}_{\sigma_\Phi}(\cS_r; \fc_\Gamma)$ for every $r > -2\pi\rho$. This and the analogue of \eqref{eq:dFlatSwSpectral1} for the base configuration $\fc_\Gamma$ shows that the left-hand side of \eqref{eq:spectralInvariantIdentity3} is equal to 
$$\lim_{r \to \infty} \big(c^{\HM}_{\sigma_\Phi}(\cS_r; \fc_\Gamma) + \frac{i}{2}\int_{M_{\phi}} (B^c(r) - B_\Gamma) \wedge \omega_\phi\big)$$
which proves the proposition. 
\end{proof}

\subsection{Estimating the Seiberg--Witten spectral invariants} \label{subsec:maxMin}

Having shown that we can recover PFH spectral invariants from the Seiberg--Witten ones, we now explain that the Seiberg--Witten spectral invariants can be estimated from the reducible solutions.  This section follows closely the ``max-min'' families technique from \cite{TaubesWeinstein1} and \cite{WeifengMinMax}. 
There are significant differences in the setup, a notable one being the lack of full gauge invariance of the Chern--Simons--Dirac functional.

\subsubsection{Max-min for $r > -2\pi\rho$}

Fix a monotone area-preserving map $\phi$, $J \in \cJ^\circ(dt, \omega_\phi)$, and a monotone $\Gamma \in H_1(M_\phi; \bZ)$ with monotonicity constant $\rho>0$. Fix an abstract perturbation $\fg \in \cP$ with $\|\fg\|_{\cP} \leq 1$. For any $r \in (-2\pi\rho, \infty)$, we define an SW parameter set $\cS_r = (\phi, J, \Gamma, r, \fg)$. Fix a choice of base configuration $\fc_\Gamma = (B_\Gamma, \Psi_\Gamma)$. Recall that the assoociated Seiberg--Witten--Floer cohomology groups $\TWHM^*(\cS_r)$ are all canonically isomorphic as $\bZ$-graded modules, where spectral flow to $\fc_\Gamma$ determines the $\bZ$-grading. Fix a nonzero homogeneous Seiberg--Witten--Floer cohomology class $\sigma$. 

Here, we study the behavior of the Seiberg--Witten spectral invariants $c^{\HM}_{\sigma}(\cS_r; \fc_\Gamma)$, considered as a function of the variable $r \in (-2\pi\rho, \infty)$. This relies on several estimates on solutions to the Seiberg--Witten equations which are proved in \S\ref{subsubsec:continuationEstimates}. Many of the results here make reference to constants fixed in \S\ref{subsubsec:continuationEstimates}. In particular, many of them assume that the degree $d$ and the monotonicity constant $-2\pi\rho$ of $\Gamma$ are large. These are not restrictive assumptions since the main Theorem \ref{thm:hutchingsConjectureFullintro} is concerned with the asymptotic behavior of spectral invariants as $d$ (and consequently $-2\pi\rho$) increases.

We begin with a proof that the function $c^{\HM}_{\sigma}(\cS_r; \fc_\Gamma) = c^{\HM}_\sigma(\phi, J, \Gamma, r, \fg; \fc_\Gamma)$
is continuous in $r$. 

\begin{lem}
\label{lem:maxMin3} Let $\cS_r = (\phi, J, \Gamma, r, \fg)$, $\rho$, $\sigma$, and $\fc_\Gamma$ be as fixed above. Suppose that $-2\pi\rho > 10r_{\ref{prop:continuationEstimates2}}$. Then $c^{\HM}_{\sigma}(\cS_r; \fc_\Gamma)$ is continuous as a function of $r \in (-2\pi\rho, \infty)$. 
\end{lem}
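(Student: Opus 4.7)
The argument follows the same outline as Lemma~\ref{lem:maxMin1}, but with the variation in $\fg$ replaced by a variation in $r$; the required ingredient is a continuation estimate controlling the change in the Chern--Simons--Dirac functional along instantons when only $r$ varies. Fix $r_0 \in (-2\pi\rho, \infty)$ and $r_1$ near $r_0$. First, using Lemma~\ref{lem:maxMin1} applied uniformly on a small compact neighborhood of $r_0$ (on which the constant $\kappa_{\ref{lem:maxMin1}} r^2$ is bounded), I would reduce to the case where the abstract perturbation $\fg$ is generic enough that the chain complexes $\TWCM^*(\cS_r)$ are well-defined for every $r$ in this neighborhood. By the uniform Lipschitz bound, spectral invariants for the original $\fg$ are uniform limits in $r$ of spectral invariants for such a perturbed $\fg$, so continuity is preserved under the reduction.

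Next, I would construct a continuation parameter set in which only $r$ varies. Choose a smooth monotone path $\{r_s\}_{s \in \bR}$ with $r_s = r_0$ for $s \leq -1$, $r_s = r_1$ for $s \geq 1$, and $|\partial_s r_s| \leq 2|r_1 - r_0|$. Setting $\phi_\pm = \phi$, $H_\pm = 0$, $J_s \equiv J$, $\mu_s \equiv \mu$, and $\fg_s \equiv \fg$, the tuple $\cS_s = (0, J, r_s, \fg)$ is an SW continuation parameter set from $\cS_{r_0}$ to $\cS_{r_1}$. For a generic such path, the associated cobordism map $T(\cS_s) : \TWCM^*(\cS_{r_1}) \to \TWCM^*(\cS_{r_0})$ realizes the canonical identification of twisted Seiberg--Witten--Floer cohomology groups recalled in \S\ref{subsubsec:swfCobordismMaps}, and in particular sends $\sigma$ to $\sigma$.

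Finally, I would invoke Proposition~\ref{prop:continuationEstimates2}: for any instanton $\fd$ with $\lim_{s \to \pm\infty} \fd = \fc_\pm$ appearing in $T(\cS_s)$, the proposition bounds $\fa_{r_1, \fg}(\fc_+, \fc_\Gamma) - \fa_{r_0, \fg}(\fc_-, \fc_\Gamma)$ by a quantity that tends to $0$ as $r_1 \to r_0$. Running the same max-min argument as in Lemma~\ref{lem:maxMin1} (taking the minimum over cochains representing $\sigma$, then the supremum over such cochains, and then swapping the roles of $r_0$ and $r_1$), and using that $r_1^{-1}$ and $r_0^{-1}$ are close for $r_1$ close to $r_0$ bounded away from $-2\pi\rho$, yields $|c^{\HM}_\sigma(\cS_{r_1}; \fc_\Gamma) - c^{\HM}_\sigma(\cS_{r_0}; \fc_\Gamma)| \to 0$ as $r_1 \to r_0$.

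\textbf{Main obstacle.} The hard part is confirming that the error supplied by Proposition~\ref{prop:continuationEstimates2} actually shrinks with $|r_1 - r_0|$ rather than merely being bounded. This requires the continuation estimate to be controlled by $\int |\partial_s r_s|\, ds$ or another quantity vanishing in this limit, which is the expected form of such an estimate, but must be read off carefully from the formulation in \S\ref{subsubsec:continuationEstimates}.
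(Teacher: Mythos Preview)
Your proposal is essentially the paper's argument: set up a continuation parameter set with only $r$ varying, invoke Proposition~\ref{prop:continuationEstimates2}, and run the max--min argument of Lemma~\ref{lem:maxMin1} in both directions. One point to correct: Proposition~\ref{prop:continuationEstimates2} does \emph{not} bound $\fa_{r_+,\fg}(\fc_+,\fc_\Gamma)-\fa_{r_-,\fg}(\fc_-,\fc_\Gamma)$ directly, but rather $\tfrac{z_-}{z_+}\fa_{r_+,\fg}(\fc_+,\fc_\Gamma)-\fa_{r_-,\fg}(\fc_-,\fc_\Gamma)$ with $z_\pm=r_\pm+2\pi\rho$. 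Consequently the two-sided max--min yields inequalities of the form $\tfrac{z_-r_+}{z_+r_-}c^{\HM}_\sigma(\cS_{r_+})\le c^{\HM}_\sigma(\cS_{r_-})+\text{(error)}$ and its reverse, not a direct bound on $|c^{\HM}_\sigma(\cS_{r_1})-c^{\HM}_\sigma(\cS_{r_0})|$; the paper then sandwiches and uses $\tfrac{z_-r_+}{z_+r_-}\to 1$. Your remark about $r_1^{-1}$ being close to $r_0^{-1}$ anticipates this, but make the multiplicative structure explicit. A second, smaller point: rather than seeking a single perturbation making $\TWCM^*(\cS_r)$ well-defined for all $r$ in a neighborhood, the paper more simply chooses separate small perturbations $\fp_\pm$ at $r_\pm$, derives the inequalities with an extra $\|\fg_+-\fg_-\|_\cP$ term, and lets $\fp_\pm\to 0$; your uniform-reduction step via Lemma~\ref{lem:maxMin1} also works but is slightly less direct.
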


\begin{proof}
Pick $r_+ > r_- > -2\pi\rho$. Set $z_\pm = r_\pm + 2\pi\rho$. Choose any two perturbations $\fp_-$ and $\fp_+$ such that 
$$\TWHM(\phi, J, \Gamma, r_\pm, \fg_\pm = \fg + \fp_\pm)$$
are well-defined. Assume also that both $|r_+ - r_-|$ and $\|\fp_+ - \fp_-\|_{\cP}$ are bounded above by $\kappa_{\ref{prop:continuationEstimates2}}^{-1}/100$. Set
$$\cS_\pm = (\phi, J, \Gamma, r_\pm, \fg_\pm).$$

Fix an SW continuation parameter set
$\cS_s = (0, J, r_s, \fg_s)$
from $\cS_-$ to $\cS_+$. Suppose that 
$$\sup_{s \in \bR} \|\frac{\partial}{\partial s}\fg_s\|_\cP \leq 4\|\fg_+ - \fg_-\|_\cP \quad \text{and} \quad \sup_{s \in \bR} |r_s - r_-| \leq 4|r_+ - r_-|.$$
Suppose that $\{\fg_s\}_{s \in \bR}$ is chosen sufficiently generically, so that the chain isomorphism
$$T(\cS_s): \TWCM^*(\cS_+) \to \TWCM^*(\cS_-)$$
is well-defined. Let $\fc_+$ be a solution of the $\cS_+$--Seiberg--Witten equations, and let $\fc_-$ be any solution of the $\cS_-$--Seiberg--Witten equations such that $[\fc_-]^\circ$ appears with nonzero coefficient in $T(\cS_s)[\fc_+]^\circ$. Using the fact that $T(\cS_s)$ counts solutions to the $\cS_s$--Seiberg--Witten instanton equations, there is a solution $\fd$ of the $\cS_s$--Seiberg--Witten instanton equations (\ref{eq:swContinuation}) such that $\lim_{s \to \pm\infty} \fd = \fc_\pm$. 

By our earlier assumptions and Proposition \ref{prop:continuationEstimates2}, we have
\begin{equation*}
\frac{z_-}{z_+}\fa_{r_+, \fg_+}(\fc_+, \fc_\Gamma) - \fa_{r_-, \fg_-}(\fc_-, \fc_\Gamma) \leq \kappa_{\ref{prop:continuationEstimates2}}r_+^2(z_-^{-1} + z_+^{-1} + 1)|r_+ - r_-|  + \kappa_{\ref{prop:continuationEstimates2}}r_+^2\|\fg_+ - \fg_-\|_\cP.
\end{equation*}
Hence the same argument as in the proof of \eqref{eq:maxMin1} in Lemma \ref{lem:maxMin1} yields
\begin{equation}
\label{eq:maxMin6} 
\frac{z_-r_+}{z_+r_-}c^{\HM}_{\sigma}(\cS_+; \fc_\Gamma) \leq c^{\HM}_{\sigma}(\cS_-; \fc_\Gamma) + \kappa_{\ref{prop:continuationEstimates2}} r_+^2r_-^{-1}\|\fg_+ - \fg_-\|_\cP  + \kappa_{\ref{prop:continuationEstimates2}}r_+^2r_-^{-1}(z_-^{-1} + z_+^{-1} + 1)|r_+ - r_-|.
\end{equation}
Reversing the roles of $\cS_+$ and $\cS_-$, we also have
\begin{equation}
\label{eq:maxMin7} 
\frac{z_+r_-}{z_-r_+}c^{\HM}_{\sigma}(\cS_-; \fc_\Gamma) \leq c^{\HM}_{\sigma}(\cS_+; \fc_\Gamma) + \kappa_{\ref{prop:continuationEstimates2}} r_+\|\fg_+ - \fg_-\|_\cP 
 + \kappa_{\ref{prop:continuationEstimates2}}r_+(z_-^{-1} + z_+^{-1} +1)|r_+ - r_-|.
\end{equation}

Now fix some $r > -2\pi\rho$ and a sequence $\{r_k\}$ in $(-2\pi\rho, \infty)$ such that $r_k \to r$. It follows from (\ref{eq:maxMin6}) and (\ref{eq:maxMin7}) that 
$$\limsup_{k \to \infty} \frac{(2\pi\rho + r)r_k}{(2\pi\rho + r_k)r}c^{\HM}_{\sigma}(\cS_{r_k}; \fc_\Gamma) \leq c^{\HM}_{\sigma}(\cS_r; \fc_\Gamma) \leq \liminf_{k \to \infty} \frac{(2\pi\rho + r)r_k}{(2\pi\rho + r_k)r}c^{\HM}_{\sigma}(\cS_{r_k}; \fc_\Gamma).$$
Therefore
\begin{equation*}
c^{\HM}_{\sigma}(\cS_r; \fc_\Gamma) = \lim_{k \to \infty} \frac{(2\pi\rho + r)r_k}{(2\pi\rho + r_k)r}c^{\HM}_{\sigma}(\cS_{r_k}; \fc_\Gamma) \\
= \lim_{k \to \infty} c^{\HM}_{\sigma}(\cS_{r_k}; \fc_\Gamma),
\end{equation*}
and the desired result is proved. 
\end{proof}

Fix any small co-exact $1$-form $\mu$. A transversality result of Taubes implies that when $\mu$ is generic and the abstract perturbation term $\fg$ in the family $\cS_r$ is equal to $\fe_\mu$, the spectral invariants $c^{\HM}_\sigma(\cS_r; \fc_\Gamma)$ admit a particularly nice structure: there is a family of solutions to the Seiberg--Witten equations over the interval $(-2\pi\rho, \infty)$ that depends smoothly on $r$ on the complement of a locally finite set, such that the Chern--Simons--Dirac actions recover the spectral invariants.  The details are stated in the forthcoming lemma.

\begin{lem}
\label{lem:maxMin5} Let $\cS_r = (\phi, J, \Gamma, r, \fg)$, $\rho$, $\sigma$, $\fc_\Gamma$, and suppose $\fg = \fe_\mu$ where $\mu$ is generic. There is a locally finite set $U_\mu \subset (-2\pi\rho, +\infty)$ such that the following holds. First, the function $c^{\HM}_\sigma(\cS_r; \fc_\Gamma)$ is differentiable at any $r > -2\pi\rho$ that does not lie in $U_\mu$. Second, label the elements of the intersection of $U_\mu$ with $(-2\pi\rho, \infty)$ as a monotonically increasing sequence $\{s_k\}_{k \in \mathbb{N}}$. Then there is a  family of configurations
$$\{\fc(r,\mu,\sigma) = (B(r, \mu, \sigma), \Psi(r,\mu,\sigma))\}_{r > -2\pi\rho}$$
such that for any $r > -2\pi\rho$, $\fc(r,\mu,\sigma)$ solves the $\cS_r$--Seiberg--Witten equations, the family varies smoothly over the interval $(s_k, s_{k+1})$ for any $k \in \mathbb{N}$, and at every $r > -2\pi\rho$ we have
$$c^{\HM}(\cS_r; \fc_\Gamma) = \fa_{r, \fe_\mu}(\fc(r, \mu, \sigma), \fc_\Gamma).$$
\end{lem}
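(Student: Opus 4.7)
The plan is to construct the family $\{\fc(r,\mu,\sigma)\}_{r > -2\pi\rho}$ via a ``max-min'' strategy in the spirit of Taubes' arguments in \cite{TaubesWeinstein1} and \cite{WeifengMinMax}: at each $r$, use the spectrality result of Lemma \ref{lem:maxMin2} to produce a solution realizing $c^{\HM}_\sigma(\cS_r; \fc_\Gamma)$ as its Chern--Simons--Dirac action, and then use transversality plus the implicit function theorem to assemble these solutions into a one-parameter family that is smooth away from a locally finite set.

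First I would invoke Taubes' transversality results for perturbations of the form $\fe_\mu$ (see \cite[Proposition $3.11$]{TaubesWeinstein1}) to produce, for a residual set of small co-exact $1$--forms $\mu$, a discrete set $U_\mu^{\text{deg}} \subset (-2\pi\rho, \infty)$ such that every solution of the $\cS_r$--Seiberg--Witten equations is nondegenerate in the sense of \S\ref{subsubsec:spectralFlowDefn} whenever $r \notin U_\mu^{\text{deg}}$. Reducibles are automatically absent because $\Gamma$ is monotone and $r \neq -2\pi\rho$. At any such regular $r_0$ with solution $\fc_0$, the implicit function theorem applied on a slice for the $\cG^\circ$--action---where $\cL_{(B,\Psi)}$ is an isomorphism---yields a unique smooth local family $r \mapsto \fc(r)$ of $\cS_r$--solutions extending $\fc_0$. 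The function $r \mapsto \fa_{r, \fe_\mu}(\fc(r), \fc_\Gamma)$ is then smooth on this interval.

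Next I would use the monotonicity hypothesis together with \cite[Proposition $29.2.1$]{monopolesBook} to conclude that, at each grading relevant to the class $\sigma$, there are only finitely many $\cG^\circ$--equivalence classes of $\cS_{r_0}$--solutions whose Chern--Simons--Dirac action lies in a bounded window. By Lemma \ref{lem:maxMin2}, at least one of them realizes $c^{\HM}_\sigma(\cS_{r_0}; \fc_\Gamma)$, so near $r_0$ the spectral invariant coincides with the pointwise maximum of finitely many smooth branches $r \mapsto \fa_{r, \fe_\mu}(\fc_i(r), \fc_\Gamma)$ obtained by propagating these solutions via Step 2. I would then let $U_\mu$ be the union of $U_\mu^{\text{deg}}$ with the set of ``switching points'' where the branch realizing the maximum changes. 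On each component of $(-2\pi\rho, \infty) \setminus U_\mu$ a single smooth branch realizes the spectral invariant, yielding both the desired family $\fc(r,\mu,\sigma)$ and the differentiability statement. At the finitely many points of $U_\mu$ in any bounded interval, I would define $\fc(r,\mu,\sigma)$ to be any solution whose action achieves $c^{\HM}_\sigma(\cS_r; \fc_\Gamma)$, again furnished by Lemma \ref{lem:maxMin2}.

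The hard part will be proving that $U_\mu$ is locally finite, not merely locally countable. A priori two branches could cross on a sequence of parameters accumulating at some $r_*$. To rule this out I would exploit the fact that the $\cS_r$--equations depend real-analytically (indeed affinely) on $r$: each local branch $r \mapsto \fa_{r, \fe_\mu}(\fc_i(r), \fc_\Gamma)$ is therefore real-analytic, so any two branches either coincide on an open interval or intersect on a locally finite set; in the coincidence case the branches may be identified without affecting the family. Together with the continuity of $r \mapsto c^{\HM}_\sigma(\cS_r; \fc_\Gamma)$ from Lemma \ref{lem:maxMin3} and the standard compactness theory of Seiberg--Witten solutions (which bounds how branches may enter or leave the maximum on a bounded $r$--interval), this will imply the local finiteness of $U_\mu$ and let me patch the local families into a globally defined $\fc(r,\mu,\sigma)$ with the properties asserted in the lemma.
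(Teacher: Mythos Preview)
Your approach is broadly correct and follows the same high-level strategy as the paper (spectrality from Lemma~\ref{lem:maxMin2}, Taubes' transversality from \cite[Proposition~3.11]{TaubesWeinstein1}, implicit function theorem), but you work harder than necessary in the last step. The paper invokes a second conclusion of \cite[Proposition~3.11]{TaubesWeinstein1} that you do not use: for generic $\mu$, away from a locally finite set of $r$, no two gauge-inequivalent solutions of the $\cS_r$--equations have the same Chern--Simons--Dirac action. With this in hand there are no ``switching points'' to worry about at all---at each good $r$ the solution whose action realizes $c^{\HM}_\sigma(\cS_r;\fc_\Gamma)$ is uniquely determined, and it persists smoothly by the implicit function theorem until one hits the locally finite degeneracy set. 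Your real-analyticity argument for ruling out accumulation of crossings is then unnecessary.

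Two minor points on your version. First, describing the spectral invariant near $r_0$ as ``the pointwise maximum of finitely many smooth branches'' is not quite right: it is a max-min, and spectrality only tells you it equals the action of \emph{some} solution, not the largest one. What you actually need (and what your argument really uses) is that a continuous function which at each point equals one of finitely many smooth functions can only switch branches where two branches meet. Second, your real-analyticity argument is plausible here because the $\cS_r$--equations with $\fg = \fe_\mu$ are polynomial in $(B,\Psi,r)$, but it is a heavier tool than needed and would require some care to set up rigorously on the infinite-dimensional configuration space. The paper's route via distinctness of actions sidesteps all of this.
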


\begin{proof}
Lemma \ref{lem:maxMin2} states that for every $r$, the spectral invariant $c^{\HM}_{\sigma}(\cS_r; \fc_\Gamma)$ is the action of some solution to the $\cS_r$--Seiberg--Witten equations. Now \cite[Proposition $3.11$]{TaubesWeinstein1} implies that when $\mu$ is generic, all solutions of the $\cS_r$-Seiberg--Witten equations are nondegenerate apart from a locally finite set of $r > -2\pi\rho$, and moreover no two gauge-inequivalent solutions to the $\cS_r$-Seiberg--Witten equations such an $r$ will have the same Chern--Simons--Dirac action. The implicit function theorem implies that non-degenerate solutions to the Seiberg--Witten equations move in smooth families as the parameter set is varied, which allows us to conclude the lemma. See also the discussion in \cite[Section $5$]{WeifengMinMax} in the contact setting. 
\end{proof}

We will assume from now on that $\mu$ is chosen generically so that Lemma \ref{lem:maxMin5} is satisfied. A family of configurations $\{\fc(r, \mu, \sigma) = (B(r, \mu, \sigma), \Psi(r, \mu, \sigma))\}_{r > -2\pi\rho}$ as given by Lemma \ref{lem:maxMin5} is called a \textbf{$(\mu,\sigma)$--max-min family}. 

\begin{rem}
Changing the base configuration changes the Chern--Simons--Dirac functional $\fa_{r, \fe_\mu}$ by a constant. It follows that we can take the max-min family to be ``independent'' of the base configuration. That is, a max-min family for a base configuration $\fc_\Gamma$ will also be a max-min family for a different base configuration $\fc_\Gamma'$. 
\end{rem}

\subsubsection{Max-min for $r = -2\pi\rho$}

Let $\phi$, $J$, $\Gamma$, $\rho$, $\fg$, $\fc_\Gamma$, $\sigma$ be as fixed above. The SW parameter set $\cS_{-2\pi\rho} = (\phi, J, \Gamma, -2\pi\rho, \fg)$ has an associated Floer cohomology group $\TWHMhat^*(\cS_{-2\pi\rho})$. By a general property of Seiberg--Witten--Floer cohomology \cite[Theorem $31.5.1$]{monopolesBook}, it is canonically isomorphic to the groups $\TWHM^*(\cS_r)$ for $r > -2\pi\rho$. The isomorphism is, as in the prior cases considered, given by counting $\cS_s$-continuation instantons. Therefore, $\sigma$ can be interpreted as a class in $\TWHMhat^*(\cS_{-2\pi\rho})$, and there is a well-defined spectral invariant $c^{\HM}_{\sigma}(\cS_{-2\pi\rho}; \fc_\Gamma)$, defined in the same manner as the $r > -2\pi\rho$ case.  

The exact same argument in Lemma \ref{lem:maxMin1} works in this setting with minor modifications to show the following:
\begin{lem}
\label{lem:maxMin6}
The function
$c^{\HM}_{\sigma}(\phi, J, \Gamma, -2\pi\rho, \fg; \fc_\Gamma)$
extends continuously to any $\fg \in \cP$ with $\cP$-norm less than or equal to $1$. 
\end{lem}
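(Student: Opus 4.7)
The plan is to follow the template of Lemma \ref{lem:maxMin1} verbatim, with minor adaptations to accommodate the presence of reducibles at $r=-2\pi\rho$. The first step is to restrict attention to the dense subset $\cP^\circ \subset \{\fg \in \cP : \|\fg\|_\cP \le 1\}$ consisting of those $\fg$ for which the complex $\TWCMhat^*(\cS_{-2\pi\rho})$ is well-defined (and, in particular, all solutions of \eqref{eq:sw} are nondegenerate in the appropriate sense; for reducibles this means the spectral conditions on the perturbed Dirac operator required by Kronheimer--Mrowka). On $\cP^\circ$, I would prove that the function is locally Lipschitz with a constant depending only on $\phi$, $J$, $\Gamma$, $\rho$ and the uniform norm bound on $\fg$, and then extend continuously to the full unit ball.

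For the Lipschitz estimate, pick $\fg_\pm \in \cP^\circ$ with $\|\fg_+ - \fg_-\|_\cP$ very small compared to $\kappa_{\ref{prop:continuationEstimates2}}^{-1}$, and connect them by an SW continuation parameter set $\cS_s = (0, J, -2\pi\rho, \fg_s)$ (with $r_s \equiv -2\pi\rho$) satisfying $\sup_s \|\partial_s \fg_s\|_\cP \le 4\|\fg_+ - \fg_-\|_\cP$, chosen generically so that the continuation chain map on the hat complexes is defined. Given a cocycle $\widetilde\sigma_+$ representing $\sigma$ in $\TWCMhat^*(\cS_+)$, each generator $[\fc_-]^\circ$ appearing in $T(\cS_s) \widetilde\sigma_+$ is either the limit at $-\infty$ of a genuine $\cS_s$-instanton whose $+\infty$ limit appears in $\widetilde\sigma_+$, or it arises at the end of a broken trajectory beginning at such a generator. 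In the first case, Proposition \ref{prop:continuationEstimates2} (with $r_+=r_-=-2\pi\rho$) gives
\[
\fa_{-2\pi\rho,\,\fg_+}(\fc_+, \fc_\Gamma) - \fa_{-2\pi\rho,\,\fg_-}(\fc_-, \fc_\Gamma) \le \kappa_{\ref{prop:continuationEstimates2}}(-2\pi\rho)^2 \|\fg_+ - \fg_-\|_\cP,
\]
so the minimum action in the cochain cannot decrease by more than this amount.

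The one new ingredient is the possibility of broken trajectories, which do appear on $\TWCMhat^*$ because of the boundary stratum of reducibles. I would handle these by noting that a broken trajectory decomposes into at most one ``true'' $\cS_s$-continuation piece (on which Proposition \ref{prop:continuationEstimates2} bounds the action drop as above) together with finitely many Seiberg--Witten gradient flows on $\cS_+$ and $\cS_-$; each of these latter pieces is a flow for the $r$-normalized Chern--Simons--Dirac functional at a fixed parameter, so along it the quantity $\tfrac{1}{(-2\pi\rho)}\fa_{-2\pi\rho,\fg_\pm}(\cdot,\fc_\Gamma)$ is monotonically non-increasing. Consequently the same estimate on action drop holds across any broken trajectory. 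Combining these, exactly as in Lemma \ref{lem:maxMin1}, yields the two-sided bound
\[
\bigl| c^{\HM}_\sigma(\cS_+; \fc_\Gamma) - c^{\HM}_\sigma(\cS_-; \fc_\Gamma) \bigr| \le \kappa (-2\pi\rho)^2 \|\fg_+ - \fg_-\|_\cP
\]
for a constant $\kappa$ depending only on $\phi$ and $J$.

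Finally, this Lipschitz estimate on the dense subset $\cP^\circ$ forces the function to be uniformly continuous on any bounded subset, so it admits a unique continuous extension to the whole unit ball. The main obstacle is the bookkeeping surrounding broken trajectories at the reducible parameter $r=-2\pi\rho$; but since the Chern--Simons--Dirac functional decreases along genuine gradient flows at either end of a broken trajectory, no new contribution to the action difference arises, and the continuation estimate from Proposition \ref{prop:continuationEstimates2} applies just as in the irreducible case.
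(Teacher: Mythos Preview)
Your proposal is correct and matches the paper's own sketch essentially verbatim: both run the argument of Lemma~\ref{lem:maxMin1} at $r=-2\pi\rho$, noting that the only new feature is that $T(\cS_s)$ now counts broken instantons, and handle these by applying the continuation action estimate piecewise. Your decomposition into one continuation piece plus gradient-flow pieces (on which the functional is monotone) is exactly what the paper means by ``applying the estimate from Proposition~\ref{prop:continuationEstimates2} to each instanton within the broken instanton.''
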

\begin{proof}[Sketch of proof]
	The only difference from Lemma \ref{lem:maxMin1} is that in this case an SW continuation map $T(\cS_s)$ is given by counting \emph{broken} instantons. Lemma \ref{lem:maxMin6} follows by applying the estimate from Proposition \ref{prop:continuationEstimates2} to each instanton within the broken instanton from the differential map, and then proceeding with the same argument as the proof of Lemma \ref{lem:maxMin1}. 
\end{proof}
 
We now prove that the spectral invariants extend continuously to $r = -2\pi\rho$.  

\begin{lem}
\label{lem:maxMin7} Let $\cS_r = (\phi, J, \Gamma, r, \fg)$, $\rho$, $\sigma$ and $\fc_\Gamma$ be as fixed above. Suppose that $-2\pi\rho > 10r_{\ref{prop:continuationEstimates5}}$. Then 
$$\lim_{r \to -2\pi\rho} c^{\HM}_{\sigma}(\cS_r; \fc_\Gamma) = c^{\HM}_{\sigma}(\cS_{-2\pi\rho}; \fc_\Gamma).$$
\end{lem}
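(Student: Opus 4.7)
The plan is to mimic the proof strategy of Lemma \ref{lem:maxMin3}, but with the following two modifications: (i) one of the parameter sets is now $\cS_{-2\pi\rho}$, which admits reducibles, so the continuation chain maps count \emph{broken} instantons rather than unbroken ones, and (ii) the action estimate will need to come from the version of the continuation-instanton estimate tailored to the $r=-2\pi\rho$ endpoint, namely Proposition~\ref{prop:continuationEstimates5} rather than Proposition~\ref{prop:continuationEstimates2}.

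First I would show $\limsup_{r \to -2\pi\rho} c^{\HM}_\sigma(\cS_r;\fc_\Gamma) \leq c^{\HM}_\sigma(\cS_{-2\pi\rho};\fc_\Gamma)$. Pick a small perturbation of $\fg$ for which $\TWCMhat^*(\cS_{-2\pi\rho})$ is well-defined (invoking Lemma~\ref{lem:maxMin6} to pass between the genuine and approximate parameter sets at the cost of an error controlled by $\|\fp\|_\cP$). For each $r > -2\pi\rho$ close to $-2\pi\rho$, fix an SW continuation parameter set $\cS_s$ from $\cS_r$ to $\cS_{-2\pi\rho}$, chosen sufficiently generically that the induced continuation quasi-isomorphism $T(\cS_s): \TWCM^*(\cS_r) \to \TWCMhat^*(\cS_{-2\pi\rho})$ is defined. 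This map counts broken instantons $(\fd_1, \ldots, \fd_k)$ with total index zero, and it respects the identification of $\sigma$ on both sides.  Given a cocycle representative $\widetilde{\sigma}_+$ for $\sigma$ realizing $c^{\HM}_\sigma(\cS_r;\fc_\Gamma)$ (using Lemma~\ref{lem:maxMin2}), the image $T(\cS_s)\widetilde{\sigma}_+$ is a cocycle representing $\sigma$ in $\TWCMhat^*(\cS_{-2\pi\rho})$. Applying Proposition~\ref{prop:continuationEstimates5} to each piece $\fd_i$ of each broken instanton arising in this image and telescoping yields an inequality of the form
\[
\fa_{-2\pi\rho, \fg}(\fc_-, \fc_\Gamma) - \tfrac{-2\pi\rho}{r}\fa_{r, \fg}(\fc_+, \fc_\Gamma) \leq \epsilon(r),
\]
with $\epsilon(r) \to 0$ as $r \to -2\pi\rho$. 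Taking the minimum over generators of the image cocycle and using the definition of the spectral invariant on the $\TWHMhat$ side gives the desired upper bound after sending $r \to -2\pi\rho$.

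Next I would prove the matching lower bound $\liminf_{r \to -2\pi\rho} c^{\HM}_\sigma(\cS_r;\fc_\Gamma) \geq c^{\HM}_\sigma(\cS_{-2\pi\rho};\fc_\Gamma)$ by running the argument in the reverse direction. Fix a cocycle representative $\widehat{\sigma}$ realizing $c^{\HM}_\sigma(\cS_{-2\pi\rho};\fc_\Gamma)$; by Lemma~\ref{lem:maxMin2} each generator is a genuine solution. Use a continuation parameter set from $\cS_{-2\pi\rho}$ to $\cS_r$ to push this cocycle forward, again applying Proposition~\ref{prop:continuationEstimates5} to each piece of each broken instanton appearing. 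This produces a cocycle representative for $\sigma$ in $\TWCM^*(\cS_r)$ whose generators all have $r^{-1}\fa_{r,\fg}$-value at least $c^{\HM}_\sigma(\cS_{-2\pi\rho};\fc_\Gamma) - \epsilon'(r)$, with $\epsilon'(r) \to 0$ as $r \to -2\pi\rho$. Combining the two bounds yields the lemma.

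The main obstacle is controlling the contributions from broken instantons uniformly: unlike in Lemma~\ref{lem:maxMin3}, where one compares two irreducible Floer complexes via unbroken continuation instantons, here one of the complexes involves reducibles, so arbitrarily long broken trajectories could in principle contribute. The resolution is that Proposition~\ref{prop:continuationEstimates5} gives an action bound proportional to $|r + 2\pi\rho|$ (up to lower-order terms in $r$) on each individual piece, summable across a broken instanton because the intermediate actions telescope; the factor $|r+2\pi\rho| \to 0$ forces the total error to vanish regardless of how many pieces occur. A secondary technical point is to verify that the canonical identification $\TWHM^*(\cS_r) \cong \TWHMhat^*(\cS_{-2\pi\rho})$ transporting $\sigma$ is indeed realized by the continuation chain map $T(\cS_s)$; this follows from the setup in \S\ref{subsubsec:swfCobordismMaps} and \cite[Ch.~31]{monopolesBook}.
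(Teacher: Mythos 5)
Your proposal is close in spirit to the paper's argument --- both hinge on continuation maps between $\cS_r$ and $\cS_{-2\pi\rho}$ and the estimate in Proposition~\ref{prop:continuationEstimates5} --- but there is a gap at the key step.

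The inequality you claim to derive by telescoping Proposition~\ref{prop:continuationEstimates5} across the pieces of a broken instanton,
$$
\fa_{-2\pi\rho, \fg}(\fc_-, \fc_\Gamma) - \tfrac{-2\pi\rho}{r}\fa_{r, \fg}(\fc_+, \fc_\Gamma) \leq \epsilon(r),
$$
is not what that proposition provides. Proposition~\ref{prop:continuationEstimates5} compares $\fa_{r_-, \fg_+}(\fc_+, \fc_\Gamma)$ with $\fa_{r_-, \fg_-}(\fc_-, \fc_\Gamma)$ --- both evaluated with the \emph{same} parameter $r_- = -2\pi\rho$ in the Chern--Simons--Dirac functional. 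Telescoping then gives you control of the quantity the paper calls $r_+ f(r_+) = \max\min \fa_{r_-, \fg_+}(\fc_+,\fc_\Gamma)$, not of $r_+ c^{\HM}_\sigma(\cS_{r_+};\fc_\Gamma) = \max\min \fa_{r_+, \fg_+}(\fc_+,\fc_\Gamma)$. These two quantities differ, and your rescaling by $\tfrac{-2\pi\rho}{r}$ does not reconcile them: $\fa_{r_-, \fg}(\fc)$ is not $\tfrac{r_-}{r_+}\fa_{r_+, \fg}(\fc)$; rather $\fa_{r_-, \fg}(\fc) - \fa_{r_+, \fg}(\fc) = \tfrac{1}{2}(r_+ - r_-)E_\phi(\fc, \fc_\Gamma)$.

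What you are missing is the bridge from $\fa_{r_-, \fg_+}(\fc_+)$ to $\fa_{r_+, \fg_+}(\fc_+)$: you need a bound showing $|E_\phi(\fc_+)| \lesssim r_+$ so that the error term $\tfrac{1}{2}(r_+-r_-)E_\phi(\fc_+)$ vanishes as $r_+ \to r_-$. This is not automatic and does not come from the continuation estimates at all; it requires the gauge invariance of $-2\pi\rho E_\phi(-,\fc_\Gamma) + \fcs(-,\fc_\Gamma)$ together with Propositions~\ref{prop:3dActionEstimates} and \ref{prop:spectralFlow2}. Without this, the statement that ``$\epsilon(r) \to 0$'' is unjustified, and the proof does not close. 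Your handling of the broken instanton telescoping and the upper/lower bound symmetry is otherwise fine, but the proof as written is incomplete at precisely the point where new analytic input is needed.
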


\begin{proof}
We proceed in a similar manner to the proof of Lemma \ref{lem:maxMin3}, using Proposition \ref{prop:continuationEstimates5} instead of Proposition \ref{prop:continuationEstimates2}. The proof will proceed in three steps. 

\textbf{Step 1:} The first step uses results of \S\ref{subsubsec:continuationEstimates} to bound the difference in the spectral invariants for $r_+ > -2\pi\rho$ and $r_- = -2\pi\rho$. Fix any two SW parameter sets
$$\cS_\pm = (\phi, J, \Gamma, r_\pm, \fg_\pm)$$
where $r_+ > r_- = -2\pi\rho$ and $\TWHMhat^*(\cS_-)$ and $\TWHM^*(\cS_+)$ are well-defined. We assume that $\|\fg_+ - \fg_-\|$ and $|r_+ - r_-|$ are both bounded above by $\kappa_{\ref{prop:continuationEstimates5}}^{-1}$. 

Fix an SW continuation parameter set $\cS_s$ from $\cS_-$ to $\cS_+$ and an SW continuation parameter set $\cS_s'$ from $\cS_+$ to $\cS_-$. Then the continuation maps $T(\cS_s): \TWCM^*(\cS_+) \to \TWCMhat^*(\cS_-)$ and $T(\cS_s'): \TWCMhat^*(\cS_-) \to \TWCM^*(\cS_+)$ define the canonical isomorphism between $ \TWCM^*(\cS_+)$ and $\TWCMhat^*(\cS_-)$.

By Proposition \ref{prop:continuationEstimates5} and the definition of $T(\cS_s)$, we conclude that for any cochain $\widetilde{\sigma}_+$ in $\TWCM(\cS_+)$ representing $\sigma$, and for any $\fc_+$ such that $[\fc_+]^\circ$ has nonzero coefficient in $\widetilde{\sigma}_+$, and for any $\fc_-$ such that $[\fc_-]^\circ$ has nonzero coefficient in $T(\cS_s) \cdot \widetilde{\sigma}_+$, that 
\begin{equation}
\label{eq:maxMin8}
\begin{split}
\fa_{r_-, \fg_+}(\fc_+, \fc_\Gamma) &\leq \fa_{r_-, \fg_-}(\fc_-) + \kappa_{\ref{prop:continuationEstimates5}} r_+^2(|r_+ - r_-| + \|\fg_+ - \fg_-\|_\cP).
\end{split}
\end{equation}
Similarly, for any cochain $\widetilde{\sigma}_-$ in $\TWCM(\cS_-)$ representing $\sigma$, and for any $\fc_-$ such that $[\fc_-]^\circ$ has nonzero coefficient in $\widetilde{\sigma}_-$, and for any $\fc_+$ such that $[\fc_+]^\circ$ has nonzero coefficient in $T(\cS_s') \cdot \widetilde{\sigma}_-$, we have 
\begin{equation}
	\label{eq:maxMin8'}
	\begin{split}
		\fa_{r_-, \fg_-}(\fc_-, \fc_\Gamma) &\leq \fa_{r_-, \fg_+}(\fc_+) + \kappa_{\ref{prop:continuationEstimates5}} r_+^2(|r_+ - r_-| + \|\fg_+ - \fg_-\|_\cP).
	\end{split}
\end{equation}

\textbf{Step 2:} The second step uses the inequality (\ref{eq:maxMin8}) from Step 1 to relate the spectral invariant $c^{\HM}_{\sigma}(\cS_-; \fc_\Gamma)$ at $r = -2\pi\rho$ to a variational quantity which is very close to the spectral invariant at $r = r_+$. Write 
$$f(r_+) = \max_{[\widetilde{\sigma}_+] = \sigma}\min_{[\fc_+]^\circ \in \widetilde{\sigma}_+}r_+^{-1}\fa_{r_-,\fg_+}(\fc_+).$$

This is similar to the spectral invariant at $r = r_+$, but we are taking a max-min of the Chern--Simons--Dirac functional with parameter $r = r_-$. 
Taking a max-min on \eqref{eq:maxMin8} and \eqref{eq:maxMin8'} yields
\begin{equation} \label{eq:maxMin9}
\begin{split}|r_+f(r_+) - r_-c^{\HM}_{\sigma}(\cS_-; \fc_\Gamma)| &\leq \kappa_{\ref{prop:continuationEstimates5}}r_+^2(|r_+ - r_-| + \|\fg_+ - \fg_-\|_\cP).
\end{split}
\end{equation}
and therefore
$$\lim_{r_+ \to -2\pi\rho} r_+f(r_+) = r_-c^{\HM}(\cS_-; \fc_\Gamma).$$

\textbf{Step 3:} This step shows that the quantity $f(r_+)$ and the spectral invariant at $r = r_+$ have the same limit as $r_+ \to -2\pi\rho$: 
\begin{equation} \label{eq:maxMin10} \lim_{r_+ \to -2\pi\rho} |f(r_+) - c^{\HM}(\cS_+; \fc_\Gamma)| = 0. \end{equation}

Let $\fc_+$ be a solution to the $\cS_+$--Seiberg--Witten equation such that $f(r_+)=r_+^{-1} \fa_{r_-,\fg_+}(\fc_+)$.
By the definition of the functional $\fa$, 
$$\fa_{r_+, \fg_+}(\fc_+)  - \fa_{r_-, \fg_+}(\fc_+) = \frac{1}{2}(r_- - r_+)E_{\phi}(\fc_+).$$
Notice that the function
$-2\pi\rho E_{\phi}(-, \fc_\Gamma) + \fcs(-, \fc_\Gamma)$
is gauge-invariant, so Proposition \ref{prop:3dActionEstimates} and Proposition \ref{prop:spectralFlow2} imply that $|E_{\phi}(\fc_+)| \leq \kappa r_+$, where $\kappa$ depends only on $\rho$ and the curvature $F_{B_\Gamma}$ of the base connection. 
Hence
$$|f(r_+) - c^{\HM}(\cS_+; \fc_\Gamma)| \leq \kappa (r_+ - r_-),$$
which implies (\ref{eq:maxMin10}). The lemma is then an immediate consequence of \eqref{eq:maxMin9} and \eqref{eq:maxMin10}.
\end{proof}

We now prove the following important proposition, which shows that the spectral invariant at $r = -2\pi\rho$ is the action of a reducible solution. This allows us to compute the spectral invariant at $r = -2\pi\rho$ later. 

\begin{prop}
\label{prop:maxMinReducibles} There is a constant $d_{\ref{prop:maxMinReducibles}} \geq 1$ depending only on the manifold $M_\phi$ such that the following holds. Fix an SW parameter set 
$\cS = (\phi, J, \Gamma, -2\pi\rho, \fe_\mu)$
such that the class $\Gamma$ has degree $d \geq d_{\ref{prop:maxMinReducibles}}$. Then for any homogeneous non-zero class $\sigma \in \TWHMhat^*(\cS)$ and base configuration $\fc_\Gamma$, there is a reducible solution $\fc_{\text{red}} = (B_{\text{red}}, 0)$ of the $\cS$--Seiberg--Witten equations such that 
$$c^{\HM}_{\sigma}(\cS; \fc_\Gamma) = \fa_{-2\pi\rho, \fe_\mu}(\fc_{\text{red}}, \fc_\Gamma).$$
\end{prop}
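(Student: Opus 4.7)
The goal is to show that at $r = -2\pi\rho$ the SWF spectral invariant is achieved by a reducible solution. The key idea is to pass from the hat complex $\TWCMhat^*(\cS)$, whose generators mix irreducibles with boundary-unstable reducibles, to the bar complex $\TWCMbar^*(\cS)$, whose generators are purely reducible (boundary-stable and boundary-unstable). I would take $d_{\ref{prop:maxMinReducibles}}$ large enough that Corollary~\ref{cor:barEqualsHat} applies, giving the isomorphism $\TWHMbar^*(\cS) \cong \TWHMhat^*(\cS)$ from the exact triangle of Theorem~\ref{thm:exactTriangle}, and lift $\sigma$ to a class $\bar\sigma \in \TWHMbar^*(\cS)$. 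I would then define the bar spectral invariant $c^{\HM}_{\bar\sigma}(\cS; \fc_\Gamma)$ analogously to the hat case, using the CSD filtration restricted to the bar complex.

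Next I would establish a spectrality result on the bar complex: $c^{\HM}_{\bar\sigma}(\cS; \fc_\Gamma)$ is achieved by a genuine reducible $\fc_{\mathrm{red}}$. The argument mirrors Lemma~\ref{lem:maxMin2}. Take a sequence of cocycles representing $\bar\sigma$ whose lowest-action generators have CSD actions converging to $c^{\HM}_{\bar\sigma}(\cS; \fc_\Gamma)$. Using compactness of the reducible moduli space modulo the full gauge group $\cG(M_\phi)$, a gauge adjustment, and discreteness of achievable action values within a fixed grading (coming from the monotonicity of $\Gamma$, as in the proof of Lemma~\ref{lem:maxMin2} and Proposition~\ref{prop:pfhSpectrality}), one obtains in the limit a reducible $\fc_{\mathrm{red}}$ with $\fa_{-2\pi\rho, \fe_\mu}(\fc_{\mathrm{red}}, \fc_\Gamma) = c^{\HM}_{\bar\sigma}(\cS; \fc_\Gamma)$.

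The main obstacle is the remaining claim that the bar-hat isomorphism preserves spectral invariants, namely $c^{\HM}_{\bar\sigma}(\cS; \fc_\Gamma) = c^{\HM}_\sigma(\cS; \fc_\Gamma)$. This is delicate because the chain-level map $\TWCMbar^* \to \TWCMhat^*$ in the exact triangle involves connecting terms counting flow lines between boundary-stable reducibles and irreducible critical points, which can strictly lower CSD action. The approach, as hinted at in the introduction, is to pass to the positive completions $\TWHMhat^\bullet, \TWHMbar^\bullet$, where infinite action-bounded cochains are allowed; the analog of Corollary~\ref{cor:barEqualsHat} still holds there, and in the completed setting one can arrange the chain-level isomorphism so that the potentially infinitely many flow-line corrections are absorbed into the completion without altering the lowest-action generator. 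Combined with the observation that completed and uncompleted spectral invariants coincide (again by discreteness of the action spectrum at a fixed grading), this yields $c^{\HM}_\sigma(\cS; \fc_\Gamma) = c^{\HM}_{\bar\sigma}(\cS; \fc_\Gamma) = \fa_{-2\pi\rho, \fe_\mu}(\fc_{\mathrm{red}}, \fc_\Gamma)$, completing the proof.
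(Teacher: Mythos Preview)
Your overall strategy—leveraging the bar--hat isomorphism from Corollary~\ref{cor:barEqualsHat}—is on the right track, and one inequality follows essentially as you suggest: since the chain map $\TWCMbar^* \to \TWCMhat^*$ counts (broken) instantons, which decrease the Chern--Simons--Dirac functional, any $\fc$ achieving the hat spectral invariant admits an instanton to some reducible $\fc_{\text{red}}$, giving $c^{\HM}_\sigma(\cS;\fc_\Gamma) \geq \fa_{-2\pi\rho,\fe_\mu}(\fc_{\text{red}},\fc_\Gamma)$. Your spectrality step on the bar complex is overcomplicated: the reducible critical locus of $\fa_{-2\pi\rho,\fe_\mu}$ is a connected affine space, so \emph{all} reducibles share the same action and no compactness or discreteness argument is needed.

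The genuine gap is the reverse inequality, and your proposed argument does not work. You appeal to completions hoping to ``absorb'' correction terms without altering the lowest-action generator, but the relevant fact about the completed groups is the opposite of what you want: for $d$ large one has $\TWHMhat^\bullet(\cS) = 0$ (via \cite[Theorem 31.1.1]{monopolesBook}, which identifies it with the group at $r=0$, and Proposition~\ref{prop:checkVanishing}). So in the completed complex $\sigma$ becomes trivial, and there is no completed spectral invariant to compare. The paper exploits this vanishing directly: suppose some cocycle $\wt\sigma \in \TWCMhat^*(\cS)$ representing $\sigma$ consists entirely of irreducible generators. Vanishing of the completed homology gives a chain homotopy $\hat K$ with $-\partial_\bullet \hat K \wt\sigma = \wt\sigma$ in $\TWCMhat^\bullet$. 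The crucial input, extracted from the proof of \cite[Proposition 31.2.6]{monopolesBook}, is that $\hat K$ sends each \emph{irreducible} generator to a \emph{finite} sum of generators. Hence $\hat K\wt\sigma$ already lies in the uncompleted complex $\TWCMhat^*$, so $\wt\sigma$ is a coboundary there—contradicting $\sigma \neq 0$. Thus every representative of $\sigma$ contains a reducible generator, and the max--min definition immediately yields $c^{\HM}_\sigma(\cS;\fc_\Gamma) \leq \fa_{-2\pi\rho,\fe_\mu}(\fc_{\text{red}}',\fc_\Gamma)$ for some reducible $\fc_{\text{red}}'$. Combined with the first inequality and the equality of all reducible actions, the proof is complete.
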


\begin{proof}
The proposition will follow in three steps. 

\textbf{Step 1:} The first step shows that when the degree of $\Gamma$ is sufficiently large, the existence of a reducible solution $\fc_{\text{red}} = (B_{\text{red}}, 0)$ such that 
$$c^{\HM}_{\sigma}(\phi, J, \Gamma, -2\pi\rho, \fe_\mu; \fc_\Gamma) \geq \fa_{-2\pi\rho, \fe_\mu}(\fc_{\text{red}}, \fc_\Gamma).$$

This is a consequence of Corollary \ref{cor:barEqualsHat}. In fact, the
isomorphism in Corollary \ref{cor:barEqualsHat} 
arises from a chain map
which counts (broken, blown-up) index zero solutions of the $\cS_b$--Seiberg--Witten instanton equations (\ref{eq:swInstanton}). Therefore, when the degree $d$ is sufficiently large, Corollary \ref{cor:barEqualsHat} tells us that for any solution $\fc$ of the $(\phi, J, \Gamma, -2\pi\rho, \fe_\mu)$--Seiberg--Witten equations such that
$c^{\HM}_{\sigma}(\phi, J, \Gamma, -2\pi\rho, \fe_\mu; \fc_\Gamma) = \fa_{-2\pi\rho, \fe_\mu}(\fc, \fc_\Gamma),$
there is a \emph{reducible} solution $\fc_{\text{red}}$ of the $(\phi, J, \Gamma, -2\pi\rho, \fe_\mu)$--Seiberg--Witten equations and a (possibly broken) instanton $\fd$ that limits to $\fc$ as $s \to -\infty$ and $\fc_{\text{red}}$ as $s \to \infty$. The monotonicity property of the Chern--Simons--Dirac functional (see Lemma \ref{lem:csdContinuationGradient1}) implies
$\fa_{-2\pi\rho, \fe_\mu}(\fc, \fc_\Gamma) \geq \fa_{-2\pi\rho, \fe_\mu}(\fc_{\text{red}}, \fc_\Gamma),$
which proves the desired lower bound. 

\textbf{Step 2:} The second step shows the existence of a reducible solution $\fc_{\text{red}}' = (B_{\text{red}}', 0)$ such that 
$$c^{\HM}_{\sigma}(\phi, J, \Gamma, -2\pi\rho, \fe_\mu; \fc_\Gamma) \leq \fa_{-2\pi\rho, \fe_\mu}(\fc_{\text{red}}').$$

It uses algebraic properties of both the completed and non-completed versions of Seiberg--Witten--Floer cohomology at $r = -2\pi\rho$. First, perturb $\fe_\mu$ by a small element $\fp\in\cP$ such that the Seiberg--Witten--Floer cohomology is well-defined. 
Write 
$\cS_b = (\phi, J, \Gamma, -2\pi\rho, \fe_\mu + \fp)$, $ \cS_0 = (\phi, J, \Gamma, 0, \fe_\mu + \fp).$
By \cite[Theorem $31.1.1$]{monopolesBook}, we have $\TWHMhat^\bullet(\cS_b) \cong  \TWHMhat^\bullet(\cS_0)$, and there are chain maps
$$\hat{i}: \TWCMhat^\bullet(\cS_b) \to \TWCMhat^\bullet(\cS_0), \qquad \hat{j}: \TWCMhat^\bullet(\cS_0) \to \TWCMhat^\bullet(\cS_b)$$
and a chain homotopy $\hat{K}$ from $\hat{j} \circ \hat{i}$ to the identity map defined by counting broken trajectories of solutions to the Seiberg--Witten equations. 
By Proposition \ref{prop:checkVanishing}, this implies $\TWHMhat^\bullet(\cS_b) = 0$ when $d$ is sufficiently large. 

We will show that any cochain $\wt\sigma$ in $\TWCMhat^*(\cS_b)$ (the non-completed version) representing $\sigma$ contains a reducible generator. This immediately implies the desired inequality by the definition of the max-min action. 

Suppose for the sake of contradiction that there is a cochain $\wt\sigma = \sum_{i=1}^n [\fc_i]^\circ$ representing $\sigma$ such that all of the generators $[\fc_i]^\circ$ are irreducible. Write $\partial_*$ and $\partial_\bullet$ for the differentials on $\TWCMhat^*(\cS_b)$ and $\TWCMhat^\bullet(\cS_b)$ respectively. Then we have
$\hat{K}\partial_\bullet - \partial_\bullet\hat{K} = \text{id}.$

Since $\partial_*\wt\sigma = 0$, we have $\partial_\bullet\wt\sigma = 0$ as well. Therefore $-\partial_\bullet\hat{K}\wt\sigma = \wt\sigma.$
It follows that $-\hat{K}\wt\sigma$ is a primitive for $\wt\sigma$ in $\TWCMhat^\bullet(\cS_b)$. However, 
the argument of \cite[Proposition $31.2.6$]{monopolesBook} implies that the map $\hat K$ takes an \emph{irreducible} generator of $\TWCMhat^\bullet(\cS_b)$ to a sum of \emph{finitely} many generators of $\TWCMhat^\bullet(\cS_b)$. (In fact, in the proof of \cite[Proposition $31.2.6$]{monopolesBook}, it is shown that if $\hat K([\fc])$ is an infinite sum of generators, then there must be infinitely many reducible instantons $[\fd_n]$ which limit to $[\fc]$ as $s \to -\infty$, which implies that $[\fc]$ is reducible.)
Since we have assumed $\wt\sigma$ is solely composed of irreducibles, we find that $-\hat{K}\wt\sigma$ is a sum of finitely many generators, which shows that $\wt\sigma$ is a coboundary in ${\TWCMhat}^*(\cS_b)$. This is a contradiction since we assumed $\wt\sigma$ represented a nonzero cohomology class. 

\textbf{Step 3:} The third step shows
$\fa_{-2\pi\rho, \fe_\mu}(\fc_{\text{red}}, \fc_\Gamma) = \fa_{-2\pi\rho, \fe_\mu}(\fc_{\text{red}}', \fc_\Gamma)$. This follows from either explicit computation or the following formal argument. Both $\fc_{\text{red}}$ and $\fc_{\text{red}}'$ are critical points of $\fa_{-2\pi\rho, \fe_\mu}$, and the set of reducible critical points of $\fa_{-2\pi\rho, \fe_\mu}$ is an affine space, which is connected.  So we must have $\fa_{-2\pi\rho, \fe_\mu}(\fc_{\text{red}}, \fc_\Gamma) = \fa_{-2\pi\rho, \fe_\mu}(\fc_{\text{red}}', \fc_\Gamma).$ 
\end{proof}

\subsubsection{Differential equations on $r$}
Fix $\phi, J, \Gamma, \fe_\mu$, consider the 1-parameter family of SW parameter set
 $$\cS_r = (\phi, J, \Gamma, r, \fe_\mu),$$
and let $\fc(r) = (B(r), \Psi(r) = r^{1/2}(\alpha(r), \beta(r)))$ be a $(\mu, \sigma)$ max-min family as in Lemma \ref{lem:maxMin5} (dropping the $(\mu, \sigma)$ from the notation). Fix a base configuration $\fc_\Gamma$ and a homogeneous nonzero class $\sigma \in \TWHM^*(M_\phi, \Gamma, m_-)$. For any $r > -2\pi\rho$ outside of a discrete subset, the family $\fc(r)$ is smooth. For simplicity write $\fa(r) := \fa_{r, \fe_\mu}(\fc(r), \fc_\Gamma)$, $E_\phi(r) := E_\phi(B(r), B_\Gamma)$, $\fcs(r) := \fcs(B(r), B_\Gamma)$, and $\fe_\mu(r) := \fe_\mu(B(r), B_\Gamma)$. As a consequence of the fact that $\fc(r)$ satisfies the $\cS_r$--Seiberg--Witten equations (\ref{eq:sw}),  at any smooth point, we have
\begin{equation}
	\label{eq:actionDerivative1} 
		\frac{\partial}{\partial r} \fa(r) = -\frac{1}{2}E_{\phi}(r).
\end{equation}
The proof of \eqref{eq:actionDerivative1} is identical to \cite[(4.6)]{TaubesWeinstein1} and we refer the reader to \cite{TaubesWeinstein1} for details. As a consequence, 
\begin{equation}
	\label{eq:actionDerivative2}
		\frac{\partial}{\partial r} c^{\HM}_{\sigma}(\cS_r; \fc_\Gamma) = \frac{\partial}{\partial r}(r^{-1}\fa(r)) = -r^{-2}\fa(r) - \frac{r^{-1}}{2}E_{\phi}(r) = -r^{-2}(\fcs(r) + \fe_\mu(r)).
\end{equation}

The following lemma uses the differential equations above to prove a bound on the difference between the limit of the spectral invariants as $r \to \infty$ and the spectral invariant near $r = -2\pi\rho$. It also makes use of spectral flow estimates from Propositions \ref{prop:spectralFlow1} and \ref{prop:spectralFlow2}; these propositions require the curvature of the base connection to satisfy suitable $C^3$ bounds. 

\begin{lem}
\label{lem:maxMinLimit}
Let $\cS_r = (\phi, J, \Gamma, r, \fe_\mu)$, $\sigma$, $\fc_\Gamma = (B_\Gamma, \Psi_\Gamma)$ be fixed as above. Fix any constant $\epsilon > 0$. Fix a constant $\Lambda \geq 1$ such that $\|F_{B_\Gamma}\|_{C^3} \leq \Lambda d$. Then there is a constant $\kappa_{\ref{lem:maxMinLimit}} = \kappa_{\ref{lem:maxMinLimit}}(\epsilon) \geq 1$ and a constant $d_{\ref{lem:maxMinLimit}} \geq 1$ depending on $\epsilon$, $\Lambda$, and geometric constants such that the following holds as long as $d \geq d_{\ref{lem:maxMinLimit}}$. The spectral invariants $c^{\HM}_{\sigma}(\cS_r; \fc_\Gamma)$ converge as $r \to \infty$ and moreover, for every $r_* > -2\pi\rho$, we have the bound
	$$|\lim_{r \to \infty} c^{\HM}_{\sigma}(\cS_r; \fc_\Gamma) - c^{\HM}_{\sigma}(\cS_{r_*}; \fc_\Gamma) + \frac{2\pi^2}{r_*}\SF(\sigma, \fc_\Gamma)| \leq \kappa_{\ref{lem:maxMinLimit}} d^{4/5 + \epsilon}.$$ 
\end{lem}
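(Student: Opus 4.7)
The plan is to prove the lemma by integrating the derivative formula \eqref{eq:actionDerivative2} from $r_*$ to infinity, substituting spectral-flow approximations for $\fcs(r)$, and controlling error terms. By Lemma \ref{lem:maxMin5}, there is a $(\mu,\sigma)$--max-min family $\fc(r)$, continuous on $(-2\pi\rho,\infty)$ and smooth off a locally finite set $U_\mu$, such that $c^{\HM}_\sigma(\cS_r;\fc_\Gamma) = r^{-1}\fa(r)$. Combining Lemma \ref{lem:maxMin3} (continuity) with \eqref{eq:actionDerivative2}, for every $R > r_*$ one obtains
\begin{equation*}
c^{\HM}_\sigma(\cS_R;\fc_\Gamma) - c^{\HM}_\sigma(\cS_{r_*};\fc_\Gamma) = -\int_{r_*}^{R} r^{-2}\bigl(\fcs(r) + \fe_\mu(r)\bigr)\,dr.
\end{equation*}

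The heart of the argument is then to replace the integrand by its topological part. First, because $\fc(r)$ belongs to the support of some cochain representative of the homogeneous class $\sigma$ for every smooth $r$, its $\cG^\circ$--equivalence class has absolute grading equal to that of $\sigma$; since the grading is defined by spectral flow to $\fc_\Gamma$, this forces
\begin{equation*}
\SF(\fc(r),\fc_\Gamma) = \SF(\sigma,\fc_\Gamma)
\end{equation*}
on the complement of $U_\mu$. Second, I will invoke Propositions \ref{prop:spectralFlow1} and \ref{prop:spectralFlow2}, which (under the hypothesis $\|F_{B_\Gamma}\|_{C^3}\leq \Lambda d$) give a quantitative comparison of the shape $|\fcs(r)+2\pi^2\SF(\fc(r),\fc_\Gamma)|\leq \kappa\,r^{\alpha}d^{\beta}$ for suitable exponents. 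Third, standard curvature estimates for the Seiberg--Witten equations (for instance of the type used in Proposition \ref{prop:actionConvergence}) yield a bound $|\fe_\mu(r)|\leq \kappa r \|\mu\|$ coming from $\|F_{B(r)}\|_{L^1}\lesssim r$, so that after weighting by $r^{-2}$ this term is readily integrable against a mild cutoff.

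Substituting the spectral-flow formula and integrating yields the main term $2\pi^{2}\SF(\sigma,\fc_\Gamma)(r_*^{-1}-R^{-1})$ together with the error integral $\int_{r_*}^R r^{-2}(|\fcs(r)+2\pi^{2}\SF|+|\fe_\mu(r)|)\,dr$. Because this error integral will be bounded uniformly in $R$, the quantity $c^{\HM}_\sigma(\cS_R;\fc_\Gamma)$ admits a limit as $R\to\infty$; passing to the limit gives
\begin{equation*}
\lim_{R\to\infty}c^{\HM}_\sigma(\cS_R;\fc_\Gamma) - c^{\HM}_\sigma(\cS_{r_*};\fc_\Gamma) + \frac{2\pi^{2}}{r_*}\SF(\sigma,\fc_\Gamma) = 2\pi^{2}\SF(\sigma,\fc_\Gamma)\cdot 0 + \text{error},
\end{equation*}
which is precisely the quantity to bound by $\kappa_{\ref{lem:maxMinLimit}}d^{4/5+\epsilon}$.

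The main obstacle is packaging the error integral so that the final bound is $d^{4/5+\epsilon}$ independent of $r_*$. The naive estimates on $\fcs(r)+2\pi^{2}\SF$ grow in $r$ and fight the $r^{-2}$ weight, so I will split the integration at a threshold $r_0 = c\,d^{\gamma}$ chosen to balance the two regimes: on $[r_*,r_0]$ the spectral flow estimate with its $d$--dependence dominates, while on $[r_0,\infty)$ the decay $r^{-2}$ combined with the sublinear growth of $\fcs(r)+2\pi^{2}\SF$ (by the second, sharper of the two spectral flow propositions) gives an integrable tail. The exponent $4/5$ in the final bound arises from this optimization, with the extra $\epsilon$ absorbing the logarithmic correction introduced by the $\fe_\mu$ term; the independence of $r_*$ follows because the estimate $|\fcs(r)+2\pi^{2}\SF|\leq \kappa d^{4/5+\epsilon} r^{1}$ would make $\int r^{-2}\cdot r\,dr$ logarithmically divergent near $r_*$, so one must use the stronger fact that $\fcs$ approaches a limit as $r\to -2\pi\rho$ by continuity of $c^{\HM}$ to control the near-endpoint contribution.
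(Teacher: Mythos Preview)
Your overall strategy matches the paper's: integrate \eqref{eq:actionDerivative2}, replace $\fcs(r)$ by $-4\pi^2\SF(\sigma,\fc_\Gamma)$ using the spectral-flow estimates of Propositions~\ref{prop:spectralFlow1} and~\ref{prop:spectralFlow2}, and split the integration range at a threshold $r_0\sim d^{1+\delta}$ to balance the two bounds. However, several of the details you supply are wrong and would derail the argument as written.

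First, your bound $|\fe_\mu(r)|\leq \kappa r\|\mu\|$ is too weak and in fact fatal: with it the contribution $\int_{r_*}^\infty r^{-2}\cdot r\,dr$ diverges, so the error integral is not ``readily integrable.'' The correct estimate is $|\fe_\mu(r)|\lesssim \|F_{B(r)}-F_{B_\Gamma}\|_{L^1}\lesssim d+\|F_{B_\Gamma}\|_{L^1}$, which is \emph{independent of $r$}; this follows from the fourth item of Proposition~\ref{prop:3dEstimates1}. With this bound the $\fe_\mu$--contribution integrates to $O(d/r_*)=O(1)$ and is harmless.

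Second, you misidentify the source of the $\epsilon$. It does not come from $\fe_\mu$; it comes from the logarithmic factor $\log(r)^{\kappa}$ in Proposition~\ref{prop:spectralFlow2}, which the paper absorbs by replacing $r^{2/3}\log(r)^{\kappa}$ with $r^{2/3+\delta_1}$ for $d$ large. The two spectral-flow bounds one actually uses are $|\fcs(r)+4\pi^2\SF|\lesssim r^{3/2}$ and $|\fcs(r)+4\pi^2\SF|\lesssim d^{4/3}r^{2/3+\delta_1}$; splitting at $r_0=d^{1+\delta_2}$ with $\delta_2=(6\delta_1+3)/(5-6\delta_1)$ and optimizing gives the exponent $4/(5-6\delta_1)\to 4/5$ as $\delta_1\to 0$.

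Finally, your last paragraph is confused. There is no hypothetical bound of the form $|\fcs+4\pi^2\SF|\lesssim d^{4/5+\epsilon}r$ in play, no logarithmic divergence ``near $r_*$'' to worry about, and no need to appeal to any limiting behavior of $\fcs$ at $r=-2\pi\rho$. The uniformity in $r_*$ falls out automatically: since $r_*>-2\pi\rho$ forces $r_*\gtrsim d$, the integral $\int_{r_*}^{d^{1+\delta_2}} r^{-1/2}\,dr$ from the $r^{3/2}$ bound is already $\lesssim d^{(1+\delta_2)/2}$ regardless of $r_*$.
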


\begin{proof}
Fix any $r_2 > r_1 > -2\pi\rho$. In what follows we will take $\kappa(\Lambda) \geq 1$ to be a constant depending only on geometric constants and $\Lambda$, which can be assumed to increase between successive appearances. Stokes' theorem and Proposition \ref{prop:3dEstimates1} indicates for any $r$ that we have a bound of the form
\[
|\fe_\mu(r)| \leq \|F_{B(r)} - F_{B_\Gamma}\|_{L^1} \leq 2\pi d + \kappa + \|F_{B_\Gamma}\|_{L^1}.
\]

Recall that $\SF(\fc(r), \fc_\Gamma) = \SF(\sigma, \fc_\Gamma)$ for every $r$. We integrate the identity (\ref{eq:actionDerivative2}) from $r_1$ to $r_2$ to deduce that
\begin{equation} \label{eq:maxMinLimit3}
\begin{split}
& |c^{\HM}_{\sigma}(\cS_{r_2}; \fc_\Gamma) - c^{\HM}_{\sigma}(\cS_{r_1}; \fc_\Gamma) - 2\pi^2(r_2^{-1} - r_1^{-1})\SF(\sigma, \fc_\Gamma)| \\
&\qquad  \leq \frac{1}{2}\int_{r_1}^{r_2} r^{-2}(|\fcs(r) + 4\pi^2\SF(\sigma, \fc_\Gamma)| + |\fe_\mu(r)|)dr \\
&\qquad \leq \frac{1}{2}\int_{r_1}^{r_2} r^{-2}|\fcs(r) + 4\pi^2\SF(\sigma, \fc_\Gamma)| dr + (2\pi d + \kappa + \|F_{B_\Gamma}\|_{L^1})(r_1^{-1} - r_2^{-1}).
\end{split}
\end{equation}

Next, we examine the integral
$$\int_{r_1}^{r_2} r^{-2}|\fcs(r) + 4\pi^2\SF(\sigma, \fc_\Gamma)| dr.$$

By Propositions \ref{prop:spectralFlow1} and \ref{prop:spectralFlow2} and the fact that the base connection has $C^3$ norm bounded by $\Lambda d$, we have the bounds
\begin{equation} \label{eq:maxMinLimit1} |\fcs(r) + 4\pi^2\SF(\sigma, \fc_\Gamma)| \leq \kappa(\Lambda) r^{3/2} \end{equation}
and
$$|\fcs(r) + 4\pi^2\SF(\sigma, \fc_\Gamma)| \leq \kappa(\Lambda) d^{4/3}r^{2/3}\log(r)^{\kappa(\Lambda)}.$$

Fix some constant $\delta_1 \in (0, 1)$. If $d$ is larger than a constant depending only on $\kappa(\Lambda)$ and $\delta_1$, then we can use the fact that $r_2 > r_1 > -2\pi\rho$ to deduce that 
\begin{equation} \label{eq:maxMinLimit2} |\fcs(r) + 4\pi^2\SF(\sigma, \fc_\Gamma)| \leq \kappa(\Lambda)d^{4/3}r^{2/3 + \delta_1}. \end{equation}

For any $r_2 > r_1 > -2\pi\rho$, \eqref{eq:maxMinLimit2} shows 
\begin{equation*}\int_{r_1}^{r_2} r^{-2}|\fcs(r) + 4\pi^2\SF(\sigma, \fc_\Gamma)| dr \leq \kappa(\Lambda) d^{4/3}\int_{r_1}^{r_2} r^{-4/3 + \delta_1} \leq \kappa(\Lambda) d^{4/3} r_1^{-1/6}. \end{equation*}

This along with \eqref{eq:maxMinLimit3} shows that the family $c_\sigma^{\HM}(\cS_r; \fc_\Gamma)$ is Cauchy, which shows that the spectral invariants converge. This proves the first assertion of the lemma. 

We will now prove the second assertion of the lemma. This will use both (\ref{eq:maxMinLimit1}) and (\ref{eq:maxMinLimit2}) to prove a suitable bound for 
$$\int_{r_1}^{r_2} r^{-2}|\fcs(r) + 4\pi^2\SF(\sigma, \fc_\Gamma)| dr$$
for any $r_2 > r_1 > -2\pi\rho$. Fix another constant $\delta_2 = (6\delta_1 + 3)/(5 - 6\delta_1)$. 

Assume without loss of generality that $r_1 \leq d^{1 + \delta_2}$ and $r_2 \geq d^{1 + \delta_2}$. Then by (\ref{eq:maxMinLimit1}),
\begin{equation*}
\int_{r_1}^{d^{1 + \delta_2}} r^{-2}|\fcs(r) + 4\pi^2\SF(\sigma, \fc_\Gamma)| dr \leq \kappa(\Lambda) \int_{r_1}^{d^{1 + \delta_2}} r^{-1/2} dr \leq \kappa(\Lambda) d^{(1 + \delta_2)/2}.
\end{equation*}
By (\ref{eq:maxMinLimit2}),
\begin{equation*}
\int_{d^{1 + \delta_2}}^{r_2} r^{-2}|\fcs(r) + 4\pi^2\SF(\sigma, \fc_\Gamma)| dr \leq \kappa(\Lambda) d^{4/3} \int_{d^{1 + \delta_2}}^{r_2} r^{-4/3 + \delta_1} dr \leq \kappa(\Lambda) d^{4/3 + (1 + \delta_2)(\delta_1 - 1/3)}. 
\end{equation*}

The strange choice of $\delta_2$ is now justified by the fact that
$$\frac{1}{2}(1 + \delta_2) = 4/3 + (1 + \delta_2)(\delta_1 - 1/3) = \frac{4}{5 - 6\delta_1}.$$

It follows that
$$\int_{r_1}^{r_2} r^{-2}|\fcs(r) + 4\pi^2\SF(\sigma, \fc_\Gamma)| dr \leq \kappa(\Lambda) d^{\frac{4}{5 - 6\delta_1}}$$
for any $r_2 > r_1 > -2\pi\rho$. Taking $\delta_1$ sufficiently close to $0$ and combining this bound with \eqref{eq:maxMinLimit3} proves the second assertion of the lemma. 
\end{proof}

\subsection{Proof of Theorem \ref{thm:hutchingsConjectureFullintro}}

We are now ready to prove our Weyl law for PFH spectral invariants. The proof will be carried out in $7$ steps. 

\textbf{Step 1:} The first step reduces proving the theorem to the case where the chosen data satisfy some additional simplifying assumptions. 

First, by approximating a degenerate map with a non-degenerate one, 
we can reduce to the case where $\phi$ and $\phi'$ are both nondegenerate.

Next, we reduce to the case where the reference cycles $\Theta_m$ and $\Theta_m^H$ are separated for every $m$. 
Assume for now that the theorem holds when the reference cycles are separated for every $m$. We will prove the theorem for an arbitrary choice of reference cycles $\Theta_m$ and $\Theta_m^H$. We choose for every $m$ a reference cycle $\Xi_{m}$ so that both $\Xi_m$ and $\Xi_m^H$ are separated reference cycles. We also choose for every $m$ a cycle $W_m \in H_2(M_\phi, \Theta_m, \Xi_m; \bZ)$ and set $W_m^H = (M_H)_*(W_m) \in H_2(M_{\phi'}, \Theta_m^H, \Xi_m^H; \bZ)$. This induces for any $m$ an identification $$\TWPFH_*(\phi, \Theta_m) \simeq \TWPFH_*(\phi, \Xi_m)$$
by the map
$(\Theta, W) \mapsto (\Theta, W + W_m)$
on generators. This isomorphism may not preserve the $\bZ$--grading, but it shifts the $\bZ$-grading by a constant depending only on $W_m$. Likewise, there is an identification of $\TWPFH_*(\phi', \Theta_m^H)$ with $\TWPFH_*(\phi', \Xi_m^H)$, and the exact same shift in the $\bZ$-grading, so in particular the grading difference $I(\tau_m) - I(\sigma_m)$ will not change when we shift the reference cycles. We also deduce the following two identities for the spectral invariants:
$$c_{\sigma_m}(\phi, \Theta_m) = c_{\sigma_m}(\phi, \Xi_m) - \int_{W_m} \omega_\phi, \quad c_{\tau_m^H}(\phi', \Theta_m^H) = c_{\tau_m^H}(\phi', \Xi_m^H) - \int_{W_m} (\omega_{\phi} + dH \wedge dt).$$

This computation, along with the fact that $I(\sigma_m') - I(\sigma_m)$ does not change, implies that the left-hand side of the expression in the statement of the theorem does not change when $\Theta_m$ and $\Theta_m^H$ are changed to $\Xi_m$ and $\Xi_m^H$, and the desired claim is verified.

\textbf{Step 2:} The second step sets up some relevant definitions and notation. 

Choose generic almost-complex structures $J \in \cJ^\circ(dt, \omega_\phi)$ and $J' \in \cJ^\circ(dt, \omega_{\phi'})$. Write $g$ and $g'$ for the Riemannian metrics on $M_\phi$ and $M_{\phi'}$ associated to $(\phi, J)$ and $(\phi', J')$, respectively. Next, the classes $\Gamma_m$ define spin-c structures $S_m = E_m \oplus E_m \otimes V$ on $M_\phi$. Push these forward by $M_H$ to define spin-c structures $S_m' = E_m' \oplus E_m' \otimes V'$ on $M_{\phi'}$ corresponding to the classes $\Gamma'_m$. 

We now define two families of unitary reference connections on $E_m$, starting with the ``harmonic connections''. Write $B^h_m$ for the connection such that the curvature $F^h_m$ is $g$-harmonic and Poincar\'e dual to $\frac{i}{2\pi}\Gamma_m$. Push forward $B^h_m$ by $M_H$ to define a corresponding sequence $B^{h'}_m$ of unitary connections on $E_m'$. Elliptic regularity shows that the $C^3$ norms of the curvatures $\|F^h_m\|_{C^3(g)}$, $\|F^{h'}_m\|_{C^3(g')}$ are both $\lesssim d_m$ (and as a consequence $\lesssim |\rho_m|$). Choose two sequences of spinors $\Psi_m^h$ and $\Psi_m^{h'}$ that are $C^3$-small to define two sequence of reference configurations $\fc_m^h = (B_m^h, \Psi_m^h)$ and $\fc_m^{h'} = (B_m^h, \Psi_m^{h'})$. 

We fix for every $m$ and every $r > -2\pi\rho_m$ the PFH parameter sets
$\bfS_m = (\phi, \Theta_m, J)$ and  $\bfS_m^H = (\phi', \Theta_m^H, J')$
and the corresponding SW parameter sets
$\cS_{m,r} = (\phi, J, \Gamma_m, r, \fe_{\mu_m})$, $\cS_{m,r}^H = (\phi', J', \Gamma_m^H, r, \fe_{\mu_m'}).$

For every $m$ and every $r \gg -2\pi\rho_m$, we fix the data that defines the Lee--Taubes isomorphism between the PFH of $\bfS_m$ and the SWF of $\cS_{m,r}$. 
We will use this isomorphism to identify every $\sigma_m$ with a Seiberg--Witten--Floer cohomology class. Recall that the isomorphism data was defined in \S\ref{subsec:twistedIsoDefinition}. 
One of the items of data was a \emph{$\Theta_m$--concentrated family} of base configurations, which we write as
$\fc^c_m(r) = (B^c_m(r), \Psi^c_m(r) = (\alpha^c_m(r), \beta^c_m(r))).$

The main formal property of the configurations $\fc^c_m(r)$ that we need is that, as $r \to \infty$, the curvatures $F^c_m(r)$ of $B^c_m(r)$ converge weakly as currents to $-2\pi i \Theta_m$. We push forward the data by $M_H$ (see, for example the setup in \S\ref{subsec:twistedIsoGradings}) to define the Lee--Taubes isomorphism between the PFH of $\bfS_m^H$ and the SWF of $\cS_{m,r}^H$. 
We will use this isomorphism to identify $\tau_m^H$ with a Seiberg--Witten--Floer cohomology class for every $m$. The pushforward of the data by $M_H$ defines a \emph{$\Theta_m^H$--concentrated family} of base configurations, which we write as
$\fc^{c'}_m(r) = (B^{c'}_m(r), \Psi^{c'}_m(r) = (\alpha^{c'}_m(r), \beta^{c'}_m(r))).$
The curvatures $F^{c'}_m(r)$ of $B^{c'}_m(r)$ converge weakly as currents as $r \to \infty$ to $-2\pi i \Theta_m^H$. 

Finally, to tie up a loose end, write $\varpi_V$ and $\varpi_{V'}$ for the harmonic two-forms representing $\pi c_1(V)$ with respect to the metrics $g$ and $g'$.

\textbf{Step 3:} In this step, we write down the relationship of the twisted PFH and twisted SWF spectral invariants from \S\ref{sec:twistedIso} in a suitable form. Recall from Proposition \ref{prop:spectralInvariantIdentity} that we have the identities
\begin{equation}
\label{eq:hutchingsProof1}
\begin{split}
\lim_{r \to \infty} c^{\HM}_{\sigma_m}(\cS_{m,r}; \fc^h_m) + \frac{i}{2} \int_{M_{\phi}} (B^c_m(r) - B^h_m) \wedge \omega_{\phi} = -\pi c_{\sigma_m}(\bfS_m), \\
\lim_{r \to \infty} c^{\HM}_{\tau_m^H}(\cS_{m,r}^H; \fc^{h'}_m) + \frac{i}{2} \int_{M_{\phi'}} (B^{c'}_m(r) - B^{h'}_m) \wedge \omega_{\phi'}  = -\pi c_{\tau_m^H}(\bfS_m^H).
\end{split}
\end{equation}
 Taking the difference of the two equations in (\ref{eq:hutchingsProof1}) yields
\begin{equation}
\label{eq:hutchingsProof4}
\begin{split}
&c_{\tau_m^H}(\bfS_m^H) - c_{\sigma_m}(\bfS_m) \\
= & \lim_{r \to \infty} \pi^{-1}\big[c^{\HM}_{\sigma_m}(\cS_{m,r}; \fc^h_m) - c^{\HM}_{\tau_m^H}(\cS_{m,r}^H; \fc^{h'}_m) -\frac{i}{2\pi} \int_{M_\phi} (B^c_m(r) - B^h_m) \wedge dH \wedge dt\big]\\
= & \big[\lim_{r \to \infty} \pi^{-1}(c^{\HM}_{\sigma_m}(\cS_{m,r}; \fc^h_m) - c^{\HM}_{\tau_m^H}(\cS_{m,r}^H; \fc^{h'}_m))\big] 
 + \frac{i}{2\pi}\int_{M_\phi} F^h_m \wedge H dt - \int_{\Theta_m} H dt.
\end{split}
\end{equation}

The last line in (\ref{eq:hutchingsProof4}) follows from Stokes' theorem and the fact that the curvatures $F_m^c(r)$ converge weakly as currents to $-2\pi i \Theta_m$ as $r \to \infty$. It also uses the fact that the configurations $\fc_m^{h'}$ pull back for every $m$ to configurations that are equal to $\fc_m^h$ up to some negligible error, and the same is true for $\fc_m^{c'}$. Also recall that
$(M_H)^*\omega_{\phi'} = \omega_\phi + dH \wedge dt.$

\textbf{Step 4:} The fourth step applies the results of \S\ref{subsec:maxMin}. Apply Proposition \ref{prop:maxMinReducibles} to deduce the following. For any $m$, there are configurations $\fc_{m,\text{red}} = (B_{m,\text{red}}, 0)$ and $\fc_{m,\text{red}}' = (B_{m,\text{red}}', 0)$ satisfying the following properties:
\begin{itemize}
\item $\fc_{m,\text{red}}$ and $\fc_{m,\text{red}}'$ solve the $\cS_{m,-2\pi\rho_m}$-- and $\cS_{m,-2\pi\rho_m}^H$--Seiberg--Witten equations, respectively.
\item The actions of the configurations recover the respective spectral invariants at $r = -2\pi\rho_m$: 
$$c^{\HM}_{\sigma_m}(\cS_{m,-2\pi\rho_m}; \fc_m^h) = (-2\pi\rho_m)^{-1}\fa_{-2\pi\rho_m, \fe_{\mu_m}}(\fc_{m,\text{red}}, \fc_m^h),$$  $$c^{\HM}_{\tau_m^H}(\cS_{m,-2\pi\rho_m}^H; \fc_m^{h'}) = (-2\pi\rho_m)^{-1}\fa_{-2\pi\rho_m, \fe_{\mu_m'}}(\fc_{m, \text{red}}', \fc_m^{h'}).$$
\end{itemize}

Lemma \ref{lem:maxMinLimit} implies that there is a constant $\kappa(\epsilon) > 0$ depending only on the metric $g$, the Hamiltonian $H$, and a choice of constant $\epsilon \in (0,1)$ such that the following two bounds hold:
\begin{equation}
\label{eq:hutchingsProof6}
\begin{split}
|\lim_{r \to \infty} c^{\HM}_{\sigma_m}(\cS_{m,r}; \fc^h_m) + (2\pi\rho_m)^{-1}\fa_{-2\pi\rho_m, \fe_{\mu_m}}(\fc_{m,\text{red}}, \fc_m^h) - \pi\rho_m^{-1}\SF(\sigma_m, \fc_m^h)| &\leq \kappa(\epsilon)d_m^{4/5 + \epsilon} \\
|\lim_{r \to \infty} c^{\HM}_{\tau_m^H}(\cS_{m,r}^H; \fc^{h'}_m) + (2\pi\rho_m)^{-1}\fa_{-2\pi\rho_m, \fe_{\mu_m'}}(\fc_{m,\text{red}}', \fc_m^{h'}) - \pi\rho_m^{-1}\SF(\tau_m^H, \fc_m^{h'})| &\leq \kappa(\epsilon)d_m^{4/5 + \epsilon}.
\end{split}
\end{equation}

Note that the application of Lemma \ref{lem:maxMinLimit} requires the fact that the curvatures $F_m^h$ of the chosen base connections have $C^3$ norms bounded by multiples of $d_m$ depending only on geometric constants. 
Combining (\ref{eq:hutchingsProof4}) with (\ref{eq:hutchingsProof6}) then yields the estimate
\begin{equation}
\label{eq:hutchingsProof7}
\begin{split}
c_{\tau_m^H}(\bfS_m^H) - c_{\sigma_m}(\bfS_m) &= \frac{1}{2}\pi^{-2}\rho_m^{-1}(\fa_{-2\pi\rho_m, \fe_{\mu_m'}}(\fc_{m,\text{red}}', \fc_m^{h'}) - \fa_{-2\pi\rho_m, \fe_{\mu_m}}(\fc_{m,\text{red}}, \fc_m^{h})) \\
&\; + \rho_m^{-1}(\SF(\tau_m^H, \fc_m^{h'}) - \SF(\sigma_m, \fc_m^{h})) 
 + \frac{i}{2\pi} \int_{M_\phi} F_m^h \wedge H dt - \int_{\Theta_m} H dt + O_\epsilon(d_m^{4/5 + \epsilon}).
\end{split}
\end{equation}

\textbf{Step 5:} The fifth step is to compute the difference of the Chern--Simons--Dirac functionals at $\fc_{m,\text{red}}$ and $\fc_{m,\text{red}}'$ for every $m$. We will then plug this into (\ref{eq:hutchingsProof7}) to conclude the conjecture. 

We write down the Seiberg--Witten equations for the reducibles. For brevity, write the curvatures of $B_{m,\text{red}}$ and $B_{m,\text{red}}'$ as $F_{m,\text{red}}$ and $F_{m,\text{red}}'$. Then we find
\begin{equation}
\label{eq:hutchingsProof5}
F_{m,\text{red}} = i\pi\rho_m\omega_\phi + i \varpi_V + id\mu_m,\qquad F_{m,\text{red}}' = i\pi\rho_m\omega_{\phi'} + i\varpi_{V'} + id\mu_m'.
\end{equation}

Using the Seiberg--Witten equations (\ref{eq:hutchingsProof5}) we find
$$(M_H)^*(B_{m,\text{red}}') - B_{m,\text{red}} = b_m + i\pi\rho_m H dt + i\xi_V + i( (M_H)^*(\mu_m') - \mu_m)$$
where $b_m$ is a closed one-form and $\xi_V$ is a fixed primitive for the exact two-form $(M_H)^*(\varpi_{V'}) - \varpi_V$. 

A key observation is that the functional $\fa_{-2\pi\rho_m, \fg}$ for any $\fg \in \cP$ is gauge-invariant. Therefore, we can apply gauge transformations to $B_{m,\text{red}}$ and assume without loss of generality that $b_m$ is harmonic and has $C^0$ norm bounded by a metric-dependent constant $\kappa$ that is independent of $m$. 

We now proceed with the computation. Using the equations in (\ref{eq:hutchingsProof5}) we can write
\begin{align*}
\fa_{-2\pi\rho_m, \fe_{\mu_m}}(\fc_{m,\text{red}}, \fc_m^{h}) &= \frac{1}{2}(\fcs(B_{m,\text{red}}, B_m^{h}) + 2\pi\rho_m E_{\phi}(B_{m,\text{red}}, B_m^{h})) + \fe_{\mu_m}(B_{m,\text{red}}, B_m^{h}) \\
&= \int_{M_{\phi}} (B_{m,\text{red}} - B_m^{h}) \wedge (i\pi\rho_m \omega_{\phi} - \frac{1}{2}F_{m,\text{red}} - \frac{1}{2}F_m^{h} + i\varpi_{V} + id\mu_m) \\
&= \int_{M_{\phi}} (B_{m,\text{red}} - B_m^{h}) \wedge (\frac{i\pi\rho_m}{2}\omega_{\phi} + \frac{i}{2}\varpi_{V} - \frac{1}{2}F_m^{h})
\end{align*}
and similarly
$$\fa_{-2\pi\rho_m, \fe_{\mu_m'}}(\fc_{m,\text{red}}', \fc_m^{h'}) = \int_{M_{\phi'}} (B_{m,\text{red}}' - B_m^{h'}) \wedge (\frac{i\pi\rho_m}{2}\omega_{\phi'} + \frac{i}{2}\varpi_{V'} - \frac{1}{2}F_m^{h'}).$$

We can then pull back the latter of the two identities by $M_H$ and compute the difference of the actions:
\begin{align}
&\quad \fa_{-2\pi\rho_m, \fe_{\mu_m'}}(\fc_{m,\text{red}}', \fc_m^{h'}) - \fa_{-2\pi\rho_m, \fe_{\mu_m}}(\fc_{m,\text{red}}, \fc_m^{h}) 
\label{eq:hutchingsProof9}
\\
& = \int_{M_\phi} ((M_H)^*B_{m,\text{red}}' - B_m^h) \wedge (\frac{i\pi\rho_m}{2}(\omega_{\phi} + dH \wedge dt) + \frac{i}{2}(M_H)^*\varpi_{V'} - \frac{1}{2}F_m^h) 
\nonumber
\\
&\qquad - \int_{M_\phi} (B_{m,\text{red}} - B_m^h) \wedge (\frac{i\pi\rho_m}{2}\omega_\phi + \frac{i}{2}\varpi_V - \frac{1}{2}F_m^h) 
\nonumber
\\
& = \int_{M_\phi} ((M_H)^*B_{m,\text{red}}'-B_{m,\text{red}}) \wedge (\frac{i\pi\rho_m}{2}(\omega_{\phi} + dH \wedge dt)+ \frac{i}{2}(M_H)^*\varpi_{V'} - \frac{1}{2}F_m^h) 
\nonumber
\\
&\qquad + \int_{M_\phi} ( B_{m,\text{red}} - B_m^h) \wedge (\frac{i\pi\rho_m}{2}dH \wedge dt + \frac{i}{2}d\xi_V) 
\nonumber
\\
& = \int_{M_\phi} (b_m + i\pi\rho_m H dt + i\xi_V + i((M_H)^*(\mu_m') - \mu_m)) 
\wedge (\frac{i\pi\rho_m}{2}(\omega_{\phi} + dH \wedge dt) + \frac{i}{2}(M_H)^*\varpi_{V'} - \frac{1}{2}F_m^h) 
\nonumber
\\
&\qquad + \int_{M_\phi} ( B_{m,\text{red}} - B_m^h) \wedge (\frac{i\pi\rho_m}{2}dH \wedge dt + \frac{i}{2}d\xi_V) 
\nonumber
\\
& = \int_{M_\phi}(i\pi\rho_m H dt) \wedge (\frac{i\pi\rho_m}{2}(\omega_{\phi} + dH \wedge dt)- \frac{1}{2}F_m^h) 
+ \int_{M_\phi}(F_{m,\text{red}} - F_m^h)\wedge (\frac{i\pi\rho_m}{2}H \wedge dt)
+ O(|\rho_m|) 
\nonumber
\\
& = -\frac{1}{2}(\pi\rho_m)^2\int_{M_\phi} H \omega_\phi \wedge dt - i\pi\rho_m \int_{M_\phi} F_m^h \wedge (H dt) 
+ \frac{i\pi\rho_m}{2}\int_{M_\phi} F_{m,\text{red}} \wedge (H dt)+ O(|\rho_m|) 
\nonumber
\\
& = -(\pi\rho_m)^2\int_{M_\phi} H \omega_\phi \wedge dt - i\pi\rho_m \int_{M_\phi} F_m^h \wedge (H dt) + O(|\rho_m|).
\nonumber
\end{align}

The errors of $O(|\rho_m|)$ arise from the fact that the norms of $b_m$, $\mu_m$, $\mu_{m'}$, $\varpi_V$, $\varpi_{V'}$, and $\xi_V$ are bounded by geometric constants and the fact that the norms of $F_{m,\text{red}}$, $F_{m,\text{red}}'$, and $F^h_m$ are bounded by multiples of $|\rho_m|$ by geometric constants. 

\textbf{Step 6:} Plug (\ref{eq:hutchingsProof9}) into (\ref{eq:hutchingsProof7}) and rearrange appropriately to deduce the equation
\begin{equation*}
\begin{split}
&(c_{\tau_m^H}(\phi', \Theta_m^H) - c_{\sigma_m}(\phi, \Theta_m) + \int_{\Theta_m} H dt) - \rho_m^{-1}(\SF(\tau_m^H, \fc_m^{h'} ) - \SF(\sigma_m, \fc_m^{h})) \\
&\qquad = -\frac{1}{2}\rho_m\int_{M_\phi} H \omega_\phi \wedge dt + O_\epsilon(d_m^{4/5 + \epsilon}).
\end{split}
\end{equation*}

Note in particular the integrals of $F^h_m \wedge (H dt)$ appearing in \eqref{eq:hutchingsProof7} and \eqref{eq:hutchingsProof9} cancel out. If we multiply through by $-2\rho_m^{-1}$ and expand the previous equation using the identity
$$\rho_m = -2(d_m + 1 - G),$$
we get
\begin{equation}
\label{eq:hutchingsProof10}
\begin{split}
&\frac{c_{\tau_m^H}(\phi', \Theta_m^H) - c_{\sigma_m}(\phi, \Theta_m) + \int_{\Theta_m} H dt}{d_m + 1 - G} + \frac{\SF(\tau_m^H, \fc_m^{h'}) - \SF(\sigma_m, \fc_m^{h})}{2(d_m + 1 - G)^2} \\
&\qquad = \int_{M_\phi} H \omega_\phi \wedge dt + O_\epsilon(d_m^{-1/5 + \epsilon}).
\end{split}
\end{equation}

\textbf{Step 7:} The seventh step rewrites the Seiberg--Witten grading term in (\ref{eq:hutchingsProof10}) in terms of PFH gradings. By Proposition \ref{prop:twistedIsoPreservesGradings} and the bound $\|F^h_m\|_{C^3} \lesssim d_m$, 
\begin{equation}
\label{eq:hutchingsProof11}
|( I(\tau_m) - I(\sigma_m)) + (\SF(\tau_m^H, \fc_m^{h'}) - \SF(\sigma_m, \fc_m^{h}))| = O(d_m^{3/2}).
\end{equation}

Plug (\ref{eq:hutchingsProof11}) into (\ref{eq:hutchingsProof10}) to conclude Theorem \ref{thm:hutchingsConjectureFullintro}.

\section{Non-vanishing}
\label{sec:nonvanish}

The aim of this section is to prove Theorem~\ref{thm:nonvan}. 

Let $R$ be a fixed commutative ring. 
Although Theorem~\ref{thm:nonvan} requires that the coefficient ring has a finite global dimension, we do \emph{not} assume this property on $R$ for now. 
Since we need to consider Floer homology groups with different coefficient rings in our proof, we will include the coefficient ring in the notation of Floer homology throughout this section.

We first prove a nonvanishing result for the \emph{untwisted} Seiberg--Witten Floer homology. 

\begin{thm}
	\label{thm_HMbar_nonvanish}
Suppose $Y$ is a closed oriented 3-manifold, and let $\mathfrak{s}$ be a non-torsion spin-c structure on $Y$. Let 
$\HMbar^*(Y, \mathfrak{s}, c_b;R)$ be the bar-version of monopole Floer cohomology of $(Y,\mathfrak{s})$ with the balanced perturbation in $R$ coefficients. Then 
	$\HMbar^*(Y, \mathfrak{s}, c_b;R)\neq 0.$
\end{thm}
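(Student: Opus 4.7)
The plan is to use an adaptation of the ``third root of unity trick'' from \cite[Cor.~35.1.3]{monopolesBook}. For a non-torsion spin-c structure $\mathfrak{s}$ on $Y$ equipped with the balanced perturbation $c_b$, the reducible critical set of the Chern--Simons--Dirac functional on the configuration space quotient is Morse--Bott, and is diffeomorphic to the Picard torus $\mathbb{T} := H^1(Y;\mathbb{R})/H^1(Y;\mathbb{Z})$. After fixing an auxiliary Morse perturbation of the functional restricted to $\mathbb{T}$, generators of the complex $\CMbar^{*}(Y,\mathfrak{s},c_b;R)$ are pairs $(\alpha,\lambda)$, where $\alpha$ is a Morse critical point on $\mathbb{T}$ and $\lambda$ is an eigenvalue of the perturbed Dirac operator $D_{B_{\alpha}}$ on the configuration associated to $\alpha$; the differential splits into a Morse component along $\mathbb{T}$ and an ``eigenvalue-shifting'' component corresponding to $U$-type maps.

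The key input is the integral calculation. Over $\mathbb{Z}$, the analysis of Chapter 35 in \cite{monopolesBook} (concluding in Cor.~35.1.3) shows that with the balanced perturbation and a suitable Morse choice on $\mathbb{T}$, the complex $\CMbar^{*}(Y,\mathfrak{s},c_b;\mathbb{Z})$ is chain-isomorphic to the Koszul-type complex $H^{*}(\mathbb{T};\mathbb{Z})\otimes_{\mathbb{Z}}\mathbb{Z}[[U^{-1},U]$ with a differential that, via the third-root-of-unity argument, is shown to vanish identically. Consequently
\[
\HMbar^{*}(Y,\mathfrak{s},c_b;\mathbb{Z}) \;\cong\; H^{*}(\mathbb{T};\mathbb{Z})\otimes_{\mathbb{Z}}\mathbb{Z}[[U^{-1},U].
\]
The first step I would take is to verify that this cited argument, which on the face of it is stated for $\mathbb{Z}$ coefficients, is in fact carried out at the chain level by a finitely generated free $\mathbb{Z}$-complex with all differentials vanishing. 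This is essentially formal from the Morse--Bott setup and the explicit enumeration of moduli spaces in \cite[Ch.~35]{monopolesBook}; the only nontrivial statement is the vanishing of the Morse differential along $\mathbb{T}$, which is where the third-root-of-unity trick is used.

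Given this chain-level vanishing, the passage to an arbitrary commutative ring $R$ is immediate: the complex
\[
\CMbar^{*}(Y,\mathfrak{s},c_b;R) \;\cong\; \CMbar^{*}(Y,\mathfrak{s},c_b;\mathbb{Z})\otimes_{\mathbb{Z}} R
\]
has trivial differential as well, so
\[
\HMbar^{*}(Y,\mathfrak{s},c_b;R) \;\cong\; H^{*}(\mathbb{T};R)\otimes_{R} R[[U^{-1},U] \;\neq\; 0,
\]
since $H^{0}(\mathbb{T};R)=R\neq 0$. The hypothesis that $R$ has finite global dimension is not needed for the non-vanishing of $\HMbar$ itself; it enters only later, when applying homological algebra to propagate the non-vanishing from $\HMbar$ to the twisted groups.

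The main obstacle in implementing this plan is verifying that the third-root-of-unity computation from \cite{monopolesBook}, which is written with integer coefficients in mind, really does produce a free chain complex with zero differential, rather than just a group isomorphism in homology. Once this chain-level refinement is confirmed, extending the non-vanishing across coefficient rings is automatic by the identification of $\CMbar^{*}(\cdot;R)$ with $\CMbar^{*}(\cdot;\mathbb{Z})\otimes_{\mathbb{Z}} R$.
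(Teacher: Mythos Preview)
Your proposal has a genuine gap: the structure you attribute to $\CMbar^{*}(Y,\mathfrak{s},c_b;\mathbb{Z})$ is the one that holds for \emph{torsion} spin-c structures, not non-torsion ones. When $c_1(\mathfrak{s})$ is non-torsion, the spectral flow of the Dirac operator around the loop $\gamma_1$ in $\mathbb{T}$ (the one pairing nontrivially with $c_1(\mathfrak{s})$) is nonzero, so the eigenvalue labels $\lambda$ are not decoupled from the Morse data on $\mathbb{T}$. The ``coupled Morse complex'' that results does \emph{not} split as $H^{*}(\mathbb{T};\mathbb{Z})\otimes \mathbb{Z}[U^{-1},U]$; instead (see \cite[Thm.~35.1.6]{monopolesBook}) there is a spectral sequence whose $E^2$ page is $H_*(\mathbb{T};\Gamma_{\xi_1})$ for a nontrivial local system $\Gamma_{\xi_1}$, and this $E^2$ page is already a finitely generated $\mathbb{Z}$--module, isomorphic to $(\mathbb{Z}[T,T^{-1}]/(T^d-1))\otimes H_*(\mathbb{T}';\mathbb{Z})$. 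So $\HMbar^{*}(Y,\mathfrak{s},c_b;\mathbb{Z})$ is finitely generated over $\mathbb{Z}$, not free of infinite rank as your claimed isomorphism would imply.

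Moreover, the differential does not vanish: the spectral sequence has a nontrivial $d_3$, given by cap product with a class $\xi_3\in H^3(\mathbb{T};\mathbb{Z})$ (this is the content of \cite[Cor.~35.1.7]{monopolesBook}). The third-root-of-unity trick in \cite[Cor.~35.1.3]{monopolesBook} is not an argument that any differential vanishes; it is an Euler-characteristic-type argument showing that the \emph{ranks} of the pieces $\ker(\beta_s)/\operatorname{im}(\beta_{s+3})$ cannot all be zero, by evaluating the associated generating function at a primitive sixth root of unity and recognizing it as $d\cdot(1+\zeta)^{b_1(Y)-1}\neq 0$. The paper's proof runs exactly this argument: finite generation via the local-system computation, then universal coefficients to reduce to $\operatorname{rank}_{\mathbb{Z}}\HMbar^*(Y,\mathfrak{s},c_b;\mathbb{Z})>0$, then the root-of-unity trick over $\mathbb{Q}$. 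Your chain-level vanishing plan would need to be replaced by this rank argument.
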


\begin{proof}

Since $\mathfrak{s}$ is non-torsion, we have $b_1(Y)\ge 1$. Let $\mathbb{T}$ be the Picard torus $H^1(Y; \bR) / H^1(Y; \bZ)$. 

We first show that $\HMbar^*(Y, \mathfrak{s}, c_b;\mathbb{Z})$ is finitely generated as a $\mathbb{Z}$--module. By \cite[Theorem $35.1.6$]{monopolesBook}, there is a spectral sequence with $E^2$ page equal to $H_*(\mathbb{T};\Gamma_{\xi_1})$ that abuts to  $\HMbar^*(Y, \mathfrak{s}, c_b;\mathbb{Z})$. Here, $\Gamma_{\xi_1}$ is the local system on $\mathbb{T}$ defined as follows.   Let $\xi_1 \in H^1(\mathbb{T};\mathbb{Z}) = \text{Hom}(H^1(Y;\mathbb{Z}),\mathbb{Z})$ be given by
\begin{equation}
\label{eqn_def_xi_1}
	a \mapsto \frac{1}{2} a \cup c_1(\mathfrak{s}) [-Y].
\end{equation} 
Define $\Gamma_{\xi_1}$ to be the local system on $\mathbb{T}$ with fiber $\mathbb{Z}[T,T^{-1}]$ where the holonomy around a loop $\gamma$ is multiplication by $T^k$ for $k = \xi_1[\gamma]$. The extra negative sign in \eqref{eqn_def_xi_1} comes from the fact that we are working with monopole Floer cohomology instead of homology.

We compute the homology group $H_*(\mathbb{T};\Gamma_{\xi_1})$ following \cite[Page $688$]{monopolesBook}.
Let $2d$ be the divisibility of $c_1(\mathfrak{s})$. Choose a basis $\gamma_i$ for $\mathbb{T}$ with $\xi_1(\gamma_1) = d$ and $\xi_i(\gamma_i) = 0$ for all other $i$. Let $\mathbb{T}'$ be a torus with dimension $(\dim \mathbb{T}-1)$. Then $\mathbb{T} \cong \mathbb{T}_1 \times \mathbb{T}',$ with the first factor generated by $\gamma_1$ and the others generated by $\gamma_i$, and the homology with coefficients in $\Gamma_{\xi_1}$ splits.  On $\mathbb{T}_1$, there are two trajectories from the maximum to the minimum, and so we can arrange so that the differential on $\mathbb{T}_1$ is
\[ \mathbb{Z}[T,T^{-1}] \mapsto \mathbb{Z}[T, T^{-1}], \quad p \to (T^d-1)p,\]
and hence we have
\begin{equation}
\label{eqn:localmorse}
H_*(\mathbb{T};\Gamma_{\xi_1}) = (\mathbb{Z}[T,T^{-1}]/(T^d - 1)) \otimes_\bZ H_*(\mathbb{T}';\mathbb{Z}).
\end{equation}
By \eqref{eqn:localmorse}, the group $H_*(\mathbb{T};\Gamma_{\xi_1})$ is a finitely generated $\bZ$--module, therefore the group $\HMbar^*(Y, \mathfrak{s}, c_b;\mathbb{Z})$ is also finitely generated.

By the universal coefficient theorem, in order to show the non-vanishing of $\HMbar^*(Y, \mathfrak{s}, c_b;R)$, we only need to show that 
$\rank_{\mathbb{Z}}\HMbar^*(Y, \mathfrak{s}, c_b;\mathbb{Z})\neq 0$.

By \cite[Corollary $35.1.7$]{monopolesBook}, there is a filtration on $\HMbar^*(Y, \mathfrak{s}, c_b;\mathbb{Q})$ whose associated graded spaces are given by
\[ \frac{ \text{ker}(\beta_s) }{\text{im}(\beta_{s+3})} \otimes \mathbb{Q},\]
where
$ \beta_s: H_s(\mathbb{T};\Gamma_{\xi_1}) \to H_{s-3}(\mathbb{T}; \Gamma_{\xi_1})$
is the cap product with a class $\xi_3\in H^3(\mathbb{T};\bZ)$.

We now use the trick from \cite[Corollary $35.1.3$]{monopolesBook} and \eqref{eqn:localmorse} to prove the non-vanishing of $\HMbar^*(Y, \mathfrak{s}, c_b;\mathbb{Q})$.  Let $\zeta = e^{2 \pi i /6}$ and  consider
\[ \sum_s \rank_\bZ\left( \frac{\text{ker}(\beta_s)}{\text{im}(\beta_{s+3})}\right)\zeta^s.\]
 Because the Euler characteristic of a bounded complex is equal to the Euler characteristic of its homology, we have that this sum is equal to
\[ d \sum_s \rank_\bZ H_s(\mathbb{T}';\bZ)\cdot  \zeta^s,\]
 This latter sum is just the product of $d$ with the Poincare polynomial of $\mathbb{T}'$, evaluated at $\zeta$.  Since the Poincare polynomial of $\mathbb{T}'$ is $(1 - X)^{b_1(Y) - 1}$, we conclude that 
 $\HMbar^*(Y, \mathfrak{s}, c_b;\mathbb{Q})$ does not vanish, therefore
 $$
 \rank_{\mathbb{Z}}\HMbar^*(Y, \mathfrak{s}, c_b;\mathbb{Z}) = \dim_{\mathbb{Q}}\HMbar^*(Y, \mathfrak{s}, c_b;\mathbb{Q}) >0,
 $$
 and the desired result is proved.
\end{proof}

\begin{thm} \label{thm:nonVanishing}
Suppose $R$ is a ring with finite global dimension.
Let $Y$ be a closed oriented 3-manifold, let $\mathfrak{s}$ be a non-torsion spin-c structure on $Y$. Let 
$\TWHMbar^*(Y, \mathfrak{s}, c_b;R)$ be the bar-version of twisted monopole Floer homology of $(Y,\mathfrak{s})$ with the balanced perturbation in $R$ coefficients. Then 
	$\TWHMbar^*(Y, \mathfrak{s}, c_b;R)\neq 0.$
\end{thm}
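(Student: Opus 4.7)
I plan to argue by contradiction, bridging the twisted and untwisted bar groups via a hyper-Tor spectral sequence. From Theorem~\ref{thm_HMbar_nonvanish} and its proof (which shows $\rank_{\mathbb{Z}}\HMbar^*(Y, \mathfrak{s}, c_b;\mathbb{Z})>0$), the universal coefficient theorem yields $\HMbar^*(Y, \mathfrak{s}, c_b;R)\neq 0$ for any nonzero commutative ring $R$. It therefore suffices to show that $\TWHMbar^*(Y,\mathfrak{s},c_b;R)=0$ would force $\HMbar^*(Y,\mathfrak{s},c_b;R)=0$.

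The key structural observation is that, writing $\Lambda := R[H^1(Y;\mathbb{Z})]$ for the group ring, the complex $\TWCMbar^*$ is naturally a complex of \emph{free} $\Lambda$-modules. Indeed, $\cG/\cG^\circ \cong H^1(Y;\mathbb{Z})$ permutes the $\cG^\circ$-equivalence classes of generators freely, and the differential is $\Lambda$-linear because the local system $\mathbb{B}^\circ$ is defined via this deck action. The augmentation $\epsilon:\Lambda \to R$ collapses $\cG^\circ$-classes back to $\cG$-classes, giving a natural isomorphism $\CMbar^* \cong \TWCMbar^* \otimes_\Lambda R$ of cochain complexes. (In the bar case one must also verify that the reducible contributions over the lifted Picard locus are organized compatibly with the $\Lambda$-action; this amounts to invoking Morse theory on the Picard torus and lifting its generators to the universal abelian cover.)

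With finite global dimension of $R$ in hand, one can build a finite projective resolution $P_\bullet \to R$ of $R$ as a $\Lambda$-module: decompose $H^1(Y;\mathbb{Z}) \cong \mathbb{Z}^k \oplus T$ with $T$ finite, apply the length-$k$ Koszul resolution for the free part, and use that $R[T]$ has finite global dimension because $R$ does and $R[T]$ is a finite free $R$-algebra. Forming the double complex $\TWCMbar^* \otimes_\Lambda P_\bullet$ and using that $\TWCMbar^*$ is a complex of flat $\Lambda$-modules with $P_\bullet\to R$ a quasi-isomorphism, the total complex has cohomology $\HMbar^*$. Filtering by the $P_\bullet$-direction yields a spectral sequence
\[ E_2^{p,q} = \operatorname{Tor}^\Lambda_{-p}(\TWHMbar^q, R) \;\Longrightarrow\; \HMbar^{p+q}. \]
If $\TWHMbar^* = 0$, every $E_2$ term vanishes, forcing $\HMbar^* = 0$ — a contradiction with the universal coefficient consequence of Theorem~\ref{thm_HMbar_nonvanish}.

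The main obstacle is convergence of this spectral sequence: the finite length of $P_\bullet$ bounds the $p$-direction, but in each total degree one still needs only finitely many contributing $(p,q)$. I would control this by verifying that $\TWCMbar^*$ is finitely generated as an $R$-module in each $\mathbb{Z}$-degree. The number of $\cG$-equivalence classes of critical points is finite (after generic perturbation, by standard compactness in Seiberg--Witten theory), and the $\Lambda$-action shifts the absolute spectral-flow grading by multiples of the divisibility $\ell$ of $c_\Gamma$; hence within any single $\cG$-orbit, at most one $\cG^\circ$-class sits in each fixed degree. The remaining bookkeeping — checking that the algebraic identification $\CMbar^* \cong \TWCMbar^*\otimes_\Lambda R$ really holds at the chain level in the bar setting, including the reducible generators and balanced perturbation — is, I expect, where most of the technical care will be required.
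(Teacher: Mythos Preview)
Your approach is essentially the same as the paper's: both identify $\CMbar^* \cong \TWCMbar^* \otimes_\Lambda R$ with $\Lambda = R[H^1(Y;\bZ)]$, use that $\TWCMbar^*$ is free (hence flat) over $\Lambda$, take a finite projective resolution of $R$ over $\Lambda$ (which has finite global dimension since $R$ does), and derive a contradiction with Theorem~\ref{thm_HMbar_nonvanish}. The paper packages the homological algebra as a short direct lemma (Lemma~\ref{lem_homological_algebra_Kunneth}: a flat acyclic complex tensored with a module admitting a finite projective resolution stays acyclic), proved by induction on the resolution length, rather than invoking the hyper-Tor spectral sequence; your spectral sequence is just the same induction viewed differently.

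Two places where you over-engineer: first, $H^1(Y;\bZ)$ is always torsion-free, so $\Lambda$ is a Laurent polynomial ring and the Koszul resolution already suffices---no separate treatment of a torsion summand is needed. Second, your convergence worry is unnecessary: since $P_\bullet$ has finite length, the filtration in the $p$-direction is finite and the spectral sequence converges automatically, with no need for degreewise finite generation of $\TWCMbar^*$. The chain-level identification $\CMbar^* \cong \TWCMbar^* \otimes_\Lambda R$ in the bar setting is immediate from the local-system description of the twisted theory and requires no special bookkeeping for reducibles.
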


\begin{proof}
Recall that the twisted cohomology
$\TWHMbar^*(Y, \mathfrak{s}, c_b)$
can be interpreted as the bar version of Seiberg--Witten--Floer cohomology with coefficients in a local system $\mathbb{B}^\circ$ of $\mathcal{R} = R[H^1(Y; \bZ)]$--modules. The fibers of the local system are themselves isomorphic to $\mathcal{R}$. The chain complex $\TWCMbar^*(Y, \mathfrak{s}, c_b)$ is relatively graded over $\bZ/2d$ where $2d$ is the divisibility of $c_1(\fs)$. When we ignore the $\mathcal{R}$--module structure and view $\TWCMbar^*(Y, \mathfrak{s}, c_b)$ as a $\bZ$--module, the relative $\bZ/2d$--grading can be lifted to a relative $\bZ$--grading. In this proof, we will keep the $\mathcal{R}$--module structure and view $\TWCMbar^*(Y, \fs, c_b)$ as a   chain complex relatively graded over $\bZ/2d$.

Let $\mathcal{I}$ be the ideal in $\mathcal{R} = R[H^1(Y; \bZ)]$ generated by the elements $\{\gamma_i - 1\}_{i = 1}^{b_1(Y)}$, where $\{\gamma_i\}$ is a basis for $H^1(Y;\bZ)$. Then by definition, we have
$$\CMbar^*(Y, \fs, c_b) \simeq \TWCMbar^*(Y, \fs, c_b) \otimes_\mathcal{R} {\mathcal{R}/\mathcal{I}}.$$

Notice that $\mathcal{R}$ is isomorphic to the Laurant polynomial ring over $R$ with $b_1(Y)$ variables. Since $R$ has finite global dimension, $\mathcal{R}$ also has finite global dimension, so all $\mathcal{R}$--modules have finite projective resolutions. The chain complex $\TWCMbar^*(Y, \fs, c_b)$ is a free $\mathcal{R}$--module, therefore it is flat. By Theorem \ref{thm_HMbar_nonvanish}, the chain complex $\CMbar^*(Y, \fs, c_b)$ is not acyclic. Therefore by Lemma \ref{lem_homological_algebra_Kunneth} below, $\TWCMbar^*(Y, \fs, c_b)$ cannot be acyclic.
\end{proof}

\begin{lem}
\label{lem_homological_algebra_Kunneth}
	Suppose $\mathcal{R}$ is a commutative ring, and suppose $C$ is a flat, acyclic, chain complex of $\mathcal{R}$--modules. The chain complex $C$ may be unbounded or cyclically graded. If $M$ is an $\mathcal{R}$--module that has a finite projective resolution, then $C\otimes_R M$ is acyclic.  
\end{lem}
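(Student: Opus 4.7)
The plan is to prove this lemma by induction on the length of the finite projective resolution of $M$. The key observation is that tensoring a short exact sequence of modules with a flat chain complex yields a short exact sequence of chain complexes, which then induces a long exact sequence in homology; this works for both bounded and cyclically graded complexes.

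For the base case, suppose $M$ has projective dimension zero, i.e.\ $M$ itself is projective and hence flat. Then tensoring the acyclic complex $C$ with $M$ preserves exactness term by term. More concretely, if the differential $d \colon C_n \to C_{n \pm 1}$ has $\ker(d) = \operatorname{im}(d)$ at every grading, this remains true after tensoring with the flat module $M$, so $C \otimes_\mathcal{R} M$ is acyclic.

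For the inductive step, suppose $M$ admits a projective resolution of length $n \geq 1$:
\[
0 \to P_n \to P_{n-1} \to \cdots \to P_0 \to M \to 0.
\]
Let $K = \ker(P_0 \to M)$. Then the truncated resolution $0 \to P_n \to \cdots \to P_1 \to K \to 0$ is a projective resolution of $K$ of length $n-1$. The short exact sequence $0 \to K \to P_0 \to M \to 0$ consists of modules all of which have finite projective resolutions (of lengths $\leq n-1$, $0$, and $n$ respectively). Since every term of $C$ is flat, tensoring this short exact sequence of modules with each $C_i$ preserves exactness, yielding a short exact sequence of chain complexes
\[
0 \to C \otimes_\mathcal{R} K \to C \otimes_\mathcal{R} P_0 \to C \otimes_\mathcal{R} M \to 0.
\]
By the inductive hypothesis, $C \otimes_\mathcal{R} K$ is acyclic (since $K$ has a projective resolution of length $n-1$), and $C \otimes_\mathcal{R} P_0$ is acyclic by the base case. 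The associated long exact sequence in homology (which is valid whether $C$ is bounded, unbounded, or cyclically graded) then forces $C \otimes_\mathcal{R} M$ to be acyclic as well, completing the induction.

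There is no serious obstacle here; the argument is entirely formal homological algebra. The only mild point to notice is that the flatness hypothesis on $C$ is used at the level of the individual modules $C_i$, which is what ensures that the short exact sequence of complexes displayed above is indeed short exact, and that the long exact sequence in homology exists and behaves as expected in the cyclically graded setting relevant to $\TWCMbar^*(Y,\mathfrak{s},c_b)$.
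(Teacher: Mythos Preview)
Your proof is correct and is essentially the same argument as the paper's, just organized a bit differently. The paper first isolates an auxiliary lemma --- if $0 \to C_1 \to \cdots \to C_n \to 0$ is an exact sequence of chain complexes with $C_1,\dots,C_{n-1}$ acyclic, then $C_n$ is acyclic --- and then tensors the entire resolution $0 \to P_n \to \cdots \to P_1 \to M \to 0$ with $C$ in one step and applies the lemma. You instead induct directly on the length of the resolution by peeling off the first syzygy $K = \ker(P_0 \to M)$ and using the long exact sequence from the short exact sequence $0 \to C\otimes K \to C\otimes P_0 \to C\otimes M \to 0$. The two arguments unwind to the same computation; your version is slightly more concrete, while the paper's version packages the inductive step into a reusable statement.
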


\begin{proof}
	We first prove that if 
	$0\to C_1\to C_2\to \cdots \to C_n \to 0	$
	is an exact sequence of chain complexes and $C_1,\dots,C_{n-1}$ are all acyclic, then $C_n$ is also acyclic. Here, the complexes $C_i$ can be unbounded or cyclically graded. This is proved by induction on $n$.  The cases for $n=1,2$ are obvious, and the case for $n=3$ follows from the long exact sequence of homology. Now suppose $n>3$ and the statement holds for $n-1$. Let $f$ be the chain map from $C_1$ to $C_2$.   Applying the induction hypothesis to 
	$0 \to C_1 \to C_2 \to {\rm coker}(f) \to 0$
	shows that ${\rm coker}(f)$ is acyclic, and we have the following exact sequence of  chain complexes:
	$$
	0 \to {\rm coker}(f) \to C_3 \to \cdots \to C_n \to 0.
	$$
	The induction hypothesis then implies that $C_n$ is acyclic.
	
	Now let $C, M$ be as in the statement of the lemma. Let 
	$0\to P_n\to P_{n-1} \to\cdots \to P_1 \to M \to 0$
	be a finite projective resolution of $M$. Consider the following sequence of chain complexes
	\begin{equation}\label{eqn_chain_complex_exact_sequence_2}
	0 \to C\otimes P_n \to C\otimes P_{n-1} \to \cdots C\otimes P_1\to C\otimes M \to 0.
	\end{equation}
	Since $C$ is flat, the sequence \eqref{eqn_chain_complex_exact_sequence_2} is an exact sequence of chain complexes.  Since $C$ is acyclic and the $P_i$'s  are projective, the chain complex $C\otimes P_i$ is acyclic for every $i$. Therefore $C\otimes M$ is acyclic.
\end{proof}

The proof of Theorem~\ref{thm:nonvan}
is now an easy corollary of the above results:

\begin{proof}[Proof of Theorem~\ref{thm:nonvan}]
As the statement of the theorem only concerns nonvanishing, the reference cycle is irrelevant and we can drop it from the notation.  By the Lee--Taubes isomorphism in \S\ref{sec:twistedIso}, we have
\[ \TWPFH(\phi,\Theta;R) \simeq  \TWHM^*(M_\phi, \Gamma, m_-;R).\]
By \cite[Theorem $31.5.1$]{monopolesBook}, the right hand side of the above isomorphism is itself isomorphic to the hat-version of the group $\TWHMhat^*(M_\phi, \Gamma, m_b;R).$  Now by Corollary~\ref{cor:barEqualsHat}, we have
$\TWHMbar^*(M_\phi, \Gamma, m_b;R) \cong \TWHMhat^*(M_\phi, \Gamma, m_b;R)$
in sufficiently high degree.  The theorem then follows from Theorem~\ref{thm:nonVanishing}. 
\end{proof}

\section{The closing lemma}

We now prove the Closing Lemma as stated by Theorem \ref{thm:closingLemma}.

\subsection{Properties of PFH spectral invariants} \label{subsec:pfhSpectralInvariantProperties} 

The following properties of the PFH spectral invariants will be useful.  The proofs overlap significantly with the proofs in \cite[\S 3]{simplicity20}, so we will be brief.

\begin{prop} \label{prop:hoferContinuity1}
Fix $\phi_0 \in \Diff(\Sigma, \omega)$ and fix Hamiltonians $H_\pm \in C^{\infty}(\mathbb{R}/\mathbb{Z} \times \Sigma)$. Fix a PFH parameter set $\bfS = (\phi_0, \Theta_{\text{ref}}, J)$ where $\Gamma$ has degree $d$ and corresponding PFH parameter sets $\bfS_\pm = (\phi_\pm = \phi_0 \circ \phi^1_{H_\pm}, \Theta_{\text{ref}}^{H_{\pm}}, J_\pm)$. Then for any class $\sigma \in \TWPFH_*(\bfS)$, we have the inequality 
\begin{align*}
d\int_{\bR/\bZ} \min (H_+ - H_-)(t, -) dt \leq c_{\sigma}(\bfS_+) - c_{\sigma}(\bfS_-) + \int_{\Theta_{\text{ref}}}(H_+ - H_-) dt \leq d\int_{\bR/\bZ} \max (H_+ - H_-)(t, -)dt,
\end{align*}
where $\sigma$ is regarded as a class in $\TWPFH_*(\bfS_\pm)$ using the natural identifications from \S \ref{subsec:pfhCobordismMaps}. 
\end{prop}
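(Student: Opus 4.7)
The plan is a direct continuation-map argument on the PFH side, analogous to the Hofer continuity proofs for spectral invariants in symplectic Floer theory and in particular for PFH in \cite{simplicity20}. First, I would reduce to the case where $\phi_0$, $\phi_+$, and $\phi_-$ are all nondegenerate. This is the situation directly accessible to the continuation argument; once the proposition is known there, it implies that the approximating sequence in \eqref{eqn:defdeg} is Cauchy, so the degenerate spectral invariant is well-defined, and the general inequality then follows by passing to the limit. By the obvious symmetry between $H_+$ and $H_-$, it suffices to establish only the upper bound.

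Assuming nondegeneracy, pick a smooth monotone cutoff $\chi: \bR \to [0,1]$ with $\chi \equiv 0$ near $-\infty$ and $\chi \equiv 1$ near $+\infty$, and consider the homotopy $K(s) = H_- + \chi(s)(H_+ - H_-)$. This produces a symplectic cobordism $\overline{X}_K$ from $M_{\phi_-}$ to $M_{\phi_+}$, on which $\omega$ descends along the fiber projection to a closed $2$--form $\Omega_K$ restricting to $\omega_{\phi_\pm}$ on the ends (this descent uses the fact that each $\phi_0 \circ \phi^1_{K(s)}$ is area-preserving). The PFH continuation map $T := T^{\PFH}(\bfS_-, \bfS_+; H_-, H_+)$ associated to a generic family of almost complex structures $J_s$ interpolating $J_\pm$, defined via the composition of the Lee--Taubes isomorphism with the corresponding SW continuation map as in \S\ref{subsec:pfhCobordismMaps}, induces precisely the canonical identification of $\sigma$ across the two twisted PFH groups.

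The crux of the proof is an action--energy identity for this continuation map: if $C$ is any $J_s$--holomorphic current in $\overline{X}_K$ counted by the chain-level map, connecting twisted generators $(\Theta_-, W_-)$ at $s = -\infty$ and $(\Theta_+, W_+)$ at $s = +\infty$, then
\[
\bfA_+(\Theta_+, W_+) - \bfA_-(\Theta_-, W_-) \;=\; \int_C \Omega_K \;-\; \int_{\Theta_{\text{ref}}}(H_+ - H_-)\,dt.
\]
The first term on the right is bounded above by Stokes' theorem: writing $\tfrac{\partial}{\partial s}\omega_{\phi_0 \circ \phi^1_{K(s)}} = d(\dot K(s)\, dt)$ in local coordinates on $\bR_s \times [0,1]_t \times \Sigma$, using that each slice $C \cap (\{s\} \times M_{\phi_0 \circ \phi^1_{K(s)}})$ has fiber degree $d$, and using the positivity of $\omega(v, J_s v)$ on the horizontal part of $TC$, one obtains $\int_C \Omega_K \leq d\int_{\bR/\bZ} \max_\Sigma (H_+ - H_-)(t,\cdot)\,dt$. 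The reference-cycle contribution $\int_{\Theta_{\text{ref}}}(H_+ - H_-)\,dt$ arises from integrating $\Omega_K$ over the reference $2$--chain in $\overline{X}_K$ swept out by the Hamiltonian isotopy of $\Theta_{\text{ref}}$, again by a direct Stokes computation. Choosing chain representatives of $\sigma$ in $\TWPFC_*(\bfS_+)$ with actions arbitrarily close to $c_\sigma(\bfS_+)$ and applying $T$ then yields the upper bound; the lower bound follows by swapping the roles of $H_+$ and $H_-$.

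The main technical obstacle lies in verifying the action--energy identity above with the correct bookkeeping for the relative homology classes $W_\pm$ in the cobordism and the sign conventions for the reference $2$--chain, and confirming that the resulting chain-level map of PFH complexes agrees with the canonical identification of $\sigma$ used in the statement of the proposition. This bookkeeping is essentially the same as in the analogous arguments of \cite{simplicity20} for the case $M_\phi = S^1 \times S^2$, but needs to be adapted to the setting of general mapping tori and arbitrary separated reference cycles; the rest of the argument is then a routine Stokes computation together with the observation that any degree-$d$ orbit set $\Theta$ satisfies $d\int_{\bR/\bZ}\min H(t,\cdot)\,dt \leq \int_\Theta H\, dt \leq d\int_{\bR/\bZ}\max H(t,\cdot)\,dt$.
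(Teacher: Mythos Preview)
Your overall strategy matches the paper's proof closely: reduce to nondegenerate data, use the linear homotopy $K(s)$, extract a holomorphic curve $C$ in the cobordism between twisted generators, and run a Stokes computation to bound the action difference. The action--energy identity and the degree-$d$ projection argument are exactly what the paper uses.

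There is, however, a genuine gap in how you obtain the curve $C$. You write ``if $C$ is any $J_s$--holomorphic current in $\overline{X}_K$ counted by the chain-level map,'' but as the paper emphasizes just before the proof, the PFH continuation map is \emph{not} defined by counting holomorphic currents: it is defined by transporting the Seiberg--Witten continuation map through the Lee--Taubes isomorphism, and defining it directly via curves remains an open problem. The bridge you need is Chen's \emph{pseudoholomorphic curve axiom} \cite{Chen21}: a nonzero matrix entry of the SW-defined continuation map implies the existence of an ECH-index-zero pseudoholomorphic building in $\overline{X}_K$ between the corresponding twisted PFH generators, with the correct relative homology class. This is the step the paper invokes explicitly (Step~2 of its proof, and the discussion preceding it), and it is what makes your action--energy identity available. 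Without it, there is no a priori reason the SW-defined map respects the holomorphic-curve energy filtration you want. The ``bookkeeping'' you flag as the main obstacle is in fact routine once Chen's axiom is in hand; the axiom itself is the substantive input you are missing.
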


Before proving Proposition \ref{prop:hoferContinuity1}, we write down two corollaries. The first corollary is that the PFH spectral invariants are independent of $J$. This follows from applying Proposition \ref{prop:hoferContinuity1} to the case where $H_+ = H_- = 0$. 

\begin{cor} \label{cor:pfhSpectralJIndependence}
The PFH spectral invariants are independent of the almost-complex structure:
$$c_{\sigma}(\phi, \Theta_{\text{ref}}, J) = c_{\sigma}(\phi, \Theta_{\text{ref}}, J')$$
for any pair $J$ and $J'$ in $\cJ^\circ(dt,\omega_\phi)$ and any class $\sigma \in \TWPFH_*(\phi, \Gamma, \Theta_{\text{ref}}).$
\end{cor}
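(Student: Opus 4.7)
The plan is to derive this as an immediate consequence of Proposition~\ref{prop:hoferContinuity1} by specializing to the case of trivial Hamiltonians. Concretely, I would set $\phi_0 = \phi$, take $H_+ = H_- = 0$, and let the two almost-complex structures be $J_+ = J$ and $J_- = J'$. Then $\phi_+ = \phi_- = \phi$, and the parameter sets $\bfS_\pm = (\phi, \Theta_{\text{ref}}, J_\pm)$ differ only in their choice of admissible almost-complex structure. The class $\sigma \in \TWPFH_*(\phi, \Theta_{\text{ref}})$ is regarded as an element of each $\TWPFH_*(\bfS_\pm)$ via the canonical identifications under the PFH cobordism maps constructed at the end of \S\ref{subsec:pfhCobordismMaps}.

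With this choice, every integral involving $H_+ - H_-$ appearing in the statement of Proposition~\ref{prop:hoferContinuity1} vanishes identically: the term $\int_{\Theta_{\text{ref}}}(H_+ - H_-)\, dt = 0$, and both $\int_{\bR/\bZ}\min(H_+ - H_-)(t,-)\,dt$ and $\int_{\bR/\bZ}\max(H_+ - H_-)(t,-)\,dt$ are zero. The Hofer continuity inequality therefore collapses to
\[
0 \leq c_\sigma(\bfS_+) - c_\sigma(\bfS_-) \leq 0,
\]
which forces $c_\sigma(\phi, \Theta_{\text{ref}}, J) = c_\sigma(\phi, \Theta_{\text{ref}}, J')$.

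Since the entire content of the corollary is subsumed by Proposition~\ref{prop:hoferContinuity1} applied to the zero Hamiltonian, there is no genuine obstacle here; all the real work sits inside the Hofer continuity statement. The only point worth verifying carefully in writing this up is that the identification of $\sigma$ across $\bfS_+$ and $\bfS_-$ used in the hypothesis of Proposition~\ref{prop:hoferContinuity1} is precisely the same canonical identification of the groups $\TWPFH_*(\phi, \Theta_{\text{ref}}, J)$ for varying $J$ discussed in \S\ref{subsec:pfhCobordismMaps}, so that the conclusion is indeed a statement about a single well-defined spectral invariant of a class in $\TWPFH_*(\phi, \Theta_{\text{ref}})$. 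This compatibility follows from the functoriality of the Seiberg--Witten continuation maps used to define both identifications.
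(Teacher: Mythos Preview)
Your proposal is correct and matches the paper's own proof exactly: the paper derives the corollary by applying Proposition~\ref{prop:hoferContinuity1} with $H_+ = H_- = 0$, which is precisely what you do. Your additional remark about the compatibility of the identifications of $\sigma$ is a reasonable clarification but is not needed beyond what the paper already establishes in \S\ref{subsec:pfhCobordismMaps}.
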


The second corollary lists some properties of the PFH spectral invariants, analogous to their namesakes in \cite[Proposition $3.2$]{largeScale}. 

\begin{cor}\label{cor:shiftAndMonotonicity}
Let $\phi_0$, $H_\pm$, $\bfS = (\phi_0, \Theta_{\text{ref}}, J)$, $d$, $\bfS_\pm$, $\sigma$ be as in Proposition \ref{prop:hoferContinuity1}. Then the following two properties hold:
\begin{enumerate}
\item (Monotonicity) If $H_+ \geq H_-$, then 
$$c_\sigma(\bfS_+) + \int_{\Theta_{\text{ref}}} H_+ dt \geq c_\sigma(\bfS_-) + \int_{\Theta_{\text{ref}}} H_- dt.$$ 
\item (Shift) If $H_+ - H_- = h(t)$ is a function of $t \in \bR/\bZ$ only, then 
$$c_\sigma(\bfS_+) - c_\sigma(\bfS_-) = d\int_{\bR/\bZ} h(t) dt - \int_{\Theta_{\text{ref}}} h(t)dt.$$
\end{enumerate}
\end{cor}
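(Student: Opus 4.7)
The plan is to observe that both properties are immediate consequences of the two-sided inequality in Proposition \ref{prop:hoferContinuity1}, which we proved just above. There is no further PFH or Seiberg--Witten input needed; only rearrangement.

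For the monotonicity property, the assumption $H_+ \geq H_-$ gives $\min_p (H_+ - H_-)(t,p) \geq 0$ for every $t \in \bR/\bZ$. Substituting this into the left-hand bound of Proposition \ref{prop:hoferContinuity1} yields
$$0 \leq d\int_{\bR/\bZ} \min_p (H_+ - H_-)(t, p)\,dt \leq c_{\sigma}(\bfS_+) - c_{\sigma}(\bfS_-) + \int_{\Theta_{\text{ref}}}(H_+ - H_-)\,dt,$$
which, once the $\Theta_{\text{ref}}$--integrals are split and moved to opposite sides, is exactly the stated inequality $c_\sigma(\bfS_+) + \int_{\Theta_{\text{ref}}} H_+\,dt \geq c_\sigma(\bfS_-) + \int_{\Theta_{\text{ref}}} H_-\,dt$.

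For the shift property, if $H_+ - H_- = h(t)$ depends only on $t$, then for every $t$ the minimum and maximum of $(H_+ - H_-)(t,\cdot)$ over $\Sigma$ both equal $h(t)$. Therefore the two outer bounds in Proposition \ref{prop:hoferContinuity1} collapse to the common value $d\int_{\bR/\bZ} h(t)\,dt$, forcing the equality
$$c_\sigma(\bfS_+) - c_\sigma(\bfS_-) + \int_{\Theta_{\text{ref}}} h(t)\,dt = d\int_{\bR/\bZ} h(t)\,dt,$$
which rearranges to the stated shift formula.

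There is no substantive obstacle; the content has already been absorbed into Proposition \ref{prop:hoferContinuity1}, and the corollary is purely a rewriting. The only thing to be careful about is bookkeeping of signs and of the direction of the reference-cycle integrals, which is routine.
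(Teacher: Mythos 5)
Your proof is correct and takes precisely the intended approach: the paper presents these as immediate corollaries of Proposition \ref{prop:hoferContinuity1} with no independent argument, and your derivation (monotonicity from the nonnegativity of the left bound, shift from the collapse of both bounds to $d\int h\,dt$) is the straightforward rearrangement that justifies this. Nothing is missing.
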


The proof of Proposition \ref{prop:hoferContinuity1} relies on the so-called \emph{pseudoholomorphic curve axiom}.
It is an interesting open problem to define PFH cobordism maps on the chain level by counting pseudoholomorphic currents. This remains open, but Chen \cite{Chen21} has shown that PFH cobordism maps as currently defined still detect the existence of certain pseudoholomorphic curves, which in itself has interesting applications. We review the statement of Chen's  pseudoholomorphic curve axiom for PFH cobordism maps here.

Fix a PFH parameter set
$\bfS = (\phi_0, \Theta_{\text{ref}}, J),$
Hamiltonians $H_\pm \in C^{\infty}_c( (0,1) \times \Sigma )$, and PFH parameter sets
$\bfS_\pm = (\phi_{\pm} = \phi \circ \phi^1_{H_{\pm}}, \Theta_{\text{ref}}^{H_{\pm}}, J_\pm).$
It is essential for applications that the almost-complex structures $J_\pm$ can be arbitrarily chosen, namely they do not need to be related to $J$ by the maps $M_{H_{\pm}}$. Fix a smooth homotopy $K: [0,1] \to C^\infty(\bR/\bZ \times \Sigma)$ from $H_-$ to $H_+$ which is constant near $0$ and $1$. 

Fix $r \gg 1$ and an SW continuation parameter set
$\cS_s = (K, J_s, r, \fg_s)$
between two associated SW parameter sets
$\cS_\pm = (\phi_\pm, J_\pm, \Gamma^{H_{\pm}}, r, \fg_\pm).$

The choice of $K$ defines a symplectic $4$-manifold given by the pair
\begin{equation*}
X = (\bR \times M_{\phi_0}, ds \wedge dt + \frac{1}{2}(\omega_{\phi_0} + d(K(s)dt)).
\end{equation*}

The path of almost-complex structures $\{J_s\}_{s \in \bR}$ in $\cS_s$ defines an almost-complex structure $\bar{J}$ on $\bR \times M_{\phi_0}$ satisfying the following conditions: 
\begin{enumerate}
\item On any slice $\{s\} \times M_{\phi_0}$ for $s \in \bR$, $\bar{J}$ preserves the vertical tangent bundle $V$ and restricts to the almost-complex structure $J_s$.
\item $\bar{J}$ sends $\partial_s$ to $R + X_{K(s)}$, where $R$ is the Reeb vector field on $M_{\phi_0}$ associated to $(dt, \omega_\phi)$.
\end{enumerate}

Pick a finite sum
$\wt\sigma_+ = \sum_i (\Theta_i^+, W_i^+)$
of generators of $\TWPFC_*(\bfS_+)$.
Suppose the cochain
$\wt\sigma_- = T^{\PFH}(\bfS_-, \bfS_+; \cS_s)(\wt\sigma_+) \in \TWPFC_*(\bfS_-)$
expands as a finite sum
$\wt\sigma_- = \sum_j (\Theta_{j}^-, W_{j}^-)$
of generators of $\TWPFC_*(\bfS_-)$.
The ``pseudoholomorphic curve axiom'' proved by Chen \cite{Chen21} states the existence of an ``ECH index zero pseudoholomorphic building'' in the symplectic manifold $X$ 
between the two cochains $\wt\sigma_+$ and $\wt\sigma_-$. 
We will say more shortly in the proof of Proposition \ref{prop:hoferContinuity1}.
Note that a consequence of this axiom is that the cobordism map preserves the $\bZ$--gradings on the two complexes.

We now proceed to prove Proposition \ref{prop:hoferContinuity1}.

\begin{proof}[Proof of Proposition \ref{prop:hoferContinuity1}]
Our proof is essentially identical to the proof of \cite[Theorem $3.6(2)$]{simplicity20}.  First, note that by approximating an arbitrary $H(t,x)$ with suitable reparametrizations $\rho'(t)H(\rho(t),x)$, it suffices to prove the proposition with $H\in C^\infty_c((0,1)\times \Sigma)$. Since the spectral invariant for degenerate $\phi$ is defined by a continuous extension from the non-degenerate case, we may assume without loss of generality that $\phi$ is non-degenerate.

\textbf{Step 1:} The first step performs some required additional setup. Fix any cochain $\wt\sigma_+ \in \TWPFC_*(\bfS_+)$ representing $\sigma$ and write it as a finite sum
$\wt\sigma_+ = \sum_i (\Theta_i^+, W_i^+)$
of generators of $\TWPFC_*(\bfS_+)$. We write $\wt\sigma_- = T(\bfS_-, \bfS_+; \cS_s)(\wt\sigma_+)$ as a sum of generators 
$\wt\sigma_- = \sum_j (\Theta_j^-, W_j^-).$
of $\TWPFC_*(\bfS_-)$. Here $\cS_s = (K, J_s, r, \fg_s)$ denotes an auxiliary SW parameter set, where $K$ is a homotopy in $C^\infty(\bR/\bZ \times \Sigma)$ from $H_-$ to $H_+$. Fix a single generator $(\Theta_-, W_-)$ which has maximal action among all of the generators $(\Theta_j^-, W_j^-)$.

Recall that $K$ defines a symplectic $4$-manifold
$(\bR \times M_{\phi_0}, ds \wedge dt + \frac{1}{2}(\omega_{\phi_0} + d(K(s)dt))$
and the path $\{J_s\}$ in $\cS_s$ defines an almost-complex structure $\bar{J}$ on $\bR \times M_{\phi_0}$ satisfying the following properties:
\begin{enumerate}
\item On any slice $\{s\} \times M_{\phi_0}$ for $s \in \bR$, $\bar{J}$ preserves the vertical tangent bundle $V$ and restricts to the almost-complex structure $J_s$. 
\item $\bar{J}$ sends $\partial_s$ to $R + X_{K(s)}$, where $R$ is the Reeb vector field on $M_{\phi_0}$ associated to $(dt, \omega_{\phi_0})$.
\end{enumerate}

\textbf{Step 2:} The second step discusses the pseudoholomorphic curve axiom. As noted in \cite[Remark $3.7$]{simplicity20}, we can assume without loss of generality that there is a $\bar{J}$-holomorphic curve $C \subset \bR \times M_{\phi_0}$ from the generator $(\Theta_-, W_-)$ of maximal action chosen earlier in the sum for $\wt\sigma_-$ to a generator $(\Theta_+, W_+)$ appearing with nonzero coefficient in the cochain $\wt\sigma_+$. Moreover we have the identity
$[C] = W_+ - W_-.$

\textbf{Step 3:} The third step computes the difference in the actions of $(\Theta_\pm, W_\pm)$ and uses this to prove the proposition. Given what was said in Step $2$, we make the following computation:
\begin{align}
\label{eq:hoferContinuity1}
\bfA(\Theta_+, W_+) - \bfA(\Theta_-, W_-) &= \int_{W_+} \omega_{\phi_0} + dH_+ \wedge dt - \int_{W_-} \omega_\phi + dH_- \wedge dt \\
&= \int_{W_+ - W_-} (\omega_{\phi_0} + dH_- \wedge dt) + \int_{W_+} d((H_+ - H_-)dt) 
\nonumber \\
&= \int_C (\omega_\phi + dH_- \wedge dt) + \int_{\Theta_+} (H_+ - H_-) dt - \int_{\Theta_{\text{ref}}} (H_+ - H_-) dt 
\nonumber 
\end{align}

Next, we observe by Stokes' theorem that we have the identity
\begin{equation}
\label{eq:hoferContinuity2}
\int_{\Theta_+} (H_+ - H_-) dt = \int_C d( (K(s) - H_-)dt)  \int_C dK(s) \wedge dt - \int_C dH_- \wedge dt + \int_C K'(s)ds \wedge dt.
\end{equation}

Now observe by definition of the almost-complex structure $\bar{J}$ that the two-form
$\omega_\phi + dK(s) \wedge dt$
is non-negative on any $\bar{J}$-invariant two-plane in the tangent bundle of $\bR \times M_{\phi}$, which shows
$\int_C \omega_\phi + dK(s) \wedge dt \geq 0.$

Combining (\ref{eq:hoferContinuity1}) and (\ref{eq:hoferContinuity2}) with the above shows
\begin{align}
\label{eq:hoferContinuity3}
\bfA(\Theta_+, W_+) - \bfA(\Theta_-, W_-) &= \int_C (\omega_\phi + dK(s) \wedge dt) + \int_C K'(s)ds \wedge dt - \int_{\Theta_{\text{ref}}} (H_+ - H_-) dt \\
&\geq \int_C K'(s)ds \wedge dt - \int_{\Theta_{\text{ref}}} (H_+ - H_-)dt.
\nonumber
\end{align}

It is now convenient to take $K$ to be the homotopy 
$K(s) = (1 - \tau(s))H_- + \tau(s)H_+$
where $\tau: [0,1] \to [0,1]$ is a smooth, increasing function which is equal to $0$ and $1$ near $s = 0$ and $s = 1$, respectively. 
Then the triangle inequality, $\tau'(s) \geq 0$, and the fact that $ds \wedge dt \geq 0$ on the tangent planes of $C$ implies the following:
$$\int_C \tau'(s)\min (H_+ - H_-)(t, -) ds \wedge dt \leq \int_C K'(s)ds \wedge dt \leq \int_C \tau'(s)\max (H_+ - H_-)(t, -) ds \wedge dt.$$

We simplify the above by projecting $C$ onto the $(s, t)$-plane, which has degree $d$, and using the fact that $\int_\bR \tau'(s) ds = \tau(1) - \tau(0) = 1$ to conclude
\begin{equation} \label{eq:hoferContinuity4} d\int_{\bR/\bZ} \min (H_+ - H_-)(t, -) dt \leq \int_C K'(s)ds \wedge dt \leq d\int_{\bR/\bZ} \max (H_+ - H_-)(t, -) dt.\end{equation}

We conclude from the lower bound of (\ref{eq:hoferContinuity4}) the following lower bound:
\begin{equation}
\label{eq:hoferContinuity5}
\begin{split}
\bfA(\Theta_+, W_+) - \bfA(\Theta_-, W_-) + \int_{\Theta_{\text{ref}}} (H_+ - H_-)dt \geq d\int_{\bR/\bZ} \min (H_+ - H_-)(t, -) dt.
\end{split}
\end{equation}

Note by definition of the PFH spectral invariants that $c_{\sigma}(\bfS_-) \leq \bfA(\Theta_-, W_-)$. Taking the inequality (\ref{eq:hoferContinuity5}) over all chains $\wt\sigma_+ \in \TWPFC_*(\bfS_+)$ representing the class $\sigma$ shows
\begin{equation}
\label{eq:hoferContinuity6}
\begin{split}
c_{\sigma}(\bfS_+) - c_{\sigma}(\bfS_-) + \int_{\Theta_{\text{ref}}} (H_+ - H_-) dt \geq d\int_{\bR/\bZ} \min (H_+ - H_-)(t, -)dt.
\end{split}
\end{equation}

Applying the same argument in the reverse direction (using the upper bound of (\ref{eq:hoferContinuity4}) instead) shows the upper bound asserted by the proposition. 
\end{proof}

\subsection{Periodic points are generically dense} In this section, we give a proof of Theorem \ref{thm:closingLemma}, the closing lemma. 

Let $\phi_0$ be any area-preserving diffeomorphism of a surface $(\Sigma, \omega)$. We will assume without loss of generality that the area integral
$A_\Sigma = \int_\Sigma \omega$
is equal to $1$. 
Let $U \subset \Sigma$ and  $V$ be a neighborhood of $\phi_0$ in $\text{Diff}(\Sigma, \omega)$, we show that there is a diffeomorphism $\phi' \in V$ with a periodic point in $U$. The proof proceeds in three steps.

\textbf{Step 1:} The first step is to show that we can find a monotone area-preserving diffeomorphism $\phi$ in the neighborhood $V$. 

\begin{lem}
\label{lem:genericallyMonotone} The set of all monotone area-preserving diffeomorphisms is dense in $\Diff(\Sigma, \omega)$ in the $C^\infty$--topology.  
\end{lem}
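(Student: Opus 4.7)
The plan is to show that small symplectic perturbations of $\phi_0$ suffice to deform $[\omega_\phi] \in H^2(M_\phi; \mathbb{R})$ into a rational class, using the fact that rational classes are dense in real cohomology.

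Fix $\phi_0$ and a $C^\infty$-neighborhood $V$. For every $\phi$ sufficiently close to $\phi_0$, a short fiberwise isotopy produces a canonical-up-to-isotopy diffeomorphism $M_\phi \cong M_{\phi_0}$, and under this identification the class $[\omega_\phi]$ depends continuously on $\phi$ as an element of the fixed real vector space $H^2(M_{\phi_0}; \mathbb{R})$. The Wang exact sequence for the fibration $\Sigma \hookrightarrow M_{\phi_0} \to S^1$ gives a (non-canonical) splitting
\[
H^2(M_{\phi_0}; \mathbb{R}) \;\cong\; H^2(\Sigma; \mathbb{R}) \,\oplus\, \bigl(H^1(\Sigma; \mathbb{R})/\mathrm{Im}(\phi_0^{*} - 1)\bigr),
\]
under which $[\omega_\phi]$ has first coordinate $A_\Sigma = 1$, since $\omega_\phi$ restricts to $\omega$ on each fiber. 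Both factors admit natural rational structures (coming from $H^*(\Sigma; \mathbb{Q})$ and the fact that $\phi_0^{*}$ preserves the integer lattice).

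Next I would produce perturbations that freely shift the second coordinate. Given a closed $1$-form $\eta$ on $\Sigma$, let $X_\eta$ be the symplectic vector field defined by $\iota_{X_\eta}\omega = \eta$, and let $\psi_\eta$ denote its time-$1$ flow; for $\|\eta\|_{C^\infty}$ sufficiently small, $\phi_0 \circ \psi_\eta$ lies in $V$. A standard flux computation, obtained by integrating $\omega_{\phi_0 \circ \psi_{t\eta}} - \omega_{\phi_0}$ over $2$-cycles in $M_{\phi_0}$ via Stokes' theorem on the $3$-chain swept out by the isotopy $t \mapsto \psi_{t\eta}$, yields
\[
[\omega_{\phi_0 \circ \psi_\eta}] - [\omega_{\phi_0}] \;=\; [\eta] \otimes [dt] \;\in\; H^1(\Sigma; \mathbb{R})/\mathrm{Im}(\phi_0^{*} - 1),
\]
so the second coordinate of $[\omega_\phi]$ can be shifted by any small prescribed element of $H^1(\Sigma; \mathbb{R})/\mathrm{Im}(\phi_0^{*} - 1)$ while remaining in $V$.

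Finally, density of $\mathbb{Q}$ in $\mathbb{R}$ gives that rational classes are dense in $H^1(\Sigma; \mathbb{R})/\mathrm{Im}(\phi_0^{*} - 1)$, so we may choose $\eta$ arbitrarily small such that $[\omega_{\phi_0 \circ \psi_\eta}] \in H^2(M_{\phi_0}; \mathbb{Q})$; setting $\phi := \phi_0 \circ \psi_\eta \in V$ then produces the desired monotone map. The only substantive step is the flux identity in the second paragraph — it is a standard computation in symplectic topology, and the main subtlety is carefully identifying the two mapping tori via the symplectic isotopy, but this does not present a serious obstacle.
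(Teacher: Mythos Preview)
Your proposal is correct and follows essentially the same strategy as the paper: both characterize monotonicity as rationality of $[\omega_\phi]$, perturb $\phi_0$ by the time-$1$ flow of the symplectic vector field dual to a small closed $1$-form, compute the resulting shift in $[\omega_\phi]$ as a flux, and invoke density of the rationals. The only difference is presentational --- the paper works homologically with explicit $2$-cycles $\bar S_i$ built from curves $c_i$ spanning $\ker(\phi_{0*}-1)\subset H_1(\Sigma;\mathbb{Z})$ and computes directly that the pairing changes by $\int_{c_i}\lambda$, whereas you package the same computation cohomologically via the Wang sequence.
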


\begin{proof}
	It is an immediate consequence of the definition that an area preserving map $\phi:\Sigma\to\Sigma$ is monotone if and only if $[\omega_\phi]\in H^2(M_\phi;\mathbb{R})$ is a real multiple of an integral homology class. Since we assume $\langle [\Sigma],[\omega_\phi]\rangle =1$, the map $\phi$ is monotone if and only if $[\omega_\phi]$ is rational. 
	
	Let $K\subset H_1(\Sigma;\bZ)$ be the kernel of 
	$\phi_*-{\rm id}:H_1(\Sigma;\bZ)\to H_1(\Sigma;\bZ).$
	Let $c_1,c_2,\cdots,c_m$ be oriented closed curves on $\Sigma$ whose fundamental classes form a basis of $K$.  For each $c_i$, let $S_i$ be a smooth 2-chain on $\Sigma$ with integer coefficients such that $\partial S_i = \phi(c_i) - c_i$. 
	Let $\bar{S}_i$ be the closed 2-chain in $M_\phi$ 
	obtained by taking the sum of the image of $S_i$ in 
	$\{0\}\times \Sigma \subset M_\phi$
	 with the image of $[0,1]\times c_i\subset [0,1]\times M$ in $M_\phi$ after suitable triangulations. Then the closed chain $\bar{S}_i$ defines an element in $H_2(M_\phi;\bZ)$.  
	It follows from a straightforward calculation using the Mayer-Vietoris sequence that $H_2(M_\phi;\bZ)$ is generated by $[\Sigma]$ and $[\bar{S}_1],\dots,[\bar{S}_m]$. Therefore, $[\omega_\phi]$ is rational if and only if its pairing with every $[\bar{S}_m]$ is rational.  By the definition of $\bar{S}_i$, we have
	$\langle [\bar{S}_i],[\omega_\phi]\rangle = \int_{\bar{S}_i} \omega_\phi.$
	
	Now fix $\phi_0 \in \Diff(\Sigma,\omega)$. Define $K$, $c_1,\dots,c_m$ as above with respect to $\phi_0$, and fix a choice of $S_1,\dots,S_m$. For  a closed 1-form $\lambda$ on $\Sigma$, define $X_\lambda$ to be the unique vector field on $\Sigma$ such that $\omega(X_\lambda,-) = \lambda(-)$.  Since 
	$\lambda$ is closed, the diffeomorphisms generated by $X_\lambda$ is area-preserving. Let $\phi_\lambda^s$ ($s\in[0,1]$) be the 1-parameter family of diffeomorphisms generated by $X_\lambda$. Let $\phi' = \phi_\lambda^1 \circ \phi_0$. 
	Define $A_i'$ to be the chain on $\Sigma$ given by 
    $$
	[0,1]\times c_i  \to \Sigma , \quad
		(s,p) \mapsto \phi_\lambda^s\circ \phi_0(p)
	$$
	up to triangulations, 
	let $S_i'$ be the sum of $A_i'$ and $S_i$. Then we have  
	 $\int_{S_i'} \omega_{\phi'} = \int_{S_i} \omega_{\phi_0} + \int_{c_i} \lambda.$
	 Since $[c_1],\dots,[c_m]$ are linearly independent in $H_1(\Sigma;\bZ)$,  one can take the $C^\infty$--norm of $\lambda$ to be arbitrarily small such that $\int_{S_i'} \omega_{\phi'}$ are all rational.
	\end{proof}

\textbf{Step 2:} 
Let $\phi \in V$ be the monotone area-preserving diffeomorphism produced in the prior step. After a further Hamiltonian perturbation if necessary, we may  assume that $\phi$ is non-degenerate. Fix a sequence of reference cycles $\Theta_m$ in $M_\phi$ such that the classes $[\Theta_m] \in H_1(M_\phi; \bZ)$ are monotone with positive degrees $d_m$ monotonically increasing as $m \to \infty$. representing $\Gamma_m$ for every $m$. 

As a consequence of the non-vanishing result Theorem \ref{thm:nonVanishing}, the isomorphism of twisted PFH and Seiberg--Witten--Floer cohomology by Proposition \ref{prop_iso_twisted_PFH_SW}, along with the isomorphisms of twisted Seiberg--Witten--Floer homology given by \cite[Theorem $31.5.1$]{monopolesBook} and Corollary \ref{cor:barEqualsHat}, we conclude that for all sufficiently large $m$, the group 
$\TWPFH_*(\phi, \Theta_m)$
does not vanish. We can then choose for each $m$ a nonzero homogeneous class $\sigma_m \in \TWPFH_*(\phi, \Theta_m).$

Choose some embedded disk $\bD \subset U$. Choose a Hamiltonian $H \in C^{\infty}_c( (0,1) \times \Sigma ),$ compactly supported in $\bR/\bZ \times \bD$, such that the Calabi invariant
$\text{CAL}(H) = \int_{\bR/\bZ \times \Sigma} H dt \wedge \omega$
is nonzero. Fix a smooth, non-decreasing function $f: [0,1] \to [0,1]$ such that $f(s) = 0$ for $s$ near $0$ and $f(s) = 1$ for $s$ near $1$. 

For every $s \in [0,1]$, write $\phi^s = \phi \circ \phi^1_{f(s)H}$ and write $M_\phi^s$ for its mapping torus for every $s \in [0,1]$. For every $s \in [0,1]$, the Hamiltonian $f(s)H$ defines a diffeomorphism
$M_{f(s)H}: M_\phi \to M_\phi^s.$

For every $m$ and every $s \in [0,1]$, write $\Theta_m^s = M_{f(s)H}(\Theta_m)$. Recall from what was said in \S\ref{subsubsec:swfCobordismMaps} and \S\ref{subsec:pfhCobordismMaps} that for any $s \in [0,1]$ and any $m$, there is a canonical identification
$\TWPFH_*(\phi, \Theta_m) \simeq \TWPFH_*(\phi^s, \Theta_m^s)$
of $\bZ$--graded modules. We therefore find for every $s \in [0,1]$ a sequence of classes $\sigma^s_m$, obtained as the image of the sequence of classes $\sigma_m$. Moreover, for every $s \in [0,1]$ and any $m$, the $\bZ$-grading of $\sigma_m$ and the $\bZ$-grading of $\sigma_m^s$ coincides.

Now the asymptotic result in Theorem \ref{thm:hutchingsConjectureFullintro} implies the following for any $s \in [0,1]$:
\begin{equation}
\label{eq:closingLemmaProof1} \lim_{m \to \infty} \frac{c_{\sigma_m^s}(\phi^s, \Theta_m^s) - c_{\sigma_m}(\phi, \Theta_m) + \int_{\Theta_m} H^s dt}{d_m} = \text{CAL}(H).
\end{equation}
Define for any $m$ and $s \in [0,1]$ the function
$$c_m(s) = c_{\sigma_m^s}(\phi^s, \Theta_m^s) + \int_{\Theta_m} H^s dt.$$ 
The Hofer continuity result in Proposition \ref{prop:hoferContinuity1} implies that $c_m$ is continuous.

For any $s \in [0,1]$ and any $m$, we write
$\mathcal{T}(s, m) \subset \bR$
for the set of actions
$\int_W \omega_{\phi^s} + \int_{\Theta_m} H^s dt$
across all generators $(\Theta, W) \in \TWPFC_*(\phi^s, \Theta_m^s)$. We also write $\mathcal{T}(s) = \cup_m \mathcal{T}(s, m)$. 

Since the cohomology class of $\omega_{\phi^s}$ is by assumption a real multiple of an integral class, it follows that the set $\mathcal{T}(s, m)$ is discrete for any $s \in [0,1]$ and any $m$. From this we conclude that the set $\mathcal{T}(s) \subset \bR$, as a countable union of discrete subset of $\bR$, has Lebesgue measure zero. The spectrality result in Proposition \ref{prop:pfhSpectrality} implies that for every $s \in [0,1]$ and every $m$, we have $c_m(s) \in \mathcal{T}(s, m)$. 

\textbf{Step 3:} We will now show using a proof by contradiction that there exists some $s \in [0,1]$ such that $\phi^s$ has a periodic point in $\bD \subset U$. Suppose for the sake of contradiction that $\phi^s$ does not have a periodic point in $\bD$ for any $s \in [0,1]$. 

The diffeomorphisms $\{\phi^s\}_{s \in [0,1]}$ only differ from each other on the disk $\bD$ itself, so it follows that under this assumption, they all have the exact same set of periodic points. From this Stokes' theorem implies that the sets of actions $\mathcal{T}(s)$ are all equal to a single fixed, Lebesgue measure zero subset $\mathcal{T} \subset \bR$. 

From what was said in the previous step, we find for any $m$ that $c_m(s)$ is a continuous function from the interval $[0,1]_s$ to $\bR$, taking values in the Lebesgue measure zero set $\mathcal{T} \subset \bR$. It follows that, under our assumptions, $c_m(s)$ is a constant function for any $m$. 
It follows that for every $s$,
$$\lim_{m \to \infty} \frac{c_{\sigma_m^s}(\phi^s, \Theta_m^s) - c_{\sigma_m}(\phi, \Theta_m) + \int_{\Theta_m} H^s dt}{d_m} = 0.$$

However, since we assumed that the Calabi invariant of the Hamiltonian $H$ is nonzero, this contradicts the asymptotic identity from (\ref{eq:closingLemmaProof1}) whenever $s \in [0,1]$ is such that $f(s) \neq 0$. We therefore arrive at a contradiction and there must be some $s \in [0,1]$ such that $\phi^s$ has a periodic point in $\bD$. This proves the smooth closing lemma for area-preserving surface diffeomorphisms.

\section{Supporting estimates} \label{sec:estimates}

\subsection{Conventions and notation} \label{subsec:conventions}

Many estimates include in their statements a choice of a base configuration
$\fc_\Gamma = (B_\Gamma, \Psi_\Gamma) \in \text{Conn}(E_\Gamma) \times C^\infty(S_\Gamma)$. We will always assume that any chosen base configuration is such that $\|\Psi_\Gamma\|_{L^2} + \|\Psi_\Gamma\|_{C^1} \leq 1.$ We will denote the Hodge decomposition of a $1$-form $b$ on a closed Riemannian $3$-manifold $M$ by
$b = b^{\text{co-exact}} + b^{\text{harm}} + b^{\text{exact}}$, where the first, second, and third terms on the right-hand side denote the co-exact, harmonic, and exact parts of $b$. We always assume that the degree $d$ of $\Gamma$ is at least $1$. 

Recall also that a constant is called ``geometric'' if it depends only on the underlying Riemannian metric, and if $x, y > 0$ are positive constants we write $x = O(y)$ or $x \lesssim y$ if $x \leq \kappa y$ where $\kappa$ is a geometric constant.

\subsection{$L^2$ bounds for tame perturbations} \label{subsec:tamePerturbationBounds}

Recall that $\cP$ denotes the Banach space of abstract perturbations for Seiberg--Witten--Floer homology defined in \cite{monopolesBook}. We first recall the following property of the space $\cP$.

\begin{lem}
\label{lem:tamePerturbationEstimate1}
Let $\fg \in \cP$ be a tame perturbation. Then for any pair $(B, \Psi) \in \text{Conn}(E_\Gamma) \times C^\infty(S_\Gamma)$, there is a constant $\kappa_{\ref{lem:tamePerturbationEstimate1}} > 0$ depending on the metric $g$ such that the following estimate holds:
$$\int_{M_\phi} \big(|\fC_\fg(B, \Psi)|^2 + |\fS_\fg(B, \Psi)|^2\big) \leq \kappa_{\ref{lem:tamePerturbationEstimate1}} \|\fg\|_\cP^2 \big(\int_{M_\phi} |\Psi|^2 + 1\big) .$$
\end{lem}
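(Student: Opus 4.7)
The plan is to deduce this $L^2$ bound directly from the pointwise growth estimates that are built into the definition of a tame perturbation, as given in \cite[Definition 10.5.1]{monopolesBook} and its specialization to the norm $\|\cdot\|_\cP$ (see \cite[Chapter 11]{monopolesBook}). Specifically, among the tameness axioms is a pointwise bound of the form
\[
|\fC_\fg(B,\Psi)(x)| + |\fS_\fg(B,\Psi)(x)| \;\leq\; \kappa\, \|\fg\|_\cP \bigl(1 + |\Psi(x)|\bigr),
\]
for a constant $\kappa$ depending only on the metric $g$ (and the implicit Sobolev exponent $k \geq 3$ used to define $\cP$). This is essentially the content of the first-order growth estimate for the formal gradient, and it is independent of the connection $B$ because the gauge-invariance of $\fg$ together with the tameness axioms constrains the dependence on $B$ to enter only through $\Psi$.

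Given this pointwise inequality, the proof is short. First I would square it and use $(a+b)^2 \leq 2a^2 + 2b^2$ together with $(1 + |\Psi|)^2 \leq 2(1 + |\Psi|^2)$ to obtain the pointwise estimate
\[
|\fC_\fg(B,\Psi)|^2 + |\fS_\fg(B,\Psi)|^2 \;\leq\; 4\kappa^2\, \|\fg\|_\cP^2\bigl(1 + |\Psi|^2\bigr).
\]
Then I would integrate this bound over the compact $3$-manifold $M_\phi$, which produces
\[
\int_{M_\phi}\bigl(|\fC_\fg|^2 + |\fS_\fg|^2\bigr) \;\leq\; 4\kappa^2\, \|\fg\|_\cP^2 \Bigl(\int_{M_\phi} |\Psi|^2 + \mathrm{vol}_g(M_\phi)\Bigr).
\]
Absorbing $4\kappa^2 \max(1,\mathrm{vol}_g(M_\phi))$ into the constant $\kappa_{\ref{lem:tamePerturbationEstimate1}}$ gives the claim.

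The only conceptual step is identifying the correct tameness axiom to quote; once that is in hand the argument is a one-line pointwise estimate followed by integration. The main (mild) obstacle is therefore bookkeeping: matching our conventions for the rescaling $\Psi = r^{1/2}\psi$ and for the splitting of the formal gradient into $\fC_\fg$ and $\fS_\fg$ with the conventions of \cite{monopolesBook}, and verifying that the constant $\kappa$ arising in the tameness axiom depends only on the geometric data of $(M_\phi, g)$ and not on $\Gamma$, $r$, or the particular configuration. This follows because the axiom is stated for arbitrary spin-c structures with a uniform constant. No analytic difficulty arises beyond quoting the definition.
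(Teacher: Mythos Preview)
There is a genuine gap. The pointwise bound you invoke,
\[
|\fC_\fg(B,\Psi)(x)| + |\fS_\fg(B,\Psi)(x)| \;\leq\; \kappa\, \|\fg\|_\cP \bigl(1 + |\Psi(x)|\bigr),
\]
is not one of the tameness axioms in \cite[Definition 10.5.1]{monopolesBook}, and in general it cannot hold for elements of $\cP$. The Banach space $\cP$ is built out of cylinder functions (see \cite[Chapter 11]{monopolesBook}): the formal gradient at a point $x$ depends on $\Psi$ through finitely many integral pairings against fixed sections, not through the pointwise value $|\Psi(x)|$. These perturbations are inherently nonlocal, so a bound of the above pointwise form is simply unavailable. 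Your remark that ``gauge-invariance of $\fg$ together with the tameness axioms constrains the dependence on $B$ to enter only through $\Psi$'' is also not quite right for the same reason: gauge invariance does not force locality.

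The correct argument is shorter than what you wrote, because the $L^2$ bound is itself one of the defining conditions. Condition (iv) of \cite[Definition 10.5.1]{monopolesBook} asserts, for each tame perturbation $\fq$, a constant $m_2$ with
\[
\|\fq(B,\Psi)\|_{L^2} \;\leq\; m_2\bigl(\|\Psi\|_{L^2} + 1\bigr),
\]
and the construction of $\cP$ in the proof of \cite[Theorem 11.6.1]{monopolesBook} packages this so that $m_2$ is controlled by $\|\fg\|_\cP$. This is exactly what the paper's proof cites. Squaring and using $(a+b)^2 \leq 2(a^2+b^2)$ gives the lemma directly; no pointwise-to-integral step is needed or possible.
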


\begin{proof}
The estimate follows from the definition of the $\cP$--norm and the construction of the Banach space $\cP$ in proof of \cite[Theorem $11.6.1$]{monopolesBook}. Our constant $\kappa$ corresponds to the constant $m_2$ in the fourth property of this theorem, and in the fourth part of the definition \cite[Definition $10.5.1$]{monopolesBook} of tame perturbations.
\end{proof}

The following lemma is a consequence of Lemma \ref{lem:tamePerturbationEstimate1}. 

\begin{lem}
\label{lem:tamePerturbationEstimate2}
Let $\fg \in \cP$ be a tame perturbation. Then for any pair $(B, \Psi)$ in $\text{Conn}(E_\Gamma) \times C^\infty(S_\Gamma)$ and choice of base configuration $(B_\Gamma, \Psi_\Gamma)$, there is a constant $\kappa_{\ref{lem:tamePerturbationEstimate2}} \geq 1$, depending only on the metric $g$ such that the following estimate holds:

$$|\fg(B, \Psi) - \fg(B_\Gamma, \Psi_\Gamma)| \leq \kappa_{\ref{lem:tamePerturbationEstimate2}} \|\fg\|_\cP\big(\|F_B - F_{B_\Gamma}\|_{L^2}^2 + \|\Psi\|_{L^2}^2 + 1\big).$$
\end{lem}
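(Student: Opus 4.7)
The plan is to express $\fg(B,\Psi) - \fg(B_\Gamma, \Psi_\Gamma)$ as the integral along a path of configurations and then estimate the integrand using Lemma~\ref{lem:tamePerturbationEstimate1}. Specifically, I would choose the linear interpolation
\[
(B_t, \Psi_t) = \big(B_\Gamma + t(B - B_\Gamma),\, \Psi_\Gamma + t(\Psi - \Psi_\Gamma)\big), \qquad t \in [0,1],
\]
so that by the fundamental theorem of calculus and the definition of the $L^2$ formal gradient of $\fg$,
\[
\fg(B, \Psi) - \fg(B_\Gamma, \Psi_\Gamma) \;=\; \int_0^1 \Big( \langle \fC_\fg(B_t, \Psi_t), B - B_\Gamma \rangle_{L^2} + \mathrm{Re}\,\langle \fS_\fg(B_t, \Psi_t), \Psi - \Psi_\Gamma \rangle_{L^2} \Big)\, dt.
\]
Cauchy--Schwarz together with Lemma~\ref{lem:tamePerturbationEstimate1} and the conventions of \S\ref{subsec:conventions} (in particular $\|\Psi_\Gamma\|_{L^2} \leq 1$, which gives $\|\Psi_t\|_{L^2} \lesssim \|\Psi\|_{L^2} + 1$) then yield the pointwise-in-$t$ bound
\[
\big\|\fC_\fg(B_t,\Psi_t)\big\|_{L^2} + \big\|\fS_\fg(B_t,\Psi_t)\big\|_{L^2} \;\lesssim\; \|\fg\|_\cP \big(\|\Psi\|_{L^2} + 1\big).
\]

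The main obstacle is that there is no a priori bound on $\|B - B_\Gamma\|_{L^2}$ in terms of $\|F_B - F_{B_\Gamma}\|_{L^2}$, because $B - B_\Gamma$ can have a large exact part and a large harmonic part. To handle this, I will use that tame perturbations $\fg$ are gauge-invariant (this is part of the definition reviewed in \S\ref{subsubsec:abstractPerturbations}). Applying a suitable $u \in \cG(M_\phi)$, I may replace $(B,\Psi)$ by $(B',\Psi') = (B - u^{-1}du, u\Psi)$ without changing $\fg(B,\Psi)$, $\|\Psi\|_{L^2}$, or $F_B$. I will choose $u$ so that $B' - B_\Gamma$ is put in Coulomb gauge, i.e.\ has no exact component in its Hodge decomposition, and so that the harmonic component of $B' - B_\Gamma$ lies in a fixed bounded fundamental domain for the action of $H^1(M_\phi; 2\pi\bZ)$ on harmonic $1$-forms. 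Elliptic regularity for $d$ on co-exact forms on the closed Riemannian $3$-manifold $M_\phi$ then gives
\[
\|B' - B_\Gamma\|_{L^2} \;\lesssim\; \|F_{B'} - F_{B_\Gamma}\|_{L^2} + 1 \;=\; \|F_B - F_{B_\Gamma}\|_{L^2} + 1,
\]
with the implicit constant depending only on the metric $g$.

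Running the above interpolation argument at $(B', \Psi')$ instead of $(B, \Psi)$ and using $\|\Psi' - \Psi_\Gamma\|_{L^2} \lesssim \|\Psi\|_{L^2} + 1$, I obtain
\[
|\fg(B,\Psi) - \fg(B_\Gamma, \Psi_\Gamma)| \;=\; |\fg(B',\Psi') - \fg(B_\Gamma, \Psi_\Gamma)| \;\lesssim\; \|\fg\|_\cP \big(\|\Psi\|_{L^2} + 1\big)\big(\|F_B - F_{B_\Gamma}\|_{L^2} + 1\big).
\]
A final application of the AM--GM inequality, absorbing cross terms, bounds the right-hand side by a geometric multiple of $\|\fg\|_\cP \big(\|F_B - F_{B_\Gamma}\|_{L^2}^2 + \|\Psi\|_{L^2}^2 + 1\big)$, which is the desired estimate. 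The only step requiring any care beyond routine manipulation is the gauge-fixing in the second paragraph, which is what allows the $L^2$-norm of the connection difference to be absorbed into the stated right-hand side.
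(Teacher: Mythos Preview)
Your proposal is correct and follows essentially the same approach as the paper: gauge-fix so that $B-B_\Gamma$ has no exact part and bounded harmonic part, run the linear interpolation, apply Lemma~\ref{lem:tamePerturbationEstimate1} and Cauchy--Schwarz, then use the elliptic estimate for $d$ on co-exact forms to convert $\|B'-B_\Gamma\|_{L^2}$ into $\|F_B-F_{B_\Gamma}\|_{L^2}+1$. The only difference is the order of exposition (the paper gauge-fixes first, then interpolates), which is cosmetic.
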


\begin{proof}
	Recall that $\fg$ is gauge invariant.
There is a gauge transformation $u$ so that the pair
$(\widetilde{B} = B - u^{-1}du, \widetilde{\Psi} = u \cdot \Psi)$
has the following properties:
$$(\widetilde{B} - B_\Gamma)^{\text{exact}} = 0\quad\text{and}\quad\|(\widetilde{B} - B_\Gamma)^{\text{harm}}\|_{C^0} \lesssim 1.$$

Also notice that $\|\widetilde{\Psi}\|_{L^2} = \|\Psi\|_{L^2}$. Let $(B(t),\Psi(t))$, $t\in[0,1]$ be the linear interpolation from $(B,\Psi)$ to $(\widetilde{B},\widetilde{\Psi})$. 
Applying Lemma \ref{lem:tamePerturbationEstimate1} yields the following estimates: 
\begin{align*}
& \fg(\widetilde{B}, \widetilde\Psi) - \fg(B_\Gamma, \Psi_\Gamma) \\
 \lesssim  & \sup_t \big(\|\fC_\fg(B(t), \Psi(t))\|_{L^2} + \|\fS_\fg(B(t), \Psi(t))\|_{L^2}\big)\big(\|\widetilde{B} - B_\Gamma\|_{L^2} + \|\widetilde\Psi - \Psi_\Gamma\|_{L^2}\big) \\
\lesssim & \|\fg_\cP\| \big(\|\widetilde{\Psi}\|_{L^2} + \|\Psi_\Gamma\|^2_{L^2} + 1\big)\big(\|\widetilde{B} - B_\Gamma\|_{L^2} + \|\widetilde{\Psi} - \Psi_\Gamma\|_{L^2}\big)\\
\lesssim & \|\fg_\cP\|\big(\|\widetilde{B} - B_\Gamma\|^2_{L^2} + \|\widetilde\Psi\|^2_{L^2} + 1\big),
\end{align*}
where the last inequality uses Cauchy--Schwarz and  the assumption that $\|\Psi_\Gamma\|_{L^2} + \|\Psi_\Gamma\|_{C^1} \leq 1.$
Now recall that $\|(\widetilde{B} - B_\Gamma)^{\text{harm}}\|_{C^0(M_\phi)} \lesssim 1$. To bound the co-exact part, observe that $d( (\widetilde{B} - B_\Gamma)^{\text{co-exact}}) = F_B - F_{B_\Gamma}$. The operator $\star^{g_*}d$ on the space of co-exact one-forms is self-adjoint with respect to the $L^2$ inner product and has discrete spectrum not containing zero. It follows that $\|\widetilde{B} - B_\Gamma\|_{L^2}^2 \lesssim \|F_B - F_{B_\Gamma}\|_{L^2}^2 + 1$. This and the prior inequality imply the lemma. 
\end{proof}

\subsection{Solutions to the Seiberg--Witten equations} \label{subsec:swEstimates} In this section, we derive various pointwise and integral bounds for solutions to the Seiberg-Witten equations. 

\subsubsection{The three-dimensional equations} \label{subsubsec:3dEstimates} We begin with some basic estimates on the three-dimensional $\cS$-Seiberg-Witten equations which are essential for much of the analysis in this paper. Proposition \ref{prop:3dEstimates1} restates pointwise estimates proved by Taubes. 

\begin{prop} \label{prop:3dEstimates1}
Let $\fc = (B, \Psi = ( r^{1/2}\alpha, r^{1/2}\beta))$ be a solution of the $(\phi, J, \Gamma, r, \fe_\mu)$-Seiberg-Witten equations (\ref{eq:sw}). Suppose that $\mu$ has $C^3$ norm bounded above by $1$. Then there is a geometric constant $r_{\ref{prop:3dEstimates1}} \geq 1$ and a geometric constant $\kappa_{\ref{prop:3dEstimates1}} \geq 1$ such that if $r \geq r_{\ref{prop:3dEstimates1}}$, the following estimates hold:
\begin{enumerate}
\item $|\alpha|^2 + \kappa_{\ref{prop:3dEstimates1}}^{-1} r|\beta|^2 \leq 1 + \kappa_{\ref{prop:3dEstimates1}} r^{-1}$.
\item $|F_B| \leq \kappa_{\ref{prop:3dEstimates1}} r(1 - |\alpha|^2 + \kappa_{\ref{prop:3dEstimates1}} r^{-1})$.
\item $|\widehat{\nabla}_B\alpha|^2 + r|\widehat{\nabla}_B\beta|^2 \leq \kappa_{\ref{prop:3dEstimates1}} r(1 - |\alpha|^2 + \kappa_{\ref{prop:3dEstimates1}}r^{-1})$.
\item $| \int_{M_\phi} r(1 - |\alpha|^2) - 2\pi d| \leq \kappa_{\ref{prop:3dEstimates1}}$.
\end{enumerate}
\end{prop}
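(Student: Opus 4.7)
The plan is to reproduce Taubes' classical pointwise estimates for solutions of the three-dimensional Seiberg--Witten equations, as developed in \cite{TaubesWeinstein1} and the early papers of \cite{ECHSWF}, adapted to the mapping torus $M_\phi$ with perturbation $\fg = \fe_\mu$ (so in particular $\fS_\fg \equiv 0$ and the Dirac equation reduces to $D_B \Psi = 0$). The four items should be established in order, with item $(1)$ providing the heart of the argument and items $(2)$--$(4)$ following essentially formally.

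For item $(1)$, first I would apply the Weitzenb\"ock formula to the Dirac equation to obtain
\begin{equation*}
\nabla_B^{\ast}\nabla_B \Psi + \tfrac{1}{4} s\,\Psi + \tfrac{1}{2}\cl(F_B)\Psi = 0,
\end{equation*}
where $s$ is the scalar curvature of $g$. Substituting the first line of \eqref{eq:sw} for $F_B$ and computing $\cl(F_B)\Psi$ using the splitting $S_\Gamma = E_\Gamma \oplus (E_\Gamma \otimes V)$, together with $\cl(dt) = \mathrm{diag}(i,-i)$, yields a pointwise identity whose quadratic nonlinearity has a definite sign. Specifically, the $E_\Gamma$-component produces a term proportional to $-r(1 - |\alpha|^2 + |\beta|^2)\alpha$, so that applying the maximum principle at a point of the closed manifold $M_\phi$ where $|\alpha|^2$ attains its maximum forces $|\alpha|^2 \leq 1 + \kappa r^{-1}$; the error is absorbed into terms controlled by $s$, the harmonic form $\varpi_V$, and the assumed $C^3$-bound on $\mu$. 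A parallel maximum-principle argument applied to the combination $|\alpha|^2 + c r|\beta|^2$ for $c > 0$ small, exploiting that the analogous ``good'' sign forces $r|\beta|^2$ to be comparable to $1 - |\alpha|^2$ at its maximum, gives the bound on $|\beta|^2$ and produces the sharper pointwise inequality $|\beta|^2 \lesssim r^{-1}(1 - |\alpha|^2 + \kappa r^{-1})$ as a by-product, which is needed below.

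For item $(2)$, I would read the bound off the first equation of \eqref{eq:sw} directly: the $dt$--component of $\cl^\dagger(\Psi) - ir\,dt$ equals $-ir(1 - |\alpha|^2 + |\beta|^2)$, whose modulus is $O(r(1-|\alpha|^2) + \kappa)$ by item $(1)$, while the vertical components are bounded pointwise by $\kappa r|\alpha||\beta|$; combining the sharper $\beta$-bound from item $(1)$ with AM--GM gives $r|\alpha||\beta| \leq \tfrac{1}{2}r(1-|\alpha|^2) + O(1)$, and the remaining contributions from $\mu$ and $\varpi_V$ are bounded by a geometric constant. For item $(3)$, I would apply the Bochner identities separately to $|\alpha|^2$ and $|\beta|^2$, express their Laplacians in terms of $|\widehat{\nabla}_B \alpha|^2$ and $|\widehat{\nabla}_B \beta|^2$ plus curvature and quadratic terms already controlled by $(1)$ and $(2)$, and conclude pointwise using the maximum principle in the standard Taubes fashion. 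For item $(4)$, I would pair the first equation of \eqref{eq:sw} with $dt$ and integrate: by Chern--Weil and the definition of the degree $\int_{M_\phi} F_B \wedge dt = \pm 2\pi i d$, Stokes kills $d\mu \wedge dt$, one has $dt \wedge dt = 0$, and the remaining spinorial term equals $ir\int(|\alpha|^2 - |\beta|^2)\,\mathrm{dvol}$ up to a geometric constant coming from $\varpi_V$; using item $(1)$ to absorb $r\int|\beta|^2 = O(1)$ yields the claimed estimate.

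The main obstacle is the sign computation in item $(1)$: after substituting the curvature equation into the Weitzenb\"ock identity, one must verify that the quadratic term $\cl(F_B)\Psi$ has the sign compatible with the maximum principle, and this relies crucially on the diagonal form of $\cl(dt)$ with respect to the canonical splitting of $S_\Gamma$. Provided the sign works out (it does, by Taubes' original calculation), the remaining analytic ingredients---maximum principle, Bochner, Chern--Weil---are standard, and the only care required is to keep track of the perturbation terms coming from $\fe_\mu$ so that the resulting error in all four items is at most $O(r^{-1})$ rather than $O(1)$.
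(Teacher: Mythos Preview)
Your proposal is correct and follows essentially the same approach as the paper. The paper's proof simply cites Taubes' arguments (Lemma~2.2 of \cite{TaubesWeinstein1} for item~(1), Lemma~2.3 of \cite[Paper~IV]{ECHSWF} for item~(3)) and gives the direct computation for item~(4) by wedging the curvature equation with $dt$ and integrating; your sketch accurately unpacks what those cited maximum-principle/Weitzenb\"ock arguments do, and your treatment of items~(2) and~(4) matches the paper's.
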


\begin{proof} 
The first item follows by copying the proof of Lemma $2.2$ in \cite{TaubesWeinstein1}, given the rescaling from Remark \ref{rem:conventions}. The second item follows from the first item and the equations (\ref{eq:sw}). The third item follows by copying the proof of Lemma $2.3$ in \cite[Paper IV]{ECHSWF}, given the rescaling from Remark \ref{rem:conventions}.

For the fourth item, observe from the first equation in (\ref{eq:sw}) that
$$F_B = -i(r(1 - |\alpha|^2 + |\beta|^2)\omega_{\phi} + e_1$$
where $e_1$ is a two-form such that $dt \wedge e_1 \equiv 0$. Recall that $\frac{i}{2\pi}F_B$ represents $\text{PD}(\Gamma)$ and the volume form of the metric $g$ is $dt \wedge \omega_{\phi}$. We conclude that \begin{equation*}
\int_{M_\phi} r(1 - |\alpha|^2) = \int_{M_\phi} dt \wedge (iF_B + r|\beta|^2\omega_{\phi}) = 2\pi d + r\int_{M_\phi}|\beta|^2.
\end{equation*}

An application of the first item then yields the desired bound.
\end{proof}

The statement of Proposition \ref{prop:3dEstimates1} assumes that the perturbation term has the form $\fg=\fe_\mu$.
Proposition \ref{prop:3dEstimates2} below deduces some energy bounds for the $\cS$-Seiberg-Witten equations in the presence of an \emph{arbitrary} abstract perturbation $\fg \in \cP$.

\begin{prop} \label{prop:3dEstimates2}
Let $\fc = (B, \Psi = (r^{1/2}\alpha, r^{1/2}\beta))$ be a solution of the $(\phi, J, \Gamma, r, \fg)$-Seiberg-Witten equations (\ref{eq:sw}). Suppose that $\|\fg\|_\cP \leq 1$. Then there is a geometric constant $\kappa_{\ref{prop:3dEstimates2}} \geq 1$ such that the following estimates hold: 
$$\|\nabla_B\Psi\|_{L^2}^2 + \|\Psi\|_{L^4}^4 \leq \kappa_{\ref{prop:3dEstimates2}}(r+1)^2\quad\text{and}\quad\|F_B\|_{L^2} \leq \kappa_{\ref{prop:3dEstimates2}}(r+1).$$
\end{prop}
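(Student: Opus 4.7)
\smallskip

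The plan is to adapt the standard Weitzenb\"ock-type energy estimate for three-dimensional Seiberg--Witten solutions to the abstractly perturbed setting, using the Banach-space bound from Lemma~\ref{lem:tamePerturbationEstimate1} to absorb the contribution from $\fg$. First I would apply the Weitzenb\"ock formula
\[
D_B^2 \Psi \;=\; \nabla_B^*\nabla_B \Psi + \tfrac{1}{4} R_g \Psi + \tfrac{1}{2} \cl(F_B)\Psi
\]
to the Dirac equation $D_B \Psi = \fS_\fg(B,\Psi)$, integrate by parts, and substitute the curvature equation $\star^g F_B = \cl^\dagger(\Psi) - ir\, dt + i\varpi_V + \fC_\fg$ into the $\langle \cl(F_B)\Psi, \Psi\rangle$ term. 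The pointwise identity $\langle \cl(\cl^\dagger(\Psi))\Psi,\Psi\rangle = |\Psi|^4$ produces the desired quartic term, yielding an identity of the schematic form
\[
\int_{M_\phi} |\nabla_B \Psi|^2 + \tfrac{1}{2}|\Psi|^4 \;=\; \int_{M_\phi} |\fS_\fg|^2 + r\, dt(\cl^\dagger(\Psi), \Psi)\, \text{(up to bounded terms)} + \langle \cl(\fC_\fg)\Psi,\Psi\rangle + O(\|\Psi\|_{L^2}^2 + 1),
\]
where the $O$-term collects the geometric contributions from $R_g$ and $\varpi_V$.

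Next I would estimate each right-hand term. The Reeb term $r\, dt(\cl^\dagger(\Psi), \Psi)$ is pointwise bounded by $r|\Psi|^2$, so Young's inequality gives $r|\Psi|^2 \leq \tfrac{1}{8}|\Psi|^4 + C r^2$, and integrating yields a $O(r^2)$ contribution (here $|M_\phi|$ is geometric). The terms involving $\fg$ are controlled via Lemma~\ref{lem:tamePerturbationEstimate1}: the bound
\[
\|\fS_\fg\|_{L^2}^2 + \|\fC_\fg\|_{L^2}^2 \;\leq\; \kappa_{\ref{lem:tamePerturbationEstimate1}}\bigl(\|\Psi\|_{L^2}^2 + 1\bigr)
\]
together with Cauchy--Schwarz and Young's inequality on $\langle \cl(\fC_\fg)\Psi,\Psi\rangle \leq \|\fC_\fg\|_{L^2} \|\Psi\|_{L^4}^2$ shows that both $\fg$-contributions are controlled by $\tfrac{1}{8}\|\Psi\|_{L^4}^4 + O(\|\Psi\|_{L^2}^2 + 1)$. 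Finally, the linear-in-$|\Psi|^2$ leftover is absorbed by one more Young's inequality $\|\Psi\|_{L^2}^2 \lesssim \varepsilon \|\Psi\|_{L^4}^4 + \varepsilon^{-1}$, giving after rearrangement
\[
\|\nabla_B \Psi\|_{L^2}^2 + \|\Psi\|_{L^4}^4 \;\lesssim\; (r+1)^2,
\]
which is the first asserted estimate.

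For the second estimate, I would read off $F_B$ directly from the curvature equation. Pointwise,
\[
|F_B| \;\lesssim\; |\cl^\dagger(\Psi)| + r + |\varpi_V| + |d\mu\text{-type terms}| + |\fC_\fg| \;\lesssim\; |\Psi|^2 + r + |\fC_\fg|,
\]
and squaring and integrating gives $\|F_B\|_{L^2}^2 \lesssim \|\Psi\|_{L^4}^4 + r^2 + \|\fC_\fg\|_{L^2}^2$. By the first estimate and H\"older's inequality we have $\|\Psi\|_{L^2}^2 \leq |M_\phi|^{1/2}\|\Psi\|_{L^4}^2 \lesssim r+1$, so Lemma~\ref{lem:tamePerturbationEstimate1} gives $\|\fC_\fg\|_{L^2}^2 \lesssim r+1$. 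The desired bound $\|F_B\|_{L^2} \lesssim r+1$ follows.

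The main obstacle is ensuring that the contributions of the abstract perturbation $\fg$ do not produce terms whose $\Psi$-dependence grows faster than linearly in $\|\Psi\|_{L^2}^2$, because such a term could not be absorbed by Young's inequality against $\|\Psi\|_{L^4}^4$. This is precisely what Lemma~\ref{lem:tamePerturbationEstimate1} guarantees: the $L^2$ norms of $\fC_\fg, \fS_\fg$ grow only linearly in $\|\Psi\|_{L^2}^2$, which is exactly the regularity needed. Modulo this input, the argument is a direct adaptation of Taubes' standard three-dimensional energy bound (compare \cite{TaubesWeinstein1}), and no novel ingredients are required.
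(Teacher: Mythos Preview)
Your proposal is correct and follows essentially the same route as the paper: apply the Weitzenb\"ock formula to the perturbed Dirac equation, substitute the curvature equation to produce the $|\Psi|^4$ term, use Lemma~\ref{lem:tamePerturbationEstimate1} to control the $\fS_\fg$ and $\fC_\fg$ contributions, and absorb the remaining linear-in-$\|\Psi\|_{L^2}^2$ terms via H\"older and Young. In fact you are more explicit than the paper about the handling of the cross-term $\langle \cl(\fC_\fg)\Psi,\Psi\rangle$, which the paper folds directly into its $\kappa(r+1)|\Psi|^2$ bound.
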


\begin{proof}
The starting point is the Weitzenbock formula for the Dirac operator $D_B$:
$$D_B^2\Psi = \nabla_B^*\nabla_B\Psi + \frac{R_{g}}{4}\Psi + \cl(\star^{g}F_B)\Psi.$$

Taking the inner product with $\Psi$, integrating over $M_\phi$, and plugging in (\ref{eq:sw}) yields an inequality of the form
$$\int_{M_\phi} |\fS_\fg(B, \Psi)|^2 \geq \int_{M_\phi} |\nabla_B\Psi|^2 + |\Psi|^2(|\Psi|^2 - \kappa (r + 1)).$$
Plug in the bound from Lemma \ref{lem:tamePerturbationEstimate1} to bound the left-hand side from above by a multiple of $\|\Psi\|_{L^2}^2 + 1$, and rearrange to conclude
$$\int_{M_\phi} |\nabla_B \Psi|^2 + |\Psi|^4 \leq \kappa + \kappa(r+1)\|\Psi\|_{L^2}^2.$$
H\"older's inequality and Young's inequality imply
$$\int_{M_\phi} |\Psi|^2 \leq \kappa (r+1) + (2\kappa(r+1))^{-1}\int_{M_\phi} |\Psi|^4.$$
Combining the above two inequalities and rearranging prove the first estimate. The second estimate follows from the first estimate, the first equation in (\ref{eq:sw}) and Lemma \ref{lem:tamePerturbationEstimate1} as follows:
\begin{equation*}
\|F_B\|_{L^2}^2 \lesssim \|\Psi\|_{L^4}^4 + \|\fC_\fg(B, \Psi)\|_{L^2}^2 + r^2 + 1\lesssim \|\Psi\|_{L^4}^4 + r^2 + 1.
\phantom\qedhere\makeatletter\displaymath@qed
\end{equation*}
\end{proof}

Proposition \ref{prop:3dActionEstimates} shows some convenient estimates hold after applying a gauge transformation. 

\begin{prop} \label{prop:3dActionEstimates}
Let $\fc = (B, \Psi = (r^{1/2}\alpha, r^{1/2}\beta))$ be a solution of the $(\phi, J, \Gamma, r, \fe_\mu)$-Seiberg-Witten equations (\ref{eq:sw}). Suppose that $\mu$ has $C^3$ norm bounded above by $1$. Then there is a gauge transformation $u$ such that the following holds.  There is a geometric constant $r_{\ref{prop:3dActionEstimates}} \geq 1$ and a geometric constant $\kappa_{\ref{prop:3dActionEstimates}} \geq 1$ such that if $r \geq r_{\ref{prop:3dActionEstimates}}$, then the following estimates hold:
\begin{enumerate}
\item $|B - B_\Gamma - u^{-1}du| \leq \kappa_{\ref{prop:3dActionEstimates}}(d^{1/3}r^{2/3} + d^{1/3}r^{-1/3}\|F_{B_\Gamma}\|_{C^0} + d^{-2/3}r^{2/3}\|F_{B_\Gamma}\|_{L^1})$.
\item $|\fcs(B-u^{-1}du, B_\Gamma)| \leq \kappa_{\ref{prop:3dActionEstimates}}(d + \|F_{B_\Gamma}\|_{L^1})(d^{1/3}r^{2/3} + d^{1/3}r^{-1/3}\|F_{B_\Gamma}\|_{C^0} + d^{-2/3}r^{2/3}\|F_{B_\Gamma}\|_{L^1})$.
\item $|E_{\phi}(B - u^{-1}du,B_\Gamma)| \leq \kappa_{\ref{prop:3dActionEstimates}}$.
\item $|\fa_{r, \fe_\mu}(u \cdot \fc,\fc_\Gamma)| \leq \kappa_{\ref{prop:3dActionEstimates}}(r + d  + \|F_{B_\Gamma}\|_{L^1} + |\fcs(u \cdot \fc,\fc_\Gamma)|)$.
\end{enumerate}
\end{prop}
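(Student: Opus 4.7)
The plan is to Coulomb-gauge-fix $B - B_\Gamma$ and then apply Hodge theory together with the pointwise and $L^1$ bounds on $F_B$ provided by Proposition \ref{prop:3dEstimates1}.

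First, using the standard slice theorem for the gauge group action, I would choose $u \in \cG(M_\phi)$ so that $b := B - B_\Gamma - u^{-1}du$ satisfies $d^* b = 0$, and so that the harmonic part $b^{\mathrm{harm}}$ lies in a fixed compact fundamental domain of the lattice in $H^1(M_\phi; \bR) \otimes i\bR$ arising from the connected components of $\cG(M_\phi)$; in particular $\|b^{\mathrm{harm}}\|_{C^0}$ is bounded by a geometric constant. Hodge theory then gives $b = b^{\mathrm{harm}} + d^* G(F_B - F_{B_\Gamma})$, where $G$ is the Green's operator of the Hodge Laplacian. Elliptic regularity and the Sobolev embedding $W^{1,p} \hookrightarrow C^0$ for some fixed $p$ slightly greater than $3$ yield $\|b\|_{C^0} \lesssim \|F_B - F_{B_\Gamma}\|_{L^p} + 1$. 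Interpolation gives $\|F_B - F_{B_\Gamma}\|_{L^p} \leq \|F_B - F_{B_\Gamma}\|_{\infty}^{1 - 1/p}\|F_B - F_{B_\Gamma}\|_{L^1}^{1/p}$, and items (2) and (4) of Proposition \ref{prop:3dEstimates1} bound these factors by $\kappa(r + \|F_{B_\Gamma}\|_{C^0})$ and $\kappa(d + \|F_{B_\Gamma}\|_{L^1})$, respectively. Taking $p$ just above $3$ and expanding using the subadditivity $(x + y)^\alpha \leq x^\alpha + y^\alpha$ for $\alpha \in (0,1)$ produces the three-term bound asserted in item (1).

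For item (2), substituting the $C^0$ bound on $b$ into the defining formula for $\fcs$ and using invariance of the Chern--Simons functional under null-homotopic gauge transformations gives
\[|\fcs(B - u^{-1}du, B_\Gamma)| \leq \|b\|_{C^0}\bigl(\|F_B - F_{B_\Gamma}\|_{L^1} + 2\|F_{B_\Gamma} - i\star^{g}\varpi_V\|_{L^1}\bigr) \lesssim \|b\|_{C^0}(d + \|F_{B_\Gamma}\|_{L^1}),\]
where the last inequality uses item (4) of Proposition \ref{prop:3dEstimates1}. For item (3), the energy $E_\phi(b, 0)$ depends only on the cohomology class of $b$ since $\omega_\phi$ is closed. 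Writing $b = b^{\mathrm{harm}} + d^* \eta$ for a $2$-form $\eta$, the identity $\star^{g}\omega_\phi = 2\,dt$ forces $\int d^* \eta \wedge \omega_\phi = 2\langle \eta, d(dt)\rangle_{L^2} = 0$, so $E_\phi(b, 0) = i\int b^{\mathrm{harm}} \wedge \omega_\phi$ is bounded by the chosen geometric bound on $b^{\mathrm{harm}}$. For item (4), since $\fg = \fe_\mu$ has vanishing spinor component, the Dirac equation yields $D_B \Psi = 0$, so the term $\int \langle D_B \Psi, \Psi\rangle$ vanishes; the remaining terms of $\fa_{r, \fe_\mu}$ are bounded by $\tfrac{1}{2}|\fcs| + \tfrac{r}{2}|E_\phi| + |\fe_\mu(B) - \fe_\mu(B_\Gamma)|$, and the last of these is $\leq \|\mu\|_{C^0}\|F_B - F_{B_\Gamma}\|_{L^1} \leq \kappa(d + \|F_{B_\Gamma}\|_{L^1})$ by definition of $\fe_\mu$ and Proposition \ref{prop:3dEstimates1}(4).

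The main technical obstacle is the borderline Sobolev embedding step: at the natural interpolation exponent $p = 3$ dictated by scaling, $W^{1,3}$ fails to embed into $C^0$ in dimension three, so one must fix some $p > 3$ and verify that the resulting small loss in the interpolation exponents can be absorbed into the geometric constant $\kappa_{\ref{prop:3dActionEstimates}}$ without disturbing the stated three-term structure of the bound in item (1).
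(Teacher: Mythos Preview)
Your treatment of items (2), (3), and (4) is essentially the same as the paper's and is correct. The gap is in item (1), and it is exactly the obstacle you flag at the end: it cannot be absorbed.

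For any fixed $p>3$ the Sobolev--interpolation route gives at best
\[
\|b^{\text{co-exact}}\|_{C^0}\ \lesssim_p\ \|F_B-F_{B_\Gamma}\|_\infty^{\,1-1/p}\,\|F_B-F_{B_\Gamma}\|_{L^1}^{\,1/p}
\ \lesssim\ (r+\|F_{B_\Gamma}\|_{C^0})^{1-1/p}(d+\|F_{B_\Gamma}\|_{L^1})^{1/p}.
\]
Since $1-1/p>2/3$ strictly, the leading term is $r^{1-1/p}d^{1/p}$, which exceeds the stated $d^{1/3}r^{2/3}$ by a factor $r^{\,1/3-1/p}$ that grows without bound as $r\to\infty$. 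This is a loss in the \emph{exponent} of $r$, not in the constant, so it cannot be hidden in $\kappa_{\ref{prop:3dActionEstimates}}$. Moreover, even at the formal endpoint $p=3$ the subadditivity expansion you describe would produce terms like $r^{2/3}\|F_{B_\Gamma}\|_{L^1}^{1/3}$ and $d^{1/3}\|F_{B_\Gamma}\|_{C^0}^{2/3}$, which are not the linear-in-$\|F_{B_\Gamma}\|$ terms asserted in item (1); neither family of terms dominates the other in general.

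The paper avoids the borderline embedding entirely. After the same gauge fixing, it uses the pointwise Green's form bound $|G(x,y)|\lesssim \text{dist}(x,y)^{-2}$ for $d+d^*$ on one-forms to write
\[
|\,b^{\text{co-exact}}(x)\,|\ \lesssim\ \int_{M_\phi}\text{dist}(x,y)^{-2}\,|F_B-F_{B_\Gamma}|(y),
\]
then splits the integral over $B_\delta(x)$ (bounded by $\delta\,\|F_B-F_{B_\Gamma}\|_{C^0}\lesssim \delta(r+\|F_{B_\Gamma}\|_{C^0})$) and its complement (bounded by $\delta^{-2}\|F_B-F_{B_\Gamma}\|_{L^1}\lesssim \delta^{-2}(d+\|F_{B_\Gamma}\|_{L^1})$), and finally sets $\delta=d^{1/3}r^{-1/3}$. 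This near/far splitting is what produces the exact three-term bound with the $2/3$ and $1/3$ powers and with $\|F_{B_\Gamma}\|_{C^0}$, $\|F_{B_\Gamma}\|_{L^1}$ appearing linearly. Replacing your $W^{1,p}$ step by this kernel argument fixes the proof; the rest of your outline then goes through as written.
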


\begin{proof}
The proof amounts to repeating the proof of \cite[Lemma $5.1$]{LeeTaubes12}, but tracking more precisely the contribution of the base configuration to the estimates.

Let $u$ be a gauge transformation such that
$\widetilde{b} = B - B_\Gamma - u^{-1}du$
satisfies the property that $\widetilde{b}^{\text{exact}}=0$
and $\|\widetilde{b}^{\text{harm}}\|_{C^0} \lesssim 1$. Let $G(x,y)$ denote the Green's form for the operator $d + d^*$ on the space of one-forms. We can choose $G(x,y)$ so that
$|G(x,y)| \lesssim \text{dist}(x, y)^{-2}.$ Note that
$(d + d^*)\widetilde{b}^{\text{co-exact}} = F_B - F_{B_\Gamma}.$

Therefore, at any point $x \in M_\phi$, we have
\begin{align*}
|\widetilde{b}^{\text{co-exact}}(x)| &=\Big |\int_{M_\phi} G(x,y) \wedge (F_B - F_{B_\Gamma})(y)\Big| \lesssim \int_{M_\phi} \text{dist}(x,y)^{-2}|F_B - F_{B_\Gamma}|(y).
\end{align*}

By the second and fourth points of Proposition \ref{prop:3dEstimates1}, we observe that, as long as $r$ is sufficiently large,
$\|F_B\|_{L^1} \lesssim 1 + r\int_{M_\phi} (1 - |\alpha|^2) \lesssim d + 1\lesssim d.$
Fix some positive $\delta \ll 1$. Then we have
$$
\int_{M_\phi} \text{dist}(x,y)^{-2}|F_B - F_{B_\Gamma}|(y) 
\lesssim \|F_B - F_{B_\Gamma}\|_{C^0} \int_{B_\delta(x)} \text{dist}(x, y)^{-2} + \delta^{-2} \int_{M_\phi} |F_B - F_{B_\Gamma}|.
$$

By the second point of Proposition \ref{prop:3dEstimates1}, the first term in the above is $\lesssim \delta (r + \|F_{B_\Gamma}\|_{C^0})$. By our bound for the $L^1$ norm of $F_B$ established above, the second term is $\lesssim \delta^{-2}(d + \|F_{B_\Gamma}\|_{L^1})$. It follows that
$$\|\widetilde{b}^{\text{co-exact}}\|_{C^0} \lesssim \delta(r + \|F_{B_\Gamma}\|_{C^0}) + \delta^{-2}(d + \|F_{B_\Gamma}\|_{L^1}).$$

Selecting $\delta = d^{1/3}r^{-1/3}$ proves the first item. The second item follows from the first item:
\begin{align*} |\fcs(B - u^{-1}du,B_\Gamma)| &\lesssim \|\widetilde{b}\|_{C^0}(\|F_B\|_{L^1} + \|F_{B_\Gamma}\|_{L^1} + 1) \\
&\lesssim (d + \|F_{B_\Gamma}\|_{L^1})(d^{1/3}r^{2/3} + d^{1/3}r^{-1/3}\|F_{B_\Gamma}\|_{C^0} + d^{-2/3}r^{2/3}\|F_{B_\Gamma}\|_{L^1}).
\end{align*}

For the third item, observe that
$$|E_{\phi}(B - u^{-1}du,B_\Gamma)| = \Big|\int_{M_\phi} \widetilde{b} \wedge \omega_{\phi}\Big| = \Big|\int_{M_\phi} \widetilde{b}^{\text{harm}} \wedge \omega_{\phi}\Big| \lesssim 1.$$

The fourth item is an immediate consequence of the first three items, along with the fact that
$$
\fe_\mu(B - u^{-1}du, B_\Gamma) \leq \|\mu\|_{C^0}\|F_B - F_{B_\Gamma}\|_{L^1} \lesssim d + \|F_{B_\Gamma}\|_{L^1}.
\phantom\qedhere\makeatletter\displaymath@qed
$$
\end{proof}

\subsubsection{The continuation instanton equations} \label{subsubsec:continuationEstimates} We write down here several necessary estimates for solutions of the $\cS_s$-Seiberg-Witten instanton equations (\ref{eq:swContinuation}). Fix SW parameter sets $\cS_\pm = (\phi, J, \Gamma, r_\pm, \fg_\pm)$ and a SW continuation parameter set $\cS_s = (0, J, r_s, \fg_s)$ from $\cS_-$ to $\cS_+$. We will assume that $r_s \geq 1$ and $\|\fg_s\|_{\cP} \leq 1$ for every $s$. 
We also assume that there exists a universal constant $s_*>0$ such that $(r_s,\fg_s)$ is independent of $s$ when $s<-s_*$ and is independent of $s$ when $s>s_*$. 
Write
$$r_{\text{max}} = \sup_{s \in \mathbb{R}} r_s\quad\text{and} \quad\Lambda_\fg = \sup_{s \in \mathbb{R}} \|\frac{\partial}{\partial s} \fg_s\|_{\cP}.$$

We will assume $\Gamma$ is negative monotone with monotonicity constant $\rho < 0$. Fix a solution $\fd = (B_s, \Psi_s = (\alpha_s, \beta_s))$ of the $\cS_s$-Seiberg-Witten instanton equations and let $\fc_\pm = (B_\pm, \Psi_\pm)$ denote its limits as $s \to \pm\infty$. Fix a base configuration $\fc_\Gamma = (B_\Gamma, \Psi_\Gamma)$. Our main objective is to bound the difference of the values of the Chern--Simons--Dirac functional on $\fc_+$ and $\fc_-$. We begin with an explicit computation of this difference in Lemma \ref{lem:csdContinuationGradient1} below. 

\begin{lem}
\label{lem:csdContinuationGradient1}
Let $\cS_\pm = (\phi, J, \Gamma, r_\pm, \fg_\pm)$, $\cS_s = (0, J, r_s, \fg_s)$, $\fd$, $\fc_\pm = (B_\pm, \Psi_\pm)$, and $\fc_\Gamma = (B_\Gamma, \fc_\Gamma)$ be as fixed at the start of \S\ref{subsubsec:continuationEstimates}. Then for any $s_- < s_+$ and any $r$ we have the identity
\begin{equation*}
\begin{split}
&\fa_{r, \fg_{s_-}}(\fd_{s_-}, \fc_\Gamma) - \fa_{r, \fg_{s_+}}(\fd_{s_+}, \fc_\Gamma) \\
&\qquad = \int_{[s_-, s_+] \times M_\phi} \big(|\frac{\partial B_s}{\partial s}|^2 + 2|\frac{\partial \Psi_s}{\partial s}|^2 \big)  + \int_{[s_-, s_+] \times M_\phi} \langle \frac{\partial}{\partial s}B_s, i(r_s - r)dt \rangle \\
& \qquad\qquad - \int_{s_-}^{s_+} \big((\frac{\partial}{\partial s}\fg_s)(B_s, \Psi_s) - (\frac{\partial}{\partial s}\fg_s)(B_\Gamma, \Psi_\Gamma)\big) ds.
\end{split}
\end{equation*}
\end{lem}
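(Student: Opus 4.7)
The plan is to treat this as the standard energy identity for a continuation trajectory in Seiberg--Witten--Floer theory, and to handle the extra terms carefully since the parameters $r$, $\fg$ are both allowed to depend on $s$ while the functional $\fa_{r,\fg_s}$ on the left-hand side is evaluated with the \emph{fixed} parameter $r$.

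First, I would differentiate the map $s \mapsto \fa_{r,\fg_s}(\fd_s, \fc_\Gamma)$ in $s$ using the chain rule. This produces two contributions. The first is the ``configuration variation'' obtained by pairing the formal $L^2$-gradient of $\fa_{r,\fg_s}$ at $\fd_s=(B_s,\Psi_s)$ with the tangent vector $(\partial_s B_s, \partial_s \Psi_s)$. By the definition of $\fa_{r,\fg_s}$, that formal gradient (in the $(B,\Psi)$-directions) is precisely the left-hand side minus the right-hand side of the three-dimensional equations \eqref{eq:sw} \emph{with parameter $r$ and perturbation $\fg_s$}. The second contribution is the purely parametric derivative $(\partial_s \fg_s)(B_s,\Psi_s) - (\partial_s \fg_s)(B_\Gamma,\Psi_\Gamma)$, which comes from differentiating $\fg_s - \fg_s(\fc_\Gamma)$ in $s$ with $\fd_s$ held fixed.

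Next, I would use the continuation instanton equation \eqref{eq:swContinuation} to eliminate the three-dimensional SW operator in the configuration variation. Equation \eqref{eq:swContinuation} expresses $\partial_s B_s$ and $\partial_s \Psi_s$ as \emph{minus} the formal gradient of $\fa_{r_s,\fg_s}$ at $\fd_s$. The only discrepancy between the gradient of $\fa_{r,\fg_s}$ and the gradient of $\fa_{r_s,\fg_s}$ lives in the $B$-component and equals $i(r_s-r)\,dt$, coming from the $-\tfrac{r}{2}E_\phi$ term in the Chern--Simons--Dirac functional. Substituting, the configuration variation becomes
$\int_{M_\phi}\bigl(|\partial_s B_s|^2 + 2|\partial_s \Psi_s|^2\bigr) \;+\; \int_{M_\phi}\langle \partial_s B_s, i(r_s-r)dt\rangle$,
the factor $2$ in front of $|\partial_s\Psi_s|^2$ arising from the fact that the Dirac term $\int\langle D_B\Psi,\Psi\rangle$ is quadratic in $\Psi$ so its gradient contributes a factor of $2$ when paired against $\partial_s\Psi_s$.

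Finally, I would integrate the resulting identity
\[
-\frac{d}{ds}\fa_{r,\fg_s}(\fd_s,\fc_\Gamma) \;=\; \int_{M_\phi}\bigl(|\partial_s B_s|^2 + 2|\partial_s\Psi_s|^2\bigr) + \int_{M_\phi}\langle \partial_s B_s,\, i(r_s-r)dt\rangle - \bigl((\partial_s\fg_s)(B_s,\Psi_s)-(\partial_s\fg_s)(B_\Gamma,\Psi_\Gamma)\bigr)
\]
from $s_-$ to $s_+$, which yields the stated formula.

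The main obstacle is purely bookkeeping: correctly identifying the gradient conventions so that the factors of $\tfrac12$ in $\fa_{r,\fg}$ (in front of $\fcs$ and $E_\phi$) line up with the equations \eqref{eq:sw}, and so that the Dirac term yields the clean factor of $2$ in front of $|\partial_s\Psi_s|^2$. No new analytic input is needed beyond the defining equations; this is a formal computation, and one only needs the regularity assumed on $\fd$ to justify differentiation under the integral.
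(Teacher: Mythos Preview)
Your proposal is correct and is essentially the same computation the paper carries out; the only difference is organizational. The paper splits $\fa_{r,\fg_s}$ into its four constituents ($\fcs$, $E_\phi$, the Dirac term, and $\fg_s$), computes the $s_-$-to-$s_+$ difference of each piece separately via Stokes' theorem and substitution of the instanton equations \eqref{eq:swContinuation}, and then adds; you instead package this as a single chain-rule computation using the gradient-flow interpretation, which is slightly more streamlined but uses identical ingredients.
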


\begin{proof}
First, we compute the difference in the Chern--Simons functionals:
\begin{equation} \label{eq:csdContinuationGradient1}
\begin{split}
\fcs(B_{s_-}, B_\Gamma) - \fcs(B_{s_+}, B_\Gamma) = 2\int_{[s_-, s_+] \times M_\phi} |\frac{\partial}{\partial s} B_s|^2 - \langle \frac{\partial}{\partial s} B_s,  \cl^\dagger(\Psi_s) - ir_sdt + \fC_{\fg_s}(B_s, \Psi_s)\rangle.
\end{split}
\end{equation}

This uses Stokes' theorem and the first equation in \eqref{eq:swContinuation}. 
Second, we compute the difference in the energy functionals using Stokes' theorem and the fact that $2 dt$ and $\omega_\phi$ are Hodge-dual:
\begin{equation} \label{eq:csdContinuationGradient2}
rE_\phi(B_{s_+}, B_\Gamma) - r E_{\phi}(B_{s_-}, B_\Gamma) = -2\int_{[s_-, s_+] \times M_\phi} \langle \frac{\partial}{\partial s} B_s, i r dt \rangle.
\end{equation}

Third, we compute the difference in the Dirac terms:
\begin{equation} \label{eq:csdContinuationGradient3}
\begin{split}
&\int_{M_\phi} \langle D_{B_{s_-}}\Psi_{s_-}, \Psi_{s_-} \rangle - \langle D_{B_{s_+}}\Psi_{s_+}, \Psi_{s_+} \rangle \\
&\qquad = \int_{s_-}^{s_+} (\int_{M_\phi} 2|D_{B_s}\Psi_s|^2 + \langle \frac{\partial}{\partial s} B_s, \cl^\dagger(\Psi_s) \rangle) ds - 2\int_{[s_-, s_+] \times M_\phi} \text{Re} \langle D_{B_s}\Psi_s, \fS_{\fg_s}(B_s, \Psi_s) \rangle.
\end{split}
\end{equation}

This follows by using Stokes' theorem, the fact that the Dirac operator is self-adjoint, and the second equation in \eqref{eq:swContinuation} to expand all terms of the form $\frac{\partial}{\partial s}\Psi_s$. Fourth, we compute the differences of the abstract perturbation terms:
\begin{equation} \label{eq:csdContinuationGradient4}
\begin{split}
&\fg_{s_-}(B_{s_-}, \Psi_{s_-}) - \fg_{s_+}(B_{s_+}, \Psi_{s_+}) \\
&\qquad = \int_{s_-}^{s_+} (\int_{M_\phi} \langle \frac{d}{ds} B_s, \fC_{\fg_s}(B_s, \Psi_s) \rangle - \text{Re}\langle D_{B_s}\Psi_s, \fS_{\fg_s}(B_s, \Psi_s) \rangle + |\fS_\fg(B_s, \Psi_s)|^2) ds \\
&\qquad\qquad - \int_{s_-}^{s_+} (\frac{\partial}{\partial s} \fg_s)(B_s, \Psi_s) ds, \\
&\fg_{s_-}(B_\Gamma, \Psi_\Gamma) - \fg_{s_+}(B_\Gamma, \Psi_\Gamma) = -\int_{s_-}^{s_+} (\frac{\partial}{\partial s}\fg_s)(B_\Gamma, \Psi_\Gamma).
\end{split}
\end{equation}

Combining (\ref{eq:csdContinuationGradient1}--\ref{eq:csdContinuationGradient4}) proves the lemma. 
\end{proof}

The following corollary is an immediate consequence of Lemma \ref{lem:csdContinuationGradient1}.
\begin{cor}
	\label{cor_derivative_of_a}
	Let $\cS_\pm = (\phi, J, \Gamma, r_\pm, \fg_\pm)$, $\cS_s = (0, J, r_s, \fg_s)$, $\fd$, $\fc_\pm = (B_\pm, \Psi_\pm)$, and $\fc_\Gamma = (B_\Gamma, \fc_\Gamma)$ be as above. Then for any $r$, we have
	$$
	\frac{\partial}{\partial s} \fa_{r,\fg_s}(\fd_s,\fc_\Gamma) = -\int_{M_\phi} \Big[|\frac{\partial B_s}{\partial s}|^2 + 2|\frac{\partial \Psi_s}{\partial s}|^2 - \langle \frac{\partial B_s}{\partial s}, i(r_s-r)dt\rangle\Big]  + \big((\frac{\partial}{\partial s} \fg_s) (B_s,\Psi_s) - (\frac{\partial}{\partial s} \fg_s) (B_\Gamma,\Psi_\Gamma) \big).
	$$
\end{cor}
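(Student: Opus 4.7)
The plan is to obtain the corollary as a direct infinitesimal version of Lemma \ref{lem:csdContinuationGradient1}. Writing the identity from that lemma with $s_- = s$ and $s_+ = s + h$ and rearranging the sign gives
\[
\fa_{r, \fg_{s+h}}(\fd_{s+h}, \fc_\Gamma) - \fa_{r, \fg_s}(\fd_s, \fc_\Gamma)
= -\int_{[s, s+h] \times M_\phi} \bigl(|\tfrac{\partial B_\tau}{\partial \tau}|^2 + 2|\tfrac{\partial \Psi_\tau}{\partial \tau}|^2 - \langle \tfrac{\partial B_\tau}{\partial \tau}, i(r_\tau - r)dt\rangle\bigr)
+ \int_s^{s+h} \bigl((\tfrac{\partial}{\partial \tau}\fg_\tau)(B_\tau,\Psi_\tau) - (\tfrac{\partial}{\partial \tau}\fg_\tau)(B_\Gamma,\Psi_\Gamma)\bigr) d\tau.
\]
Dividing by $h$ and letting $h \to 0$ will produce the desired formula, provided the right-hand side integrands are continuous in $\tau$, which holds automatically since $\fd = (B_\tau, \Psi_\tau)$ is a smooth family of configurations and $\fg_\tau$ is a smooth section of $\underline{\cP}$ over $\mathbb{R}$.

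First, I would note that Lemma \ref{lem:csdContinuationGradient1} is stated for any $s_- < s_+$ and arbitrary $r$, so it applies verbatim in this setup. Second, I would invoke the fundamental theorem of calculus on each of the two terms: the first is an integral of a continuous function of $\tau$ (with values in $L^1(M_\phi)$), and the second is an integral of a continuous real-valued function of $\tau$. In both cases, differentiation in the upper endpoint recovers the integrand at $\tau = s$, giving exactly the expression stated in the corollary.

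The only step requiring a tiny bit of care is justifying that the smoothness of $\fd$ as a map from $\mathbb{R}$ into the relevant configuration space, combined with the smoothness of $\tau \mapsto \fg_\tau$ as a section of the Banach space bundle $\underline{\cP}$, yields continuity in $\tau$ of the pointwise expressions $|\partial_\tau B_\tau|^2 + 2|\partial_\tau \Psi_\tau|^2 - \langle \partial_\tau B_\tau, i(r_\tau - r)dt\rangle$ and of the evaluations $(\partial_\tau \fg_\tau)(B_\tau, \Psi_\tau)$ and $(\partial_\tau \fg_\tau)(B_\Gamma, \Psi_\Gamma)$. The first is immediate from the smoothness of $\fd$ and the smoothness of $\tau \mapsto r_\tau$. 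The second follows from the tame perturbation estimates (in particular Lemma \ref{lem:tamePerturbationEstimate1}) applied to the smoothly varying perturbation $\partial_\tau \fg_\tau$. There is no real obstacle here; the corollary is essentially a bookkeeping consequence of Lemma \ref{lem:csdContinuationGradient1}, and the ``main step'' is simply recognizing that the right-hand side of the lemma is the integral from $s_-$ to $s_+$ of the expression asserted in the corollary.
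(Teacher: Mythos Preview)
Your proposal is correct and follows exactly the approach the paper takes: the paper simply states that the corollary is an immediate consequence of Lemma \ref{lem:csdContinuationGradient1}, and your argument---dividing the identity from that lemma on $[s, s+h]$ by $h$ and taking $h \to 0$---is precisely how one makes that ``immediate consequence'' explicit. The continuity justifications you include are routine and correctly identified.
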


\begin{rem} \label{rem:csdGradient}
If $r_s$ and $\fg_s$ are independent of $s$ in $\cS_s$, then $\cS_+ = \cS_- = \cS$ and $\fd$ is a solution of the standard $\cS$-Seiberg-Witten instanton equations. In this case, the second part of Lemma \ref{lem:csdContinuationGradient1} vanishes and we conclude that the Chern--Simons--Dirac functional $\fa_{r, \fg_s}(\fd_s, \fc_\Gamma)$ decreases monotonically in $s$.  
\end{rem}

Proposition \ref{prop:continuationEstimates2} below gives a bound for the change in the Chern--Simons--Dirac functional in the case where $r_+ > r_- > -2\pi\rho$. 

\begin{prop}
\label{prop:continuationEstimates2}
Let $\cS_\pm = (\phi, J, \Gamma, r_\pm, \fg_\pm)$, $\cS_s = (0, J, r_s, \fg_s)$, $r_{\text{max}}$, $\Lambda_\fg$, $\fd = (B_s, \Psi_s)$, $\fc_\pm = (B_\pm, \Psi_\pm)$, $\fc_\Gamma = (B_\Gamma, \Psi_\Gamma)$ be as fixed at the start of \S\ref{subsubsec:continuationEstimates}. Let $z_\pm = 2\pi\rho + r_\pm$. Suppose that $r_+ > r_- > -2\pi\rho$. Then there are constants $\kappa_{\ref{prop:continuationEstimates2}} \geq 1$ and $r_{\ref{prop:continuationEstimates2}} \geq 1$ that depend only on $d$, $\fc_\Gamma$, and geometric constants, such that the following holds. Assume that
$$\sup_{s \in \bR} |r_s - r_-| \leq \kappa_{\ref{prop:continuationEstimates2}}^{-1},\quad\Lambda_\fg \leq \kappa_{\ref{prop:continuationEstimates2}}^{-1},\quad r_\pm \geq r_{\ref{prop:continuationEstimates2}}.$$

Then 
\begin{equation*}
\frac{z_+}{z_-}\fa_{r_+, \fg_+}(\fc_+, \fc_\Gamma) - \fa_{r_-, \fg_-}(\fc_-, \fc_\Gamma) \leq \kappa_{\ref{prop:continuationEstimates2}}\,r_{\text{max}}^2\,\Big((z_-^{-1} + z_+^{-1} + 1)|r_+ - r_-| + \Lambda_\fg\Big).
\end{equation*}
\end{prop}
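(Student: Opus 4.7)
The plan is to integrate the differential identity of Lemma \ref{lem:csdContinuationGradient1} along the continuation instanton $\fd$ from $s_-=-\infty$ to $s_+=+\infty$ with the auxiliary parameter fixed at $r=r_-$, and then produce the weight $z_+/z_-$ on the left-hand side through a gauge-invariant decomposition of the Chern--Simons--Dirac functional. Letting $s_\pm \to \pm\infty$ in Lemma \ref{lem:csdContinuationGradient1} gives
\begin{equation*}
\fa_{r_-,\fg_+}(\fc_+,\fc_\Gamma) - \fa_{r_-,\fg_-}(\fc_-,\fc_\Gamma) = -\int_{\bR\times M_\phi}\bigl(|\partial_s B_s|^2 + 2|\partial_s \Psi_s|^2\bigr) - \int_{\bR\times M_\phi}\langle\partial_s B_s, i(r_s-r_-)dt\rangle + \int_\bR\bigl[(\partial_s\fg_s)(\fc_s) - (\partial_s\fg_s)(\fc_\Gamma)\bigr]ds.
\end{equation*}
Cauchy--Schwarz followed by Young's inequality bounds the middle cross term by $\tfrac{1}{2}\|\partial_s B_s\|_{L^2}^2 + O(|r_+-r_-|^2)$, allowing the first piece to be absorbed into the nonnegative leading term. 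Lemma \ref{lem:tamePerturbationEstimate2}, together with uniform-in-$s$ a priori bounds $\|F_{B_s}-F_{B_\Gamma}\|_{L^2}^2 + \|\Psi_s\|_{L^2}^2 \lesssim r_{\text{max}}^2$ along the instanton (established by running the Weitzenb\"ock argument behind Proposition \ref{prop:3dEstimates2} slice-by-slice and using the asymptotic solutions as anchors), bounds the perturbation term by $\kappa \Lambda_\fg r_{\text{max}}^2$. The explicit $r$-linearity $\fa_{r_-,\fg_+}(\fc_+,\fc_\Gamma) = \fa_{r_+,\fg_+}(\fc_+,\fc_\Gamma) + \tfrac{r_+-r_-}{2}E_\phi(B_+,B_\Gamma)$ then yields
\begin{equation*}
\fa_{r_+,\fg_+}(\fc_+,\fc_\Gamma) - \fa_{r_-,\fg_-}(\fc_-,\fc_\Gamma) \leq \kappa|r_+-r_-|^2 + \kappa \Lambda_\fg r_{\text{max}}^2 - \tfrac{r_+-r_-}{2}E_\phi(B_+,B_\Gamma).
\end{equation*}

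To produce the weight $z_+/z_-$, write $\tfrac{z_+}{z_-}\fa_{r_+,\fg_+}(\fc_+,\fc_\Gamma) - \fa_{r_-,\fg_-}(\fc_-,\fc_\Gamma) = \bigl(\fa_{r_+,\fg_+}(\fc_+,\fc_\Gamma) - \fa_{r_-,\fg_-}(\fc_-,\fc_\Gamma)\bigr) + \tfrac{r_+-r_-}{z_-}\fa_{r_+,\fg_+}(\fc_+,\fc_\Gamma),$ and invoke the decomposition $\fa_{r,\fg}(\fc,\fc_\Gamma) = \cA(\fc,\fc_\Gamma) - \tfrac{z}{2}E_\phi(B,B_\Gamma)$, where
\begin{equation*}
\cA(\fc,\fc_\Gamma) := \tfrac{1}{2}\bigl(\fcs(B,B_\Gamma) + 2\pi\rho E_\phi(B,B_\Gamma)\bigr) + \int_{M_\phi}\langle D_B\Psi,\Psi\rangle + \fg(B,\Psi) - \fg(B_\Gamma,\Psi_\Gamma).
\end{equation*}
The key structural point is that $\cA$ is \emph{fully} gauge invariant: the monotonicity identity $2\text{PD}(\Gamma)+c_1(V) = -\rho[\omega_\phi]$ precisely cancels the gauge anomalies of $\fcs$ and $2\pi\rho E_\phi$. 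The two contributions of $E_\phi(B_+,B_\Gamma)$ combine with coefficient $-\tfrac{(r_+-r_-)(z_++z_-)}{2z_-}$, leaving only the inhomogeneous term $\tfrac{r_+-r_-}{z_-}\cA(\fc_+,\fc_\Gamma)$. Proposition \ref{prop:3dActionEstimates} then supplies a gauge representative of $\fc_+$ with $|E_\phi(B_+,B_\Gamma)| \lesssim 1$, and combining items (2), (3), (4) of that proposition with Lemma \ref{lem:tamePerturbationEstimate2} (to handle the non-$\fe_\mu$ part of $\fg_+$) and the gauge invariance of $\cA$ produces $|\cA(\fc_+,\fc_\Gamma)| \leq \kappa r_{\text{max}}^2$ with $\kappa$ depending on $d$ and $\fc_\Gamma$. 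Assembling these pieces gives the terms $\kappa r_{\text{max}}^2 z_-^{-1}|r_+-r_-|$ and $\kappa r_{\text{max}} z_-^{-1}|r_+-r_-|$ needed on the right-hand side.

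The main obstacle is that $\fa$ itself is only invariant under null-homotopic gauge transformations: changing the gauge representative of the instanton $\fd$ by a class $\eta\in H^1(M_\phi;\bZ)$ shifts the left-hand side by $\tfrac{(r_+-r_-)(z_++z_-)}{2z_-}\langle\eta\cup[\omega_\phi],[M_\phi]\rangle$, which is $O(r_{\text{max}} z_-^{-1}|r_+-r_-|)$ per unit of $\eta$ and is therefore comfortably absorbed into the target bound. The weight $z_+/z_-$ is chosen precisely so that this gauge ambiguity is compatible with the claimed estimate, making the statement gauge-robust and sharp enough for its use in establishing continuity of the Seiberg--Witten spectral invariants in Lemma \ref{lem:maxMin3}.
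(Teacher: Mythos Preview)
There are two genuine gaps in the argument.

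First, the Young inequality step on the cross term fails. Since $r_s = r_+$ for all $s > s_*$, the quantity $\int_{\bR\times M_\phi}(r_s - r_-)^2|dt|^2$ diverges, so the cross term $\int_{\bR\times M_\phi}\langle\partial_s B_s, i(r_s-r_-)dt\rangle$ cannot be absorbed into the negative leading term by Young's inequality when you integrate over all of $\bR$. The $O(|r_+-r_-|^2)$ remainder you claim is in fact infinite.

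Second, your asserted uniform-in-$s$ slice bounds $\|F_{B_s} - F_{B_\Gamma}\|_{L^2}^2 + \|\Psi_s\|_{L^2}^2 \lesssim r_{\text{max}}^2$ are not justified: a time slice $\fd_s$ of an instanton is not a solution of the three-dimensional equations, so Proposition~\ref{prop:3dEstimates2} does not apply ``slice-by-slice''. In the paper such slice bounds emerge only \emph{conditionally}, under the hypothesis $\cL'(s)<0$, inside the proof of Lemma~\ref{lem:continuationEstimates3}.

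A structural remark explains why your gauge discussion in the last paragraph cannot be made to work: the coefficient in the statement should be $z_-/z_+$ rather than $z_+/z_-$ (this is what the paper's proof and the application in Lemma~\ref{lem:maxMin3} actually use). The gauge anomaly of $\fa_{r,\fg}$ is proportional to $z=r+2\pi\rho$, so $\tfrac{z_-}{z_+}\fa_{r_+,\fg} - \fa_{r_-,\fg}$ is \emph{fully} gauge-invariant, whereas $\tfrac{z_+}{z_-}\fa_{r_+,\fg} - \fa_{r_-,\fg}$ shifts by a multiple of the unbounded class $\eta$ and hence cannot be bounded as stated. Your ``comfortably absorbed'' claim is therefore incorrect for the weight you use.

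The paper organizes the proof differently. It splits the difference at $s=\pm s_*$ into four pieces: the two cylindrical-end pieces are nonpositive by monotonicity of $\fa_{r_\pm,\fg_\pm}$ along $\cS_\pm$-instantons; the finite-interval piece is bounded directly by Lemma~\ref{lem:continuationEstimates4} (this is where the $\Lambda_\fg r_{\text{max}}^2$ term comes from, and where your absorption idea does work, over the compact interval); and the remaining ``jump'' $\tfrac{z_-}{z_+}\fa_{r_+,\fg_+}(\fd_{s_*}) - \fa_{r_-,\fg_+}(\fd_{s_*}) = z_+^{-1}\cL(s_*)$ is controlled via the gauge-invariant function $\cL(s) = z_-\fa_{r_+,\fg_s}(\fd_s) - z_+\fa_{r_-,\fg_s}(\fd_s)$ and the dichotomy of Lemma~\ref{lem:continuationEstimates3}: for every $s$ either $\cL'(s)\ge 0$ or $|\cL(s)|\lesssim (r_s^2+d^2)|r_+-r_-|$. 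It is precisely the Weitzenb\"ock computation in the proof of that second alternative that yields the slice bounds you wanted---but only under the hypothesis $\cL'(s)<0$. The dichotomy then bounds $\cL(s_*)$ by the maximum of this pointwise bound and the gauge-invariant endpoint value at $\fc_+$, which is handled by Proposition~\ref{prop:3dActionEstimates}.
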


Proposition \ref{prop:continuationEstimates2} will follow from Lemmas \ref{lem:continuationEstimates3} and \ref{lem:continuationEstimates4} below. Lemma \ref{lem:continuationEstimates3} analyzes the gauge-invariant function
$\cL(s) = z_-\fa_{r_+, \fg_s}(\fd_s, \fc_\Gamma) - z_+\fa_{r_-, \fg_s}(\fd_s, \fc_\Gamma).$
This is similar to \cite[Lemma $31.2.1$]{monopolesBook}, although we need more precise estimates in our situation.

\begin{lem}
\label{lem:continuationEstimates3} Let $\cS_\pm = (\phi, J, \Gamma, r_\pm, \fg_\pm)$, $\cS_s = (0, J, r_s, \fg_s)$,  $\Lambda_\fg$, $z_\pm$, $\fd_s = (B_s, \Psi_s)$, $\fc_\pm = (B_\pm, \Psi_\pm)$, $\fc_\Gamma = (B_\Gamma, \Psi_\Gamma)$ be as above. Suppose that $r_+ > r_- > -2\pi\rho$. There is a geometric constant $\kappa_{\ref{lem:continuationEstimates3}} \geq 1$ such that the following holds. Assume that
$$\sup_{s \in \bR} |r_s - r_-| \leq \kappa_{\ref{lem:continuationEstimates3}}^{-1},\quad\Lambda_\fg \leq \kappa_{\ref{lem:continuationEstimates3}}^{-1}.$$

Then the smooth, real-valued function
$$\cL(s) = z_-\fa_{r_+, \fg_s}(\fd_s, \fc_\Gamma) - z_+\fa_{r_-, \fg_s}(\fd_s, \fc_\Gamma)$$
satisfies that for every $s\in \mathbb{R}$, either $\cL'(s) \geq 0$ or
$|\cL(s)| \leq \kappa_{\ref{lem:continuationEstimates3}}(r_{s}^2+d^2)|r_+ - r_-|$.
\end{lem}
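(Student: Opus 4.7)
My first step will be to observe that the quantity $\cL(s)$ admits a clean rewriting as a Chern--Simons--Dirac functional at the special parameter $r = -2\pi\rho$. Expanding $\fa_{r, \fg}(\fc, \fc_\Gamma) = \frac{1}{2}(\fcs - rE_{\phi}) + \int\langle D_B\Psi, \Psi\rangle + \fg(\fc) - \fg(\fc_\Gamma)$ and using the identity $z_+ r_- - z_- r_+ = -2\pi\rho(r_+ - r_-)$, a direct calculation will show
$$\cL(s) = -(r_+ - r_-)\,\fa_{-2\pi\rho, \fg_s}(\fd_s, \fc_\Gamma).$$
Hence the target bound $|\cL(s)| \leq \kappa(r_s^2 + d^2)|r_+ - r_-|$ reduces to establishing $|\fa_{-2\pi\rho, \fg_s}(\fd_s, \fc_\Gamma)| \lesssim r_s^2 + d^2$ in the regime where $\cL'(s) < 0$.

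Next, I will apply Corollary \ref{cor_derivative_of_a} to each of $\fa_{r_+, \fg_s}$ and $\fa_{r_-, \fg_s}$ and take the appropriate linear combination. Repeating the same algebraic manipulation as before, the cross-term coefficient once again collapses to $z_s := 2\pi\rho + r_s$, producing
$$\cL'(s) = (r_+ - r_-)\Bigl[\int_{M_\phi}\bigl(|\partial_s B_s|^2 + 2|\partial_s \Psi_s|^2\bigr) - z_s\int_{M_\phi}\langle \partial_s B_s, i\,dt\rangle - \Delta_s\Bigr],$$
where $\Delta_s := (\partial_s \fg_s)(B_s, \Psi_s) - (\partial_s \fg_s)(B_\Gamma, \Psi_\Gamma)$. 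Since $r_+ > r_-$, the sign of $\cL'(s)$ matches that of the bracketed term. If the bracket is nonnegative, the first alternative holds. Otherwise, Cauchy--Schwarz on the cross term (using $\|dt\|_g \equiv 1$) together with Lemma~\ref{lem:tamePerturbationEstimate2} to control $|\Delta_s|$ will yield
$$\|\partial_s B_s\|_{L^2}^2 + \|\partial_s \Psi_s\|_{L^2}^2 \lesssim z_s^2 + \Lambda_\fg\bigl(\|F_{B_s} - F_{B_\Gamma}\|_{L^2}^2 + \|\Psi_s\|_{L^2}^2 + 1\bigr).$$

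To upgrade this time-derivative estimate into bounds on the configuration $\fd_s$ itself, I will rearrange the instanton equations (\ref{eq:swContinuation}) so that $\star F_{B_s}$ and $D_{B_s}\Psi_s$ are expressed in terms of $\Psi_s$, $r_s$, $\partial_s B_s$, $\partial_s \Psi_s$, and bounded perturbation terms, and then adapt the Weitzenb\"ock argument behind Proposition~\ref{prop:3dEstimates2} to the 4d setting. The adapted argument should give a bound of the shape
$$\|F_{B_s}\|_{L^2}^2 + \|\Psi_s\|_{L^4}^4 + \|\nabla_{B_s}\Psi_s\|_{L^2}^2 \lesssim r_s^2 + \|\partial_s B_s\|_{L^2}^2 + \|\partial_s \Psi_s\|_{L^2}^2 + \Lambda_\fg\|\Psi_s\|_{L^2}^2 + 1.$$
Substituting the bound from the previous paragraph, invoking the monotonicity estimate $|\rho| \lesssim d$ (so that $z_s^2 \lesssim r_s^2 + d^2$), and shrinking $\Lambda_\fg$ enough to absorb the perturbation-dependent configuration norms on the right will yield $\|F_{B_s}\|_{L^2}^2 + \|\Psi_s\|_{L^4}^4 + \|\nabla_{B_s}\Psi_s\|_{L^2}^2 \lesssim r_s^2 + d^2$.

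Finally, I will gauge fix $\fd_s$ so that $B_s - B_\Gamma$ has vanishing exact part and harmonic part bounded in $C^0$; standard Hodge theory then yields $\|B_s - B_\Gamma\|_{L^2}^2 \lesssim \|F_{B_s} - F_{B_\Gamma}\|_{L^2}^2 + 1 \lesssim r_s^2 + d^2$. Bounding each piece of $\fa_{-2\pi\rho, \fg_s}(\fd_s, \fc_\Gamma)$ by Cauchy--Schwarz (for the Chern--Simons and Dirac terms) and by Lemma~\ref{lem:tamePerturbationEstimate2} (for the perturbation term) will give $|\fa_{-2\pi\rho, \fg_s}(\fd_s, \fc_\Gamma)| \lesssim r_s^2 + d^2$, which combined with the identity from the first paragraph closes the argument. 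The main technical obstacle will be the 4d configuration bound: while the 3d Taubes estimates of Proposition~\ref{prop:3dEstimates2} are formally analogous, one must carefully track how the $\partial_s$ terms of the continuation equation enter the Weitzenb\"ock formula and verify that the bad-case dichotomy combined with the smallness of $\Lambda_\fg$ suffices to absorb them.
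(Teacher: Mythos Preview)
Your proposal is correct and follows essentially the same route as the paper: both compute $\cL'(s)$ via Corollary~\ref{cor_derivative_of_a}, use the assumption $\cL'(s)<0$ together with Young's inequality and Lemma~\ref{lem:tamePerturbationEstimate2} to bound $\|\partial_s B_s\|_{L^2}^2+\|\partial_s\Psi_s\|_{L^2}^2$, run the Weitzenb\"ock argument on the instanton equations to upgrade this to control of $\|F_{B_s}\|_{L^2}$, $\|\Psi_s\|_{L^4}$, $\|\nabla_{B_s}\Psi_s\|_{L^2}$, and then gauge-fix to bound $|\cL(s)|$. Your opening identity $\cL(s)=-(r_+-r_-)\,\fa_{-2\pi\rho,\fg_s}(\fd_s,\fc_\Gamma)$ is a clean repackaging that the paper does not write down explicitly, but it is exactly what underlies the paper's final step (it is why $\cL$ is gauge-invariant and why the eventual bound carries the factor $|r_+-r_-|$); your derivative formula and the paper's \eqref{eqn:continuationEstimates3_proof0} are the same computation viewed through this identity.
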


\begin{proof}
By Corollary \ref{cor_derivative_of_a}, we have
\begin{align}
\frac{\partial}{\partial s}(\mathcal{L}(s)) &= \int_{M_\phi} \big((r_+ - r_-)|\frac{\partial}{\partial s} B_s|^2 - (2\pi\rho+r_s) (r_+ - r_-)\langle \frac{\partial}{\partial s}B_s, idt \rangle + 2(r_+ - r_-)|\frac{\partial}{\partial s}\Psi_s|^2\big) 
\label{eqn:continuationEstimates3_proof0}
\\ 
&\qquad - (r_+ - r_-) \big((\frac{\partial}{\partial s} \fg_s)(B_s, \Psi_s) - (\frac{\partial}{\partial s}\fg_s)(B_\Gamma, \Psi_\Gamma)\big).
\nonumber
\end{align}

If $\frac{\partial}{\partial s}\mathcal{L}(s) < 0$, then setting the above to be less than $0$ and dividing by $r_+ - r_-$ shows
\begin{equation*}
\int_{M_\phi} \big(|\frac{\partial}{\partial s}B_s|^2 +2|\frac{\partial}{\partial s} \Psi_s|^2\big) < \int_{M_\phi} (2\pi\rho+r_s)\, \langle \frac{\partial}{\partial s}B_s, i dt \rangle  +  \big((\frac{\partial}{\partial s} \fg_s)(B_s, \Psi_s) - (\frac{\partial}{\partial s}\fg_s)(B_\Gamma, \Psi_\Gamma)\big)\big).
\end{equation*} 

By Young's inequality, the fact that $|dt| \equiv 1$, and Lemma \ref{lem:tamePerturbationEstimate2} we have
\begin{equation}
	\label{eqn:continuationEstimates3_proof1}
\int_{M_\phi} \big(\frac{1}{2}|\frac{\partial}{\partial s}B_s|^2 + |\frac{\partial}{\partial s} \Psi_s|^2\big)  \lesssim r_s^2 + \Lambda_\fg(\|F_{B_s} - F_{B_\Gamma}\|_{L^2}^2 + \|\Psi_s\|_{L^2}^2  + 1).
\end{equation}

We now find lower bounds for the left-hand side of the above inequality using equation \eqref{eq:swContinuation}:
\begin{align}
	\int_{M_\phi} |\frac{\partial}{\partial s} B_s|^2 = & \int_{M_\phi} \Big( |F_{B_s}|^2 + |\cl^\dagger(\Psi_s) - i r_s dt|^2  - 2\langle \cl(\star^{g}F_{B_s})\Psi_s, \Psi_s \rangle
	+ 2 r_s \langle i dt, \star^{g}F_{B_s} \rangle
\label{eqn:continuationEstimates3_proof2} \\
	&\;
	- 2\text{Re}\langle \star^{g}F_{B_s}, i\varpi_V + \fC_{\fg_s}(B_s, \Psi_s) \rangle 
	+ 2\text{Re} \langle \cl^\dagger(\Psi_s) - i r_s dt, i\varpi_V + \fC_{\fg_s}(B_s, \Psi_s) \rangle
	\nonumber \\
	&\;
	+ |i\varpi_V - \fC_{\fg_s}(B_s, \Psi_s)|^2 \Big) 
	\nonumber \\
	\geq & \Big[\int_{M_\phi} \frac{1}{2} |F_{B_s}|^2 + \frac{1}{2}|\cl^\dagger(\Psi_s) - i r_s dt|^2 
	-2\langle \cl(\star^{g}F_{B_s})\Psi_s, \Psi_s \rangle 
	 - 3|i\varpi_V + \fC_{\fg_s}(B_s, \Psi_s)|^2\Big] - 4\pi d r_s 
	\nonumber \\
	\geq & \Big[\int_{M_\phi} \big(\frac{1}{2} |F_{B_s}|^2 + \frac{1}{2}|\cl^\dagger(\Psi_s) - i r_s dt|^2 - 2\langle \cl(\star^{g}F_{B_s})\Psi_s, \Psi_s \rangle \big)
	- \kappa \int_{M_\phi} |\Psi_s|^2 \Big]- (4\pi d r_s + \kappa) 
	\nonumber \\
	\geq & \Big[ \int_{M_\phi} \frac{1}{2} |F_{B_s}|^2 + \frac{1}{2}| |\Psi_s|^2 - \kappa r_s|^2 - 2\langle \cl(\star^{g}F_{B_s})\Psi_s, \Psi_s \rangle \Big]  - \kappa^2 (r_{s}^2+d^2).
	\nonumber
\end{align}

The first inequality follows from Young's inequality and the fact that the integral of $dt \wedge F_{B_s}$ over $M_\phi$ is equal to $-2\pi i d$ for any $s$;
 the second inequality follows from the bound on the $L^2$ norm of $|\fC_{\fg_s}(B_s, \Psi_s)|^2$ for any $s$ given by Lemma \ref{lem:tamePerturbationEstimate1}; the third follows by expanding $|\cl^\dagger(\Psi_s) - i r_s dt|^2$ and absorbing the terms proportional to the integral of $|\Psi_s|^2$. 

As for the integral of $ |\frac{\partial}{\partial s} \Psi_s|^2$, we have:

$$
	2 \int_{M_\phi}  |\frac{\partial}{\partial s} \Psi_s|^2 = 2\int_{M_\phi} \big(|D_{B_s}\Psi_s|^2 + |\fS_{\fg_s}(B_s, \Psi_s)|^2 - 2\text{Re}\langle D_{B_s}\Psi_s, \fS_{\fg_s}(B_s, \Psi_s) \rangle \big) 
$$

Using the Weitzenbock formula for the Dirac operator $D_{B_s}$, we have the identity
$$\int_{M_\phi} |D_{B_s}\Psi_s|^2 = \int_{M_\phi} \big(|\nabla_{B_s}\Psi_s|^2 + \frac{R_{g}}{4}|\Psi_s|^2 + \langle \cl(\star^{g}F_{B_s}\Psi_s, \Psi_s \rangle\big).$$

Paired with the fact that $|D_{B_s}\Psi_s| \leq 3|\nabla_{B_s}\Psi_s|$, this shows 
\begin{align*}
	& \quad 2\int_{M_\phi}  |\frac{\partial}{\partial s} \Psi_s|^2 \\
	&\geq \int_{M_\phi} \big(2|\nabla_{B_s}\Psi_s|^2 - \kappa|\Psi_s|^2 + 2\langle \cl(\star^{g}F_{B_s})\Psi_s, \Psi_s\rangle - 12|\nabla_{B_s}\Psi||\fS_{\fg_s}(B_s,\Psi_s)| + 2|\fS_{\fg_s}(B_s,\Psi_s)|^2\big) \\
	&\geq \int_{M_\phi} \big(\frac{1}{2}|\nabla_{B_s}\Psi_s|^2 - \kappa|\Psi_s|^2 + 2\langle \cl(\star^{g}F_{B_s})\Psi_s, \Psi_s\rangle  - 1000|\fS_{\fg_s}(B_s,\Psi_s)|^2\big).
\end{align*}

Hence by Lemma \ref{lem:tamePerturbationEstimate1},
\begin{equation}
	\label{eqn:continuationEstimates3_proof3}
	 \int_{M_\phi}  |\frac{\partial}{\partial s} \Psi_s|^2  \geq \int_{ M_\phi} \big(\frac{1}{2}|\nabla_{B_s}\Psi_s|^2 - \kappa |\Psi_s|^2 - \kappa + 2\langle \cl(\star^{g_*}F_{B_s})\Psi_s, \Psi_s \rangle \big).
\end{equation}
Combining \eqref{eqn:continuationEstimates3_proof2} and \eqref{eqn:continuationEstimates3_proof3}, we have
\begin{equation}
	\label{eqn:continuationEstimates3_proof4}
\int_{M_\phi} \big(|\frac{\partial}{\partial s}B_s|^2 +2 |\frac{\partial}{\partial s}\Psi_s|^2\big) \geq \frac{1}{2}\int_{M_\phi} \big(|F_{B_s}|^2 + | |\Psi_s|^2 - \kappa r_s|^2 + |\nabla_{B_s}\Psi_s|^2\big)  - \kappa^2 (r_s^2+d^2).
\end{equation}

Combining \eqref{eqn:continuationEstimates3_proof1} and \eqref{eqn:continuationEstimates3_proof4}, assuming $\Lambda_\fg$ is sufficiently small, and rearranging shows that
\begin{equation}
\label{eqn:continuationEstimates3_proof5}
\int_{M_\phi} \big(|F_{B_s}|^2 + ||\Psi_s|^2 - \kappa r_s|^2 + |\nabla_{B_s}\Psi_s|^2\big) \lesssim r_s^2+d^2.
\end{equation}

Now note that, as in the proof of Lemma \ref{lem:tamePerturbationEstimate2},
there is a gauge-transformation $u$ such that the transformed pair $(\widetilde B_s = B_s - u^{-1} du, \widetilde \Psi_s = u \cdot \Psi_s)$ satisfies the estimate
$$\|\widetilde B_s - B_\Gamma\|_{L^2}^2 \lesssim \|F_{B_s} - F_{B_\Gamma}\|_{L^2}^2 + 1.$$
Since $\mathcal{L}(s)$ is gauge-invariant, we may assume without loss of generality that $B_s=\widetilde{B}_s$. Therefore \eqref{eqn:continuationEstimates3_proof0}, \eqref{eqn:continuationEstimates3_proof5}, Lemma \ref{lem:tamePerturbationEstimate2}, and the assumption that $\cL'(s) < 0$ imply that 
$$
|\mathcal{L}(s)| 
\lesssim |r_+ - r_-|(\|F_{B_s} - F_{B_\Gamma}\|_{L^2}^2  + \|\nabla_{B_s}\Psi_s\|_{L^2}^2 + \|\Psi_s\|_{L^2}^2  + 1) 
\lesssim (r_{s}^2+d^2)|r_+ - r_-|.
\phantom\qedhere\makeatletter\displaymath@qed
$$
\end{proof}

Lemma \ref{lem:continuationEstimates4} bounds the change in the Chern--Simons--Dirac functional between $s = s_*$ and $s = -s_*$, with fixed $r = r_-$.

\begin{lem}
\label{lem:continuationEstimates4} Let $\cS_\pm = (\phi, J, \Gamma, r_\pm, \fg_\pm)$, $\cS_s = (0, J, r_s, \fg_s)$, $r_{\text{max}}$, $\Lambda_\fg$, $z_\pm$, $\fd = (B_s, \Psi_s)$, $\fc_\pm = (B_\pm, \Psi_\pm)$, $\fc_\Gamma = (B_\Gamma, \Psi_\Gamma)$ be as above. Suppose that $r_+ > r_- > -2\pi\rho$. There is a geometric constant $\kappa_{\ref{lem:continuationEstimates4}} \geq 1$ such that the following holds. Assume that
$$\sup_{s \in \bR} |r_s - r_-| \leq \kappa_{\ref{lem:continuationEstimates4}}^{-1}, \quad\Lambda_\fg \leq \kappa_{\ref{lem:continuationEstimates4}}^{-1}.$$
Then
$\fa_{r_-, \fg_+}(\fd_{s_*}, \fc_\Gamma) - \fa_{r_-, \fg_{-}}(\fd_{-s_*}, \fc_\Gamma) \leq \kappa_{\ref{lem:continuationEstimates4}} \Lambda_\fg r_{\text{max}}^2.$
\end{lem}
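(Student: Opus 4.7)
The plan is to apply Corollary~\ref{cor_derivative_of_a} with the fixed spectral parameter $r = r_-$ and integrate the resulting expression for $\partial_s \fa_{r_-, \fg_s}(\fd_s, \fc_\Gamma)$ over the interval $s \in [-s_*, s_*]$. Since $\fg_{-s_*} = \fg_-$ and $\fg_{s_*} = \fg_+$, this integral is exactly the left-hand side of the lemma. The integrand splits into three pieces: the non-positive quadratic piece $-\int_{M_\phi}(|\partial_s B_s|^2 + 2|\partial_s \Psi_s|^2)$; the cross term $\int_{M_\phi} \langle \partial_s B_s, i(r_s - r_-)dt\rangle$ arising because the CSD parameter $r_-$ differs from the instanton parameter $r_s$; and the perturbation-derivative term $(\partial_s\fg_s)(B_s, \Psi_s) - (\partial_s \fg_s)(B_\Gamma, \Psi_\Gamma)$. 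The quadratic piece is dropped for the upper bound.

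For the cross term, I would use the identity $\star^g dt = \omega_\phi/2$ to rewrite the slice integral as $\tfrac{1}{2}(r_s - r_-)\,\partial_s E_\phi(B_s, B_\Gamma)$, then integrate by parts in $s$. Since $r_s = r_-$ near $s = -s_*$, the only boundary contribution is $\tfrac{1}{2}(r_+ - r_-) E_\phi(\fc_+, B_\Gamma)$, together with a bulk term $-\tfrac{1}{2}\int_{-s_*}^{s_*} r'_s E_\phi(B_s, B_\Gamma)\,ds$. After passing to a suitable slice-wise gauge (as in Proposition~\ref{prop:3dActionEstimates}(3)), both $E_\phi$ quantities are $O(1)$, so the total contribution of the cross term is $O(|r_+ - r_-|)$; under the hypothesis $|r_+ - r_-| \leq \kappa^{-1}$ this is absorbable into the final geometric constant given that $\Lambda_\fg r_{\text{max}}^2 \gtrsim 1$ is the regime where the bound is nontrivial.

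For the perturbation-derivative term, I would apply Lemma~\ref{lem:tamePerturbationEstimate2} slice-wise to obtain
\[
\bigl|(\partial_s \fg_s)(B_s, \Psi_s) - (\partial_s \fg_s)(B_\Gamma, \Psi_\Gamma)\bigr| \;\lesssim\; \|\partial_s \fg_s\|_\cP\bigl(\|F_{B_s} - F_{B_\Gamma}\|_{L^2}^2 + \|\Psi_s\|_{L^2}^2 + 1\bigr) \;\lesssim\; \Lambda_\fg\bigl(\|F_{B_s}\|_{L^2}^2 + \|\Psi_s\|_{L^2}^2 + 1\bigr).
\]
The main obstacle is a uniform slice-wise a priori bound $\|F_{B_s}\|_{L^2}^2 + \|\Psi_s\|_{L^2}^2 \lesssim r_{\text{max}}^2$ along the instanton. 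Since $\fd$ does not solve the $3$-dimensional Seiberg--Witten equations at each slice, Proposition~\ref{prop:3dEstimates2} does not apply directly. I would obtain this bound by adapting the Weitzenbock calculation from the proof of Lemma~\ref{lem:continuationEstimates3} that leads to
\[
\int_{M_\phi} \bigl(|F_{B_s}|^2 + \bigl||\Psi_s|^2 - \kappa r_s\bigr|^2 + |\nabla_{B_s}\Psi_s|^2\bigr) \;\lesssim\; r_s^2 + d^2,
\]
using equation~\eqref{eq:swContinuation} to rewrite $|\partial_s B_s|^2$ and $|\partial_s \Psi_s|^2$, then absorbing the resulting $\fC_{\fg_s}$ and $\fS_{\fg_s}$ contributions via Lemma~\ref{lem:tamePerturbationEstimate1}; the small-$\Lambda_\fg$ hypothesis ensures that the $\partial_s \fg_s$ cross-terms generated in this step do not destabilize the absorption, unlike in Lemma~\ref{lem:continuationEstimates3} where one had to assume $\cL'(s) < 0$. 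Integrating in $s$ over the fixed compact interval $[-s_*, s_*]$ produces the total bound $\kappa \Lambda_\fg r_{\text{max}}^2$, completing the proof once combined with the cross-term estimate from the previous paragraph.
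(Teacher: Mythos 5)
Your starting point (Corollary~\ref{cor_derivative_of_a} integrated over $[-s_*, s_*]$) is the same as the paper's, and your treatment of the cross term by integrating by parts in $s$ is a reasonable alternative to the paper's one-line Young's-inequality absorption; with some care about fixing a single $s$-independent gauge and controlling $|E_\phi(B_s, B_\Gamma) - E_\phi(B_{s_*}, B_\Gamma)|$ along the trajectory (which is itself controlled by $\int |\partial_s B_s|^2$ and can be absorbed), this route does close. Both routes leave a residual term of order $|r_+ - r_-|$ or $|r_+ - r_-|^2$, as you correctly flagged.

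The genuine gap is in your treatment of the perturbation term. You propose to prove a \emph{pointwise-in-$s$} a priori bound $\|F_{B_s}\|_{L^2}^2 + \|\Psi_s\|_{L^2}^2 \lesssim r_{\text{max}}^2$ along the instanton by ``adapting the Weitzenbock calculation'' from Lemma~\ref{lem:continuationEstimates3} and claiming that small $\Lambda_\fg$ can substitute for the hypothesis $\cL'(s) < 0$. This misidentifies the role of that hypothesis. Inequality \eqref{eqn:continuationEstimates3_proof4} gives a \emph{lower} bound on $\|\partial_s B_s\|_{L^2}^2 + 2\|\partial_s\Psi_s\|_{L^2}^2$ in terms of $\|F_{B_s}\|_{L^2}^2 + \||\Psi_s|^2 - \kappa r_s\|_{L^2}^2 + \|\nabla_{B_s}\Psi_s\|_{L^2}^2 - \kappa^2(r_s^2 + d^2)$, and it holds for any instanton. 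What the hypothesis $\cL'(s) < 0$ provides in Lemma~\ref{lem:continuationEstimates3} is the complementary \emph{upper} bound \eqref{eqn:continuationEstimates3_proof1} on the quadratic terms; without an upper bound, \eqref{eqn:continuationEstimates3_proof4} alone only tells you $\|F_{B_s}\|_{L^2}^2 + \|\Psi_s\|_{L^2}^2 \lesssim \|\partial_s B_s\|_{L^2}^2 + \|\partial_s\Psi_s\|_{L^2}^2 + r_s^2$, which is not a pointwise $r_{\text{max}}^2$ bound. Small $\Lambda_\fg$ does not supply the missing upper bound.

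The argument the paper actually uses does not need a pointwise a priori bound at all. Apply Lemma~\ref{lem:tamePerturbationEstimate2} slice-wise, then substitute \eqref{eqn:continuationEstimates3_proof4} to obtain
\[
\Bigl| \int_{-s_*}^{s_*} \bigl((\partial_s\fg_s)(B_s,\Psi_s) - (\partial_s\fg_s)(B_\Gamma,\Psi_\Gamma)\bigr)\,ds \Bigr| \lesssim \Lambda_\fg\Bigl(\int_{[-s_*,s_*]\times M_\phi}\bigl(|\partial_s B_s|^2 + |\partial_s\Psi_s|^2\bigr) + r_s^2\Bigr),
\]
and then \emph{absorb} the $\Lambda_\fg \int (|\partial_s B_s|^2 + |\partial_s\Psi_s|^2)$ piece into the negative term $-\int(|\partial_s B_s|^2 + 2|\partial_s\Psi_s|^2)$ coming from the first line of the integrated Corollary~\ref{cor_derivative_of_a}, using $\Lambda_\fg \leq \kappa^{-1}$. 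This leaves only $\Lambda_\fg r_s^2$ per unit $s$-length, which integrates to $\lesssim \Lambda_\fg r_{\text{max}}^2$ over the fixed interval. The point is precisely that you are \emph{not} bounding $\int(|\partial_s B_s|^2 + |\partial_s\Psi_s|^2)$ — you are playing it off against the negative quadratic contribution. Your proposal should be revised to use this absorption rather than a pointwise a priori bound.
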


\begin{proof}

We deduce from Lemma \ref{lem:csdContinuationGradient1} the following identity:

\begin{equation} \label{eq:continuationEstimates6}
\begin{split}
& \fa_{r_-, \fg_+}(\fd_{s_*}, \fc_\Gamma) - \fa_{r_-, \fg_-}(\fd_{-s_*}, \fc_\Gamma) \\
= & -\int_{[-s_*,s_*] \times M_{\phi}} \big(|\frac{\partial}{\partial s} B_s|^2 + |\frac{\partial}{\partial s}\Psi_s|^2\big)  
 + \int_{[-s_*,s_*] \times M_{\phi}} \big(\langle \frac{\partial}{\partial s} B_s, (r_s - r_-)dt\rangle \big) 
 \\
& - \int_{-s_*}^{s_*} \big((\frac{\partial}{\partial s} \fg_s)(B_s, \Psi_s) - (\frac{\partial}{\partial s}\fg_s)(B_\Gamma, \Psi_\Gamma)) ds.
\end{split}
\end{equation}

As long as $\sup_{s \in \bR}|r_s - r_-|$ is sufficiently small, we are free to absorb the second integral into the first integral, which is negative. Lemma \ref{lem:tamePerturbationEstimate2} yields the following bound for the third integral:
$$ |\int_{-s_*}^{s_*} \big((\frac{\partial}{\partial s} \fg_s)(B_s, \Psi_s) - (\frac{\partial}{\partial s}\fg_s)(B_\Gamma, \Psi_\Gamma)\big)ds| \lesssim \Lambda_\fg\big(\int_{[-s_*,s_*] \times M_{\phi}} \big(|\frac{\partial}{\partial s} B_s|^2 + |\frac{\partial}{\partial s}\Psi_s|^2\big) + r_s^2\big).$$

As long as $\Lambda_\fg$ is small, the first two terms on the right-hand side can be absorbed into the first line of \eqref{eq:continuationEstimates6}. Since $s_*$ is assumed to be some universal constant, the desired inequality is proved.
\end{proof}

We now prove Proposition \ref{prop:continuationEstimates2} using Lemmas \ref{lem:continuationEstimates3} and \ref{lem:continuationEstimates4}.

\begin{proof}[Proof of Proposition \ref{prop:continuationEstimates2}]
We split up the quantity in the statement of the proposition into the following sum:
\begin{equation*}
\begin{split}
&\frac{z_-}{z_+}\fa_{r_+, \fg_+}(\fc_+, \fc_\Gamma) - \fa_{r_-, \fg_-}(\fc_-, \fc_\Gamma) \\
&\qquad = \underbrace{\frac{z_-}{z_+}\fa_{r_+, \fg_+}(\fc_+, \fc_\Gamma) - \frac{z_-}{z_+}\fa_{r_+, \fg_+}(\fd_{s_*}, \fc_\Gamma)}_{I} + \underbrace{\frac{z_-}{z_+}\fa_{r_+, \fg_+}(\fd_{s_*}, \fc_\Gamma) - \fa_{r_-, \fg_+}(\fd_{s_*}, \fc_\Gamma)}_{II} \\
&\qquad + \underbrace{\fa_{r_-, \fg_+}(\fd_{s_*}, \fc_\Gamma) - \fa_{r_-, \fg_-}(\fd_{-s_*}, \fc_\Gamma)}_{III} + \underbrace{\fa_{r_-, \fg_-}(\fd_{-s_*}, \fc_\Gamma) - \fa_{r_-, \fg_-}(\fc_-, \fc_\Gamma)}_{IV}.
\end{split}
\end{equation*}

First, observe that I and IV are nonpositive by Lemma \ref{lem:csdContinuationGradient1} (see Remark \ref{rem:csdGradient}). Next, the absolute value of III is bounded by $\kappa_{\ref{lem:continuationEstimates4}}\Lambda_\fg r_{\text{max}}^2$ by Lemma \ref{lem:continuationEstimates4}.
By Lemma \ref{lem:continuationEstimates3}, we deduce that 
$$II \lesssim \max\Big\{(z_-)^{-1}\kappa_{\ref{lem:continuationEstimates3}}(r_{\text{max}}^2+d^2)|r_+ - r_-|, \frac{z_-}{z_+}\fa_{r_+, \fg_+}(\fc_+) - \fa_{r_-, \fg_+}(\fc_+)\Big\}.$$

The function $\frac{z_-}{z_+}\fa_{r_+, \fg_+} - \fa_{r_-, \fg_+}$ is gauge-invariant. Then Proposition \ref{prop:3dActionEstimates} implies that for $r_+ > r_- \gg 1$, 
$$|\frac{z_-}{z_+}\fa_{r_+, \fg_+}(\fc_+, \fc_\Gamma) - \fa_{r_-, \fg_-}(\fc_+, \fc_\Gamma)| \le c\cdot  r_{\text{max}}(1 - \frac{z_-}{z_+}) = c\cdot \frac{1}{z_+}r_{\text{max}}(r_+ - r_-)$$
for some constant $c$ that may depend on the geometry of $M_\phi$ as well as $d$ and $\fc_\Gamma$. Putting the bounds for I--IV together yields the desired inequality.
\end{proof}

Proposition \ref{prop:continuationEstimates5} below is a version of Proposition \ref{prop:continuationEstimates2} to hold in the case where $r_- = -2\pi\rho$. The proof method is identical, but we make use of bounds on the gauge-invariant function $\fa_{-2\pi\rho, \fg_s}$ instead of the function $\cL(s)$ from Lemma \ref{lem:continuationEstimates3}.

\begin{prop}
\label{prop:continuationEstimates5} Let $\cS_\pm=(\phi,J,\Gamma,r_\pm,\fg_\pm)$, $\cS_s=(0,J,r_s,\fg_s)$, $r_{\text{max}}$, $\Lambda_\fg$, $\fd=(B_s,\Psi_s)$, $\fc_\pm = (B_\pm,\Psi_\pm)$, $\fc_\Gamma = (B_\Gamma, \Psi_\Gamma)$ be as fixed at the start of \S\ref{subsubsec:continuationEstimates}. Suppose that $r_- = -2\pi\rho$ and $r_+ > r_-$. Then there are constants $\kappa_{\ref{prop:continuationEstimates5}} \geq 1$ and $r_{\ref{prop:continuationEstimates5}} \geq 1$ that depend only on $d$, $\fc_\Gamma$, and geometric constants,  such that the following holds. Assume that
$$\sup_{s \in \bR} |r_s - r_-| \leq \kappa_{\ref{prop:continuationEstimates5}}^{-1},\quad\Lambda_\fg \leq \kappa_{\ref{prop:continuationEstimates5}}^{-1},\quad r_\pm \geq r_{\ref{prop:continuationEstimates5}}.$$
Then
\begin{equation*}
\begin{split}\fa_{r_-, \fg_+}(\fc_+, \fc_\Gamma) - \fa_{r_-, \fg_-}(\fc_-, \fc_\Gamma) &\leq \kappa_{\ref{prop:continuationEstimates5}}r_+^2(|r_+ - r_-| + \Lambda_\fg).
\end{split}
\end{equation*}
If instead $r_- > r_+ = -2\pi\rho$, we have the bound
\begin{equation*}
\begin{split}
\fa_{r_+, \fg_+}(\fc_+, \fc_\Gamma) - \fa_{r_+, \fg_-}(\fc_-, \fc_\Gamma) &\leq \kappa_{\ref{prop:continuationEstimates5}}r_-^2(|r_+ - r_-| + \Lambda_\fg).
\end{split}
\end{equation*}
\end{prop}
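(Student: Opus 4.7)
The proof will closely follow the structure of Proposition \ref{prop:continuationEstimates2}, with modifications reflecting the fact that $z_- = r_- + 2\pi\rho = 0$ (in the first case) or $z_+ = 0$ (in the second case). The key structural simplification is that we are no longer bounding the rescaled difference $(z_-/z_+)\fa_{r_+,\fg_+}(\fc_+) - \fa_{r_-,\fg_-}(\fc_-)$, but rather the plain difference $\fa_{r_-,\fg_+}(\fc_+) - \fa_{r_-,\fg_-}(\fc_-)$ where both functionals have the CSD parameter $r_-$. A useful feature of this setup is that when $r_- = -2\pi\rho$, the Chern--Simons--Dirac functional $\fa_{r_-,\fg}$ is \emph{fully} gauge-invariant, because the non-gauge-invariant part of $\fcs - rE_\phi$ is exactly the $(r + 2\pi\rho)$-multiple of $E_\phi$, which now vanishes. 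This frees us to choose gauges freely along the instanton $\fd$ when carrying out the estimates.

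The plan is to decompose
\[
\fa_{r_-,\fg_+}(\fc_+,\fc_\Gamma) - \fa_{r_-,\fg_-}(\fc_-,\fc_\Gamma) = I + III + IV,
\]
where $I = \fa_{r_-,\fg_+}(\fc_+) - \fa_{r_-,\fg_+}(\fd_{s_*})$ is the forward tail, $III = \fa_{r_-,\fg_+}(\fd_{s_*}) - \fa_{r_-,\fg_-}(\fd_{-s_*})$ is the middle part, and $IV = \fa_{r_-,\fg_-}(\fd_{-s_*}) - \fa_{r_-,\fg_-}(\fc_-)$ is the backward tail. (There is no analog of Part II because no rescaling factor appears.) Part III is bounded by $\kappa_{\ref{lem:continuationEstimates4}}\Lambda_\fg r_{\text{max}}^2$ by Lemma \ref{lem:continuationEstimates4}, whose proof only uses that the CSD parameter and the instanton parameter agree at the endpoints $\pm s_*$ and does not require $r_- > -2\pi\rho$. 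Part IV is non-positive: for $s < -s_*$ we have $r_s = r_-$ and $\fg_s = \fg_-$, so the instanton satisfies the ordinary $\cS$-instanton equations and $\fa_{r_-,\fg_-}(\fd_s)$ is monotonically decreasing in $s$ by Remark \ref{rem:csdGradient}.

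The main obstacle is Part I, which is an integral over the semi-infinite interval $(s_*,\infty)$. By Corollary \ref{cor_derivative_of_a} applied with $r = r_-$, using that $r_s = r_+$ and $\fg_s = \fg_+$ are constant for $s > s_*$,
\[
I = -\int_{(s_*,\infty)\times M_\phi}\!\big[|\partial_s B_s|^2 + 2|\partial_s\Psi_s|^2\big] + \int_{(s_*,\infty)\times M_\phi}\!\langle \partial_s B_s, i(r_+ - r_-)dt\rangle.
\]
The first term is $\le 0$. For the second, integration by parts in $s$ together with $\star^g dt = \tfrac{1}{2}\omega_\phi$ gives the exact identity $\int\langle \partial_s B_s, i(r_+-r_-)dt\rangle = \tfrac{1}{2}(r_+-r_-)E_\phi(B_+, B_{s_*})$. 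The difficult step is controlling $|E_\phi(B_+,B_{s_*})|$ by a $d$- and $\fc_\Gamma$-dependent multiple of $r_+$. The plan is to use the gauge-invariance of $I$ to pick a representative of $\fd$ in which $\fc_+$ satisfies the gauge-fixing condition of Proposition \ref{prop:3dActionEstimates} (so that $|E_\phi(\fc_+,\fc_\Gamma)| \lesssim 1$), and then apply the argument of Lemma \ref{lem:continuationEstimates3} pointwise in $s$: on the set where $\frac{d}{ds}\fa_{r_-,\fg_+}(\fd_s) > 0$, Young's inequality combined with the continuation Seiberg--Witten equations gives uniform pointwise control $|\fa_{r_-,\fg_+}(\fd_s)| \lesssim (r_s^2 + d^2)|r_+-r_-|$. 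Exponential decay of $\fd_s$ to $\fc_+$ with rate determined by the spectral gap of $\cL_{\fc_+}$ then converts the pointwise control into an integrable bound on the tail, yielding $|I| \lesssim r_+^2 |r_+-r_-|$.

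The second case $r_- > r_+ = -2\pi\rho$ follows from an entirely symmetric argument, exchanging the roles of $\fc_+$ and $\fc_-$ and of $s_*$ and $-s_*$, since now $\fa_{r_+,\fg}$ is the fully gauge-invariant functional and the relevant semi-infinite tail lives on $(-\infty,-s_*)$. Combining the contributions from I, III, IV produces the asserted bound $\kappa_{\ref{prop:continuationEstimates5}} r_{\text{max}}^2(|r_+ - r_-| + \Lambda_\fg)$, using $|r_+ - r_-|^2 \leq \kappa^{-1}|r_+-r_-| \leq r_{\text{max}}^2|r_+-r_-|$ to absorb any quadratic-in-$|r_+-r_-|$ remainders.
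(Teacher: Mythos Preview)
Your decomposition into $I + III + IV$ mimics the proof of Proposition~\ref{prop:continuationEstimates2}, and your treatment of III and IV is correct. But the handling of Part I has a gap. The appeal to ``exponential decay of $\fd_s$ to $\fc_+$ with rate determined by the spectral gap of $\cL_{\fc_+}$'' is neither needed nor does it clearly do the job: the dichotomy gives you pointwise control on the \emph{value} $|\fa_{r_-,\fg_+}(\fd_s)|$, not on any quantity you are integrating, so there is nothing for an exponential weight to act on; and the spectral gap depends on $r$ and on the particular solution $\fc_+$, so invoking it would introduce uncontrolled constants. Also, your claimed bound $|\fa_{r_-,\fg_+}(\fd_s)| \lesssim (r_s^2+d^2)|r_+-r_-|$ does not follow from the Lemma~\ref{lem:continuationEstimates3} argument as stated: there the factor $|r_+-r_-|$ came from the explicit $(r_+-r_-)$ built into that lemma's function $\cL$, which is absent here. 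Finally, bounding $E_\phi(B_+,B_{s_*})$ directly is hard because $\fd_{s_*}$ is not a Seiberg--Witten solution, so Proposition~\ref{prop:3dActionEstimates} does not apply to it.

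The paper's proof avoids the decomposition entirely. Since $r_- = -2\pi\rho$, the functional $\cL(s) := \fa_{r_-,\fg_s}(\fd_s,\fc_\Gamma)$ is fully gauge-invariant, and one runs the dichotomy of Lemma~\ref{lem:continuationEstimates3} directly for this $\cL$ on \emph{all} of $\bR$: at every $s$, either $\cL'(s) \leq 0$ or $|\cL(s)|$ is bounded by the asserted constant. An elementary real-variable argument then shows that either $\cL(+\infty) \leq \cL(-\infty)$, or both endpoint values are bounded in absolute value (once $|\cL(s)|$ exceeds the bound it must be decreasing, hence stays outside the bound forever, forcing global monotonicity). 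This yields the proposition in one stroke, with no need to split off tails, control the intermediate slice $\fd_{s_*}$, or invoke exponential decay.
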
 

\begin{proof}
Suppose that $r_+ > r_- = -2\pi\rho$; the reverse case $r_- > r_+ = -2\pi\rho$ is identical. Define the gauge-invariant function
$\cL(s) = \fa_{r_-, \fg_s}(\fd_s, \fc_\Gamma).$
Proceeding in the same way as in Lemma \ref{lem:continuationEstimates3} shows that at any $s \in \bR$, either
$\cL'(s) \leq 0$ or $|\cL(s)| \lesssim r_+^2(|r_+ - r_-| + \Lambda_\fg)$, given that $\sup_{s \in \bR}|r_s - r_-|$ and $\Lambda_\fg$ are sufficiently small. This proves the proposition, since either $\fa_{r_-, \fg_+}(\fc_+, \fc_\Gamma) \leq \fa_{r_-, \fg_-}(\fc_-, \fc_\Gamma)$ or both $\fa_{r_-, \fg_+}(\fc_+, \fc_\Gamma)$ and $\fa_{r_-, \fg_-}(\fc_-, \fc_\Gamma)$ have absolute value $\lesssim r_+^2(|r_+ - r_-| + \Lambda_\fg)$.
\end{proof}

\subsection{Spectral flow} \label{subsec:spectralFlow}

In this section, we collect some estimates on the spectral flow of solutions to the Seiberg--Witten equations. The main observation is that, as long as our base connection satisfies suitable estimates, the spectral flow estimates from prior work of Cristofaro-Gardiner--Savale in \cite{CGSavaleSpectralFlow} and Taubes in \cite{TaubesWeinstein2} go through without difficulty. Fix an SW parameter set
$\cS = (\phi, J, \Gamma, r, \fe_\mu)$
with $\Gamma$ monotone with monotonicity constant $\rho < 0$ and degree $d$, $r > -2\pi\rho$, and $\mu$ having $C^3$ and $\cP$-norm less than $1$. Fix a nondegenerate irreducible solution $\fc = (B, \Psi)$ to the $\cS$-Seiberg-Witten equations. 

Let $\Lambda\ge 1$ be a real number such that $\|F_{B_\Gamma}\|_{C^3} \leq \Lambda d$ and $r\ge d/\Lambda.$
All constants in the estimates in \S\ref{subsec:spectralFlow} will only depend on the geometry of $M_\phi$ and the value of $\Lambda$, but not on the spin-c structure or the specific choice of base configuration $\fc_\Gamma$. 

\subsubsection{Cristofaro-Gardiner--Savale's $\eta$-invariant bound} \label{subsubsec:cgSavaleSpectralFlow} Cristofaro-Gardiner--Savale \cite[Equation $4.3$]{CGSavaleSpectralFlow} proved that the spectral flow from $\fc$ to $\fc_\Gamma$ differs from $\fcs(B, B_\Gamma)$ by $cr^{3/2}$ for some $r$-independent constant $c > 0$. We show in the proposition below that the constant $c$ is controlled by geometric constants and the $C^3$ norm of $F_{B_\Gamma}$. 

\begin{prop} \label{prop:spectralFlow1} 
Let $\cS = (\phi, J, \Gamma, r, \fe_\mu)$, $\fc = (B, \Gamma)$, $\fc_\Gamma = (B_\Gamma, \Psi_\Gamma)$, and $\Lambda$ be as fixed at the start of \S\ref{subsec:spectralFlow}. Then there is a constant $\kappa_{\ref{prop:spectralFlow1}} \geq 1$ depending only on $\Lambda$ and geometric constants such that
$$|\fcs(B, B_\Gamma) + 4\pi^2\SF(\fc, \fc_\Gamma)| \leq \kappa_{\ref{prop:spectralFlow1}}r^{3/2}.$$
\end{prop}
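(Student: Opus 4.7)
The plan is to follow the argument of Cristofaro-Gardiner and Savale in \cite{CGSavaleSpectralFlow}, taking care to track the dependence of the implicit constants on the base configuration. The starting point is to interpret $\SF(\fc,\fc_\Gamma)$ via an Atiyah--Patodi--Singer type formula: for the linear interpolation $\fc_t = ((1-t)B_\Gamma + tB, (1-t)\Psi_\Gamma + t\Psi)$, the spectral flow equals the index of the one-parameter family operator $\partial_t + \cL_{\fc_t}$ on $[0,1]\times M_\phi$ with suitable spectral projection boundary conditions. Separating the local index contribution from the $\eta$-invariant boundary corrections, the Atiyah--Singer integrand produces exactly the Chern--Simons term
\[
-\tfrac{1}{4\pi^2}\fcs(B,B_\Gamma),
\]
so the proposition reduces to controlling the $\eta$-invariant error by $O(r^{3/2})$.

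The analytic bound on this $\eta$-invariant error is precisely the content of \cite[Section 4]{CGSavaleSpectralFlow}, obtained via Getzler rescaling and small-time heat-kernel asymptotics for the relevant Dirac-type operator. To apply their argument in our setting I would simply insert the a priori bounds from Proposition \ref{prop:3dEstimates1} and Proposition \ref{prop:3dEstimates2}: these give the pointwise and integral estimates $|F_B|\lesssim r$, $|\Psi|\lesssim 1$, $|\nabla_B\Psi|\lesssim r^{1/2}$, $\|F_B\|_{L^1}\lesssim d$, and $\|F_B\|_{L^2}\lesssim r$. Combined with our hypothesis $\|F_{B_\Gamma}\|_{C^3}\leq \Lambda d \leq \Lambda^2 r$ and the fact that $d\leq \Lambda r$, every curvature-dependent quantity entering their heat-kernel expansion is bounded by a multiple of $r$ with a constant depending only on $\Lambda$ and geometric data.

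The main step that needs verification is that the constants appearing in the Getzler expansion in \cite{CGSavaleSpectralFlow} depend on the auxiliary data only through a finite number of Sobolev (or $C^k$) norms of $F_{B_\Gamma}$ and $\Psi_\Gamma$; no hidden dependence on the topological type of the spin-c structure or on $r$ itself should enter. This is implicit in their argument, where the heat-kernel coefficients are universal polynomials in the curvature and its covariant derivatives and where the small-time bound is integrated against the geometry of the fixed Riemannian background. The assumption $\|F_{B_\Gamma}\|_{C^3}\le \Lambda d$ is tailored precisely so that the third-order derivatives, which are the highest that appear in the truncated Getzler expansion used for the $r^{3/2}$ bound, are under control.

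The hard part is this bookkeeping step: confirming that no term in the CG--Savale argument requires a derivative of $F_{B_\Gamma}$ beyond order three, nor a norm scaling faster than $\Lambda d$. Once the finite list of controlling norms is extracted and shown to be dominated by $\Lambda^{O(1)} r$, the conclusion $|\fcs(B,B_\Gamma) + 4\pi^2 \SF(\fc,\fc_\Gamma)| \leq \kappa r^{3/2}$ follows directly by substituting these bounds into the estimate of \cite[(4.3)]{CGSavaleSpectralFlow}.
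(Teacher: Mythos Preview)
Your approach is essentially the same as the paper's: invoke the Atiyah--Patodi--Singer index theorem on the cylinder to extract the Chern--Simons term as the local integrand, then bound the boundary corrections by $O(r^{3/2})$ using the heat-kernel analysis of \cite{CGSavaleSpectralFlow}, tracking that only $C^3$ control of the curvatures is needed.

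One point of precision is worth flagging. The paper decomposes the boundary correction into two distinct pieces, handled by different tools: (i) the $\eta$-invariants $\eta(D_B)$ and $\eta(D_{B_\Gamma})$ of the \emph{pure Dirac operators}, bounded via Proposition~\ref{prop:etaInvariant1} (this is where the Cristofaro-Gardiner--Savale heat-kernel argument enters, and where $C^k$ bounds on $F_B$ of order $r^{1+k/2}$ for $k\le 3$ are needed, obtained by bootstrapping Proposition~\ref{prop:3dEstimates1}); and (ii) the spectral flows $\SF(\cL_{(B,0)},\cL_{(B,\Psi)})$ and $\SF(\cL_{(B_\Gamma,0)},\cL_{(B_\Gamma,\Psi_\Gamma)})$ arising from the zeroth-order spinor perturbation, bounded separately via \cite[Lemma~5.3]{TaubesWeinstein1}. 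You fold both into a single ``$\eta$-invariant error'', which is fine as a sketch, but the CG--Savale result as stated concerns the Dirac $\eta$-invariant specifically, not the $\eta$-invariant of the full extended Hessian $\cL_{(B,\Psi)}$. Making the decomposition explicit, as the paper does, avoids having to re-derive a heat-kernel expansion for the coupled operator and also makes clear why you need $C^3$ bounds on $F_B$ itself (not just $F_{B_\Gamma}$).
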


The key ingredients in the proof of Proposition \ref{prop:spectralFlow1} are bounds on the \emph{eta invariants} $\eta(D_B)$ and $\eta(D_{B_\Gamma})$ of the connections $B$ and $B_\Gamma$, respectively. See for example \cite[\S3]{CGSavaleSpectralFlow} for a definition of the eta invariant. 

\begin{prop} \label{prop:etaInvariant1}
Let $\cS = (\phi, J, \Gamma, r, \fe_\mu)$, $\fc = (B, \Psi)$, $\fc_\Gamma = (B_\Gamma, \Psi_\Gamma)$ be as fixed at the start of \S\ref{subsec:spectralFlow}. There is a geometric constant $\kappa_{\ref{prop:etaInvariant1}} \geq 1$  such that 
$$|\eta(D_B)| \leq \kappa_{\ref{prop:etaInvariant1}}\quad\text{and}\quad|\eta(D_{B_\Gamma})| \leq \kappa_{\ref{prop:etaInvariant1}}(\Lambda d)^{3/2}.$$
\end{prop}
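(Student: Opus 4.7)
The plan is to adapt the semiclassical techniques developed by Savale and elaborated in \cite{CGSavaleSpectralFlow, TaubesWeinstein2}. Both bounds come from the heat-kernel expression
$$\eta(D_A) = \frac{1}{\sqrt{\pi}}\int_0^\infty t^{-1/2}\text{tr}(D_A e^{-tD_A^2})dt,$$
combined with semiclassical control on the small-time behavior; the two bounds differ because a Seiberg--Witten solution carries much more rigid information than a generic reference connection.

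For the second inequality, concerning $\eta(D_{B_\Gamma})$, the base connection is not constrained by any equation, so one must argue purely from the $C^3$ bound $\|F_{B_\Gamma}\|_{C^3}\leq \Lambda d$. The strategy is a scaling argument: rescaling by the semiclassical parameter $\hbar=(\Lambda d)^{-1/2}$ converts $D_{B_\Gamma}$ into a semiclassical Dirac operator of bounded curvature, for which the local Getzler-style small-time expansion of $\text{tr}(D_{B_\Gamma}e^{-tD_{B_\Gamma}^2})$ yields a bound of the form
$$|\eta(D_{B_\Gamma})|\lesssim 1+\|F_{B_\Gamma}\|_{C^0}^{3/2}+\text{(lower order)},$$
where the lower-order terms are controlled by the higher derivatives of $F_{B_\Gamma}$. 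The $3/2$-power scaling is intrinsic to a three-dimensional Dirac operator. This immediately gives $|\eta(D_{B_\Gamma})|\lesssim (\Lambda d)^{3/2}$.

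For the first inequality, concerning $\eta(D_B)$ when $B$ arises from a solution $\fc=(B,\Psi=r^{1/2}(\alpha,\beta))$ of the $\cS$--Seiberg--Witten equations, the naive bound from the argument above would only give $|\eta(D_B)|\lesssim r^{3/2}$, which is far too weak. The uniform bound reflects strong cancellation built into the equations. Following Taubes' treatment in \cite[\S 4]{TaubesWeinstein2}, I would use the pointwise estimates of Proposition~\ref{prop:3dEstimates1} to partition $M_\phi$ into a ``good'' region where $|\alpha|\geq 3/4$ and a ``bad'' tubular neighborhood of $\alpha^{-1}(0)$ of radius $O(r^{-1/2})$. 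On the good region, a gauge transformation makes $B$ approximately flat (with curvature controlled by $(1-|\alpha|^2)$ plus derivatives of $\beta$), so the heat-kernel contribution to the eta invariant is uniformly bounded. On the bad region, the equations force $(B,\Psi)$ to be closely modeled on the symmetric vortex solution on $\bR^2$; the eta invariant contribution of this model is a fixed constant per unit length of $\alpha^{-1}(0)$, and the total length is bounded by a geometric constant times $d$. The key cancellations come from the Seiberg--Witten equations, which equate $F_B$ to $\cl^\dagger(\Psi)-irdt+\cdots$, so that the fluctuations of $F_B$ away from its topological mean are exactly those compatible with the vortex model.

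The main obstacle will be carefully verifying that the constants in the semiclassical analysis from \cite{CGSavaleSpectralFlow,TaubesWeinstein2} depend only on geometric data and on $\Lambda$, and in particular are independent of the spin-c structure $\fs_\Gamma$ (that is, of the degree $d$). This is essentially a matter of rereading the estimates in those works and tracking the dependence through each step; the hardest part is the comparison with the vortex model near $\alpha^{-1}(0)$, where uniformity requires that the pointwise constants of Proposition~\ref{prop:3dEstimates1} not depend on $d$.
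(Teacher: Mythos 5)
Your treatment of the second inequality ($|\eta(D_{B_\Gamma})| \lesssim (\Lambda d)^{3/2}$) via semiclassical rescaling and heat-kernel coefficient bounds is in the right spirit and matches the paper's strategy. But there is a genuine gap in your argument for the first inequality. Your good/bad decomposition estimates the bad-region contribution as ``a fixed constant per unit length of $\alpha^{-1}(0)$,'' with total length bounded by a geometric constant times $d$; that gives $|\eta(D_B)| \lesssim d$, which is weaker than the claimed uniform bound $|\eta(D_B)| \leq \kappa$ with $\kappa$ independent of $d$ and $r$. To close the gap you would need to show that the vortex-model eta density vanishes (e.g.\ by the rotational symmetry of the symmetric vortex), or that the contributions along the different components of $\Theta$ cancel systematically; the closing remark about ``key cancellations'' does not establish either.

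The paper's route is different and considerably shorter. Both bounds are obtained by quoting \cite[Proposition~9]{CGSavaleSpectralFlow} rather than \S4 of \cite{TaubesWeinstein2}; that proposition bounds $\eta(D_A)$ via the coefficients of the small-time heat-kernel expansion (their Lemmas 10--11), and those coefficients are controlled by $C^3$ bounds on $F_A$. For a Seiberg--Witten solution the relevant input is the semiclassical scaling $\|F_B\|_{C^k} \lesssim r^{1+k/2}$ for $k \leq 3$ (obtained by bootstrapping Proposition~\ref{prop:3dEstimates1} via elliptic regularity), and this is what yields the $d$- and $r$-uniform constant; for $B_\Gamma$ the same heat-kernel argument with only $\|F_{B_\Gamma}\|_{C^3} \leq \Lambda d$ yields $(\Lambda d)^{3/2}$. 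The constant-tracking that you identify as the main obstacle is in fact the entire content of the paper's proof: one observes that the estimates in \cite{CGSavaleSpectralFlow} only use the first three derivatives of the first two expansion terms, so a $C^3$ curvature bound suffices and all constants are metric-dependent only.
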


\begin{proof}
The first bound is proved in \cite[Proposition $9$]{CGSavaleSpectralFlow} and the same proof works for the second bound.  We clarify for the reader why the claimed bounds on the constants hold in our setting; we explain this point in the case of the first bound, with the same considerations holding for the second.  The proof of Proposition $9$ in \cite{CGSavaleSpectralFlow} only uses the fact that $(B, \Psi)$ is a solution of the Seiberg--Witten equations via the pointwise estimates on solutions of the Seiberg--Witten equations, listed in Lemma $7$ of \cite{CGSavaleSpectralFlow}: bounds on the curvature $F_B$ are used in Lemmas $10$ and $11$  of \cite{CGSavaleSpectralFlow} to derive bounds on the coefficients in the small time asymptotic expansion of the Dirac heat kernel and another associated heat kernel, from which the $\eta$-invariant bound follows. These bounds only use estimates on the first three derivatives of the first two terms in the asymptotic expansion, for which a $C^3$ bound on the curvature suffices: in our setting, what is required is that the curvature $F_B$ has, for any $0 \leq k \leq 3$, a $C^k$ norm of $\kappa r^{1 + k/2}$. This bound on the curvature follows from standard bootstrapping of the bounds in Proposition~\ref{prop:3dEstimates1} via elliptic regularity (see, for example, \cite[Lemma 3.7]{LeeTaubes12}).   
\end{proof}

We use Proposition \ref{prop:etaInvariant1} and the Atiyah--Patodi--Singer index theorem \cite{APS1} to prove Proposition \ref{prop:spectralFlow1}. 

\begin{proof}[Proof of Proposition \ref{prop:spectralFlow1}]
 Recall from (\ref{eq:indexEqualsSF1}) that $\SF(\fc_\Gamma, \fc)$ can be computed in the following way. Take a one-parameter family $\fd = (B_s, \Psi_s)$ of connections on $E_\Gamma$ and spinors in $S_\Gamma$ such that $(B_s, \Psi_s) = \fc_\Gamma$ for $s \leq -1$ and $(B_s, \Psi_s) = \fc$ for $s \geq 1$. Then there is an associated Fredholm operator
$$\cL_\fd = \frac{\partial}{\partial s} + \cL_{(B_s, \Psi_s)}$$ from (\ref{eq:extended4DHessian}) and the spectral flow is computed as the index of this operator.

We observe that the operator $\cL_\fd$ is a Dirac operator of Atiyah--Patodi--Singer type on the cylinder $\bR \times M_\phi$ equipped with a cylindrical metric. Write $g$ for the Riemannian metric on $M_\phi$ induced by $(\phi, J)$. The results of \cite{APS1} show that it is equal to the expression
\begin{equation} \label{eq:apsCylindrical} -\frac{1}{4\pi^2}\fcs(B, B_\Gamma) - \SF(\cL_{(B, 0)}, \cL_{(B, \Psi)}) + \SF(\cL_{(B_\Gamma, 0)}, \cL_{(B_\Gamma, \Psi_\Gamma)}) - \eta(D_B) + \eta(D_{B_\Gamma}). \end{equation}

The estimate from \cite[Lemma $5.3$]{TaubesWeinstein1} along with the bounds from Proposition \ref{prop:3dEstimates1} shows that
\begin{equation} \label{eq:spinorSpectralFlow1} |\SF(\cL_{(B, 0)}, \cL_{(B, \Psi)})| \lesssim r^{3/2}, \qquad |\SF(\cL_{(B_\Gamma, 0)}, \cL_{(B_\Gamma, \Psi_\Gamma)})| \lesssim \|F_{B_\Gamma}\|_{C^0}^{3/2} \le (\Lambda d)^{3/2}. \end{equation}
Proposition \ref{prop:etaInvariant1} and the equations (\ref{eq:apsCylindrical}) and (\ref{eq:spinorSpectralFlow1}) together prove the desired spectral flow bound. 
\end{proof}

\subsubsection{Taubes' spectral flow bound} \label{subsubsec:taubesSpectralFlow} The following spectral flow estimate is a version of the one proved by Taubes in \cite{TaubesWeinstein2} with the dependence on the base connection $B_\Gamma$ made more explicit. 

\begin{prop} \label{prop:spectralFlow2} 
Let $\cS = (\phi, J, \Gamma, r, \fe_\mu)$, $\fc = (B, \Psi)$, $\fc_\Gamma = (B_\Gamma, \Psi_\Gamma)$, and $\Lambda$ be as fixed at the start of \S\ref{subsec:spectralFlow}. Then there is a constant $\kappa_{\ref{prop:spectralFlow2}} \geq 1$ depending only on $\Lambda$ and geometric constants such that
$$|\fcs(B, B_\Gamma) + 4\pi^2\SF(\fc, \fc_\Gamma)| \leq \kappa_{\ref{prop:spectralFlow2}}d^{4/3}r^{2/3}\log(r)^{\kappa_{\ref{prop:spectralFlow2}}}.$$
\end{prop}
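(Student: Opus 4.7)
The plan is to follow the strategy of Taubes in \cite{TaubesWeinstein2}, adapted to track both the dependence on the degree $d$ and on the base connection $B_\Gamma$. As in the proof of Proposition \ref{prop:spectralFlow1}, the Atiyah--Patodi--Singer index theorem applied to a path from $\fc_\Gamma$ to $\fc$ on the cylinder $\bR \times M_\phi$ gives the identity
\begin{equation*}
\SF(\fc_\Gamma, \fc) = -\frac{1}{4\pi^2}\fcs(B, B_\Gamma) - \SF(\cL_{(B, 0)}, \cL_{(B, \Psi)}) + \SF(\cL_{(B_\Gamma, 0)}, \cL_{(B_\Gamma, \Psi_\Gamma)}) - \eta(D_B) + \eta(D_{B_\Gamma}).
\end{equation*}
The $\eta$-invariants are controlled by Proposition \ref{prop:etaInvariant1}: $|\eta(D_B)| \lesssim 1$ and $|\eta(D_{B_\Gamma})| \lesssim (\Lambda d)^{3/2}$. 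The ``base'' spin spectral flow $\SF(\cL_{(B_\Gamma, 0)}, \cL_{(B_\Gamma, \Psi_\Gamma)})$ is $\lesssim \|F_{B_\Gamma}\|_{C^0}^{3/2} \leq (\Lambda d)^{3/2}$ by \cite[Lemma 5.3]{TaubesWeinstein1}. Under the standing hypothesis $r \geq d/\Lambda$, each of these error terms is already $\lesssim d^{4/3}r^{2/3}$ and can be absorbed into the claimed bound.

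The heart of the proposition is therefore the improved estimate
\begin{equation*}
|\SF(\cL_{(B, 0)}, \cL_{(B, \Psi)})| \leq \kappa \, d^{4/3} r^{2/3} \log(r)^{\kappa},
\end{equation*}
replacing the cruder $\lesssim r^{3/2}$ bound of \cite[Lemma 5.3]{TaubesWeinstein1}. The plan is to compute this spectral flow via Taubes' heat-trace representation: one writes
\begin{equation*}
\SF(\cL_{(B,0)}, \cL_{(B, \Psi)}) = -\int_0^1 \text{tr}\bigl( \dot{\cL}_{(B, s\Psi)} \cdot f_T(\cL_{(B,s\Psi)}) \bigr) \, ds + (\text{error terms}),
\end{equation*}
with $f_T$ a suitable heat-kernel-type cutoff at scale $T$, and then optimizes $T$ as a function of $r$ and $d$. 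The $r^{2/3}$ exponent arises by balancing two competing estimates in Taubes' argument: a crude pointwise bound of order $r$ on $\dot{\cL}$, weighed against the fact that the short-time heat kernel localizes the contribution to a tubular neighborhood of width $\sim T^{1/2}$ about the zero set of $\alpha$. The $d$-dependence enters through Proposition \ref{prop:3dEstimates1}: the integrals $\int_{M_\phi}(1 - |\alpha|^2)$, $\int_{M_\phi} |\beta|^2$, $\int_{M_\phi}|\widehat\nabla_B \alpha|^2$, and $\int_{M_\phi}|\widehat\nabla_B \beta|^2$ are all controlled by multiples of $d/r$, respectively $d$, and the zero set of $\alpha$ has effective $1$-dimensional measure $\lesssim d$.

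The main obstacle will be systematically propagating this $d$-dependence through Taubes' somewhat intricate chain of heat-kernel estimates, which in \cite{TaubesWeinstein2} is written for a single spin-c structure and never records degree dependence. The intended approach is to substitute the integral bounds from Proposition \ref{prop:3dEstimates1} at every step, then to check that after optimization in the truncation parameter $T$ each term in Taubes' estimate scales like $d^{4/3} r^{2/3} \log(r)^\kappa$, picking up at worst the extra logarithmic factors familiar from Taubes' original bound. A subsidiary issue is verifying that the short-time heat-kernel asymptotics used in the argument depend only on finitely many derivatives of $F_B$ and $F_{B_\Gamma}$, so that the standing hypothesis $\|F_{B_\Gamma}\|_{C^3} \leq \Lambda d$ together with the bootstrapped $C^3$ bounds on $F_B$ provided by Proposition \ref{prop:3dEstimates1} and elliptic regularity (as in the proof of Proposition \ref{prop:etaInvariant1}) suffice to control all implicit constants in terms of the metric $g$ and $\Lambda$ alone.
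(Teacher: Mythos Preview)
Your approach is valid but organizes the argument differently from the paper. The paper does \emph{not} reuse the APS decomposition of Proposition~\ref{prop:spectralFlow1}; instead it invokes Taubes' argument from \cite[Section~3]{TaubesWeinstein2} in full. That argument estimates $\fcs + 4\pi^2\SF$ by eigenvalue-counting for \emph{two} one-parameter families: the Dirac operators $\cL_s = D_{B_\Gamma + s(B-B_\Gamma)}$ and the extended Hessians $\cL_s'$ at connection $B$ with spinor coupling scaled by $s$. The paper observes that Taubes' energy $E$ equals $2\pi d + O(1)$, then isolates exactly which ingredients of Taubes' proof --- the bounds on $|\hat a|$, $|\fcs|$ in \cite[Lemma~1.8, (3.1)]{TaubesWeinstein2} and the constants in \cite[Lemmas~3.3, 3.4]{TaubesWeinstein2} --- see the base connection $B_\Gamma$. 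For the $\cL_s'$ family these are already independent of $B_\Gamma$; for the $\cL_s$ family the paper supplies three new lemmas (a global $L^2$ bound on $\nabla_R j$ and a local H\"older/interior estimate) to control those constants in terms of $\Lambda$ alone.

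Your route instead uses APS to strip off the $\cL_s$ family entirely: the term $\SF(\cL_{(B,0)},\cL_{(B,\Psi)})$ you isolate is precisely the spectral flow of the $\cL_s'$ family, which does not involve $B_\Gamma$ at all. So you only need the $\cL_s'$ half of Taubes' eigenvalue-counting with $E$ replaced by $d$, and you avoid the paper's new lemmas for the $\cL_s$ family. This is a genuine simplification, at the price of leaning on the $\eta$-invariant bound of Proposition~\ref{prop:etaInvariant1}. Two small corrections to your write-up: first, since the $\cL_s'$ family does not see $B_\Gamma$, your ``subsidiary issue'' about tracking $F_{B_\Gamma}$ through the heat-kernel asymptotics is moot for that step --- the $C^3$ bound on $F_{B_\Gamma}$ enters only through $\eta(D_{B_\Gamma})$, which you have already handled. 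Second, Taubes' actual mechanism in \cite[Section~3]{TaubesWeinstein2} is a direct eigenvalue-counting argument (via his Lemmas~3.3--3.4) rather than a heat-trace integral with a cutoff $f_T$; the outcome is the same, but you should follow his eigenvalue bounds rather than the heuristic you sketched.
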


    The proof of Proposition \ref{prop:spectralFlow2} follows the same arguments as \cite[Proposition 1.9]{TaubesWeinstein2}.      
According to the first paragraph of \cite[Section 3]{TaubesWeinstein2}, the spectral flow estimates in \cite{TaubesWeinstein2} holds for any smooth 1-form $a$ such that $|a|=1$ pointwise.  We take the 1-form $a$ to be $dt$ in our case. By Proposition \ref{prop:3dEstimates1}, we have
 \begin{equation}
 \label{eqn_d_same_order_as_E}
     2\pi d = i\int_{M_\phi}dt\wedge F_B =   \int _{M_\phi} r(1 - |\alpha|^2) + O(1),
 \end{equation}
 where the bound on the $O(1)$ term only depends on the geometry of $M_\phi$.
 Therefore, the term $E$ in \cite[Proposition 1.9]{TaubesWeinstein2} can be replaced by $2\pi d$ in our setting.

The only difference in Proposition \ref{prop:spectralFlow2} is that it considers a family of spin-c structures parametrized by $\Gamma$ and there is a base connection $B_\Gamma$ for each spin-c structure. Therefore, we need to prove that if $\|F_{B_\Gamma}\|_{C^0}\le \Lambda d$ and $r\ge d/\Lambda$, then the constants in \cite[Section 3]{TaubesWeinstein2} only depend on $\Lambda$ and are independent of the spin-c structure or the base connection.

There are exactly three steps in the proof of \cite[Proposition 1.9]{TaubesWeinstein2} that depend on the spin-c structure and the choice of base connection:
\begin{enumerate}
    \item The bound of $|\hat a|$ and $|\mathfrak{cs}(A)|$ in \cite[Lemma 1.8]{TaubesWeinstein2} and \cite[(3.1)]{TaubesWeinstein2}. 
    \item The estimate in Lemma 3.3 of \cite{TaubesWeinstein2}.
    \item The estimate in Lemma 3.4 of \cite{TaubesWeinstein2}.
\end{enumerate}
 The 1-form $\hat a$ and the Chern--Simons functional $\mathfrak{cs}(A)$ are denoted by $\widetilde b$ and $\mathfrak{cs}(\mathfrak{c},\fc_\Gamma)$ in our notation. Since we assume $r\ge d/\Lambda$, the desired uniform bounds for step (1) follow from Proposition \ref{prop:3dActionEstimates}. The rest of this section is devoted to establishing uniform estimates for steps (2) and (3).

As before, we write $\Psi = r^{1/2} \psi$. For $s\in [0,1]$, let $\mathcal{L}_s$ be the Dirac operator on $S_\Gamma$ defined by the connection $B_\Gamma+s(B-B_\Gamma)$, and let 
 $$\mathcal{L}_s': C^\infty(iT^*M_\phi\oplus S_\Gamma \oplus i\mathbb{R}) \to C^\infty(iT^*M_\phi\oplus S_\Gamma \oplus i\mathbb{R})$$
 be the operator defined by  
 \begin{equation}
 \label{eqn_def_L's} 
     \begin{pmatrix}
    b
    \\
    \eta
    \\ 
    f
 \end{pmatrix}
\mapsto
\begin{pmatrix}
    *db  -2^{-1/2}sr^{1/2}(\eta^\dagger\tau\psi+\psi^\dagger\tau\eta)  -df \\
    2^{1/2}sr^{1/2}\cl(b)\psi +   D_B\eta  + 2^{1/2} sr^{1/2}f\psi \\
    -d^*b +  2^{-1/2}sr^{1/2} (-\eta^\dagger \psi + \psi^\dagger \eta)
\end{pmatrix}.
\end{equation}
This is the extended Hessian operator
 in \cite[(1.7)]{TaubesWeinstein2} with $r$ replaced by $s^2r$.

 In the following, we will use $\mathcal{L}$ to denote the operator $\mathcal{L}$ or $\mathcal{L}_s$ for $s\in [0,1]$. We will use $\nabla$ to denote the spin-c connection associated with $B$ if $\mathcal{L} = \mathcal{L}'_s$, and use $\nabla$ to denote the spin-c connection associated with $B_\Gamma + s (B-B_\Gamma)$ if $\mathcal{L} = \mathcal{L}_s$.

 Recall that $S_\Gamma = E_\Gamma \oplus (E_\Gamma\otimes V)$. Let $\widecheck{\nabla}$ be the orthogonal projection of $\nabla$ to $E_\Gamma\otimes V$, let $\widehat{\nabla}$ be the orthogonal projection of $\nabla$ to $E_\Gamma$, let $\mathcal{I} = \nabla - (\widehat{\nabla}\oplus \widecheck{\nabla})$. Then $\mathcal{I}$  is a pointwise linear map given by the tensor product of a map on $\mathbb{C}\oplus V$  with the identity map on $E_\Gamma$, where the map on $\mathbb{C}\oplus V$ only depends on the geometry of $M_\phi$. Moreover, the ``diagonal'' part of $\mathcal{I}$ vanishes; namely, $\mathcal{I}$ takes a section of $E_\Gamma$ to a section of $E_\Gamma\otimes V$ and takes a section of $E_\Gamma\otimes V$ to a section of $E_\Gamma$. Recall that $R$ is defined to be the unique vector field on $M_\phi$ such that $dt(R) \equiv 1$ and $\omega_\phi(R, -) \equiv 0$. The following lemma establishes a uniform bound for the constant in \cite[Lemma 3.3]{TaubesWeinstein2}.

\begin{lem}
\label{lem_global_L2_j_LambdaGamma}
Let
$\cS = (\phi, J, \Gamma, r, \fe_\mu),\fc = (B, \Psi),d,r,\mathcal{L},\nabla$ be as above.
 Let $\Lambda_\Gamma = \|F_{B_\Gamma}\|_{C^0}$.  Let $\lambda> 0$ be an arbitrary constant.  Suppose $j$ is a linear combination of eigenvectors of $\mathcal{L}$ with eigenvalues in $[-\lambda,\lambda]$.  Then
    \begin{equation}
    \label{eqn_lem_global_L2_j_LambdaGamma}
    \|\nabla_R j\|_{L^2}\lesssim (\lambda^2 + \Lambda_\Gamma +d )^{1/2} \|j\|_{L^2}.
    \end{equation}
\end{lem}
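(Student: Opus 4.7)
The plan is to adapt the proof of \cite[Lemma 3.3]{TaubesWeinstein2} to our setting, with the essential task being to verify that the constants depend on the base connection $B_\Gamma$ only through $\Lambda_\Gamma$, and on the spin-c structure only through $d$, rather than through $r$.

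The starting point is the estimate $\|\mathcal{L} j\|_{L^2}^2 \leq \lambda^2 \|j\|_{L^2}^2$ furnished by the spectral hypothesis on $j$. A Weitzenb\"ock--Bochner calculation applied componentwise to $j = (b, \eta, f)$ yields an identity of the form $\mathcal{L}^2 = \nabla^*\nabla + \mathfrak{K}$, where $\nabla$ is the natural connection on the bundle $iT^*M_\phi \oplus S_\Gamma \oplus i\bR$ coupled to the underlying spin-c connection, and $\mathfrak{K}$ is a zeroth-order self-adjoint endomorphism whose pointwise operator norm is controlled by $|F| + |\nabla \psi|^2 + r|\psi|^2$; here $F$ denotes the curvature entering $\mathcal{L}$ (so $F = F_B$ when $\mathcal{L} = \mathcal{L}'_s$, and $F = (1-s) F_{B_\Gamma} + s F_B$ when $\mathcal{L} = \mathcal{L}_s$). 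Taking the inner product with $j$, integrating by parts, and using that $|\nabla_R j| \leq |\nabla j|$ pointwise gives
$$\|\nabla_R j\|_{L^2}^2 \leq \|\nabla j\|_{L^2}^2 = \|\mathcal{L} j\|_{L^2}^2 - \int_{M_\phi} \langle \mathfrak{K} j, j \rangle \leq \lambda^2 \|j\|_{L^2}^2 + \int_{M_\phi} |\langle \mathfrak{K} j, j\rangle|,$$
so the lemma reduces to bounding $\int_{M_\phi} |\langle \mathfrak{K} j, j\rangle|$ by a constant times $(\Lambda_\Gamma + d) \|j\|_{L^2}^2$.

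To control this integral, I would use Proposition \ref{prop:3dEstimates1} to obtain the pointwise bound $|F_B| \lesssim r(1 - |\alpha|^2) + 1$, which gives $|F| \lesssim r(1 - |\alpha|^2) + \Lambda_\Gamma + 1$ in both cases. The portion of $\int \langle \mathfrak{K} j, j\rangle$ originating from the additive $\Lambda_\Gamma + 1$ integrates directly to $\lesssim \Lambda_\Gamma \|j\|_{L^2}^2$. The portion originating from $r(1-|\alpha|^2)$ and from the $r|\psi|^2$-type coupling terms in $\mathfrak{K}$ is handled via the structure of the first Seiberg--Witten equation, which rewrites the dangerous expressions as Clifford-algebraic contractions with $\psi$; after invoking the pointwise bounds $|\alpha| \leq 1 + O(r^{-1})$ and $|\beta| \lesssim r^{-1/2}$ from Proposition \ref{prop:3dEstimates1}, the integrand takes the form $r(1-|\alpha|^2) |j|^2$ up to absorbable remainders. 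One then appeals to the integral identity $\int_{M_\phi} r(1-|\alpha|^2) \lesssim d$ from the fourth item of Proposition \ref{prop:3dEstimates1} together with equation \eqref{eqn_d_same_order_as_E}, combined with local elliptic estimates that bootstrap the bound on $\mathcal{L} j$ into pointwise control on $|j|$ over the regions where $\alpha$ is close to zero; any surviving factor of $\|\nabla j\|_{L^2}^2$ that appears is absorbed into the left-hand side of the main inequality.

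The main obstacle is this last step: the pointwise norm of $F_B$ can be as large as $r$ in neighborhoods of the zeros of $\alpha$, so a naive $L^\infty$ bound on $F$ would produce $\|\nabla_R j\|_{L^2}^2 \lesssim r \|j\|_{L^2}^2$, which is much weaker than the claimed bound. The delicate rearrangement that converts the pointwise mass of $r|\alpha|^2$ into the integral bound $\int r(1-|\alpha|^2) \lesssim d$ is precisely the content of Taubes' Lemma 3.3, and its proof goes through unchanged in our setting. The only modification is to observe that the new contribution from a non-trivial base connection is the uniform additive term $\|F_{B_\Gamma}\|_{C^0} \leq \Lambda_\Gamma$; this quantity never enters a product with $r$ in the Weitzenb\"ock expansion, so it merely adds the harmless $\Lambda_\Gamma \|j\|_{L^2}^2$ contribution to the final bound.
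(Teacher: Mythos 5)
Your proposal runs into a genuine obstruction that the paper's proof is specifically designed to avoid, and I do not believe the strategy can be repaired without effectively redoing the paper's argument.

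The trouble is the very first reduction, $\|\nabla_R j\|^2_{L^2} \le \|\nabla j\|^2_{L^2} = \|\mathcal{L}j\|^2_{L^2} - \int \langle \mathfrak{K}j,j\rangle$. This step is correct but lossy, and the loss is fatal: $\|\nabla j\|^2_{L^2}$ is \emph{not} bounded by $(\lambda^2+\Lambda_\Gamma+d)\|j\|^2_{L^2}$. Near the zero set of $\alpha$ the low eigenfunctions of $\mathcal{L}_s$ concentrate on tubes of radius $\sim r^{-1/2}$, so $\|\nabla j\|^2_{L^2}$ can be of order $r\|j\|^2_{L^2}$, much larger than the target. The whole point of the lemma is that the \emph{Reeb-direction} derivative is small, not that the full covariant derivative is. Consistently with this, when you pair the curvature term with the full $j$ you get, modulo $O(1)$ corrections,
\[
\langle \mathrm{cl}(F_B)j,j\rangle \;\approx\; -\tfrac{r}{2}(1-|\alpha|^2)\bigl(|j_0|^2 - |j_1|^2\bigr),
\]
which is negative of size $\sim r(1-|\alpha|^2)|j_0|^2$ on the $E_\Gamma$-component. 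There is no way to ``rewrite this as a Clifford-algebraic contraction with $\psi$ and absorb it'': the term has the wrong sign on the $j_0$ block and is genuinely of order $r$ near $\alpha^{-1}(0)$. Invoking local elliptic estimates to control $|j|$ pointwise where $\alpha$ is small is the content of a \emph{different} lemma in the paper (Lemma \ref{lem_interior_L2_j_LambdaGamma}, the analogue of Taubes's Lemma 3.4), which is used independently in the proof of Proposition \ref{prop:spectralFlow2}; it is not what goes into proving the present lemma, and grafting it in here would amount to a circular or much more elaborate argument than you acknowledge.

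The paper sidesteps all of this by pairing the Weitzenb\"ock identity against $(0,j_1)$ rather than against $j$. On the $E_\Gamma\otimes V$ block the diagonal entry of $\mathrm{cl}(F_B)$ is $+\tfrac{r}{2}(1-|\alpha|^2+|\beta|^2)$, which has the favorable sign and can simply be dropped from the lower bound \eqref{eqn_lower_bound_<cl(F_B)j,j1>}; the off-diagonal piece $\alpha^*\beta\, j_0\bar j_1$ is $O(r^{-1/2}|j|^2)$ by the pointwise bound $|\beta|\lesssim r^{-1/2}$ from Proposition \ref{prop:3dEstimates1}, and the $\mathrm{cl}(F_{B_\Gamma})$ term contributes only $\Lambda_\Gamma\|j\|^2_{L^2}$. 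This yields $\|\widecheck\nabla j_1\|^2_{L^2} \lesssim \lambda^2+\Lambda_\Gamma+1$ (hence $\|\nabla j_1\|^2_{L^2}$ too). The control on $\nabla_R j_0$ is then recovered \emph{algebraically} from the Dirac equation: the $E_\Gamma$-component of $D_B j$ equals $\nabla_R j_0$ plus a first-order operator applied to $j_1$, so $\|\nabla_R j_0\|_{L^2}\lesssim \lambda + \|\nabla j_1\|_{L^2}$. Combining gives the lemma without ever needing a bound on the transverse derivatives of $j_0$. This two-step decomposition — favorable sign on $j_1$, Dirac structure to recover $\nabla_R j_0$ — is the essential idea you are missing.
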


\begin{proof}  Without loss of generality, assume $\|j\| = 1$. Recall that by the assumptions of Proposition \ref{prop:spectralFlow2}, we have $d> 0$. We discuss two cases.

\paragraph{Case 1:} $\mathcal{L} = \mathcal{L}_s$ for some $s\in [0,1]$. Let $\mathfrak{q} = \mathcal{L}^2 j$, then $\|\mathfrak{q}\|_{L^2}\le \lambda^2$. By the Weitzenboch formula,
\begin{equation}
\label{eqn_Weitzenboch_Dirac}
    \nabla^*\nabla j + s\cdot\cl( F_B ) + (1-s) \cdot \cl(F_{B_\Gamma}) + \cl(\mathfrak{R}) j = \mathfrak{q},
\end{equation}
where $\mathfrak{R}$ only depends on the geometry of $M$. Write $j=(j_0,j_1)$ and $\mathfrak{q} = (\mathfrak{q}_0,\mathfrak{q}_1)$ with respect to the decomposition $S_\Gamma = E_\Gamma \oplus (E_\Gamma\otimes V)$.  Taking the inner product of \eqref{eqn_Weitzenboch_Dirac} with $(0,j_1)$ yields:
$$
\int \langle \nabla j, \nabla (0,j_1)\rangle + \langle  s\cdot \cl( F_B ) + (1-s) \cdot \cl(F_{B_\Gamma}) + \cl(\mathfrak{R}) j, (0,j_1)\rangle = \int \langle \mathfrak{q_1}, j_1\rangle.
$$
Since
$$
\int \langle \nabla j, \nabla (0,j_1)\rangle = \|\widecheck{\nabla} j_1\|_{L^2}^2 + \| \mathcal{I}j_1 \|_{L^2}^2 +
\int \langle \mathcal{I}j_0, \widecheck{\nabla} j_1 \rangle + \langle \widehat{\nabla} j_0, \mathcal{I} j_1 \rangle
$$
and $\|j_1\|_{L^2} \le \|j\|_{L^2} =1, \|\mathfrak{q}_1\|_{L^2}\le \|\mathfrak{q}\|_{L^2} \le \lambda^2$,
we have
\begin{equation}
\label{eqn_weitzenboch_nabla'j1}
    \|\widecheck{\nabla} j_1\|_{L^2}^2 +
\int \langle \mathcal{I}j_0, \widecheck{\nabla} j_1 \rangle + \langle \widehat{\nabla} j_0, \mathcal{I} j_1 \rangle
\le 
(\lambda^2 + \Lambda_\Gamma + \kappa_1) - s \int \langle\cl(F_B)j, j_1\rangle
\end{equation}
for some constant $\kappa_1$. Recall that
$$
    \cl(F_B)
\begin{pmatrix}
    j_0 \\
    j_1
\end{pmatrix}
=
\begin{pmatrix}
    r(|\alpha|^2-|\beta|^2-1)/2& \alpha\beta^*\\
     \alpha^* \beta & -r(|\alpha|^2-|\beta|^2-1)/2
\end{pmatrix}
\begin{pmatrix}
    j_0 \\
    j_1
\end{pmatrix} + O(1).
$$
By the first item in Proposition \ref{prop:3dEstimates1}, this implies
\begin{equation}
\label{eqn_lower_bound_<cl(F_B)j,j1>}
\int \langle\cl(F_B)j, j_1\rangle \ge - \kappa_2\|j\|_{L^2}^2  = - \kappa_2.
\end{equation}
We also have
\begin{align*}
& \Big| \int \langle \mathcal{I}j_0, \widecheck{\nabla} j_1 \rangle + \langle \widehat{\nabla} j_0, \mathcal{I} j_1 \rangle \Big| = \Big| \int \langle \mathcal{I}j_0, \widecheck{\nabla} j_1 \rangle + \langle  j_0, (\widehat{\nabla})^*(\mathcal{I} j_1) \rangle \Big| \le \frac12 \|\widecheck{\nabla} j\|_{L^2}^2 + \kappa_3.
\end{align*}
Hence by \eqref{eqn_weitzenboch_nabla'j1}, we have
$\|\widecheck{\nabla} j_1\|_{L^2}^2\lesssim \lambda^2 + \Lambda_\Gamma + 1.$
Since
$\|\mathcal{I} j\|_{L^2}\lesssim 1,$
we conclude that
\begin{equation}
\label{eqn_L2_bound_nabla_j1_LambdaGamma}
    \|\nabla j_1\|_{L^2}^2\lesssim \lambda^2 + \Lambda_\Gamma + 1.
\end{equation}

By the definition of $j$, we have $\|\mathcal{L}j\|\le \lambda$. By the definition of the Dirac operator, $\nabla_R j_0$ equals a linear combination of the components of $\nabla j_1$ and $\mathcal{L}j$.  Hence by \eqref{eqn_L2_bound_nabla_j1_LambdaGamma}, we have
\begin{equation}
\label{eqn_L2_bound_nabla_j0_LambdaGamma}
    \|\nabla_R j_0\|_{L^2}^2\lesssim \lambda^2 + \Lambda_\Gamma + 1,
\end{equation}
  and the desired estimate \eqref{eqn_lem_global_L2_j_LambdaGamma} follows from \eqref{eqn_L2_bound_nabla_j1_LambdaGamma} and \eqref{eqn_L2_bound_nabla_j0_LambdaGamma}.

\paragraph{Case 2:} $\mathcal{L} = \mathcal{L}'_s$ for some $s\in [0,1]$.  This case is proved by repeating the argument in \cite[p. 1373-1374]{TaubesWeinstein2}, replacing all instances of the energy bound ``$E$'' with $d$, since none of the estimates therein depend on the choice of base connection $B_\Gamma$.  
\end{proof}

The next two lemmas establish a uniform bound for the constant in \cite[Lemma 3.4]{TaubesWeinstein2} when $\mathcal{L} = \mathcal{L}_s$.

\begin{lem}
\label{lem_holder_estimate_1/2_j}
Suppose $\mathcal{L} = \mathcal{L}_s$ for $s\in[0,1]$. Let $j$ be a section of $S_\Gamma$, let $\fq = \cL^2 j$.
    Suppose $\rho>0$ is less than the injectivity radius of $M$ and $z$ is a point on $M$. Let $e_z$ be the maximum of $|F_B|+|F_{B_\Gamma}|+1$ on $B_\rho(z)$.  Then
    $$
    \big||j(z)| - |j(x)|\big|\lesssim |x-z|^{1/2} [(e_z^2\rho^2 +\rho^{-2})\|j\|_{L^2(B_\rho(z))} + (\rho^{2} e_z + 1)\|\frak{q}\|_{L^2(B_\rho(z))}].
    $$
\end{lem}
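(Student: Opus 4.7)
The proof is modeled on Taubes' argument for the analogous Lemma 3.4 of \cite{TaubesWeinstein2}, with care taken to make the dependence on the base connection $B_\Gamma$ (through the quantity $e_z$) explicit and uniform in the spin-c structure. The starting point is the Weitzenböck identity used in Case 1 of the proof of Lemma \ref{lem_global_L2_j_LambdaGamma}: writing $\fq = \cL^2 j$, one has $\nabla^*\nabla j + s\cl(F_B)j + (1-s)\cl(F_{B_\Gamma})j + \cl(\mathfrak{R})j = \fq$. Pairing with $j$, invoking the Kato inequality $|\nabla|j|| \leq |\nabla j|$, and using that $|F_B| + |F_{B_\Gamma}| + 1 \leq e_z$ on $B_\rho(z)$, I would derive the distributional inequality $d^*d|j| \leq |\fq| + \kappa\, e_z\, |j|$ on $B_\rho(z)$, where $d^*d$ is the positive scalar Laplacian.

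Next, let $\chi$ be a smooth cutoff equal to $1$ on $B_{\rho/2}(z)$, supported in $B_\rho(z)$, with $|\nabla^k\chi|\lesssim \rho^{-k}$. Working in geodesic normal coordinates based at $z$ (valid because $\rho$ is less than the injectivity radius), I extend $u := \chi|j|$ by zero to $\mathbb{R}^3$. Expanding $d^*du$ produces the sum of $\chi\cdot d^*d|j|$, cross terms $\langle d\chi, d|j|\rangle$, and $|j|\cdot d^*d\chi$. To control the middle term I need a companion bound on $\|\chi\nabla j\|_{L^2}$: multiplying Weitzenböck by $\chi^2 j$, integrating by parts, and absorbing, I would obtain
\begin{equation*}
\|\chi\nabla j\|_{L^2}^2 \lesssim (e_z + \rho^{-2})\,\|j\|_{L^2(B_\rho)}^2 + \|\fq\|_{L^2(B_\rho)}\|j\|_{L^2(B_\rho)}.
\end{equation*}
Plugging this back into the expansion for $d^*du$ and applying Young's inequality to the resulting cross term, I arrive at the $L^2$ bound $\|d^*du\|_{L^2(\mathbb{R}^3)} \lesssim (e_z + \rho^{-2})\|j\|_{L^2(B_\rho)} + \|\fq\|_{L^2(B_\rho)}$. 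A trivial case-split in $e_z\rho^2$ shows $e_z + \rho^{-2} \lesssim e_z^2\rho^2 + \rho^{-2}$ and $1 \leq \rho^2 e_z + 1$, so this inequality is stronger than the statement's right-hand side coefficients.

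Finally, to pass from an $L^2$ bound on $d^*du$ to a Hölder-$\frac12$ bound on $u$, I would use the fact that in geodesic normal coordinates the Laplace--Beltrami operator differs from the flat Euclidean Laplacian by lower-order terms with coefficients bounded by geometric constants (this is where $\rho$ being less than the injectivity radius enters). Convolving with the Euclidean Green's function $G(x,y) = (4\pi|x-y|)^{-1}$ represents $u$ as the Riesz potential $I_2$ of the right-hand side (up to a controlled error). The classical Riesz-potential Hölder estimate, that $I_2$ maps $L^2(\mathbb{R}^3)$ into $C^{0,1/2}(\mathbb{R}^3)$ since $2 - 3/2 = 1/2$, then yields $|u(x)-u(z)| \lesssim |x-z|^{1/2}\,\|d^*du\|_{L^2(\mathbb{R}^3)}$. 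Since $u = |j|$ on $B_{\rho/2}(z)$, this implies the estimate for $x \in B_{\rho/2}(z)$; for $|x-z| \geq \rho/2$ the desired bound follows from a parallel $L^\infty$ estimate obtained by using the mapping $I_2\colon L^p(\mathbb{R}^3)\to L^\infty$ for $p>3/2$. The main obstacle is not conceptual but combinatorial: one must juggle the Bochner inequality, the cutoff-induced commutator terms, and several applications of Young's inequality to arrive at the precise combination of coefficients $(e_z^2\rho^2+\rho^{-2})$ and $(\rho^2 e_z + 1)$ in the stated form, while ensuring that no hidden dependence on $\Gamma$ or $\fc_\Gamma$ enters the implicit constants beyond what is encoded in $e_z$.
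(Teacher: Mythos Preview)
Your approach has a genuine gap at the step where you pass from the Kato differential inequality to an $L^2$ bound on $d^*du$. Pairing the Weitzenb\"ock formula with $j$ and invoking Kato yields only the \emph{one-sided} distributional inequality $d^*d|j|\le |\fq|+\kappa e_z|j|$; it gives no lower bound on $d^*d|j|$. Indeed, on the set where $j\neq 0$ one has
\[
d^*d|j|=\frac{\mathrm{Re}\langle\nabla^*\nabla j,\,j\rangle}{|j|}-\frac{|\nabla j|^2-|d|j||^2}{|j|},
\]
and the second (nonpositive) term is not controlled by anything you have in hand when $|j|$ is small relative to $|\nabla j|$. Hence the term $\chi\cdot d^*d|j|$ in your expansion of $d^*du$ need not lie in $L^2$ with the bound you claim, and the Riesz potential argument cannot be launched. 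One-sided subharmonic-type bounds are adequate for sup-norm estimates via positive Green kernels, but H\"older continuity requires two-sided control.

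The paper sidesteps this entirely by never passing to $|j|$. Instead it trivializes $E_\Gamma$ on $B_\rho(z)$ by radial parallel transport with respect to $B_\Gamma+s(B-B_\Gamma)$, so that $j$ becomes an honest $\mathbb{C}^2$-valued function and the Weitzenb\"ock formula becomes a genuine \emph{equation}
\[
d^*dj=\Gamma_0\cdot j+\Gamma_1\cdot dj+\fq,\qquad |\Gamma_0|\lesssim e_z+\rho^2e_z^2,\quad |\Gamma_1|\lesssim \rho e_z,
\]
the extra factors of $\rho e_z$ and $\rho^2e_z^2$ coming from the connection one-form $\hat b$ in this gauge (with $|\hat b|\lesssim\rho e_z$). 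One then bounds $\|d^*d(\chi^2 j)\|_{L^2}$ directly and applies the Green's function estimate $\|G_x-G_z\|_{L^2}\lesssim|x-z|^{1/2}$ to obtain $|j(z)-j(x)|\lesssim|x-z|^{1/2}\|d^*d(\chi^2 j)\|_{L^2}$; the triangle inequality $\big||j(z)|-|j(x)|\big|\le|j(z)-j(x)|$ then finishes. The first-order term $\Gamma_1\cdot dj$ is precisely what produces the coefficients $e_z^2\rho^2$ and $\rho^2e_z$ in the statement; your route, even were the Kato step valid, would not see these terms and would give apparently sharper constants---which is a signal that something has been lost.
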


\begin{proof}
To simplify notation, we will write $B_\tau = B_{\tau\rho}(z)$ for $\tau\in(0,1]$. Trivialize the line bundle $E_\Gamma$ over $B_1$ by parallel transport via $B_\Gamma + s(B-B_\Gamma)$ along the radial geodesics from $z$. Then the connection $B$ is given by $d + \hat b$ with
$|\hat b| \lesssim \rho \, e_z ,$
$|\partial \hat b| \lesssim  e_z, $
where $\partial$ denotes the partial derivatives in the normal coordinate chart.
Fix an arbitrary trivialization of the bundle $V$ on $B_1$.  This identifies $j$ and $\fq$ with smooth functions on $B_1$ valued in $\mathbb{C}^2$.

The Weitzenbock formula for $\cL^2$ now takes the form
\begin{equation}
\label{eqn_weitzenboch_local_trivialized}
d^*dj = \Gamma_0\cdot j + \Gamma_1\cdot dj + \fq.    
\end{equation}
Here $\Gamma_0$ is an operator coming from several sources: they are multiplication by $\hat b^*\hat b$, Clifford multiplication by $sF_B +(1-s)F_{B_\Gamma}$, and an operator arising from the curvature of $M$.
This implies the pointwise bound
$$
|\Gamma_0|\lesssim e_z + \rho^2\,e_z ^2.
$$
The term $\Gamma_1$ is a linear combination of the components of $\hat b$, which implies the pointwise bound
$$
|\Gamma_1|\lesssim \rho\,e_z.
$$

We first  estimate the $L^2$ norm of $dj$ using \eqref{eqn_weitzenboch_local_trivialized}. Let $\chi$ be a radial cutoff function on $B$ which takes values in $[0,1]$, is equal to $1$ on $B_{1/2}$, is equal to $0$ outside of $B_{2/3}$, and satisfies $|\nabla\chi|\le 10\rho^{-1}$. Take the inner product of \eqref{eqn_weitzenboch_local_trivialized} with $\chi^2j$ and integrate by parts over $B$. We have
$$
\int \langle dj ,d (\chi^2 j) \rangle = \int \langle \Gamma_0\cdot j + \Gamma_1\cdot dj + \fq, \chi^2 j \rangle,
$$
therefore
\begin{align*}
\|\chi dj\|_{L^2}^2 
& \le \int |\langle dj, d(\chi^2)j\rangle| +  \chi^2 (e_z + \rho^2 e_z^2) |j|^2 + \chi^2 \rho e_z  |dj|\cdot |j| + \chi^2|\fq| \cdot |j|
\\  
& \le
 \frac12 \|\chi dj\|_{L^2}^2 + \kappa \bigg(
    \|(d \chi) j\|_{L^2}^2 + (e_z + \rho^2 e_z^2)  \|\chi j \|_{^2}^2 + \rho e_z \|\chi j\|_{L^2}^2 + \|\chi \fq\|_{L^2}\|\chi j \|_{L^2}\bigg).
\end{align*}
Recall that $\rho$ is no greater than the injectivity radius of $M$, thus $\rho\lesssim 1\le e_z$. Hence
$$
\|\chi dj\|_{L^2}^2   \lesssim
    \|(d \chi) j\|_{L^2}^2 + (e_z + \rho^2 e_z^2)  \|\chi j \|_{L^2}^2  + \|\chi \fq\|_{L^2}\|\chi j \|_{L^2}.
$$
Since
$\|\chi \fq\|_{L^2}\|\chi j \|_{L^2}\lesssim \rho^2\|\chi \fq\|_{L^2}^2 + \rho^{-2} \|\chi j \|_{L^2},$
we have
\begin{equation}
\label{eqn_bound_chi_dj_L2}
\|\chi dj\|_{L^2}\lesssim (\rho^{-1} + e_z^{1/2} + \rho e_z)\cdot \|j\|_{L^2(B)} +  \rho\|\fq\|_{L^2(B)}.
\end{equation}

Now for any point $x$ in $B_{1/2}$, let $G_x$ denote the Green's function for $d ^*d$ with Dirichlet boundary condition with singularity at $x$. Then we have
$\|G_x-G_z\|_{L^2} \lesssim |x-z|^{1/2},$
and hence
\begin{equation*}
    \big||j(z)| - |j(x)|\big| \le \Bigg|\int (G_z-G_x) d^*d(\chi^2 j) \Bigg| \lesssim |x-z|^{1/2} \cdot \|d^*d(\chi^2 j)\|_{L^2}.
\end{equation*}
By \eqref{eqn_bound_chi_dj_L2},
\begin{align*}
     \|d^*d(\chi^2 j)\|_{L^2}
    &
    \lesssim \|\chi^2 d^*dj\|_{L^2} + \||d(\chi^2)|\cdot |dj|\|_{L^2}  + \|d^*d(\chi^2)j\|_{L^2}
    \\
    &   \lesssim \|\chi^2\Gamma_0\cdot j\|_{L^2} +
        \|\chi^2 \Gamma_1\cdot dj\|_{L^2} + \|\chi^2 \fq\|_{L^2}
    + \rho^{-1} \|\chi dj\|_{L^2} + \rho^{-2} \|j\|_{L^2(B)} 
    \\
    & \lesssim 
    ( e_z + e_z^2\rho^2)\|j\|_{L^2(B)}
        + \rho e_z[(\rho^{-1} + e_z^{1/2}  + \rho e_z)\cdot \|j\|_{L^2(B)} + \rho \|\fq\|_{L^2(B)}] \\
    & \quad 
        + \|\fq\|_{L^2(B)}
        + \rho^{-1} [(\rho^{-1} + e_z^{1/2}  + \rho e_z)\cdot \|j\|_{L^2(B)} + \rho \|\fq\|_{L^2(B)}]
        +  \rho^{-2} \|j\|_{L^2(B)}
        \\
    & \lesssim  (e_z + e_z^2\rho^2 + \rho\, e_z^{3/2}  + \rho^{-2} + \rho^{-1} e_z^{1/2}) \|j\|_{L^2(B)}
        + (\rho^{2} e_z + 1) \|\fq\|_{L^2(B)}
        \\
    & \lesssim  (e_z^2\rho^2 + \rho^{-2}) \|j\|_{L^2(B)} + (\rho^{2} e_z + 1) \|\fq\|_{L^2(B)}.
\end{align*}  Therefore the lemma is proved.
\end{proof}

As a consequence, we have the following interior bound on the $L^2$ norm of $j$.
\begin{lem}
\label{lem_interior_L2_j_LambdaGamma}
Suppose $\mathcal{L} = \mathcal{L}_s$ for $s\in[0,1]$. Let $j$ be a section of $S_\Gamma$, let $\fq = \cL^2 j$.
    Suppose $\rho>0$ is less than the injectivity radius of $M$ and $z$ is a point on $M$ such that $j(z) = 0$.
     Suppose $\|F_{B_\Gamma}\|_{C^0(B_\rho(z))}\le \Lambda d$, $\rho \le d^{-1/2}$, $\|r(1-|\alpha|^2)\|_{C^0(B_\rho(z))}\le \rho^{-2}$. Then there exists a constant $c$ depending only on $\Lambda$ and the geometry of $M$ such that
    $$
    \|j\|^2_{B_{\sigma\rho}(z)} \le c \sigma^4 (\|j\|_{L^2(B_\rho(z))}^2 + \rho^4 \|\frak{q}\|_{L^2(B_\rho(z))}^2).
    $$
\end{lem}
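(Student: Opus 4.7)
The plan is to prove this as a direct consequence of the preceding Hölder-type bound (Lemma \ref{lem_holder_estimate_1/2_j}), together with the standing hypotheses, which are chosen precisely so that the various powers of $e_z$ and $\rho$ appearing there can be controlled uniformly. Since $j(z)=0$, Lemma \ref{lem_holder_estimate_1/2_j} gives the pointwise bound
$$|j(x)| \lesssim |x-z|^{1/2}\bigl[(e_z^2\rho^2+\rho^{-2})\|j\|_{L^2(B_\rho(z))} + (\rho^2 e_z+1)\|\mathfrak{q}\|_{L^2(B_\rho(z))}\bigr]$$
for every $x\in B_\rho(z)$. The remainder of the proof is a direct computation: I would square this estimate, integrate over $B_{\sigma\rho}(z)$, and absorb the geometric prefactors into a constant depending only on $\Lambda$.

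The key technical input driving the whole argument is the uniform bound $e_z\rho^2 \lesssim \Lambda+1$, where the implicit constant is geometric. This follows from the three hypotheses in the statement: first, $\|F_{B_\Gamma}\|_{C^0(B_\rho(z))}\le \Lambda d$ together with $\rho^2\le d^{-1}$ yields $\rho^2\|F_{B_\Gamma}\|_{C^0(B_\rho(z))}\le \Lambda$; second, item (2) of Proposition \ref{prop:3dEstimates1} gives the pointwise bound $|F_B|\le \kappa_{\ref{prop:3dEstimates1}}r(1-|\alpha|^2)+O(1)$, and the hypothesis $\|r(1-|\alpha|^2)\|_{C^0(B_\rho(z))}\le \rho^{-2}$ together with $\rho\le d^{-1/2}\le 1$ then gives $\rho^2|F_B|\lesssim 1$; the constant $1$ in the definition of $e_z$ contributes $\rho^2\le 1$.

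Having established $e_z\rho^2\lesssim 1$, I would proceed as follows. Since $|x-z|\le \sigma\rho$ on $B_{\sigma\rho}(z)$ and $\mathrm{vol}(B_{\sigma\rho}(z))\lesssim (\sigma\rho)^3$, squaring the pointwise bound and integrating yields
\begin{align*}
\int_{B_{\sigma\rho}(z)}|j(x)|^2\,dx &\lesssim \bigl[(e_z^2\rho^2+\rho^{-2})\|j\|_{L^2(B_\rho(z))} + (\rho^2 e_z+1)\|\mathfrak{q}\|_{L^2(B_\rho(z))}\bigr]^2\int_{B_{\sigma\rho}(z)}|x-z|\,dx\\
&\lesssim \sigma^4\rho^4\bigl[(e_z^2\rho^2+\rho^{-2})^2\|j\|_{L^2(B_\rho(z))}^2 + (\rho^2 e_z+1)^2\|\mathfrak{q}\|_{L^2(B_\rho(z))}^2\bigr].
\end{align*}
Expanding the squared coefficients using $e_z\rho^2\lesssim 1$ gives
$$\rho^4(e_z^2\rho^2+\rho^{-2})^2 = (e_z\rho^2)^4+2(e_z\rho^2)^2+1 \lesssim 1, \qquad \rho^4(\rho^2 e_z+1)^2 \lesssim \rho^4,$$
and plugging these in yields $\|j\|^2_{L^2(B_{\sigma\rho}(z))} \lesssim \sigma^4(\|j\|^2_{L^2(B_\rho(z))}+\rho^4\|\mathfrak{q}\|^2_{L^2(B_\rho(z))})$, which is the claim.

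There is no serious obstacle here beyond recognizing the correct scaling. The only nontrivial point is the scale-matching encoded in the hypotheses $\rho\le d^{-1/2}$ and $\|r(1-|\alpha|^2)\|_{C^0(B_\rho(z))}\le \rho^{-2}$, which are precisely what is needed to turn the pointwise curvature bounds of Proposition \ref{prop:3dEstimates1} into the uniform estimate $e_z\rho^2\lesssim 1$ that makes every coefficient in the final calculation bounded by a constant depending only on $\Lambda$ and the metric on $M_\phi$.
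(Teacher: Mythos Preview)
Your proof is correct and follows essentially the same approach as the paper: both establish the key bound $e_z\rho^2 \lesssim_\Lambda 1$ from the hypotheses and Proposition~\ref{prop:3dEstimates1}, then feed the pointwise H\"older estimate of Lemma~\ref{lem_holder_estimate_1/2_j} (with $j(z)=0$) into an integration over $B_{\sigma\rho}(z)$. You spell out the integration $\int_{B_{\sigma\rho}(z)}|x-z|\,dx\lesssim(\sigma\rho)^4$ explicitly, whereas the paper collapses this step into a single $\lesssim$; otherwise the arguments are identical.
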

\begin{proof}
    Let $e_z$ be the maximum of $|F_B|+|F_{B_{\Gamma}}|+1$ on $B_\rho(z)$. By the assumptions and Proposition \ref{prop:3dEstimates1}, there exists a constant $c_1$ depending only on $\Lambda$ and the geometry of $M_\phi$ such that $\rho^2 e_z\le c_1$.
    By Lemma \ref{lem_holder_estimate_1/2_j} and the assumption that $j(z)=0$,
    \begin{align*}
    \|j\|^2_{B_{\sigma\rho}(z)} & \lesssim \sigma^4 [(e_z^4\rho^8 + 1)\|j\|_{L^2(B_\rho(z))}^2 + \rho^4(\rho^4e_z^2 + 1)\|\frak{q}\|_{L^2(B_\rho(z))}^2]
    \\
    &
    \le (c_1^4+c_1^2 + 1)\sigma^4 (\|j\|_{L^2(B_\rho(z))}^2 + \rho^4 \|\frak{q}\|_{L^2(B_\rho(z))}^2),
    \end{align*}
    and the lemma is proved.
\end{proof}

Now we can finish the proof of Proposition \ref{prop:spectralFlow2}.

\begin{proof}[Proof of Proposition \ref{prop:spectralFlow2}]
As discussed after the statement of Proposition  \ref{prop:spectralFlow2}, the desired spectral flow estimate is proved in \cite[Section 3]{TaubesWeinstein2}, and we only need to show that the constants in \cite[Lemma 3.3]{TaubesWeinstein2} and \cite[Lemma 3.4]{TaubesWeinstein2} depend only on $\Lambda$ and the geometry of $M_\phi$ and are independent of the spin-c structure or the base connection $B_\Gamma$.

Recall that by \eqref{eqn_d_same_order_as_E}, we have $E=2\pi d + O(1)$. By the assumptions of Proposition \ref{prop:spectralFlow2}, we have $d\ge 1$. Therefore, taking $\lambda = d^{1/2}$ in Lemma \ref{lem_global_L2_j_LambdaGamma} shows that the constant in \cite[Lemma 3.3]{TaubesWeinstein2} only depends on $\Lambda$ and the geometry of $M_\phi$.  Lemma \ref{lem_interior_L2_j_LambdaGamma} shows that the constant in \cite[Lemma 3.4]{TaubesWeinstein2} only depends on $\Lambda$ and the geometry of $M_\phi$ if $\mathcal{L} = \mathcal{L}_s$.  When $\mathcal{L} = \mathcal{L}_s'$ for $s\in [0,1]$, the statement of \cite[Lemma 3.4]{TaubesWeinstein2} is a local estimate independent of the base connection, and its proof in \cite{TaubesWeinstein2} only makes use of the estimates for $\alpha$ and $\beta$ that are given by Proposition \ref{prop:3dEstimates1} in our setting. Therefore, the constant in \cite[Lemma 3.4]{TaubesWeinstein2} does not depend on the spin-c structure or the base connection $B_\Gamma$ when $\mathcal{L} = \mathcal{L}_s'$.
\end{proof}

\subsection{$(\delta, d)$-approximations} \label{subsec:deltaDEstimates} In this section, we provide details of the construction of $(\delta, d)$-approximations. We then show that the PFH and Seiberg-Witten spectral invariants do not change by much after taking a $(\delta, d)$-approximation.

\subsubsection{Details of the construction} \label{subsubsec:deltaDDetails} Fix a twisted PFH parameter set $\bfS = (\phi, \Theta_{\text{ref}}, J)$ such that the twisted PFH group $\TWPFH_*(\bfS)$ is well-defined and the class $[\Theta_{\text{ref}}]$ has degree $\leq d$. Recall from \S\ref{subsubsec:dFlatApproximations} that a $(\delta, d)$-approximation consists of a pair $(\phi_*, J_*)$ where $\phi_* \in \Diff(\Sigma, \omega)$ and $J_* \in \cJ(dt, \omega_\phi)$, and a one-parameter family $\{H^s\}_{s \in [0,1]}$ of Hamiltonians such that the time-one maps $\wt\phi(s) = \phi^1_{H^s}$ generate a distinguished isotopy from $\phi$ to $\phi_*$. We give details of the construction of $(\phi_*, J_*)$ and $\{H^s\}$ in several steps. 

\textbf{Step 1:}  We begin by summarizing the basic structure of a $(\delta,d)$-approximation.

The map $x \to (0,x) \in \Sigma \times [0,1]$ gives a canonical embedding of the surface $\Sigma$ onto the fiber of the mapping torus $M_\phi$ over the point $0 \in \bR/\bZ$. This defines a natural identification of the vertical tangent bundle $V$ along the image of this embedding with $T\Sigma$. Write $J_\Sigma$ for the almost-complex structure on $T\Sigma$ induced by $J$.  It now follows that there is a constant $\delta_* > 0$ depending only on the area form $\omega$ such that, for any point $p \in \Sigma$, there is a symplectic coordinate chart
$$\tau_p: \mathbb{C} \supset B_{\delta_*}(0) \to \Sigma$$
such that $\tau_p(0) = p$ and $D\tau_p^{-1} \circ J_\Sigma \circ D\tau_p = i$. 

Write $\Lambda_d$ for the set of all periodic points of period less than or equal to $d$ in $\Sigma$. For further reference, as long as $\delta_*$ is sufficiently small, we find that the following is true. For any $\delta \in (0, \delta_*)$, the symplectic coordinate charts fixed above define a disjoint union of embedded disks
$$\bD_\delta = \sqcup_{p \in \Lambda_d} \tau_p(B_\delta(0)) \subset \Sigma.$$

We will now choose a parameter $\delta \ll \delta_*$. Lee--Taubes showed in \cite[Lemma $2.1$]{LeeTaubes12} that, given a fixed $d$, there exists, a map $\phi_* \in \Diff(\Sigma, \omega)$ and a distinguished Hamiltonian isotopy $\wt\phi_*$ from $\phi$ to $\phi_*$ such that the following holds:
\begin{enumerate}
\item For any $s \in [0,1]$, the map $\wt\phi_*(s)$ is $d$-nondegenerate and has the same set of periodic orbits of period less than or equal to $d$ as the diffeomorphism $\phi$. Moreover, a periodic point of $\wt\phi_*(s)$ of period less than or equal to $d$ is positive/negative hyperbolic or elliptic if and only if it is positive/negative hyperbolic or elliptic, respectively as a periodic point of $\wt\phi(s)$. If it is elliptic, it has the same rotation number with respect to $\wt\phi_*(s)$ as with respect to $\phi$.
\item For any $s \in [0,1]$, the map $\wt\phi_*(s)$ agrees with $\phi$ at any point outside of the union of coordinate disks $\bD_\delta \subset \Sigma$.
\item The diffeomorphism $\phi_*$ has a specific normal form inside the union of coordinate disks $\bD_\delta \subset \Sigma$.
\end{enumerate}

The first, second and third items correspond to the third, fifth and fourth points, respectively of \cite[Lemma $2.1$]{LeeTaubes12}; for details regarding the third item, we refer the reader to \cite[Section $2.1$]{LeeTaubes12}. Note that by the second item, as long as $\delta$ is small enough, $\phi_*$ differs from $\phi$ only on $\bD_\delta$. This implies $\phi_*$ is Hamiltonian isotopic to $\phi$, and moreover we can find a Hamiltonian generating the isotopy which is compactly supported in $\bD_\delta$. 

\textbf{Step 2:} To prove what we want to know about these approximations, we will need to recall some facts about the construction of the map $\phi_*$ and the isotopy $\wt\phi_*$ from \cite{LeeTaubes12}.   
 
We first elaborate on the normal form near the relevant periodic points before perturbing.  As long as $\delta$ is sufficiently small, Lemma $2.2$ in \cite{LeeTaubes12} produces, for each embedded Reeb orbit $\gamma$ in $M_\phi$ of period less than or equal to $d$, a tubular neighborhood embedding 
$$\tau_\gamma: \bR/\bZ \times \mathbb{C} \supset \bR/\bZ \times B_{100\delta}(0) \to M_\phi$$ 
of $\gamma$ such that the images of the maps $\tau_\gamma$ are all disjoint.   Let $\theta$ be the coordinate on $\bR/\bZ$ and $z$ be the complex coordinate on $\mathbb{C}$. Then there is a constant $\kappa = \kappa(\phi, J) \geq 1$ such that the following facts hold regarding the tubular neighbhorhood embedding $\tau_\gamma$ for any embedded Reeb orbit $\gamma$ of period $q \leq d$. There are certain smooth functions $\nu_\gamma \in C^\infty(\bR/\bZ, \bR)$ and $\mu_\gamma \in C^\infty(\bR/\bZ, \mathbb{C})$ associated to $\gamma$, defined in \cite[Definition $2.1$]{LeeTaubes12}
and which we make reference to without comment, such that the following holds:
\begin{enumerate}
\item The composition of $\tau_\gamma$ with the projection $M_\phi \to \bR/\bZ$ is the map $(\theta, z) \mapsto q\theta$.
\item For any $\gamma$, there are smooth functions $\nu \in C^\infty(\bR/\bZ, \bR)$ and $\mu \in C^\infty(\bR/\bZ, \mathbb{C})$ such that 
$$(D\tau_\gamma)^{-1}(q^{-1}R) = \partial_\theta - 2i(\nu z + \mu \overline{z} + \fe)\partial_z + 2i(\nu \overline{z} + \overline{\mu} z + \overline{\fe})\partial_{\overline{z}}$$
where $\fe$ is a smooth, complex-valued function on $\bR/\bZ \times B_{100\delta}(0)$ such that 
$$|\fe| + |z||\nabla \fe| \leq \kappa |z|^2.$$
\item For any $\tau_\gamma$ and any point in $\bR/\bZ \times \{0\}$, we have
$(J \circ D\tau_\gamma)(\partial_z) = D\tau_\gamma(i\partial_z).$
\item The restriction of $(\tau_\gamma)^*\omega_\phi$ to $\{p\}\times B_{100\delta}(0)$ is given by $\frac{i}{2}dz \wedge d\overline{z}$  for every $p\in \mathbb{R}/\mathbb{Z}$.
\end{enumerate}

Properties (1)-(3) above are explicitly stated in \cite[Lemma 2.2]{LeeTaubes12}. Property (4) above follows from the construction of the embedding in \cite[Section 2.3]{LeeTaubes12}, which we now explain. Our map $\tau_\gamma$ was denoted by $\varphi_\gamma$ in \cite{LeeTaubes12}. It was stated in \cite[Section 2.3, Part 1]{LeeTaubes12} that the maps $\psi_\tau$ are area-preserving, which implies that the map $\varphi_1$ preserves the area on each slice $\{p\}\times B_{100\delta}(0)$, and hence the map $\varphi_\gamma$ defined by \cite[Definition 2.3]{LeeTaubes12} satisfies property (4) above. 

We next explain the desired form for the element $\phi_*$ of the approximation, in these coordinates.  Use $\tau_\gamma$ to identify the disks $\{ \{k/q\} \times B_{100\delta}(0)\}$ with embedded disks in $\Sigma$, by identifying $\Sigma$ with the fiber over $0$ as above.
Then the construction in \cite{LeeTaubes12} gives a smooth, complex-valued function $\fe'$, equal to $0$ when $z$ is close to zero and equal to $\fe$ when $|z| > \delta$, such that, under this identification, the map $\phi_*$ is the time $1/q$-flow of the vector field
\begin{equation}
\label{eqn:normalform}
v_\gamma = \partial_\theta - 2i(\nu_\gamma z + \mu_\gamma \overline{z} + \fe')\partial_z + 2i(\nu_\gamma \overline{z} + \overline{\mu}_\gamma z + \overline{\fe}')\partial_{\overline z}.
\end{equation}

To prepare for the discussion of what we need to know about the isotopy, we first elaborate on the construction of $\phi_*$.  In Part $2$ of Section $2.3$ of \cite{LeeTaubes12}, the authors construct for each embedded Reeb orbit $\gamma$ a certain family $\{u_{\gamma, \theta}\}_{\theta \in [0,1)}$ of area-preserving maps with domain a small disk $D$ centered at the origin in $\mathbb{C}$ and range $B_{100\delta}(0) \subset \mathbb{C}$.
The family is chosen such that, for a smaller disk $D^* \subset D$, and any $k$, the map $u_{\gamma, k/q}$ agrees with $\tau_\gamma^*\phi$ on $\{k/q\} \times (D \setminus D^*)$.
Moreover, $u_{\gamma, 0}$ acts by the identity map and each map fixes the origin. We now set $\phi_*$ to be equal to $\phi$ outside of any disk in $\Sigma$ of the form $\tau_\gamma(\{k/q\} \times B_{100\delta}(0))$ for an embedded Reeb orbit $\gamma$ of period $q \leq d$. Inside such a disk, we set it to be $(\tau_\gamma^{-1})^*u_{\gamma, k/q}$. We recall as well, from \cite[Eq. 2.7]{LeeTaubes12} and the following discussion in \cite[Section $2.3$]{LeeTaubes12} that the vector field $v_\gamma$ is the sum of $\partial_\theta$ and the time-dependent vector field generating the family of maps $u_{\gamma, \theta}$. 

Now we construct the distinguished Hamiltonian isotopy $\widetilde\phi_*$ from $\phi$ to $\phi_*$. For any $s \in [0,1]$, we fix the map $\widetilde\phi_*(s) \in \Diff(\Sigma, \omega)$ to be equal to $\phi$ outside of any disk in $\Sigma$ of the form $\tau_\gamma(\{k/q\} \times B_{100\delta}(0))$ for an embedded Reeb orbit $\gamma$ of period $q \leq d$. Inside such a disk, we set it to be $(\tau_\gamma^{-1})^*u_{\gamma, s \cdot k/q}$.

We end this step by collecting some key facts about this isotopy.  For any $s \in [0,1]$, the map $\phi^{-1}\widetilde\phi_*(s)$ is supported on a union of embedded disks in $\Sigma$. It follows that $\widetilde\phi_*$ is a Hamiltonian isotopy, generated by a Hamiltonian $H$ compactly supported in this union of embedded disks. We also note that the fact that each map $u_{\gamma, \theta}$ fixes the origin implies that each area-preserving map $\widetilde{\phi}_*(s)$ has the same periodic points of period less than or equal to $d$ as $\phi$ itself.

\textbf{Step 3:} This step discusses the Hamiltonian $H$ generating the isotopy $\widetilde\phi_*(s)$ and its analytic properties. Recall from Step $2$ that, for any $s \in [0,1]$, $\widetilde\phi_*(s)\phi^{-1}$ is supported in the union of embedded coordinate disks $\bD_\delta$ of radius $\delta$ centered on the set $\Lambda_d$ of periodic points of period less than or equal to $d$. We conclude that there is a Hamiltonian $H \in C^\infty(\bR/\bZ \times \Sigma)$, compactly spported in $\bR/\bZ \times \bD_\delta$, generating the isotopy $\widetilde\phi_*(s)$ from $\phi$ to $\phi_*$. We fix a choice of such a Hamiltonian. It will be very important for later arguments to note that the Hamiltonian $H$ satisfies the following bounds.

\begin{lem}
\label{lem:dFlatBounds} Let $(\phi_*, J_*)$ be a $(\delta, d)$-approximation of $(\phi, J)$ with distinguished isotopy $\wt\phi_*$, generated by a Hamiltonian $H \in \cH(\Sigma, \bD_\delta)$ as above. There is a constant $\kappa_{\ref{lem:dFlatBounds}} = \kappa_{\ref{lem:dFlatBounds}}(\phi, J, g, d) \geq 1$ such that 
$$\|H\|_{C^0} + \|dH\|_g \leq \kappa_{\ref{lem:dFlatBounds}}\delta.$$
\end{lem}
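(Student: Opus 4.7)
The proof will establish the bound on $dH$ first and then deduce the $C^0$ bound on $H$ by integration, since $H$ has compact support in the thin set $\bR/\bZ \times \bD_\delta$. The essential input is the vanishing property $u_{\gamma,\theta}(0)=0$ of the interpolating family from Step~2 of the construction, which forces the generating vector field of $\wt\phi_*$ to be $O(\delta)$ on each coordinate disk of radius $\delta$.

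First I would identify the vector field $V_s$ on $\Sigma$ associated to the path $s\mapsto \wt\phi_*(s)$ via $\partial_s \wt\phi_*(s) = V_s\circ \wt\phi_*(s)$; a routine reparametrization converts bounds on $V_s$ into bounds on the Hamiltonian $H=H^1$ generating the corresponding time-one isotopy $\phi^{-1}\wt\phi_*(1)$, with constants depending only on $\phi$, $J$, $g$, and $d$. The point is that outside $\bD_\delta$ the path is constant equal to $\phi$, so $V_s$ is supported on $\phi(\bD_\delta)$, which is again a union of disks of radius comparable to $\delta$ centered at periodic points; similarly $H$ is supported on $\bR/\bZ\times \bD_\delta$.

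Next, to bound $|V_s|$ pointwise on each coordinate disk, I would use the explicit description of $\wt\phi_*(s)$ inside $D_{\gamma,k}$ as $(\tau_\gamma^{-1})^* u_{\gamma,sk/q}$. Its $s$--derivative equals $(k/q)\cdot (\tau_\gamma^{-1})^*\partial_\theta u_{\gamma,\theta}|_{\theta=sk/q}$. Since $u_{\gamma,\theta}(0)=0$ for every $\theta \in [0,1)$, differentiating in $\theta$ gives $\partial_\theta u_{\gamma,\theta}(0)=0$. Smoothness of the family $\{u_{\gamma,\theta}\}$, whose $C^2$ norm is controlled by a constant depending only on $\phi$, $J$, and $d$, together with Taylor's theorem, yields $|\partial_\theta u_{\gamma,\theta}(z)|\lesssim |z|\lesssim \delta$ on $B_\delta(0)$. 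Since $k/q\le 1$ and there are only finitely many Reeb orbits of period $\le d$ (with bound depending on $\phi$ and $d$), this gives $|V_s|\lesssim \delta$ uniformly, and the Hamiltonian equation $\iota_{V_s}\omega=-dH_s$ then yields $|dH_s|_g \lesssim |V_s|\lesssim \delta$.

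Finally, to bound $\|H\|_{C^0}$, I would use that $H$ vanishes on $\partial \bD_\delta$: each disk $D_p\subset \bD_\delta$ has radius $\delta$, so integrating the bound $|dH|_g\lesssim \delta$ along a radial path of length at most $2\delta$ from $\partial D_p$ to any interior point $x$ gives $|H(x)|\lesssim \delta^2 \le \delta$ (after shrinking $\delta_*$ if necessary). Combined with the previous paragraph this proves the lemma, with $\kappa_{\ref{lem:dFlatBounds}}$ depending on $\phi$, $J$, $g$, and $d$ through the $C^2$ bound on $\{u_{\gamma,\theta}\}$, the finite collection of charts $\tau_\gamma$, and the reparametrization from the path $\wt\phi_*(s)$ to the Hamiltonian $H^1$. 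The main obstacle is essentially bookkeeping: carefully tracking how the $s$--parameter of the isotopy relates to the time variable $t\in (0,1)$ of the Hamiltonian while keeping all constants independent of $\delta$. All substantive analytic content is the observation that smoothness of $\{u_{\gamma,\theta}\}$ plus the fact that each $u_{\gamma,\theta}$ fixes the origin forces the generator of the isotopy to be of size $O(\delta)$ in disks of radius $\delta$.
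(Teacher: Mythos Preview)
Your proposal is correct and follows essentially the same route as the paper's proof: bound the vector field generating the isotopy by $O(\delta)$ on each coordinate disk, convert this to a bound on $dH$ via $\omega(X,-)=-dH$, then integrate radially from the boundary of $\bD_\delta$ to get the $C^0$ bound on $H$. The only cosmetic difference is in how the $O(|z|)$ bound on the generating vector field is obtained: the paper writes the generator explicitly as $v_\gamma - (D\tau_\gamma)^{-1}(q^{-1}R)$ and reads off from the normal forms in Step~2 that it is linear in $z$ up to an $O(|z|^2)$ error, whereas you infer the same thing from $u_{\gamma,\theta}(0)=0 \Rightarrow \partial_\theta u_{\gamma,\theta}(0)=0$ plus Taylor. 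Your only implicit assumption---that the relevant $C^2$ bounds on the family $\{u_{\gamma,\theta}\}$ are independent of $\delta$---is exactly what the paper's explicit normal-form bounds $|\fe|+|z||\nabla\fe|\le \kappa|z|^2$ (and the analogous bound for $\fe'$) make manifest.
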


\begin{proof}
Fix any embedded Reeb orbit $\gamma$ of period $q \leq d$. By definition, the Hamiltonian vector field $X$ generating the isotopy $\wt\phi_*$ in the tubular neighborhood $\tau_\gamma(\bR/\bZ \times B_{100\delta}(0))$ is equal to
$v_\gamma - (D\tau_\gamma)^{-1}(q^{-1}R).$

It vanishes outside the union of these tubular neighborhoods. Recall that local coordinate expressions for both vector fields were presented in Step $2$. Using these local coordinate expressions, we deduce that
$$X = 2i( (\nu - \nu_\gamma)z + (\mu - \mu_\gamma)\overline{z})\partial_z + 2i( (\nu_\gamma - \nu)\overline{z} + (\overline{\mu}_\gamma - \mu)z)\partial_{\overline{z}} + \fe_X$$
where $\fe_X$ denotes a vector field satisfying a pointwise bound of the form $\|\fe_X\|_g \leq \kappa(\phi, J)|z|^2$.

It follows that there is a constant $\kappa \geq 1$ depending only on the constants $\nu$, $\mu$, $\nu_\gamma$, $\mu_\gamma$ across all periodic orbits $\gamma$ of period less than or equal to $d$, along with the metric $g$, so that we have a bound of the form $$\|X\|_g \leq \kappa \delta.$$

By definition, the exterior derivative $dH$ of the Hamiltonian $H$ satisfies the equation $\omega(X, -) = -dH$. From the bound on $\|X\|_g$ and the fact that $\|\omega\|_g \equiv 2$ we conclude the desired bound on $\|dH\|_g$.  The bound on $\|H\|_{C^0}$ then follows from the prior bound and the fact that $H$ is compactly supported in $\bD_\delta \subset \Sigma$; it can be constructed on each disk in $\bD_\delta$ by integrating $dH$ radially from the center and then subtracting a constant which is $\lesssim \delta$. 
\end{proof}

\textbf{Step $4$:} The fourth step fixes a useful identification of the mapping torus $M_\phi$ with the mapping torus $M_{\wt\phi_*(s)}$ for every $s \in [0,1]$.

Fix a smooth function
$h: [0,1]_s \times [0,1]_t \to [0,1]$
satisfying the following properties:
\begin{enumerate}
\item For every $s$, we have $h(s,t) = 0$ for $t$ near $0$ and $h(s,t) = s$ for $t$ near $1$.
\item For every $(s,t)$, the $t$-derivative $h_t(s,t)$ lies in the interval $[0,4s]$.
\end{enumerate}

The data of the Hamiltonian $H$ from the previous step and the smooth function $h$ produces a smooth family $\{H^s\}_{s \in [0,1]}$ of Hamiltonians supported in $\bD_\delta$, defined by
\begin{equation}
\label{eqn:defnhfamily}
H^s(t, p) = h_t(s,t)H^s(h(s,t), p). 
\end{equation}

Note that $H^1$ is not, in general, equal to the Hamiltonian $H$. However, the isotopy it generates from $\phi$ to $\phi_*$ is equal to the isotopy $\wt\phi_*$ pre-composed with a reparameterization of the domain of the isotopy via the map $h(1, -)$. We observe for any fixed $s \in [0,1]$ that the Hamiltonian $H^s$ generates an isotopy from $\phi$ to $\wt\phi_*(s)$, given by the map
$t \mapsto \wt\phi_*(h(s,t)).$
It consequently defines a diffeomorphism
$$M_{H^s}: M_\phi \to M_{\wt\phi_*(s)}.$$

\textbf{Step 5:} This fifth step collects those facts that we will need about the approximation $J_*$ of $J$ in order to relate the twisted PFH chain complexes.  
\begin{enumerate}
\item $J_*$ agrees with $J$ outside of the union of the images of $\tau_\gamma$ over all embedded Reeb orbits in $M_\phi$ of period less than or equal to $d$.
\item The pushforward $J_*^{H^1}$ lies in the generic subset $\cJ^\circ(dt, \omega_{\phi_*}) \subset \cJ(dt, \omega_\phi).$
\item For any $\tau_\gamma$, $D\tau_\gamma^{-1} \circ J_* \circ D\tau_\gamma$ is equal to $i$ on a small neighborhood of $\bR/\bZ \times \{0\} \subset \bR/\bZ \times B_{100\delta}(0)$.
\item For any pair of PFH generators $\Theta_-$ and $\Theta_+$ with degree $\le d$, defining the same class in $H_1(M_\phi),$ and any class $W \in H_2(M_\phi, \Theta_+, \Theta_-; \mathbb{Z})$, there is a bijection between 
$$\mathbf{M}_1(\Theta_+, \Theta_-, W; J)\quad\text{and}\quad \mathbf{M}_1(M_{H^1}(\Theta_+), M_{H^1}(\Theta_-), (M_{H^1})_*(W); J_*^{H^1}).$$
\end{enumerate}

\begin{rem}
The fourth point is a slight generalization of the analogous statement in Lemma $2.4$ of \cite{LeeTaubes12}, which asserts a bijection between $\mathbf{M}_1(\Theta_+, \Theta_-; J)$ and  $\mathbf{M}_1(M_{H^1}(\Theta_+), M_{H^1}(\Theta_-); J_*^{H^1}).$ We give an explanation below of why this generalization holds as well.

The proof of the bijection in \cite{LeeTaubes12} uses a perturbation-theoretic argument. A sequence $\{J_k\}_{k=1}^N$ of almost-complex structures is chosen such that $J_1 = J$ and $J_N = J_*$, any two consecutive members $J_k$ and $J_{k+1}$ of the sequence are very close in the $C^1$ norm, and their difference has support in progressively smaller neighborhoods of the periodic points of $\phi$ of degree less than or equal to $d$. Then a gluing argument gives for every $k$ a bijection between $\mathbf{M}_1(\Theta_+, \Theta_-; J_k)$ and $\mathbf{M}_1(\Theta_+, \Theta_-; J_{k+1})$ which by nature will identify pseudoholomorphic currents in the same relative homology class, and therefore induce a bijection between $\mathbf{M}_1(\Theta_+, \Theta_-, W; J_k)$ and $\mathbf{M}_1(\Theta_+, \Theta_-, W; J_{k+1})$.  Chaining together all of these bijections (and pushing forward by $M_{H^1}$) and taking a limit as $k\to \infty$ yield the desired bijection between $\mathbf{M}_1(\Theta_+, \Theta_-, W; J)$ and $\mathbf{M}_1(M_{H^1}(\Theta_+), M_{H^1}(\Theta_-), (M_{H^1})_*(W); J_*^{H^1})$.
\end{rem}

We will also fix a distinguished homotopy $\{J_s\} \in \cJ(dt, \omega_\phi)$ from $J$ to $J_*$.

\textbf{Step 6:} We conclude with a discussion of the identification of the twisted PFH groups associated to $(\phi, J)$ and the $(\delta, d)$-approximation $(\phi_*, J_*)$.

Fix a PFH parameter set
$\bfS = (\phi, \Theta_{\text{ref}}, J)$
where $\Gamma$ has degree $\leq d$. Fix a PFH parameter set
$\bfS_* = (\phi_*, \Theta_{\text{ref}}^{H^1}, J_*^{H^1}).$
It is immediate given all of the above considerations that there is a canonical bijection between the generators of $\TWPFC_*(\bfS)$ and $\TWPFC_*(\bfS_*)$. A generator $(\Theta, W)$ of $\TWPFC_*(\bfS)$ is mapped under this bijection to $(M_{H^1}(\Theta), (M_{H^1})_*(W))$. The bijections between the moduli spaces of pseudoholomorphic currents for $J$ and $J_*^{H^1}$ imply that the canonical bijection on the generators induces an isomorphism
$$\TWPFC(\bfS) \to \TWPFC(\bfS_*).$$

It will also be useful to note that there is a bijection, for any $s \in [0,1]$, between twisted PFH generators for $\phi$ with reference cycle $\Theta_{\text{ref}}$ and twisted PFH generators for the diffeomorphism $\wt\phi_*(s)$ with reference cycle $\Theta_{\text{ref}}^{H^s}$, given by the map
$$(\Theta, W) \mapsto ( M_{H^s}(\Theta), (M_{H^s})_*(W)).$$

\subsubsection{PFH spectral invariants} \label{subsubsec:pfhDeltaD} We now give the previously promised proof of Proposition \ref{prop:dFlatSpectralInvariants}, which claims that the PFH spectral invariants change by $O(\delta)$ after taking a $(\delta, d)$-approximation.

\begin{prop}
	\label{prop:pfhActionApproximationInvariance} 
	Let $\bfS = (\phi, \Theta_{\text{ref}}, J),$ $\wt\phi_*(s)$, and $H^s$ be as above.	
	Then for each $s$, the map $M_{H^s}$ defines a bijection between the generators of 
	$\TWPFC_*(\phi, \Theta_{\text{ref}}, J)$
	and 
	$\TWPFC_*(\phi\circ \phi_{H^s}^1,\Theta_{\text{ref}}^{H^s}, J^{H^s}).$
	Moreover, there is a constant $\kappa_{\ref{prop:pfhActionApproximationInvariance}} \geq 1$ depending only on $\phi$, $J$, $d$, $\Theta_{\text{ref}}$ and $g$ such that for every twisted PFH generator $(\Theta, W)$ of $\TWPFC_*(\bfS)$,  we have the bound
	$$|\bfA(\Theta, W) - \bfA_s( (M_{H^s}(\Theta), (M_{H^s})_*(W))| \leq \kappa_{\ref{prop:pfhActionApproximationInvariance}}\delta.$$
\end{prop}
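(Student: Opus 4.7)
The plan is to reduce the action bound to a $C^0$--bound on $H^s$, which is furnished by Lemma~\ref{lem:dFlatBounds}. The bijection of generators is immediate from Step~$6$ of \S\ref{subsubsec:deltaDDetails}: the map $M_{H^s}\colon M_\phi \to M_{\wt\phi_*(s)}$ is a diffeomorphism, the diffeomorphism $\wt\phi_*(s)$ and $\phi$ share the same periodic points of period $\leq d$, and so $M_{H^s}$ sends Reeb orbits to Reeb orbits and relative homology classes to their pushforwards, giving the asserted bijection $(\Theta, W)\mapsto (M_{H^s}(\Theta), (M_{H^s})_*W)$.

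For the action bound, I would first invoke the standard mapping-torus identity
$$M_{H^s}^*\,\omega_{\wt\phi_*(s)} \;=\; \omega_\phi + dH^s \wedge dt,$$
which is used in the proof of the Weyl law (Theorem~\ref{thm:hutchingsConjectureFullintro}). A change of variables in the integral defining $\bfA_s$ then gives
$$\bfA_s\bigl(M_{H^s}(\Theta),\,(M_{H^s})_* W\bigr) - \bfA(\Theta,W) \;=\; \int_W dH^s \wedge dt \;=\; \int_W d(H^s\,dt),$$
using $d(dt)=0$. Since $\partial W = \Theta - \Theta_{\text{ref}}$ (as currents, with multiplicity), Stokes' theorem converts this to the boundary expression
$$\int_\Theta H^s\,dt \;-\; \int_{\Theta_{\text{ref}}} H^s\,dt.$$

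Each of these boundary integrals is bounded by $\|H^s\|_{C^0}$ times the $dt$--length of the corresponding $1$--cycle, i.e.\ its degree (which is $\leq d$). From the construction $H^s(t,p)=h_t(s,t)\,H(h(s,t),p)$ with $\|h_t\|_{C^0} \leq 4$, together with Lemma~\ref{lem:dFlatBounds} bounding $\|H\|_{C^0}\leq \kappa_{\ref{lem:dFlatBounds}}(\phi,J,g,d)\,\delta$, one obtains $\|H^s\|_{C^0} \lesssim \delta$ uniformly in $s$. Combining yields the desired bound
$$|\bfA(\Theta,W) - \bfA_s(M_{H^s}(\Theta),(M_{H^s})_* W)| \;\leq\; \kappa\,\delta$$
with $\kappa = \kappa_{\ref{prop:pfhActionApproximationInvariance}}$ depending only on $\phi$, $J$, $d$, $\Theta_{\text{ref}}$ and $g$.

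There is no genuine obstacle here; the argument is a short calculation once one has the pullback identity and the $C^0$--bound on $H$. The only mild subtlety is verifying the pullback identity $M_{H^s}^*\omega_{\wt\phi_*(s)} = \omega_\phi + dH^s\wedge dt$, but this is a direct computation in mapping-torus coordinates that is already used implicitly elsewhere in the paper.
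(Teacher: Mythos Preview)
Your proof is correct and follows essentially the same route as the paper: pull back $\omega_{\wt\phi_*(s)}$ via $M_{H^s}$ to get $\omega_\phi + dH^s\wedge dt$, apply Stokes' theorem to convert $\int_W d(H^s\,dt)$ into boundary integrals over $\Theta$ and $\Theta_{\text{ref}}$, and then invoke the $C^0$ bound on $H$ from Lemma~\ref{lem:dFlatBounds} together with the factor of $4$ coming from $h_t$. One very minor imprecision: for the reference cycle $\Theta_{\text{ref}}$ the relevant bound is by its $g$--length rather than its degree (it is only assumed transverse to $V$, not a union of Reeb orbits), which is why the constant must depend on $\Theta_{\text{ref}}$; the paper handles this by bounding the pointwise norm of the one-form $H^s\,dt$ and integrating over the curves.
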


\begin{proof}
Fix any generator $(\Theta, W)$ of $\TWPFC_*(\bfS)$. For the sake of brevity, write
$$(\Theta_s, W_s) = ( M_{H^s}(\Theta), (M_{H^s})_*(W))$$
for the corresponding twisted PFH generator for $\wt\phi_*(s)$.

To show the proposition, we will prove for any $s$ a bound of the form
$$|\bfA_s( M_{H^s}(\Theta), (M_{H^s})_*(W)) - \bfA(\Theta, W)| \lesssim \delta.$$

Expanding the left-hand side and pulling back by $M_{H^s}$, we find
\begin{equation}
\label{eq:pfhActionApproximationInvariance1}
\begin{split} 
\bfA_s(\Theta_s, W_s) &= \int_{W} (M_{H^s})^*\omega_{\wt\phi_*(s)} = \int_W \omega_\phi + \int_W \big((M_{H^s})^*\omega_{\wt\phi(s)} - \omega_\phi\big) \\
&= \int_W \omega_\phi + \int_W dH^s \wedge dt = \int_W \omega_\phi + \int_\Theta H^s dt - \int_{\Theta_{\text{ref}}} H^s dt \\
\end{split}
\end{equation}

The last equality in (\ref{eq:pfhActionApproximationInvariance1}) follows from Stokes' theorem. It is a consequence of the second point in Lemma \ref{lem:dFlatBounds} that the one-form
$$H^s dt = h_t(s,t)H(h(s, t), -) dt$$
has pointwise norm bounded above by $4\kappa_{\ref{lem:dFlatBounds}}\delta$. The bound in the statement of the proposition follows from this bound and (\ref{eq:pfhActionApproximationInvariance1}).
\end{proof}

\begin{proof}[Proof of Proposition \ref{prop:dFlatSpectralInvariants}]
	Let
	$\bfA_*(\Theta, W) = \langle \omega_{\phi_*}, W \rangle$
	denote the version of the functional $\bfA$ for the PFH parameter set $\bfS_*$.
	Proposition \ref{prop:pfhActionApproximationInvariance} shows that for any generator $(\Theta, W)$ of $\TWPFC_*(\bfS)$, we have
	$$|\bfA(\Theta, W) - \bfA_*( M_{H^1}(\Theta), (M_{H^1})_*(W))| \leq \kappa_{\ref{prop:pfhActionApproximationInvariance}}\delta.$$
	
	This immediately implies the proposition since it shows the isomorphism $\TWPFH_*(\bfS) \simeq \TWPFH_*(\bfS_*)$ shifts the PFH action filtration by at most $\kappa_{\ref{prop:pfhActionApproximationInvariance}}\delta$. 
\end{proof}

\subsubsection{Seiberg-Witten spectral invariants} \label{subsubsec:deltaDSW} We now show that the Seiberg-Witten spectral invariants, when the $r$-parameter is sufficiently large, change by $O(\delta)$ upon taking a $(\delta, d)$-approximation.   The precise statement is deferred to Proposition~\ref{prop:dFlatSWSpectral} below, as we first set up the relevant notation.

Fix a family of SW parameter sets
$\cS_r = (\phi, J, \Gamma, r, \fg_r = \fe_\mu + \fp_r)$
where $\|\fp_r\|_{\cP} \leq r^{-1}$ for every $r$. We will assume that the pairs $(\fe_\mu, \fp_r)$ are generic for every $r$ so that the Seiberg--Witten--Floer cohomology is well-defined, and we will also assume tht $\phi$ is nondegenerate. Let $(\phi_*, J_*)$ be a $(\delta, d)$-approximation to $(\phi, J)$ with $\delta > 0$ very small. Let $\{H^s\}_{s \in [0,1]}$ be the associated family of Hamiltonians whose time one maps generate an isotopy from $\phi$ to $\phi_*$. Let $\{J_s\}_{s \in [0,1]}$ be the homotopy from $J$ to $J_*$ fixed in Step $5$ of \S\ref{subsubsec:deltaDDetails}. 
Fix a reference cycle $\Theta_{\text{ref}}$ for $\phi$ such that $[\Theta_{\text{ref}}] = \Gamma$. As always, we take $\delta$ sufficiently small so that the one-form $H^s dt$ is supported away from $\Theta_{\text{ref}}$. 
We also require $\delta > 0$ to be much smaller than the smallest ``gap'' in the action spectrum. Let $\mathcal{T}$ denote the set of actions of all twisted PFH generators for $\phi$ with respect to the reference cycle $\Theta_{\text{ref}}$. The set $\mathcal{T}$ is discrete because $\phi$ is nondegenerate and $\omega_\phi$ is a real multiple of an integral class. Then we require $\delta$ to be small enough so that any two distinct points in $\mathcal{T}$ lie a distance of at least $100\kappa_{\ref{prop:pfhActionApproximationInvariance}}\delta$ away from each other.
Fix a $\Theta_{\text{ref}}$-concentrated family $\fc^c(r) = (B^c(r), \Psi^c(r) = (\alpha^c(r), \beta^c(r))$ as introduced in \S\ref{sec:twistedIso}. Pushing forward by $M_{H^s}$ defines a $\Theta_{\text{ref}}^{H^s}$-concentrated family of configurations for any $s \in [0,1]$. 

Fix a two-parameter family of SW parameter sets
$$\cS_{r,s} = (\wt\phi_*(s), J_s^{H^s}, \Gamma^{H^s}, r, \fg_{r,s} = \fe_{\mu_s} + \fp_{r,s})$$
where $\mu_s$ has $C^3$ and $\cP$-norm less than or equal to $1$ for every $s$ and $\|\fp_{r,s}\|_\cP \leq r^{-10}$ for every $r$ and $s$. We will assume that the pairs $(\fe_{\mu_s}, \fp_{r,s})$ are chosen generically for every $r$ and $s$ (strictly speaking, they should be ``strongly admissible'' in the sense of \cite[\S$3$]{TaubesWeinstein1}). The groups $\TWHM^*(\cS_r)$ and $\TWHM^*(\cS_{r,s})$ are, as long as the $\fp_{r,s}$ are sufficiently small, canonically isomorphic for all $s$ and all $r > -2\pi\rho$. This implies that, given a fixed $\sigma$, the spectral invariant $c_\sigma^{\HM}(\cS_r)$ or $c_\sigma^{\HM}(\cS_{r,s})$ is unambiguously defined. We can now state the desired proposition relating the SW spectral invariants of $\phi$ to those of the approximation $\phi_*$.

\begin{prop} \label{prop:dFlatSWSpectral}
There is a constant $\kappa_{\ref{prop:dFlatSWSpectral}} = \kappa_{\ref{prop:dFlatSWSpectral}}(\phi, J, d, g, \Theta_{\text{ref}}) \geq 1$ depending only on the diffeomorphism $\phi$, the almost-complex structure $J$, the degree $d$, the metric $g$ and the $g$-length of the reference cycle $\Theta_{\text{ref}}$ such that the following bound holds.

Let $\cS_{r,s}$ be the two-parameter family of SW parameter sets fixed prior to the proposition statement. Fix any base configuration $\fc_\Gamma = (B_\Gamma, \Psi_\Gamma)$ and assume $\delta \leq \kappa_{\ref{prop:dFlatSWSpectral}}^{-1}$. Then we have the bound
$$\limsup_{r \to \infty} |c_{\sigma}^{\HM}(\cS_r; \fc_\Gamma)) - c^{\HM}_{\sigma}(\cS_{r,1}; \fc_\Gamma) + \frac{i}{2} \int_{M_\phi} (B^c(r) - B_\Gamma) \wedge dH^1 \wedge dt| \leq \kappa_{\ref{prop:dFlatSWSpectral}}\delta.$$
\end{prop}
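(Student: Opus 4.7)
The plan is to compare the two spectral invariants via a continuation cobordism between $\cS_r$ and $\cS_{r,1}$ driven by the Hamiltonian isotopy $\{H^s\}_{s \in [0,1]}$. First, using Lemma \ref{lem:maxMin1}, I would replace $\cS_r$ by $\cS_{r,0} = (\phi, J, \Gamma, r, \fg_{r,0})$: any discrepancy in the abstract perturbations is controlled by $\kappa r^2 \|\fg_r - \fg_{r,0}\|_\cP$, which tends to $0$ as $r \to \infty$ since both $\|\fp_r\|_\cP$ and $\|\fp_{r,0}\|_\cP$ are $O(r^{-10})$. After this reduction the problem becomes to bound
\[
c^{\HM}_\sigma(\cS_{r,0}; \fc_\Gamma) - c^{\HM}_\sigma(\cS_{r,1}; \fc_\Gamma) + \tfrac{i}{2}\int_{M_\phi} (B^c(r) - B_\Gamma)\wedge dH^1 \wedge dt.
\]

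Next, I would use the continuation parameter set $\cS_{r,s}$ defined by the data $(K = H^s, J_s, r, \fg_{r,s})$: this realizes the canonical isomorphism $T(\cS_{r,s}): \TWHM^*(\cS_{r,1}) \to \TWHM^*(\cS_{r,0})$ from \S\ref{subsubsec:swfCobordismMaps}. The main technical task is to refine Lemma \ref{lem:csdContinuationGradient1} to the present setting, where the metric $g_s$, Clifford multiplication $\cl_s$, and harmonic $1$-form $\varpi_{V,s}$ all depend on $s$ because the underlying mapping torus changes under the Hamiltonian isotopy. The cleanest route is to conjugate the entire instanton equation (\ref{eq:swContinuation}) by the diffeomorphism $M_{H^s}: M_\phi \to M_{\wt\phi_*(s)}$, writing the instanton on the fixed manifold $M_\phi$ with the $s$-dependent $2$-form $\omega_\phi + dH^s \wedge dt$ in place of $\omega_\phi$. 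For a continuation instanton $\fd = (B_s, \Psi_s)$ with limits $\fc_\pm$, an integration by parts analogous to the proof of Lemma \ref{lem:csdContinuationGradient1} gives an identity of the form
\[
\fa_{r,\fg_{r,0}}(\fc_-, \fc_\Gamma) - \fa_{r,\fg_{r,1}}(\fc_+, \fc_\Gamma) = (\text{non-negative quadratic}) \; - \; \tfrac{i}{2}\int_{\bR \times M_\phi} (F_{B_s} - F_{B_\Gamma}) \wedge \dot H^s \wedge dt \; + \; O(\delta),
\]
where the $O(\delta)$ error collects all contributions from the $s$-dependence of $g_s, \cl_s, \varpi_{V,s}$ and the abstract perturbations, each of which is controlled by $\|H^s\|_{C^0} + \|dH^s\|_g \lesssim \delta$ via Lemma \ref{lem:dFlatBounds} together with Proposition \ref{prop:3dEstimates2}.

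To identify the Hamiltonian term with the asserted correction, I would use the curvature asymptotics for SW solutions: by Proposition \ref{prop:actionConvergence} and the arguments in its proof, for any sequence of solutions realizing the spectral invariants, the curvatures $\frac{i}{2\pi}F_{B(r)}$ converge weakly as currents to an orbit set, whereas the curvatures $\frac{i}{2\pi}F^c(r)$ converge to $\Theta_{\text{ref}}$. Since $H^s$ is supported in $\bD_\delta$ away from $\Theta_{\text{ref}}$ and has sup-norm $\lesssim \delta$, pairing with $H^s\, dt$ produces only an $O(\delta)$ discrepancy when $F_B$ is replaced by $F^c(r)$. After running the standard max-min argument from Lemma \ref{lem:maxMin1}, maximizing over cochains representing $\sigma$ on either side and taking $s_- \to -\infty, s_+ \to +\infty$, discarding the non-negative quadratic term, and integrating $\dot H^s$ in $s$ to recover $H^1$, the Hamiltonian term collapses precisely to $-\tfrac{i}{2}\int_{M_\phi}(B^c(r) - B_\Gamma) \wedge dH^1 \wedge dt$ up to $O(\delta)$, yielding the bound.

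The main obstacle is the derivation of the refined continuation energy identity in the Hamiltonian-perturbed setting. The $s$-dependence of the geometric data produces a zoo of new terms relative to Lemma \ref{lem:csdContinuationGradient1}, and each must be shown to be $O(\delta)$ rather than contributing a genuine large correction; the conjugation trick above is essential so that the only term with a non-negligible $s$-derivative is the explicit Hamiltonian $H^s dt$ appearing in the pulled-back two-form. A secondary subtlety is that gauge transformations enter the picture (both the null-homotopic gauge group action used to identify $\TWCM$ generators and those arising in Proposition \ref{prop:3dActionEstimates}); one must check that the max-min argument still closes up modulo $\cG^\circ(M_\phi)$ as in the proof of Lemma \ref{lem:maxMin1}.
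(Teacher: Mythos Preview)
Your approach differs substantially from the paper's, and it runs into a genuine obstacle that the paper's method circumvents entirely.

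The paper does not attempt a refined continuation energy identity. Instead it first reduces to the concentrated base configurations $\fc^c(r)$, showing the proposition follows from the simpler bound $\limsup_{r\to\infty}|c^{\HM}_\sigma(\cS_r;\fc^c(r)) - c^{\HM}_\sigma(\cS_{r,1};\fc^c(r))| \lesssim \delta$. The heart of the argument is then the one-parameter subdivision technique of Hutchings--Taubes \cite[Lemma 3.4]{ArnoldChord2}: for any action level $L$ bounded away from the discrete union $\bigcup_s \mathcal{T}(s)$ of PFH action spectra along the isotopy, and for $r$ sufficiently large depending on $L$, one produces an isomorphism of \emph{filtered} groups $\TWHM^*_L(\cS_r;\fc^c(r)) \cong \TWHM^*_L(\cS_{r,1};\fc^c(r))$ compatible with the inclusions into the unfiltered groups. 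Using Proposition \ref{prop:actionConvergence} and discreteness of the spectra (together with the assumption that $\delta$ is much smaller than the spectral gap), one shows the limits $L_s = \lim_{r\to\infty}c^{\HM}_\sigma(\cS_{r,s};\fc^c(r))$ exist; applying the filtered isomorphism at $L = L_0 \pm 20\kappa_{\ref{prop:pfhActionApproximationInvariance}}\delta$ then forces $|L_0 - L_1| \lesssim \delta$. No energy estimate along a Hamiltonian-perturbed instanton is ever needed.

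Your direct approach faces the following difficulty. After conjugating by $M_{H^s}$ and tracking the $s$-dependence through $\omega_\phi + dH^s\wedge dt$, the Hamiltonian correction in your identity for $r^{-1}\fa$ becomes (after Stokes) $-\tfrac{i}{2}\int_{[0,1]}\int_{M_\phi}(F_{B_s}-F_{B_\Gamma})\wedge\dot H^s\,dt\,ds$. The $F_{B_\Gamma}$ part indeed integrates to the desired $F_{B_\Gamma}$-term. But the $F_{B_s}$ part involves curvatures of \emph{instanton slices}, not three-dimensional solutions, and no bound of the form $\|F_{B_s}\|_{L^1(M_\phi)}\lesssim d$ uniform in $r$ and $s$ is available: away from the endpoints the cancellation $|\alpha|^2\approx 1$ underlying Proposition \ref{prop:3dEstimates1} need not hold, and the first equation in \eqref{eq:swContinuation} only gives $\|F_{B_s}\|_{L^1}\lesssim \|\partial_s B_s\|_{L^1}+r$. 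So your instanton-dependent correction may be of size $\delta\cdot r$, not $O(\delta)$, and the max-min argument fails to close since this term is not even instanton-independent. Your appeal to Proposition \ref{prop:actionConvergence} does not help, as that proposition concerns sequences of three-dimensional solutions, not slices of four-dimensional instantons. The same $r$-scaling issue afflicts the other error terms you propose to absorb into $O(\delta)$: each involves quantities like $\|\Psi_s\|_{L^2}^2$ or $\|\nabla_{B_s}\Psi_s\|_{L^2}^2$ that grow with $r$ along the instanton.
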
 

\begin{proof}[Proof of Proposition \ref{prop:dFlatSWSpectral}]
Our argument combines
Proposition \ref{prop:pfhActionApproximationInvariance} and Proposition \ref{prop:actionConvergence} with an argument of Hutchings--Taubes \cite{ArnoldChord2}.  The proof will proceed in three steps. 

\textbf{Step 1:} The first step proves the proposition, assuming that a bound of the form
\begin{equation} \label{eq:dFlatSwSpectral2} \limsup_{r \to \infty} |c_{\sigma}^{\HM}(\cS_r; \fc^c(r)) - c^{\HM}_{\sigma}(\cS_{r,1}; \fc^c(r))| \lesssim \delta\end{equation}
holds.

As long as the abstract perturbations are ``strongly admissible'' in the sense of \cite[\S$3$]{TaubesWeinstein1}, a property that holds generically, we can fix two families of configurations $\fc(r) = (B(r), \Psi(r))$ and $\fc_*(r) = (B_*(r), \Psi_*(r))$ such that the following holds. First, $\fc(r)$ solves the $(\phi, J, \Gamma, r, \fe_\mu)$-Seiberg-Witten equations for every $r$ and $$\lim_{r \to \infty} |r^{-1}\fa_{r, \fe_\mu}(\fc(r); \fc^c(r)) - c_{\sigma}^{\HM}(\cS_r; \fc^c(r))| = 0.$$

Second, $\fc_*(r)$ solves the $(\phi_*, J_*, \Gamma_1, r, \fe_{\mu_1})$-Seiberg-Witten equations for every $r$ and
$$\lim_{r \to \infty} |r^{-1}\fa_{r, \fe_{\mu_1}}(\fc_*(r); \fc^c(r)) - c_{\sigma}^{\HM}(\cS_{r,1}; \fc^c(r))| = 0.$$

We now argue that
$$\lim_{r \to \infty} r^{-1}(\fa_{r, \fe_\mu}(\fc(r); \fc_\Gamma) - \fa_{r, \fe_\mu}(\fc(r); \fc^c(r))) = \frac{i}{2} \int_{M_\phi} (B^c(r) - B_\Gamma) \wedge \omega_\phi$$
and
$$\lim_{r \to \infty} r^{-1}(\fa_{r, \fe_{\mu_1}}(\fc_*(r); \fc_\Gamma) - \fa_{r, \fe_{\mu_1}}(\fc_*(r); \fc^c(r))) = \frac{i}{2} \int_{M_{\phi_*}} (B^c(r) - B_\Gamma) \wedge \omega_{\phi_*}.$$

We can argue as in the proof of the identity (\ref{eq:dFlatSwSpectral1}) in Proposition \ref{prop:actionConvergence} that we have, for any choice of base configuration $\fc = (B, \Psi)$, the identities
$$\lim_{r \to \infty} |r^{-1}\fa_{r, \fe_\mu}(\fc(r); \fc) + \frac{1}{2}E_\phi(B(r), B)| = 0$$
and
$$\lim_{r \to \infty} |r^{-1}(\fa_{r, \fe_{\mu_1}}(\fc_*(r); \fc) + \frac{1}{2}E_{\phi_*}(B_*(r), B)| = 0.$$

Now all that is left is a straightforward computation:
\begin{equation}
\label{eq:dFlatSwSpectral3}
\begin{split}
\lim_{r \to \infty} r^{-1}(\fa_{r, \fe_\mu}(\fc(r); \fc_\Gamma) - \fa_{r, \fe_\mu}(\fc(r); \fc^c(r))) &= \lim_{r \to \infty} \frac{1}{2}(E_\phi(B(r), B_\Gamma) - E_\phi(B(r), B^c(r))) \\
&= \lim_{r \to \infty} \frac{i}{2}\int_{M_\phi} (B^c(r) - B_\Gamma) \wedge \omega_\phi
\end{split}
\end{equation}
\begin{equation}
\label{eq:dFlatSwSpectral4}
\begin{split} 
\lim_{r \to \infty} r^{-1}(\fa_{r, \fe_{\mu_1}}(\fc_*(r); \fc_\Gamma) - \fa_{r, \fe_{\mu_1}}(\fc_*(r); \fc^c(r))) &= \lim_{r \to \infty} \frac{1}{2}(E_{\phi_*}(B_*(r), B_\Gamma) - E_{\phi_*}(B_*(r), B^c(r))) \\
&= \lim_{r \to \infty} \frac{i}{2}\int_{M_{\phi_*}} (B^c(r) - B_\Gamma) \wedge \omega_{\phi_*}.
\end{split}
\end{equation}

Now pull back (\ref{eq:dFlatSwSpectral4}) by $M_{H^1}$, subtract it from (\ref{eq:dFlatSwSpectral3}), and plug the result into (\ref{eq:dFlatSwSpectral2}) to conclude the proposition, using the fact that the pullback of $\omega_{\phi_*}$ by $M_{H^1}$ is equal to $\omega_\phi + dH^1 \wedge dt$.

\textbf{Step 2:} The second step applies the argument of \cite[Lemma $3.4$]{ArnoldChord2} to relate the filtered versions of the Seiberg--Witten--Floer homologies for the parameter sets $\cS_r$ and $\cS_{r,1}$ for sufficiently large $r$. 

More precisely, we will define a subset $\mathcal{O}$ of $\mathbb{R}$, and show that for every $L\in \mathcal{O}$ there exists a constant $r_L>1$, such that for all $r>r_L$, there exists an isomorphism 
$\xi_L:\TWHM^*_L(\cS_r; \fc^c(r))\to \TWHM^*_L(\cS_{r,1}; \fc^c(r))$
so that the following diagram is commutative:
\begin{center}
	\begin{tikzcd}
		\TWHM^*_L(\cS_r; \fc^c(r)) \arrow{r} \arrow{d} & \TWHM^*_L(\cS_{r,1}; \fc^c(r)) \arrow{d} \\
		\TWHM^*(\cS_r) \arrow{r} & \TWHM^*(\cS_{r,1}),
	\end{tikzcd}
\end{center}
where the vertical arrows are the maps $\iota^{\TWHM}_L(\cS_r; \fc^c(r))$ and $\iota^{\TWHM}_L(\cS_{r,1}; \fc^c(r))$, the bottom horizontal arrow is the canonical isomorphism, and the top horizontal arrow is $\xi_L$. 

The set $\mathcal{O}$ is defined as follows. For $s \in [0,1]$, let $\mathcal{T}(s) \subset \bR$ denote the discrete set of twisted PFH actions $\bfA(\Theta, W)$ across all twisted PFH generators $(\Theta, W)$ for the diffeomorphism $\wt\phi_*(s)$ with respect to the reference cycle $\Theta_{\text{ref}}$. Then we define $\mathcal{O}$ to be the set of $L\in \mathbb{R}$ such that $\text{dist}(L, \mathcal{T}(s)) > 10\kappa_{\ref{prop:pfhActionApproximationInvariance}}\delta$ for all $s \in [0,1]$.  

The existence of the above commutative diagram follows from repeating the one-parameter subdivision argument in \cite[Lemma $3.4$]{ArnoldChord2} and \cite[Paper I, Lemma $4.6$]{ECHSWF}. This argument requires $\text{dist}(L, \mathcal{T}(s))$ to be bounded away from $0$ uniformly, and $r$ needs to be larger than a constant depending only on geometric constants and this lower bound, which by our conditions on $L$ depends only on geometric constants and $\delta$. Most of the necessary modifications are cosmetic and reconcile differences in conventions; our continuation maps are analogous to the admissible deformations of \cite{ArnoldChord2}. The sole exception is the following. The argument in \cite{ArnoldChord2} requires the compactness theory of \cite{TaubesWeinstein1}, which implies that the actions of Seiberg--Witten solutions converge to the action of a ECH generator in the limit $r \to \infty$. We use the action convergence result from Proposition \ref{prop:actionConvergence} instead. 

\textbf{Step 3:} The third step shows that for any $s \in [0,1]$, the limit
$$L_s = \lim_{r \to \infty} c^{\HM}_{\sigma}(\cS_{r,s}; \fc^c(r))$$
is well-defined. First, Lemma \ref{lem:maxMinLimit} shows that for any $s \in \bR$ and any \emph{fixed} base configuration $\fc_\Gamma$, the limit $\lim_{r \to \infty} c^{\HM}_{\sigma}(\cS_{r,s}; \fc_\Gamma)$ exists. We use this fact, Proposition \ref{prop:actionConvergence} and the fact that the action spectra $\mathcal{T}(s)$ defined in the previous step are discrete to show that $L_s$ exists. First, we establish the following estimate, which holds for any sufficiently large $r_*$ and $r \gg r_*$: 
\begin{equation} \label{eq:dFlatSwSpectral5} |c^{\HM}_{\sigma}(\cS_{r,s}; \fc^c(r)) - c^{\HM}_{\sigma}(\cS_{r,s}; \fc^c(r_*))| \lesssim r_*^{-1/3}. \end{equation}
We use Proposition \ref{prop:3dActionEstimates} and the fact that $\|F^c(r)\|_{L^1(M_\phi)}$ has an $r$-independent upper bound to control the Chern--Simons functional. To control the functional $E_\phi$, we observe that $\omega_\phi$ is exact on the support of $B^c(r) - B^c(r_*)$, apply Stokes' theorem, and use the fact that $\|F^c(r)\|_{C^0} \lesssim r$. 

The bound (\ref{eq:dFlatSwSpectral5}) implies that $c^{\HM}_{\sigma}(\cS_{r,s}; \fc^c(r))$ is uniformly bounded in $r$. Now by Proposition \ref{prop:actionConvergence} any subsequential limit lies in the discrete set $\mathcal{T}(s)$. 

We assumed prior to the statement of the proposition that any two distinct elements of $\mathcal{T} = \mathcal{T}(0)$ are at least $100\kappa_{\ref{prop:pfhActionApproximationInvariance}}\delta$ away from each other. By Proposition \ref{prop:pfhActionApproximationInvariance}, we also have $\text{dist}(\mathcal{T}(s), \mathcal{T}) \leq \kappa_{\ref{prop:pfhActionApproximationInvariance}}\delta$ for any $s \in [0,1]$. It follows that any two distinct elements of $\mathcal{T}(s)$ are at least $90\kappa_{\ref{prop:pfhActionApproximationInvariance}}\delta$ away from each other. 

Take $r_*$ sufficiently large so that the right-hand side of (\ref{eq:dFlatSwSpectral5}) is bounded above by $\kappa_{\ref{prop:pfhActionApproximationInvariance}}\delta$. Then it follows from (\ref{eq:dFlatSwSpectral5}) that any two subsequential limits of the family $c^{\HM}_{\sigma}(\cS_{r,s}; \fc^c(r))$ as $r \to \infty$ lie a distance of at most $10\kappa_{\ref{prop:pfhActionApproximationInvariance}}\delta$ away from each other. However, they both lie in the discrete set $\mathcal{T}(s)$, and any two distinct elements of $\mathcal{T}(s)$ have a distance of at least $90\kappa_{\ref{prop:pfhActionApproximationInvariance}}\delta$ from each other. We conclude that any two subsequential limits of the family $c^{\HM}_{\sigma}(\cS_{r,s}; \fc^c(r))$ as $r \to \infty$ coincide, and therefore the family has a unique limit as $r \to \infty$.

\textbf{Step 4:} The fourth step completes the proof of (\ref{eq:dFlatSwSpectral2}).

Fix either $L = L_0 - 20\kappa_{\ref{prop:pfhActionApproximationInvariance}}\delta$ or $L = L_0 + 20\kappa_{\ref{prop:pfhActionApproximationInvariance}}\delta$. Since $L_0 \in \mathcal{T}$, we find that $L$ is a distance of at least $10\kappa_{\ref{prop:pfhActionApproximationInvariance}}\delta$ from any point in $\mathcal{T}$. The existence of the respective diagrams in Step $2$ for either value of $L$ implies that
$$L_0 - 20\kappa_{\ref{prop:pfhActionApproximationInvariance}}\delta \leq c^{\HM}_{\sigma}(\cS_{r,1}; \fc^c(r)) \leq L_0 + 20\kappa_{\ref{prop:pfhActionApproximationInvariance}}\delta.$$

Taking $r \to \infty$ in the above shows
$|L_0 - L_1| \leq 40\kappa_{\ref{prop:pfhActionApproximationInvariance}}\delta,$
from which we conclude (\ref{eq:dFlatSwSpectral2}), and therefore the whole proposition.
\end{proof}

\subsection{The PFH and SWF gradings} \label{subsec:twistedIsoGradings}

Recall that both twisted PFH and twisted Seiberg--Witten--Floer cohomology have canonical relative $\bZ$--gradings. The relative $\bZ$--grading on the twisted PFH group sends two generators $(\Theta_+, W_+)$ and $(\Theta_-, W_-)$ to the ECH index
$I(\Theta_+, \Theta_-, W_+ - W_-).$

The relative $\bZ$--grading on the twisted SWF group sends two generators represented by solutions $\fc_+$ and $\fc_-$ to the spectral flow
$\SF(\fc_-, \fc_+)$
between the Hessians $\cL_{\fc_-}$ and $\cL_{\fc_+}$. It is shown by \cite{LeeTaubes12} that their isomorphism reverses the relative $\bZ$--gradings. In this section, we show a technical result, Proposition \ref{prop:twistedIsoPreservesGradings}, estimating the PFH gradings of two distinct, Hamiltonian-isotopic maps $\phi$ and $\phi'$ in terms of corresponding Seiberg--Witten gradings. The result uses a result of Cristofaro--Gardiner \cite{CG13} computing the relative ECH index in a symplectic cobordism in terms of Seiberg--Witten theory. 

\subsubsection{Setup and statement of Proposition \ref{prop:twistedIsoPreservesGradings}}

Fix a nondegenerate map $\phi \in \Diff(\Sigma, \omega)$.  Fix a Hamiltonian $H \in C^\infty_c( (0,1) \times \Sigma)$, and set $\phi' = \phi \circ \phi^1_H$, and furthermore suppose that $\phi'$ is also nondegenerate. Fix a negative monotone class $\Gamma \in H_1(M_\phi; \bZ)$ of degree $d$. Fix a separated trivialized reference cycle $\Theta_{\text{ref}}$ for $\phi$ representing the class $\Gamma$. 

We use these choices to define PFH parameter sets $\bfS = (\phi, \Theta_{\text{ref}}, J)$ and $\bfS^H = (\phi', \Theta_{\text{ref}}^H, J')$. 

Write $g$ and $g'$ for the Riemannian metrics on $M_\phi$ and $M_{\phi'}$ induced by $(\phi, J)$ and $(\phi', J')$, respectively. For $r \gg -2\pi\rho$ choose SW parameter sets $\cS = (\phi, J, \Gamma, r, \fg = \fe_\mu + \fp)$ and $\cS^H = (\phi', J', \Gamma^H, r, \fg' = \fe_\mu' + \fp')$. We assume that $(\fe_\mu, \fp)$ and $(\fe_\mu', \fp')$ are generic, with $\mu$ and $\mu'$ having small $C^3$ and $\cP$-norm and $\fp$ and $\fp'$ having very small $\cP$-norm. Finally, we fix nondegenerate base configurations $\fc_\Gamma = (B_\Gamma, \Psi_\Gamma)$, and $\fc_\Gamma' = (B_\Gamma', \Psi_\Gamma')$ on $M_\phi$ and $M_{\phi'}$, and fix a constant $\Lambda \geq 1$ such that $\|F_{B_\Gamma}\|_{C^3(g)}$ and $\|F_{B_{\Gamma}'}\|_{C^3(g')}$ are $\leq \Lambda d$. We now state the proposition that we will spend the rest of this section proving.  

\begin{prop} \label{prop:twistedIsoPreservesGradings}
Let $\bfS$, $\bfS^H$, $\cS$, $\cS^H$, $\fc_\Gamma$, $\fc_{\Gamma'}$, and $\Lambda$ be as above. Suppose that $\fc_\Gamma$ and $(M_H)^*\fc_{\Gamma'}$ are sufficiently $C^1$-close.  Then for any two classes $\sigma \in \TWPFH(\bfS)$ and $\tau \in \TWPFH(\bfS^H)$, sufficiently large $r$, there is a geometric constant $\kappa_{\ref{prop:twistedIsoPreservesGradings}} \geq 1$ such that, if we set $\sigma_\Phi = T_{\text{Tw}\Phi}^r(\sigma)$ and $\tau_\Phi = T_{\text{Tw}\Phi}^r(\tau)$, we have the inequality
$$|(I(\tau) - I(\sigma)) + (\SF(\tau_\Phi, \fc_\Gamma') - \SF(\sigma_\Phi, \fc_\Gamma))| \leq \kappa_{\ref{prop:twistedIsoPreservesGradings}}(\Lambda d)^{3/2}.$$
\end{prop}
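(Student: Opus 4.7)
The plan is to first reduce the proposition to a purely cobordism-theoretic statement, then estimate the resulting grading shift via the Atiyah--Patodi--Singer index theorem. By Chen's pseudoholomorphic curve axiom, which was already invoked in \S\ref{subsec:pfhCobordismMaps} to show that the PFH cobordism isomorphism $T^{\PFH}: \TWPFH_*(\bfS^H) \to \TWPFH_*(\bfS)$ preserves the absolute $\bZ$--grading, it suffices to prove the bound in the special case where $\sigma = T^{\PFH}(\tau)$ (so that $I(\sigma) = I(\tau)$); indeed, for a general pair $(\sigma, \tau)$, writing $\sigma' = T^{\PFH}(\tau)$, the relative-grading-reversal property of the Lee--Taubes isomorphism $T^r_{\text{Tw}\Phi}$ on $\bfS$ alone implies $I(\sigma') - I(\sigma) + \SF(\sigma'_\Phi, \fc_\Gamma) - \SF(\sigma_\Phi, \fc_\Gamma) = 0$ exactly, so the original difference collapses to $\SF(\tau_\Phi, \fc_\Gamma') - \SF(\sigma'_\Phi, \fc_\Gamma)$, i.e.\ precisely the reduced case.

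Thus the core task is to bound the SW ``grading shift'' $\SF(\tau_\Phi, \fc_\Gamma') - \SF(\sigma_\Phi, \fc_\Gamma)$ when $\sigma_\Phi = T(\tau_\Phi)$ under the SW cobordism map. Since $T$ is computed by counting index-zero SW instantons on the four-dimensional cobordism $X = \bR \times M_\phi$ equipped with the metric interpolating between $g$ at $s = -\infty$ and $(M_H)^* g'$ at $s = +\infty$ (as constructed in \S\ref{subsec:continuationMaps}), there exist representatives $\fc_-$ of $\sigma_\Phi$ and $\fc_+$ of $(M_H)^*\tau_\Phi$ together with an instanton $\fd$ interpolating them such that $\text{ind}(\cL_\fd) = 0$. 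I would then apply the APS index theorem to the Dirac-type operator $\cL_\fd$ on the elongated cobordism: this expresses $0 = \text{ind}(\cL_\fd)$ as the sum of a bulk Atiyah--Singer integrand (an $r$-independent topological quantity controlled by the geometry and by the $C^1$-closeness hypothesis on $\fc_\Gamma$ and $(M_H)^*\fc_\Gamma'$), minus $\tfrac{1}{2}$ of the difference of Dirac $\eta$-invariants of the base configurations, plus corrections converting between the extended-Hessian spectral flow $\SF(\fc_-, \fc_+)$ and the spinor-Dirac spectral flow. After rearranging, $\SF(\tau_\Phi, \fc_\Gamma') - \SF(\sigma_\Phi, \fc_\Gamma)$ is expressed as a sum of these correction terms, each of which I will bound by $\kappa(\Lambda d)^{3/2}$ using the $\eta$-invariant bound of Proposition~\ref{prop:etaInvariant1} (giving $|\eta(D_{B_\Gamma})|, |\eta(D_{(M_H)^*B_\Gamma'})| \lesssim (\Lambda d)^{3/2}$) and the spinor-spectral-flow bound (\ref{eq:spinorSpectralFlow1}) applied at the reference configurations.

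The main obstacle I expect is to ensure that the \emph{$r$-dependent} contributions to the APS formula genuinely cancel rather than merely being absorbed, since the spinor spectral flows $\SF(\cL_{(B_\pm, 0)}, \cL_{(B_\pm, \Psi_\pm)})$ contribute terms of size $r^{3/2}$ at the solution endpoints, and only the reference-configuration analogues (which are bounded by $(\Lambda d)^{3/2}$) should remain. To make this cancellation precise, I would invoke Cristofaro--Gardiner's result \cite{CG13} expressing the relative ECH index in a symplectic cobordism in terms of SW Fredholm indices: combined with the fact that after our reduction $I(\tau) - I(\sigma) = 0$, this guarantees that the $r$-dependent ``solution-side'' spinor contributions are organized in exactly the same way as in the cylindrical setting of the ordinary Lee--Taubes isomorphism, so that these terms cancel across $\fc_-$ and $\fc_+$ and leave only the ``reference-side'' contributions from $\fc_\Gamma$ and $(M_H)^*\fc_\Gamma'$, which are bounded by $\kappa(\Lambda d)^{3/2}$ as required.
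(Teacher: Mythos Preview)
Your reduction step is sound and is a genuine alternative to the paper's route: the paper does \emph{not} reduce to $I(\tau)=I(\sigma)$, but instead proves directly (Lemma~\ref{lem:cg_grading_computation}) that $I(\Theta',W')-I(\Theta,W)=\text{ind}(\fc,\fc')$ for arbitrary generators, by invoking \cite{CG13} together with a nontrivial verification that the zero-set surface $\overline{Z}$ built from the interpolating spinor has homology class $W'-W$ (using the relative winding number interpretation of $\text{Tw}\Phi^r$). Your use of Chen's axiom to force $I(\tau)=I(\sigma)$ and of the index-$0$ instanton from the SW continuation map to get $\text{ind}(\fc_-,\fc_+)=0$ bypasses that computation entirely, which is a legitimate simplification.

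Where the proposal goes wrong is the APS step. Applying APS to $\cL_\fd$ on the cobordism $X$ yields boundary contributions ($\eta$--invariants and spinor spectral flows) at the \emph{solutions} $\fc_\pm$, not at the reference configurations $\fc_\Gamma,\fc_\Gamma'$; the reference configurations simply do not appear in that formula, so there is nothing to ``rearrange'' into $\SF(\tau_\Phi,\fc_\Gamma')-\SF(\sigma_\Phi,\fc_\Gamma)$. Invoking \cite{CG13} does not help here: that result is a topological identity between an ECH index and a Fredholm index, and says nothing about cancellation of $\eta$--invariants or spinor spectral flows of size $r^{3/2}$. The paper's fix (Lemma~\ref{lem:relative_index_spectral_flow}) is to avoid this issue altogether: choose the path $\fd$ to pass through $\fc_\Gamma$ and $\fc_\Gamma'$, use additivity of the index under gluing to write $\text{ind}(\fc,\fc')=\SF(\fc,\fc_\Gamma)+\text{ind}(\cL_{\fd_\Gamma})-\SF(\fc',\fc_\Gamma')$, and apply APS only to the middle piece $\fd_\Gamma$ running between the two reference configurations. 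The $\eta$--invariants and spinor spectral flows in that APS formula are then evaluated at $\fc_\Gamma,\fc_\Gamma'$ from the outset, and are bounded by $(\Lambda d)^{3/2}$ via Proposition~\ref{prop:etaInvariant1} and \eqref{eq:spinorSpectralFlow1}; the bulk integrand is small by the $C^1$--closeness hypothesis. No $r^{3/2}$ terms ever appear, so there is nothing to cancel.
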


We will prove the proposition in stages.  To start, we review the setup for our argument and fix some notation.  Fix an SW continuation parameter set $\cS_s = (K, J_s, r, \fg_s)$ from $\cS$ to $\cS^H$. Then $\cS_s$ defines a symplectic $4$--manifold
$$(X = \bR \times M_{\phi}, \Omega_X = ds \wedge dt + \frac{1}{2}(\omega_{\phi} + d(K(s)dt)))$$
and counting solutions to the $\cS_s$-Seiberg-Witten instanton equations (\ref{eq:swContinuation}) on $X$ defines a quasi-isomorphism from $\TWCM^{*}(\cS^H)$ to $\TWCM^{*}(\cS)$, with the induced map on cohomology independent of the choice of $K$. 
Choose data as in Section \ref{sec:twistedIso} to define the Lee--Taubes isomorphism
$T_{\text{Tw}\Phi}^r: \TWPFH_*(\bfS) \to \TWHM^{-*}(\cS)$
and push forward this data by $M_H$ define an isomorphism $T_{\text{Tw}\Phi}^r: \TWPFH_*(\bfS^H) \to \TWHM^{-*}(\cS^H).$ 
Fix twisted PFH generators $(\Theta, W)$ and $(\Theta', W')$ for $\phi$ and $\phi'$. Pull back $(\Theta', W')$ by $M_H$ so that $\Theta'$ is considered as a set of loops with multiplicity in $M_\phi$ and $W'$ is a class in $H_2(M_\phi, \Theta', \Theta_{\text{ref}}; \bZ)$. Fix $r \gg 1$ and write\footnote{Strictly speaking this requires taking a $(\delta, d)$-approximation for both $\phi$ and $\phi'$, however since this does not change the grading we omit this from the notation for simplicity.} 
$\fc = (B, \Psi = r^{1/2}(\alpha, \beta)) = \text{Tw}\Phi^r(\Theta, W)$
and
$\fc' = (B', \Psi' = r^{1/2}(\alpha', \beta')) = \text{Tw}\Phi^r(\Theta', W').$

\subsubsection{Applying a computation of Cristofaro-Gardiner} We now begin the proof of the proposition.  Our goal is to compare the difference $I(\Theta', W') - I(\Theta, W)$ of the PFH gradings with the corresponding difference $\SF(\fc, \fc_\Gamma) - \SF(\fc', \fc_\Gamma')$ of the Seiberg--Witten gradings. Recall from \S\ref{subsec:continuationMaps} the \emph{relative index} $\text{ind}(\fc, \fc')$ for the solutions $\fc = \text{Tw}\Phi^r(\Theta, W)$ and $\fc' = \text{Tw}\Phi^r(\Theta', W')$ of the $\cS$- and $\cS^H$-Seiberg-Witten equations. This is the index of the Fredholm operator 
$$\cL_{\fd} = \frac{\partial}{\partial s} + \cL_{\fd_s}$$
where $\fd$ is any path from $\fc$ to $\fc'$. Our first step towards proving Proposition \ref{prop:twistedIsoPreservesGradings} is to show that $I(\Theta', W') - I(\Theta, W)$ is equal to this Seiberg--Witten-theoretic quantity. 

\begin{lem} \label{lem:cg_grading_computation}
Let $\Theta$, $\Theta'$, $W$, $W'$, $\fc$, $\fc'$ be as above. Then
$$I(\Theta', W') - I(\Theta, W) = \text{ind}(\fc, \fc').$$
\end{lem}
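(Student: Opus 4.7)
The plan is to interpret both sides of the identity as computing the same quantity in the symplectic cobordism $(X, \Omega_X)$, by combining additivity of the ECH index across gluings with Cristofaro-Gardiner's Seiberg--Witten formula for the ECH index in symplectic cobordisms \cite{CG13}.

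First, I would reformulate the left-hand side as a relative ECH index in $X$. The symplectic manifold $X = \bR \times M_\phi$ with $\Omega_X = ds \wedge dt + \tfrac12(\omega_\phi + d(K(s) dt))$ is a strong symplectic cobordism from $(M_\phi, dt, \omega_\phi)$ at $s = -\infty$ to (the image of) $(M_{\phi'}, dt, \omega_{\phi'})$ at $s = +\infty$. The classes $W$ and $W'$ together with a class $Z_{\text{ref}} \in H_2(X, \Theta_{\text{ref}}, \Theta_{\text{ref}}^H)$ supported on the ``straight line'' cobordism between $\Theta_{\text{ref}}$ and its pushforward produce a well-defined relative homology class $Z = W \cup Z_{\text{ref}} \cup (-W') \in H_2(X, \Theta, \Theta')$. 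By the standard additivity of the ECH index under concatenation of symplectic cobordisms (with the conventions used to define the relative $\mathbb{Z}$-grading on twisted PFH, see \cite[\S 3.3]{simplicity20}), we have
\[
I(\Theta', W') - I(\Theta, W) = I_X(\Theta, \Theta', Z),
\]
where $I_X$ denotes the relative ECH index in the cobordism $X$.

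Second, I would apply the main result of \cite{CG13} to rewrite $I_X(\Theta, \Theta', Z)$ as the index of the linearized Seiberg--Witten operator along a suitable path of configurations. Concretely, one chooses a path $\fd = (B_s, \Psi_s)$ in $\mathrm{Conn}(E_\Gamma) \times C^\infty(S_\Gamma)$ from $\fc = \mathrm{Tw}\Phi^r(\Theta, W)$ to $\fc' = \mathrm{Tw}\Phi^r(\Theta', W')$ (after pulling back by $M_H$ so everything lives on $M_\phi$). The relative homotopy class of this path in the quotient $\mathcal{B}^\circ$ is determined by the gauge choices made in \S\ref{subsec:twistedIsoDefinition}; by the winding-number characterization in Remark \ref{rem:relativeWindingNumber}, these choices are precisely the ones that make the path $\fd$ represent the relative homology class $Z$ constructed in the previous step. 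Therefore \cite[Thm.\ 1]{CG13} gives
\[
I_X(\Theta, \Theta', Z) = \mathrm{ind}(\cL_\fd) = \mathrm{ind}(\fc, \fc').
\]

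The main obstacle I anticipate is the compatibility check in the second step: verifying that the relative homology class $[\fd] \in H_2(\mathcal{B}^\circ)$ associated to the path $\fd$ agrees with $Z$ under the isomorphism used in \cite{CG13}. This is where the careful gauge-fixing in \S\ref{subsec:twistedIsoDefinition}, together with Remark \ref{rem:relativeWindingNumber}, does the work: it ensures that for every loop $\gamma$ in the basis $\Lambda_\Gamma$ of $H_1(M_\phi;\mathbb{R})$, the winding number of $\alpha$ (resp.\ $\alpha'$) against the reference section $\alpha^c(r)$ records the intersection pairing with $W$ (resp.\ $W'$). Once this bookkeeping is in place, the lemma follows immediately; the conclusion is independent of the choice of $\cS_s$ because both $I_X$ and $\mathrm{ind}(\cL_\fd)$ are independent of the chosen interpolating data.
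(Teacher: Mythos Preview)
Your proposal is correct and follows essentially the same route as the paper: reduce to a cobordism ECH index via \cite[Theorem~5.1]{CG13}, then verify the homology-class identification using the winding-number bookkeeping of Remark~\ref{rem:relativeWindingNumber}. The paper makes one point more concrete than your sketch---\cite{CG13} associates to the path $\fd$ an actual surface $\overline{Z}\subset X$ as the zero set of the $E_\Gamma$-component $\alpha_X$ of the spinor (not a class ``$[\fd]\in H_2(\mathcal{B}^\circ)$''), and your anticipated compatibility check is carried out by choosing $\fd$ to pass through the concentrated reference configuration $\fc^c(r)$ on the middle interval and then computing $(\bR\times\gamma)\cdot\overline{Z}$ for each $\gamma\in\Lambda_\Gamma$ as a difference of relative winding numbers, yielding $[\overline{Z}]=W'-W$.
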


\begin{proof}
The proof of Lemma \ref{lem:cg_grading_computation} will be carried out in three steps. The first two steps, after some preparation, use \cite[Theorem $5.1$]{CG13} to show that $\text{ind}(\fc, \fc')$ is equal to a certain ECH index. The last step shows this index is equal to $I(\Theta', W') - I(\Theta, W)$. 

\textbf{Step 1:} The first step uses the Seiberg--Witten solutions $\fc = (B, \Psi = r^{1/2}(\alpha, \beta))$, $\fc' = (B', \Psi' = r^{1/2}(\alpha', \beta'))$, considered as configurations on $M_\phi$, to construct a certain surface $\overline{Z}$ with cylindrical ends which interpolates between $\Theta$ and $\Theta'$. 

Recall by Proposition \ref{prop:untwistedIsomorphism} that $\alpha$ does not vanish outside of a tubular neighborhood of $\Theta$ of radius approximately $r^{-1/2}$, and likewise for $\alpha'$ with respect to the orbit set $\Theta'$. Choose a path $\fd = (B_s, \Psi_s = (\alpha_s, \beta_s))$ from $(B, \Psi)$ to $(B', \Psi')$ in $\text{Conn}(E_\Gamma) \times C^\infty(S_\Gamma)$. We assume for simplicity that for $s \leq -2$ that $(B_s, \Psi_s) = (B, \Psi)$ and for $s \geq 2$ that $(B_s, \Psi_s) = (B', \Psi')$.  

Consider the path of sections $\{\Psi_s\}_{s \in \bR}$ as a section $\Psi_X = (\alpha_X, \beta_X)$ of the pullback of $S_\Gamma$ to $X = \bR \times M_{\phi}$ via the projection to $M_{\phi}$. Make a small deformation of the path $\{\Psi_s\}_{s \in \bR}$, keeping it constant for $|s| \gg 1$, so that $\alpha_X$ is transverse to zero, and the zero set $Z = \alpha_X^{-1}(0)$ is a smooth, properly embedded surface, whose intersections with the sets $\{s \geq 2\}$ and $\{s \leq -2\}$ are cylindrical ends which lie in tubular neighborhoods of radius $\sim r^{-1/2}$ of $\Theta'$ and $\Theta$, respectively.  To cite Cristofaro-Gardiner's computation, we want to assume that $Z$ has no closed components, and we can do this
after making a further deformation to the path, supported in $\{|s| \leq 3\}$.

For the remainder of this construction, fix a large parameter $R \gg 1$. This is required to apply some analysis from \cite[Paper III, \S 2.c]{ECHSWF}, which is a key step in Cristofaro-Gardiner's computation. The intersections of $Z$ with $(-\infty, -R] \times M_\phi$ and $[R, \infty) \times M_\phi$ are disjoint unions of cylinders over embedded loops in $M_\phi$ lying near $\Theta$ and $\Theta'$. Replace these ends with ends exponentially decaying to $\Theta$ as $s \to -\infty$ and $\Theta'$ as $s \to \infty$, as prescribed in \cite[Paper III, \S 2b.1]{ECHSWF}. Denote the resulting surface by $\overline{Z}$. The surface $\overline{Z}$ projects to $M_\phi$ to form a two-chain with boundary $\Theta' - \Theta$, and as a result has a well-defined homology class $[\overline{Z}] \in H_2(M_\phi, \Theta', \Theta; \bZ)$, independent of the exact choice of path $\fd$ and parameter $R \gg 1$, as in \cite{CG13}.

\textbf{Step 2:} We now state the required result from \cite{CG13}. Let $I(\Theta', \Theta, [\overline{Z}])$ denote the standard ECH index in the symplectic cobordism $X$ as defined in \cite[\S$4.2$]{ECHrevisited}. Then \cite[Theorem $5.1$]{CG13} implies the identity
\begin{equation}\label{eq:cg_computation} \text{ind}(\fc, \fc') = I(\Theta', \Theta, [\overline{Z}]).\end{equation}

The theorem in \cite{CG13} is stated for the case where $X$ is a completed symplectic cobordism between contact $3$--manifolds. The argument extends with only cosmetic modifications to our case, where $X$ is instead a completed fibered symplectic cobordism between stable Hamiltonian mapping torii. See also \cite[Remark $5.3$]{CG13} for a summary of some crucial analysis from \cite[Paper III, \S 2.c]{ECHSWF}.

\textbf{Step 3:} The purpose of this step is conclude the proof of the lemma by showing the identity
\begin{equation}\label{eq:surface_homology_computation} [\overline{Z}] = W' - W.\end{equation}

This implies the lemma because $I(\Theta', \Theta, W' - W) = I(\Theta', W) - I(\Theta, W)$ by the properties of the ECH index, so \eqref{eq:cg_computation} and \eqref{eq:surface_homology_computation} combine to prove the identity asserted by the lemma. The computation uses the details of the construction of $\overline{Z}$ and the ``winding number'' interpretation of the twisted Lee--Taubes isomorphism from Remark \ref{rem:relativeWindingNumber}. 

Let $(B^c(r), \Psi^c(r) = (\alpha^c(r), \beta^c(r))$ be the family of base configurations concentrating around $\Theta_{\text{ref}}$ used to define the Lee--Taubes isomorphism. Without loss of generality, assume that the path $\fd = (B_s, \Psi_s)$ fixed above is chosen such that $(B_s, \Psi_s) = (B^c(r), \Psi^c(r) = (\alpha^c(r), \beta^c(r)))$
when $|s| \leq 1$. Recall that $\alpha^c(r)$ is transverse to the zero section and has zero set equal to $\Theta_{\text{ref}}$. 
Let $R \gg 1$ be the large parameter that we used to define the compact surface $Z$ in the previous step, which we completed with cylindrical ends to form $\overline{Z}$. We set 
$$Z_- = \overline{Z} \cap [-R, -1] \times M_{\phi},\qquad Z_{\text{int}} = \overline{Z} \cap [-1, 1] \times M_{\phi}, \qquad Z_+ = \overline{Z} \cap [1, R] \times M_{\phi}.$$

By construction the surface $Z_{\text{int}}$ is the cylinder $[-1, 1] \times \Theta_{\text{ref}}$. By definition, the relative homology class $[\overline{Z}]$ is determined uniquely by the set of algebraic intersection numbers
$(\bR \times \gamma) \cdot \overline{Z} $
across all $\gamma \in \Lambda_\Gamma$, where $\Lambda_\Gamma$ is the set of loops used to define the twisted Lee--Taubes isomorphism. Since the loops in $\Lambda_\Gamma$ are far from $\Theta$ and $\Theta'$, we can assume that there are no intersections outside of $[-R, R] \times M$. The loops in $\Lambda_\Gamma$ are also far from $\Theta_{\text{ref}}$, so the choice of $\fd$ ensures that there are no intersections with the cylinder $Z_{\text{int}} \subset [-1, 1] \times M_{\phi}$. 

The result follows by showing, for any $\gamma \in \Lambda_\Gamma$, the identity
$ (\bR \times \gamma) \cdot \overline{Z} = \langle \gamma, W' - W \rangle.$
By what we have said above, we have
$$(\bR \times \gamma) \cdot \overline{Z} = (\bR \times \gamma) \cdot Z_+ + (\bR \times \gamma) \cdot Z_-.$$
The algebraic intersection numbers on the right-hand side are well-defined since by definition $\bR \times \gamma$ will be far from the boundary of either $Z_+$ or $Z_-$. We will show that
$$(\bR \times \gamma) \cdot Z_+ = \langle \gamma, W' \rangle, \qquad (\bR \times \gamma) \cdot Z_- = -\langle \gamma, W \rangle$$
for every fixed $\gamma \in \Lambda_\gamma$. We will write down the argument for the intersection number with $Z_+$. The argument for $Z_-$ is identical. 

This equality follows from our topological interpretation of the isomorphism between the two twisted theories. Recall from the discussion in Remark \ref{rem:relativeWindingNumber} that the relative winding number 
$\text{wind}(\gamma, \alpha^c(r), \alpha')$
is equal to $\langle \gamma, W' \rangle$. By definition, the section $\alpha_R$ of $E_\Gamma$ is homotopic to $\alpha'$ through sections that do not vanish on $\gamma$. It follows that
$$\text{wind}(\gamma, \alpha^c(r), \alpha_R) = \text{wind}(\gamma, \alpha^c(r), \alpha') = \langle \gamma, W' \rangle.$$

Note also by definition that $\alpha^c(r) = \alpha_{1}$. Therefore, it remains to show
$$\text{wind}(\gamma, \alpha_{1}, \alpha_R) = (\bR \times \gamma) \cdot Z_+.$$

This can be shown explicitly. For any $s \in [1, R]$, we define a smooth, complex-valued function $f_s$ on $\gamma$ by the identity
$\alpha_s = f_s\alpha_{1}/|\alpha_{1}|.$
The zeroes of $f_s$ correspond with the zeros of $\alpha_s$, which by definition are (the projections of) intersection points of $Z_+$ with $\bR \times \gamma$. It follows that, since $Z_+$ intersects $\bR \times \gamma$ transversely, the functions $f_s$ are themselves transverse to zero; $\text{wind}(\gamma, \alpha_1, \alpha_R)$ is precisely equal to the oriented count of zeros of the functions $\{f_s\}$. This in turn is equal to the intersection number of $Z_+$ with $\bR \times \gamma$.
\end{proof}

\subsubsection{Bounding the relative index and the proof of Proposition \ref{prop:twistedIsoPreservesGradings}}

Our next task is to estimate the relative index $\text{ind}(\fc, \fc')$ in terms of the spectral flows $\SF(\fc, \fc_\Gamma)$ and $\SF(\fc', \fc_{\Gamma}')$ where $\fc$, $\fc'_\Gamma$ are the base configurations fixed at the beginning of \S\ref{subsec:twistedIsoGradings}. The following lemma proves such an estimate, assuming $\fc_\Gamma$, $\fc_\Gamma'$ are $C^1$ close and that their curvatures have $C^3$ norms $\lesssim \Lambda d$ for some $\Lambda$. 

\begin{lem}\label{lem:relative_index_spectral_flow}
Let $\fc$, $\fc'$ be as fixed above and $\fc_\Gamma$, $\fc_{\Gamma}'$, $\Lambda$ as fixed in Proposition \ref{prop:twistedIsoPreservesGradings}. Then 
$$|\text{ind}(\fc, \fc') - \SF(\fc, \fc_\Gamma) + \SF(\fc', \fc_\Gamma')| \lesssim (\Lambda d)^{3/2}.$$
\end{lem}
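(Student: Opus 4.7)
The plan is to write $\text{ind}(\fc, \fc')$ as a sum of three Fredholm indices along a concatenated path and then estimate each piece using additivity of the index.

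More precisely, choose a path $\fd$ from $\fc$ to $\fc'$ which is a concatenation of three subpaths $\fd_1 \# \fd_2 \# \fd_3$. The first subpath $\fd_1$ lies entirely in $\mathrm{Conn}(E_\Gamma)\times C^\infty(S_\Gamma)$ with the fixed metric $g$ on $M_\phi$, and connects $\fc$ to $\fc_\Gamma$. The third subpath $\fd_3$ is the $M_H$-pullback of a path in $\mathrm{Conn}(E_{\Gamma^H})\times C^\infty(S_{\Gamma^H})$ with the fixed metric $g'$ on $M_{\phi'}$, connecting $\fc_\Gamma'$ to $\fc'$. The middle subpath $\fd_2$ is a continuation path (varying the diffeomorphism, metric, and perturbation data smoothly) connecting $\fc_\Gamma$ to $(M_H)^\ast \fc_\Gamma'$. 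By additivity of the Fredholm index under concatenation and the identity $\mathrm{ind}(\cL_\fd) = \SF(\fc_-,\fc_+)$ on a fixed manifold from \eqref{eq:indexEqualsSF1}, together with diffeomorphism invariance of the spectral flow, I get
\[
\mathrm{ind}(\fc,\fc') \;=\; \SF(\fc,\fc_\Gamma) \;-\; \SF(\fc',\fc_\Gamma') \;+\; \mathrm{ind}(\cL_{\fd_2}).
\]
The lemma therefore reduces to establishing $|\mathrm{ind}(\cL_{\fd_2})| \lesssim (\Lambda d)^{3/2}$.

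For the middle piece, I would argue as in Proposition~\ref{prop:spectralFlow1}: since $\fd_2$ is a path of configurations with small variation of the data, one can apply the Atiyah--Patodi--Singer index theorem on $[-1,1]\times M_\phi$ with respect to a cylindrical metric interpolating between $g$ and $(M_H)^\ast g'$. The resulting formula expresses $\mathrm{ind}(\cL_{\fd_2})$ as the sum of (i) a bulk integral involving the Chern--Simons-like functional between $\fc_\Gamma$ and $(M_H)^\ast\fc_\Gamma'$, (ii) spinor-spectral-flow contributions of the form $\SF(\cL_{(B_\bullet,0)}, \cL_{(B_\bullet,\Psi_\bullet)})$ at the endpoints, and (iii) a difference of eta invariants $\eta(D_{B_\Gamma})-\eta(D_{(M_H)^\ast B_\Gamma'})$. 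The hypothesis that $\fc_\Gamma$ and $(M_H)^\ast \fc_\Gamma'$ are $C^1$-close makes the bulk Chern--Simons integral $O(1)$; the spinor spectral flow contributions are bounded by $(\Lambda d)^{3/2}$ via Taubes' estimate (\cite[Lemma 5.3]{TaubesWeinstein1}) exactly as in \eqref{eq:spinorSpectralFlow1}, using the $C^3$ bound $\|F_{B_\Gamma}\|_{C^3} \le \Lambda d$; and the eta invariants are each bounded by $\kappa_{\ref{prop:etaInvariant1}}(\Lambda d)^{3/2}$ by Proposition~\ref{prop:etaInvariant1}.

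The main technical obstacle is the middle piece: one must verify that the continuation path $\fd_2$ may be chosen so that the Hessians $\cL_{\fd_2(s)}$ are nondegenerate along the path (so that $\cL_{\fd_2}$ is genuinely Fredholm and its index equals the APS-computed quantity), and one must carry out the index computation for a mixed cylinder where both the metric and the reference spin-c data vary. Both are manageable: nondegeneracy follows from the closeness of $\fc_\Gamma$ and $(M_H)^\ast \fc_\Gamma'$ together with a generic perturbation of the interpolating path, and the APS computation is a local computation insensitive to the precise interpolation. Once these pieces are combined, the bound $(\Lambda d)^{3/2}$ follows, completing the proof.
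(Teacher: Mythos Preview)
Your proposal is correct and follows essentially the same route as the paper: decompose the path into three pieces via $\fc_\Gamma$ and $\fc_\Gamma'$, identify the outer two indices with the spectral flows, and bound the middle index by the Atiyah--Patodi--Singer theorem using the $\eta$-invariant and spinor spectral flow estimates. Two small remarks: the paper's APS formula for the middle piece also includes the difference of odd signature operator $\eta$-invariants $\eta(g)-\eta(g')$ (bounded by a geometric constant), and Fredholmness of $\cL_{\fd_2}$ only requires nondegeneracy at the \emph{endpoints}, not along the whole path, so your stated ``technical obstacle'' is not actually an issue.
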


\begin{proof}
The estimate asserted in the lemma follows from the additivity of index under gluing and a computation using the Atiyah--Patodi--Singer index theorem. The details are as follows.  Let $\fd = (B_s, \Psi_s)$ be any smooth path in $\text{Conn}(E_\Gamma) \times C^\infty(S_\Gamma)$ that for $s \ll -1$ is equal to $\fc$ and for $s \gg 1$ is equal to (the pullback of) $\fc'$. Recall from \S\ref{subsec:continuationMaps} that the index $\text{ind}(\fc, \fc')$ is computed by the index of the Fredholm operator
$$\cL_{\fd} = \frac{\partial}{\partial s} + \cL_{(B_s, \Psi_s)}$$
over the cylindrical end manifold $X = \bR \times M_\phi$ with Riemannian metric of the form
$g_X = ds^2 + g_s$
induced by the homotopy of Hamiltonians $K$ fixed prior to the statement of Proposition \ref{prop:twistedIsoPreservesGradings}. It will be convenient to assume that $K(s) \equiv 0$ for $s \leq -1$ and $K(s) \equiv H$ for $s \geq 1$. 

Fix some very large $R \geq 1$. Assume that the path $\fd$ is chosen so that the following four conditions hold:
\begin{itemize}
\item $(B_s, \Psi_s)$ equals $\fc$ for $s \leq -4R$ and $\fc'$ for $s \geq 4R$. 
\item $(B_s, \Psi_s)$ equals $\fc_\Gamma$ for $s \in [-2R, -1]$ and $\fc_\Gamma'$ for $s \in [1, 2R]$. 
\end{itemize}

Now standard gluing arguments for Fredholm operators imply that 
\begin{equation} \label{eq:twistedIsoPreservesGradings3} \text{ind}(\cL_\fd) = \text{ind}(\cL_{\fd_-}) + \text{ind}(\cL_{\fd_\Gamma}) + \text{ind}(\cL_{\fd_+}) \end{equation}
where $\cL_{\fd_-}$, $\cL_{\fd_\Gamma}$ and $\cL_{\fd_+}$ are three Fredholm operators which we define below.

The operator $\cL_{\fd_-}$ is the operator on the cylindrical manifold $\bR \times M_\phi$ associated to a smooth path $\fd_-$ in $\text{Conn}(E_\Gamma) \times C^\infty(S_\Gamma)$ which is equal to $\fc$ for $s \ll -1$ and $\fc_\Gamma$ for $s \gg 1$. We therefore find
\begin{equation} \label{eq:twistedIsoPreservesGradings4} \text{ind}(\cL_{\fd_-}) = \SF(\fc, \fc_\Gamma). \end{equation}

The operator $\cL_{\fd_+}$ is the operator on the cylindrical manifold $\bR \times M_{\phi'}$ associated to a smooth path $\fd_+$ in $\text{Conn}(E_{\Gamma'}) \times C^\infty(S_{\Gamma'})$ which is equal to $\fc_\Gamma'$ for $s \ll -1$ and $\fc'$ for $s \gg 1$. We therefore find
\begin{equation} \label{eq:twistedIsoPreservesGradings5} \text{ind}(\cL_{\fd_+}) = -\SF(\fc', \fc_\Gamma'). \end{equation}

The operator $\cL_{\fd_\Gamma}$ is the operator on the same cylindrical end Riemannian $4$-manifold $X$ diffeomorphic to $\bR \times M_\phi$ that the operator $\cL_{\fd}$ was defined on. It is associated to a smooth path $\fd_\Gamma = (B_{\Gamma,s}, \Psi_{\Gamma,s})$ in $\text{Conn}(E_\Gamma) \times C^\infty(S_\Gamma)$ which is equal to $\fc_\Gamma$ for $s \leq -1$ and (the pullback of) $\fc_\Gamma'$ for $s \geq 1$. We can furthermore assume, since $\fc_\Gamma$ and $\fc_\Gamma'$ are assumed to be very close, that for all $s \in [-1,1]$ the one-form $\frac{\partial}{\partial s} B_{\Gamma,s}$ has very small $C^0$ norm. 

We observe that we can view the Riemannian manifold $X$ as the compact manifold $[-2,2] \times M_\phi$, equipped with the restriction of the Riemannian metric $g_X = ds^2 + g_s$ with cylindrical ends attached, modeled on the Riemannian manifolds $(M_\phi, g)$ and $(M_\phi, g')$. We abuse notation and use $g'$ to denote the pullback of the Riemannian metric on $M_{\phi'}$ induced by the pair $(\phi', J')$. 
Analogously to the proof of Proposition \ref{prop:spectralFlow1}, we can compute the index of $\cL_{\fd_\Gamma}$ using the Atiyah--Patodi--Singer index theorem \cite{APS1}. See \S\ref{subsec:spectralFlow} for discussion of the terminology used here. 

The index of $\cL_{\fd_\Gamma}$ is given by the formula
\begin{equation}
\label{eq:twistedIsoPreservesGradings6}
\begin{split}
&\text{ind}(\cL_{\fd_\Gamma}) = \qquad \int_{[-2,2] \times M_{\phi}} \text{APS}(\fd_\Gamma) \dvol_{g_{X}} + \frac{1}{2}\eta(D_{B_\Gamma'}) - \frac{1}{2}\eta(D_{B_\Gamma}) \\
&\qquad + \SF(\cL_{(B_\Gamma, 0)}, \cL_{(B_\Gamma, \Psi_\Gamma)}) - \SF(\cL_{(B_\Gamma', 0)}, \cL_{(B_\Gamma', \Psi_\Gamma')}) + \frac{1}{2}\eta(g) - \frac{1}{2}\eta(g').
\end{split}
\end{equation}

Here $\text{APS}(\fd_\Gamma)$ denotes the APS integrand associated to the path of Dirac operators $\{D_{B_{\Gamma, s}}\}_{s \in [-2,2]}$. The terms $\eta(g)$ and $\eta(g')$ are the eta invariants of the \textbf{odd signature operators}
$$(b, f) \mapsto (\star d b - df, d^*b)$$
associated to the metrics $g$ and $g'$ corresponding to the pairs $(\phi, J)$ and $(\phi', J')$, respectively. They are bounded by constants depending only on these metrics. 

As in \S\ref{subsec:spectralFlow}, we use \cite[Lemma $5.4$]{TaubesWeinstein1} to show
$$|\SF(\cL_{(B_\Gamma, 0)}, \cL_{(B_\Gamma, \Psi_\Gamma)}) - \SF(\cL_{(B_\Gamma', 0)}, \cL_{(B_\Gamma', \Psi_\Gamma')})| \lesssim \|F_{B_\Gamma}\|_{C^0}^{3/2} + \|F_{B'_\Gamma}\|_{C^0}^{3/2} \lesssim (\Lambda d)^{3/2}.$$

By Proposition \ref{prop:etaInvariant1}, we have $|\eta(D_{B_\Gamma'}) - \eta(D_{B_\Gamma})| \lesssim (\Lambda d)^{3/2}.$
It remains to bound the first term of \eqref{eq:twistedIsoPreservesGradings6}, the integral of $\text{APS}(\fd_\Gamma)$. An explicit formula for the APS integrand is found in the first chapter of \cite{diracHeatKernels}. In dimension four, it splits into a term depending only on the $\hat{A}$-genus of the curvature of the Riemannian metric $g_{X_-}$ and the integral
$$\int_{[-2,2] \times M_\phi} ds \wedge \frac{\partial}{\partial s} B_{\Gamma, s} \wedge F_{B_{\Gamma, s}}.$$

Recall that $\|\partial_s B_{\Gamma, s}\|_{C^0}$ is small, so this latter integral is $\lesssim \|F_{B_\Gamma}\|_{C^0} \leq (\Lambda d)^{3/2}$. It follows from plugging everything into (\ref{eq:twistedIsoPreservesGradings6}) that 
\begin{equation}
\label{eq:twistedIsoPreservesGradings7} |\text{ind}(\cL_{\fd_\Gamma})| \lesssim (\Lambda d)^{3/2}.
\end{equation}

Now plug (\ref{eq:twistedIsoPreservesGradings4}), (\ref{eq:twistedIsoPreservesGradings5}), and (\ref{eq:twistedIsoPreservesGradings7}) into (\ref{eq:twistedIsoPreservesGradings3}) to prove the bound asserted by the lemma. 
\end{proof}

We now prove Proposition \ref{prop:twistedIsoPreservesGradings}. 

\begin{proof}[Proof of Proposition \ref{prop:twistedIsoPreservesGradings}]
Combine Lemmas \ref{lem:cg_grading_computation} and \ref{lem:relative_index_spectral_flow}. 
\end{proof}

\bibliographystyle{abbrv}
\bibliography{main}

\end{document}